\title{Models For Knot Spaces And Atiyah Duality }
\author{Syunji Moriya\\
MSC2020: primary: 18M75, 55P43, 55T99, 57R40, secondary: 18N40}
\thanks{The  author is partially supported by JSPS KAKENHI Grant Number 26800037.}
\address{Department of Mathematics and Information Sciences, Osaka Prefecture University, Sakai, 599-8531, Japan}
\email{moriyasy@gmail.com}
\theoremstyle{definition}
\newtheorem{defi}{Definition}[section]
\newtheorem{exa}[defi]{Example}
\newtheorem{rem}[defi]{Remark}
\theoremstyle{plain}
\newtheorem{prop}[defi]{Proposition}
\newtheorem{lem}[defi]{Lemma}
\newtheorem{thm}[defi]{Theorem}
\newtheorem{cor}[defi]{Corollary}
\newcommand{\eps}{\epsilon}
\newcommand{\CECH}{\check{\mathrm{T}}}
\newcommand{\CECHC}{\check{\mathrm{C}}}
\newcommand{\CC}{\mathcal{C}}% the other side of pd comodule
\newcommand{\CPTM}{\langle [M]\rangle}
\newcommand{\CSINHA}{\mathcal{C}_{\langle[M]\rangle}}
\newcommand{\CCM}{\mathcal{TH}_M}
\newcommand{\TCCM}{\widetilde{\mathcal{TH}}_M}
\newcommand{\MM}{\mathcal{M}}% a model category or left contra-module
\newcommand{\CH}{\mathcal{CH}}%category of chain
\newcommand{\FCW}{f\mathcal{CW}}%category of finite CW complexes
\newcommand{\TM}{\widehat{M}}
\newcommand{\MT}{\widehat{M}^{-\tau}}
\newcommand{\TNT}{\widetilde{N}^{-\tau}}
\newcommand{\BNT}{\bar{N}^{-\tau}}
\newcommand{\DeltaT}{\mathcal{TH}}
\newcommand{\tw}{\widehat{w}}
\newcommand{\td}{\tilde{d}}
\newcommand{\fat}{\mathrm{fat}}
\newcommand{\FPsi}{\mathsf{F}_{\Psi}}
\newcommand{\THOM}{\mathsf{T}\mathrm{h}^M}
\newcommand{\C}{\mathcal{C}}
\newcommand{\Sph}{\mathbb{S}}
\newcommand{\TPsi}{\widetilde{\Psi}}
\newcommand{\TB}{\widetilde{B}}% B_\HH�̔w���ɂ����㐔
\newcommand{\NT}{N^{-\tau}}
\newcommand{\Map}{\mathrm{Map}}
\newcommand{\oper}{\mathcal{O}}% an operad
\newcommand{\ASS}{\mathcal{A}}% associative operad
\newcommand{\KK}{\mathcal{K}}% the stasheff operad
\newcommand{\DD}{\mathcal{D}}% little balls operad
\newcommand{\configc}{\mathfrak{c}}
\newcommand{\configd}{\mathfrak{d}}
\newcommand{\E}{\mathcal{E}}
\newcommand{\Ho}{\mathbf{Ho}}% homotopy category
\newcommand{\CAT}{\mathit{Cat}}% category of categories
\newcommand{\Cat}{\mathcal{C}}
\newcommand{\SP}{\mathcal{SP}^\Sigma}
\newcommand{\Hoch}{\mathrm{CH}}
\newcommand{\hotimes}{\, \hat{\otimes}\, }
\newcommand{\CG}{\mathcal{CG}}% the category of compactly generated spaces
\newcommand{\tot}{\operatorname{Tot}}% totalization
\newcommand{\GG}{\mathsf{G}}% set of reduced graphs �Ffunctor�ɕύX
\newcommand{\Emb}{\mathrm{Emb}}
\newcommand{\Imm}{\mathrm{Imm}}
\newcommand{\RR}{\mathbb{R}}
\newcommand{\HH}{\mathcal{H}}% a poincare algebra
\newcommand{\SHH}{S\mathcal{H}}
\newcommand{\Sphere}{\mathbb{S}}
\newcommand{\FF}{\mathbb{F}}% a field
\newcommand{\ZZ}{\mathbb{Z}}% integers
\newcommand{\QQ}{\mathbb{Q}}
\newcommand{\pp}{\mathfrak{p}}
\newcommand{\EE}{\mathbb{E}}
\newcommand{\colim}{\mathrm{colim}}
\newcommand{\fcolim}{\underset{\omega}{\mathrm{fcolim}}}% fiberwise colimit
\newcommand{\hocolim}{\mathrm{hocolim}}
\newcommand{\holim}{\mathrm{holim}}
\newcommand{\fc}{\mathrm{fc}}% fiberwise constant
\newcommand{\FUN}{\mathcal{F}un}
\newcommand{\LL}{\mathbb{L}}
\newcommand{\CECHF}{\check{\mathsf{C}}}
\newcommand{\kk}{\boldsymbol{k}}
\newcommand{\dd}{\boldsymbol{d}}
\newcommand{\BDelta}{\boldsymbol{\Delta}}
\newcommand{\BGSSS}{\check{\mathbb{E}}}
\newcommand{\SINHASS}{\mathbb{E}}
\begin{document}
\maketitle
\begin{abstract}
Let  $\Emb(S^1,M)$ be the space of smooth embeddings from the circle to a closed manifold $M$ of dimension $\geq 4$. We study a cosimplicial model of $\Emb(S^1,M)$ in stable categories, using  a spectral version of Poincar\'e-Lefschetz duality called  Atiyah duality. We actually deal with a notion of a comodule instead of the cosimplicial model, and prove a comodule version of  the duality  as in Theorem \ref{Tmaincomodule}. As an application, we introduce a new spectral sequence converging to $H^*(\Emb(S^1,M))$ for simply connected $M$ and for major coefficient rings as in Theorem \ref{Tmainss}. Using this, we compute $H^*(\Emb(S^1, S^k\times S^l))$ in low degrees with some conditions on $k$, $l$. We also prove the inclusion $\Emb(S^1,M)\to \Imm(S^1,M)$ to the immersions induces an isomorphism on $\pi_1$ for some simply connected $4$-manifolds, related to   a question posed by Arone and Szymik.  We also prove an equivalence of singular cochain complex of $\Emb(S^1,M)$ and a homotopy colimit of chain complexes of a Thom spectrum of a bundle over a comprehensible space as in Theorem \ref{Tmainhomotopycolimit}.  Our key ingredient is a structured version of the  duality due to R. Cohen. 
\end{abstract}
\tableofcontents

\section{Introduction}
%%%%% sinha�Ƃ�M�̈Ⴂ(���E�̂����Ȃ�)���ǂ����ɓ������D
In \cite{sinha,sinha1} Sinha constructed  cosimplicial  models  of  spaces of knots in a manifold of  dimension $\geq 4$, based on the Goodwillie-Klein-Weiss embedding calculus \cite{GK, GW}.  Sinha's model was crucially used in the affirmative solution to  Vassiliev's conjecture for  a spectral sequence for the space of long knots in $\RR^d$ ($d\geq 4$) for rational coefficient in \cite{LTV} (see \cite{BH} for other coefficients).  % One of his model is a cosimplicial model $\CC^\bullet\langle [M]\rangle$. Its space at cosimiplical degree $n$ is a compactification of the  configuration space of points with tangent vector with $M$.
%In this paper, we consider the space $\Emb (S^1,M)$ of  smooth embedding from the circle $S^1$ to smooth manifold without boundary $M$ instead of $\Emb([0,1],M)$.  This is for technical simplicity and a similar theory will be valid for $\Emb([0,1],M)$. 
In the present paper, we study a version of  Sinha's model in stable categories.  \\
\indent Precisely speaking, our situation  is different from  \cite{sinha, sinha1} in that we consider the space $\Emb (S^1,M)$ of smooth embeddings from the circle $S^1$ to a closed manifold $M$ without any base point condition while in \cite{sinha,sinha1}  spaces of   embeddings from $[0,1]$ to a manifold with boundary with some endpoints conditions are dealt with.  This is for technical simplicity and a similar theory will be valid for Sinha's  spaces. The space $\Emb (S^1,M)$ is studied in \cite{aroneszymik,budney}  and  study of embedding spaces including the knot space is  a  motivation of   \cite{willwacher,idrissi}\\
\indent In the rest of the paper, $M$ denotes a connected closed smooth manifold of dimension $\dd\geq 4$.  We endow the space $\Emb(S^1,M)$ with the $C^\infty$-topology. For our situation, we can construct a cosimplicial model similar to Sinha's, which is  called {\em Sinha's cosimplicial model}  and  denoted by $\CC^\bullet\CPTM$. This  model relates the knot space and the configuration space as follows.
\begin{itemize}
\item There exists a weak homotopy equivalence $\underset{\BDelta}{\holim}\ \CC^\bullet\CPTM\simeq \Emb (S^1,M)$, where  $\underset{\BDelta}{\holim}$ denotes the homotopy limit over the category  $\BDelta$ of standard simplices  (see subsection \ref{SSnt}).
\item $\CC^n\CPTM$ is homotopy equivalent to the configuration space $C'_{n+1}(M)$ of $n+1$ points with a tangent vector  defined as the pullback of the following diagram.
\[
\TM^{\times n+1}\longrightarrow M^{\times n+1}\longleftarrow C_{n+1}(M).
\] 
Here $\TM$ denotes the  tangent sphere bundle of $M$ and $C_{n+1}(M)$ denotes the ordered configuration space of $n+1$-points in $M$.  The left arrow is the product of the projection and the right arrow is the inclusion. Actually, $\CC^n\CPTM$ is a version of the Fulton-MacPherson compactification.
\end{itemize} 
\indent  To state our main theorems, we need some notations. Fix an embedding $e_0:\TM\to\RR^K$ and a tubular neighborhood $\nu$ of the image $e_0(\TM )$ in $\RR^K$. We regard the product $\nu^{\times n}$ as a disk bundle over $\TM^{\times n}$ and let $\nu^{\times n}|_{X}$ denote the restriction of the base to a subspace $X\subset \TM^{\times n}$.   For a positive integer $n$, let $\GG(n)$ be the set of graphs $G$ with set of vertices $V(G)=\underline{n}=\{1,\dots, n\}$ and set of edges $E(G)\subset \{(i,j)\mid i,j\in \underline{n},\ i<j\}$. Let $\pi_0(G)$ be the set of connected components of $G$. A  map $M^{\times \pi_0(G)}\to M^{\times n}$ is induced by the quotient map $\underline{n}\to \pi_0(G)$, considering  $M^{\times A}$ as the set of maps $A\to M$.   Let $\Delta_G$ be  the pullback of the following diagram 
\[
\TM^{\times n}\stackrel{\text{projection}}{\longrightarrow} M^{\times n}\longleftarrow M^{\times \pi_0(G)}.
\]
$\Delta_G$ is naturally regarded as a subspace of $\TM^{\times n}$ via the projection of the pullback. $\Delta_G$ is a rather comprehensible space, comparing to the space  $\CC^{n-1}\CPTM$ (or $C'_{n}(M)$). For example, its cohomology ring is computed in Lemmas \ref{Lahg}, \ref{Lbhg} under some assumption. Define a subspace $\Delta_\fat(n)\subset \TM^{\times n}$ as the union $\cup_{G\in\GG(n)}\Delta_G$.   We use a notion of right modules over an operad, which is similar to the notion in Loday-Vallette \cite{LV}. Let  $\DD_1$ be a version of the little intervals operad (see Definition \ref{Dlittleinterval}).  We define a right $\DD_1$-module $\CSINHA$ which shares a large part of structure with $\CC^\bullet \CPTM$ and is quite analogous to the modules considered in \cite{AT,BW} (see Definitions \ref{Dcontra-module},  \ref{Dconfigurationmodule}). For example,  $\CSINHA(n)=\CC^{n-1}\CPTM$ by definition. $\CSINHA$ also has information enough to reconstruct $\Emb(S^1,M)$. We work with the category of symmetric spectra $\SP$. $(\CSINHA)^\vee$ denotes a left $\DD_1$-comodule in $\SP$ given by $(\CSINHA)^\vee (n)=(\CSINHA(n))^\vee$, where $(-)^\vee$ denotes the Spanier-Whitehead dual. See subsection \ref{SSnt} and Definition \ref{Dsemistablemodule} for  other notations in the following theorem.\newpage
\begin{thm}[Theorem \ref{TAtiyahdual}]\label{Tmaincomodule}
There exists a left $\DD_1$-comodule $\CCM$ of non-unital commutative symmetric ring spectra   as follows.
\begin{enumerate}
\item The object $\CCM(n)$ at arity $n$ is a natural model of the Thom spectrum 
\[
\Sigma^{-nK}Th(\nu^{\times n})/Th(\nu^{\times n}|_{X})\quad \text{with}\quad X=\Delta_{\fat}(n) ,
\] where $\Sigma$ denotes the suspension equivalence and $Th(-)$ the associated Thom space.  Concretely speaking,  $\CCM(n)$ is a relative version of R. Cohen's non-unital model in \cite{cohen}.
\item There exists 
a zigzag  of $\pi_*$-isomorphisms of  left $\DD_1$-comodules 
\[
(\CSINHA )^{\vee}\simeq \CCM\, .
\]
\end{enumerate}
\end{thm}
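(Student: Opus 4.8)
The plan is to establish both parts by exhibiting $\CCM(n)$ explicitly as R. Cohen's non-unital model for the pair of Thom spectra and then transporting the comodule structure through Atiyah duality.

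\medskip

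\emph{Step 1: The pointwise model and part (1).} First I would recall R. Cohen's construction from \cite{cohen}: for a closed manifold embedded in Euclidean space with tubular neighborhood, one obtains an explicit symmetric spectrum, with a strictly commutative non-unital multiplication coming from the diagonal, which models the Thom spectrum of the stable normal bundle (up to the shift $\Sigma^{-K}$). The point of Cohen's model is that it is strictly functorial and strictly monoidal, not just up to homotopy. I would apply a relative version of this to the disk bundle $\nu^{\times n}$ over $\TM^{\times n}$ together with its restriction over the closed subspace $\Delta_\fat(n)$, defining $\CCM(n)$ as the resulting quotient spectrum $\Sigma^{-nK} Th(\nu^{\times n})/Th(\nu^{\times n}|_{\Delta_\fat(n)})$ in Cohen's point-set model. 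Part (1) is then essentially a definition together with the identification of Cohen's model with the ordinary Thom spectrum, which is standard; the non-unital commutative ring structure on $\CCM(n)$ comes from the diagonal $\TM^{\times n} \to \TM^{\times n} \times \TM^{\times n}$, exactly as in \cite{cohen}, and one checks this descends to the quotient because $\Delta_\fat(n)$ is closed under the diagonal in the appropriate sense.

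\medskip

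\emph{Step 2: The comodule structure.} The right $\DD_1$-module structure on $\CSINHA$ consists of maps built from inclusions of configuration spaces and the operad structure of $\DD_1$; these are induced by honest continuous maps of spaces. Dualizing (Spanier–Whitehead dual of a map of spaces is a map of spectra in the opposite direction, contravariantly) turns the right-module structure maps into left-comodule structure maps on $(\CSINHA)^\vee$. I would then need to produce the same comodule structure directly on $\CCM$: here the key geometric input is that the structure maps of $\CSINHA$, which involve collapsing configuration points together and re-inserting intervals, correspond under Atiyah duality to Pontryagin–Thom collapse maps relating the spaces $\Delta_\fat(n)$ for varying $n$. Concretely, the maps $\TM^{\times m} \to \TM^{\times n}$ (for a morphism in $\DD_1$ sending $n$ to $m$ with some points collapsed) restrict compatibly to the fat diagonals, and applying the relative Cohen construction functorially yields the comodule structure maps on $\CCM$. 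I expect the bookkeeping here — matching the combinatorics of $\GG(n)$ and the definition of $\Delta_\fat$ with the faces and degeneracies of $\CC^\bullet\CPTM$ and the $\DD_1$-action — to be the most delicate part of the argument, and the reason the relative (as opposed to absolute) Cohen model is needed.

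\medskip

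\emph{Step 3: The zigzag of $\pi_*$-isomorphisms, part (2).} Finally I would compare $(\CSINHA)^\vee$ with $\CCM$ arity by arity. By the second bullet in the introduction, $\CSINHA(n) = \CC^{n-1}\CPTM$ is homotopy equivalent to the configuration space $C'_n(M)$, the pullback of $\TM^{\times n} \to M^{\times n} \leftarrow C_n(M)$; removing the fat diagonal $\Delta_\fat(n)$ from $\TM^{\times n}$ recovers precisely (a space homotopy equivalent to) this configuration space, since $C_n(M) = M^{\times n} \setminus (\text{diagonals})$. Atiyah duality for the open manifold $\TM^{\times n} \setminus \Delta_\fat(n)$, in the structured form of Cohen, gives a $\pi_*$-isomorphism between its Spanier–Whitehead dual and $\Sigma^{-nK} Th(\nu^{\times n})/Th(\nu^{\times n}|_{\Delta_\fat(n)})$, i.e.\ $\CCM(n)$. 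Assembling these equivalences into a zigzag that is natural in the $\DD_1$-structure — rather than just a pointwise equivalence — requires choosing the comparison maps to be compatible with the structure maps identified in Step 2; since both structures were built from the same underlying continuous maps of manifolds via functorial (relative) Cohen and Pontryagin–Thom constructions, naturality should follow, though one may need to pass through an intermediate model to make everything strictly compatible, hence the zigzag rather than a direct map. The main obstacle throughout is not any single homotopy-theoretic input but the need to keep all constructions strictly functorial and operad-equivariant so that the comodule structure survives; this is exactly what Cohen's point-set model in \cite{cohen} is designed to provide.
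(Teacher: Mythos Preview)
Your Step 2 contains the central gap. You assert that the diagonal-type maps $\TM^{\times m} \to \TM^{\times n}$ ``restrict compatibly to the fat diagonals,'' and that applying Cohen's construction functorially then yields the comodule structure on $\CCM$. But this is exactly the obstruction the paper is built to overcome: with the \emph{standard} diagonal, the partial composition $\configc\circ_i(-)$ does \emph{not} send the thickened diagonal $\DeltaT_{pq}$ for $i\le p<q\le i+m-1$ to the basepoint, so the quotient by $\DeltaT_{\fat}$ is not preserved and no strict comodule structure is induced. The paper's introduction flags this explicitly as the analogue of constructing a strictly associative chain-level intersection product compatible with the cup product---something not available by naive functoriality.

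The paper's actual mechanism is a \emph{deformed} diagonal $\Delta[\configd,\configc;i]:\TM\to\TM^{\times m}$ (Definition~\ref{Ddelta}) that spreads a point along a short geodesic segment, with spacing dictated by the little-intervals configuration $\configc$. To make this compatible with the operad, $\TCCM(n)$ is defined not as Cohen's model $\MT(n)$ itself but as a subspectrum of $\Map(\DD_1(n),\MT(n))$, with an $\eps$-bound depending on $\configc$; the thickened diagonals $\DeltaT_{pq}(\configc)$ are cut out by a distance condition $d_M(x_p,x_q)\le\delta_{pq}(\configc,\eps)$ tuned to that same data. The technical heart is then Lemma~\ref{Ldiagonalinclusion}, which verifies by explicit metric estimates that the partial composition built from the deformed diagonal does carry $\DeltaT_{pq}(n+m-1)$ into the correct target (in particular, to $\{*\}$ when both indices lie in the collapsed block). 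Your proposal does not contain this idea, and without it Step~2 fails. Your Step~3 is also too schematic: the paper's zigzag passes through four explicit intermediate $\DD_1$-comodules $(\widetilde F^M)^\vee$, $(F^M)^\vee$, $F'_M$, $F_M$ (Definition~\ref{Dconfigurationmodule}), each built to interpolate between the dual of the configuration module and the Thom model; the comparison map $\Phi:\CCM\to F_M$ again relies on the deformed-diagonal bookkeeping, and its well-definedness (Lemma~\ref{LAtiyahdual}) is another metric estimate of the same flavor.
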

 Theorem \ref{Tmaincomodule} is a structured version of the Poincar\'e-Lefschetz duality
\[
%\Bigl(\, H^*(\CC^{n-1}\CPTM)\cong \, \Bigr) \ 
H^*(C'_n(M))\cong H_*(\TM^{\times n},\Delta_{\fat}(n) )\cdots\cdots (*), 
\]
deduced from the equality $C_n'(M)=\TM^{\times n}-\Delta_{\fat}(n)$. (We are loose on degrees.)
If we do not consider  the (non-unital) commutative multiplications, an analogue to Theorem \ref{Tmaincomodule} holds in the category of prespectra  (in the sense of  \cite{MMSS}), a more naive, non-symmetric monoidal category of spectra  and it  is enough to prove Theorem \ref{Tmainss} below, but the multiplications may be useful for future study and our constructions hardly become easier for prespectra.\\   
\indent Throughout this paper, we fix a coefficient ring $\kk$ and suppose $\kk$ is either of a subring of the rationals $\QQ$ or the field $\FF_{\pp}$ of $\pp$ elements for a prime $\pp$. All normalized singular (co)chains $C^*,\ C_*$ and singular (co)homology $H^*, H_*$ are supposed to have coefficients in $\kk$, unless otherwise stated. As an application of Theorem \ref{Tmaincomodule}, we introduce a new spectral sequence converging to $H^*(\Emb(S^1,M))$. 
\begin{thm}[Theorems \ref{Tam}, \ref{Tconvergence} and \ref{Talgebraicss}]\label{Tmainss}
Suppose $M$ is  simply connected. There exists a second-quadrant spectral sequence $\{ \BGSSS^{\,p\,q}_r\}_r$ converging to $H^{p+q}(\Emb (S^1,M))$ such that
\begin{enumerate}
\item its $E_2$-page is isomorphic to the total homology of the normalization of a simplicial commutative differential bigraded algebra $A^{\star\,*}_\bullet(M)$ which is defined in terms of  the cohomology ring $H^*(\Delta_G)$ for various $G$ and maps between them, 
\[
\BGSSS^{\,p\,q}_2\cong  H(NA_\bullet^{\star\, *}(M))\Rightarrow H^{p+q}( \Emb(S^1,M)) \, ,\ 
\]
where the bidegree is given by $*=p, \ \star-\bullet=q$,
\item and moreover, if $H^*(M)$ is a free $\kk$-module, and the Euler number $\chi(M)$ is zero or invertible in $\kk$, the object $A^{\star\,*}_\bullet(M)$ is determined by the  ring $H^*(M)$.
\end{enumerate} 
\end{thm}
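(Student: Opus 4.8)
The plan is to extract the spectral sequence from the comodule $\CCM$ of Theorem \ref{Tmaincomodule}, by totalizing in the cosimplicial direction and then refining the totalization using the closed cover $\Delta_{\fat}(n)=\bigcup_{G\in\GG(n)}\Delta_G$. The first step is to promote the weak equivalence $\Emb(S^1,M)\simeq\holim_{\BDelta}\CC^\bullet\CPTM$ to a chain-level statement. Normalized cochains do not commute with this homotopy limit, but Spanier--Whitehead duality does at each cosimplicial stage: since $\CC^{n}\CPTM$ is homotopy equivalent to a finite complex, $C^*(\CC^{n}\CPTM)\simeq C_*(\CCM(n+1))$ naturally in $n$, by Theorem \ref{Tmaincomodule}(2). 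Feeding this into the tower $\{\tot_m\}_m$ of partial totalizations --- which converges because the source $S^1$ is $1$-dimensional and the codimension $\dd-1$ is at least $3$ --- one obtains, as in Theorem \ref{Tmainhomotopycolimit}, an equivalence of $C^*(\Emb(S^1,M))$ with a homotopy colimit of the chain complexes $C_*(\CCM(n))$, glued along the $\DD_1$-comodule structure. Making this precise, in particular interchanging the duals with the limit over $m$, is where simple-connectedness of $M$ first enters, through finiteness of type and the vanishing of $\lim^1$; I would prove it first.

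Next I would build two filtrations of this homotopy colimit. The outer one is the cosimplicial skeletal filtration, with parameter $\bullet=n$; its layers involve $H_*(\CCM(n))$, which by the relative Thom isomorphism applied to the model $\CCM(n)\simeq\Sigma^{-nK}Th(\nu^{\times n})/Th(\nu^{\times n}|_{\Delta_{\fat}(n)})$, together with Poincar\'e--Lefschetz $(*)$, is, up to a degree shift, $H^*(\CC^{n-1}\CPTM)$. The inner one replaces $\Delta_{\fat}(n)$ by the homotopy colimit of the diagram $G\mapsto\Delta_G$ over the poset of graphs on $\underline{n}$: this is a good closed cover, with intersections $\Delta_G\cap\Delta_{G'}=\Delta_{G\cup G'}$, so there is a \v{C}ech-type resolution (parameter $\star$) of $H_*(\TM^{\times n},\Delta_{\fat}(n))$ whose terms are the Thom homologies of the $\Delta_G$; since each $\Delta_G$ is a closed submanifold of $\TM^{\times n}$ and $M$ is orientable (being simply connected), these identify, up to a degree shift, with the cohomology groups $H^*(\Delta_G)$. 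Assembling the two filtrations produces a trigraded object carrying a simplicial structure in $\bullet$ (coming from the $\DD_1$-comodule structure of $\CCM$, i.e.\ dual to the cofaces and codegeneracies of $\CC^\bullet\CPTM$, and acting on the diagram of the $\Delta_G$ by merging and duplicating points), a \v{C}ech differential in $\star$, and the internal cohomological grading $*$; and the non-unital commutative ring structures on the $\CCM(n)$ from Theorem \ref{Tmaincomodule}(1) induce cup products, so this object is exactly the simplicial commutative differential bigraded algebra $A^{\star\,*}_\bullet(M)$. Running the $\bullet$- and $\star$-differentials then identifies the associated $E_2$-page with $H(NA^{\star\,*}_\bullet(M))$, with $p=*$ and $q=\star-\bullet$; since $\star$ is bounded in each cosimplicial degree while $\bullet$ is unbounded and $p\ge 0$, the spectral sequence $\{\BGSSS^{\,p\,q}_r\}_r$ is second-quadrant. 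This is Theorem \ref{Tam}.

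The main obstacle is convergence (Theorem \ref{Tconvergence}): since $\dim\Delta_G$ grows with $n$, the underlying double complex is unbounded, so convergence cannot be purely formal. It has to be deduced from the increasing connectivity of the maps $\tot_n\to\tot_{n-1}$ in embedding calculus (codimension at least $3$), which confines the contributions to each fixed total cohomological degree of $H^*(\Emb(S^1,M))$ to finitely many cosimplicial stages; here simple-connectedness of $M$ is again essential, keeping all intermediate spaces simply connected so that the Bousfield--Kan fringe effects are controlled and conditional convergence upgrades to strong convergence. Finally, for part (2) I would invoke Lemmas \ref{Lahg} and \ref{Lbhg}: when $H^*(M)$ is $\kk$-free and $\chi(M)$ is zero or a unit in $\kk$, the Gysin sequence of the sphere bundle $\Delta_G\to M^{\times\pi_0(G)}$ --- whose Euler class is built from the Euler class of $M$ --- degenerates, so that $H^*(\Delta_G)$ together with all the merging, restriction and product maps occurring in $A^{\star\,*}_\bullet(M)$ is expressed purely in terms of the ring $H^*(M)$; hence $A^{\star\,*}_\bullet(M)$, and with it the $E_2$-page, depends only on $H^*(M)$. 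This is Theorem \ref{Talgebraicss}.
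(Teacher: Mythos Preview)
Your overall architecture matches the paper's: realize $C^*(\Emb(S^1,M))$ as a Hochschild-type complex built from the $\DD_1$-comodule $C^S_*\CCM$, resolve each $\CCM(n)$ by the \v Cech complex over graphs $G$, and filter the total complex by $\star+\bullet$. The identification of the $E_2$-page with $H(NA_\bullet(M))$ and the reduction to $H^*(M)$ via Lemmas~\ref{Lahg} and~\ref{Lbhg} are also as in the paper.

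There are two points where your plan diverges, and one of them is a genuine gap.

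\textbf{Convergence.} Your proposed argument --- connectivity of the Taylor tower layers confines each total degree to finitely many cosimplicial stages --- controls the \emph{cosimplicial} filtration $\bullet$; that is, it is an argument for the Sinha spectral sequence (Lemma~\ref{Lsinhass}), not for the \v Cech spectral sequence, which is filtered by $\star+\bullet$. Taylor tower connectivity says nothing directly about this combined grading, and ``finitely many $\bullet$ plus bounded $\star$ at each $\bullet$'' is not by itself a proof that the \v Cech filtration on the abutment is exhaustive and Hausdorff. The paper instead proves a vanishing line by explicit degree count: using $H^1(M)=0$, every normalized generator $a_1\cdots a_k\,b\,g_G\in NA_n(M)$ with $r$ edges and $k$ discrete non-root vertices satisfies $q\geq 2k+r\dd$ and $p=n+r$, and $k+2r\geq n$ forces $q/p\geq s_{\dd}:=\min\{\dd/3,2\}>1$. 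Hence $\BGSSS_2^{-p,q}=0$ below this line, the filtration is finite in each total degree, and Bousfield's theorem gives strong convergence to $H^*(\Emb(S^1,M))$; the same slope bound is then transferred to $\bar\SINHASS_r$ via the common target. Without this degree estimate your convergence argument does not close.

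\textbf{The CDBA structure.} You say the non-unital ring spectrum structure on $\CCM(n)$ induces the commutative product making $A_\bullet(M)$ a simplicial CDBA. The paper does not argue this way: the multiplication on $A_M(n)$ is defined directly in Definition~\ref{Dcdga} from the cup products on $H^*(\Delta_G)$, and the proof of Theorem~\ref{Tam} makes no use of the NUCSRS structure (the paper explicitly remarks that the ring structure is kept for future applications and that Theorem~\ref{Tmainss} already holds at the level of prespectra). This is not a mathematical error, but your derivation of the product would require a separate compatibility check that the paper simply avoids.

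\textbf{Minor.} A second-quadrant spectral sequence has $p\leq 0$, $q\geq 0$. Your clause ``$p\geq 0$'' with $p=*$ is inconsistent both with this and with the paper's actual convention $p=\star-\bullet\leq 0$, $q=*\geq 0$ (see Theorem~\ref{Tam}; the statement of Theorem~\ref{Tmainss} appears to have the roles of $p$ and $q$ swapped relative to the body of the paper).
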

 We call this spectral sequence  {\em $\check{C}$ech spectral sequence} or in short, {\em $\check{C}$ech s.s.}  A feature of $\CECHC$ech s.s. is that its $E_1$ page and differential $d_1$ are explicitly determined by the cohomology of $M$. As  spectral sequences for $H^*(\Emb(S^1,M))$, we have the Bousfield-Kan type cohomology spectral sequence  converging to $H^*(\Emb(S^1,M))$, see Definition \ref{Dcosimplicial}, and  Vassiliev' s spectral sequence converging to the relative cohomology $H^*(\Omega_f(M),\Emb(S^1,M))$, where $\Omega_f(M)$ is the space of smooth maps $S^1\to M$. But no small (i.e. degree-wise finite dimensional) page of  these spectral sequences have been computed in general. $E_1$-page of the Bousfield-Kan type s.s. described by the cohomology of the ordered configuration spaces of points with a vector in $M$, which is difficult to compute, and Vassiliev's first term is also interesting but complicated.   By this feature, we can compute examples, see  section \ref{Sexample}. We obtain   new computational results in the case of the product of two spheres. While we only do elementary computation in the present paper, one of potential merits of $\CECHC$ech s.s. is that computation of higher differentials will be relatively accesible since we deal with  fat diagonals and $\CECHC$ech complex instead of configuration spaces. The other is that we will be able to enrich it with operations such as the cup product and square, and relate them to those on $H^*(M)$. We will deal with these subjects  in a future work. \\
\indent  Arone and Szymik studied $\Emb(S^1,M)$ for the case of dimension $\dd=4$ in \cite{aroneszymik}.  Let $\Imm (S^1,M)$ be the space of smooth immersions $S^1\to M$ with the $C^\infty$-topology and $i_M:\Emb(S^1,M)\to \Imm(S^1,M)$ be the inclusion. Among other results, they proved that $i_M$ is $1$-connected, so in particular,   surjective on $\pi_1$ in general.  (They proved interesting results for the non-simply connected case  $M=S^1\times S^3$, see also  Budney and Gabai  \cite{BG1}.) They   asked whether there is a simply connected 4-manifold $M$ such that  $i_M$ has non-trivial kernel on $\pi_1$.  Using Theorem \ref{Tmainss}, we give a restriction to this question. 
 \begin{cor}\label{Tmain4dim}
 Suppose that $M$ is simply connected,  $\dd=4$,   $H_2(M;\ZZ)\not=0$, and the intersection form on $H_2(M;\FF_2)$  is represented by a matrix  of which the inverse has at least one non-zero diagonal component. Then, the inclusion $i_M$ induces an isomorphism on $\pi_1$. In particular, $\pi_1(\Emb(S^1,M))\cong H_2(M;\ZZ)$.
 \end{cor}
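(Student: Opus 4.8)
The plan is to deduce the statement from the analogous fact for immersions together with a low-degree computation of $H^*(\Emb(S^1,M);\kk)$ via the $\check{C}$ech spectral sequence of Theorem \ref{Tmainss}. First I would identify the target: by Smale--Hirsch immersion theory there is a weak equivalence $\Imm(S^1,M)\simeq\Map(S^1,\TM)$ with $\TM$ the tangent sphere bundle, and since $M$ is simply connected the $S^3$-bundle $\TM\to M$ gives $\pi_1(\TM)=0$ and $\pi_2(\TM)\cong\pi_2(M)\cong H_2(M;\ZZ)$; the constant-map section of the evaluation fibration $\Omega\TM\to\Map(S^1,\TM)\to\TM$ then yields $\pi_1(\Imm(S^1,M))\cong\pi_2(\TM)\cong H_2(M;\ZZ)$, a free abelian group (write $b_2$ for its rank, $M$ being a simply connected $4$-manifold). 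By Arone--Szymik $i_M$ is a $\pi_0$-equivalence and surjective on $\pi_1$, so it remains to prove injectivity on $\pi_1$, after which $\pi_1(\Emb(S^1,M))\cong H_2(M;\ZZ)$ is automatic. Write $K=\ker\pi_1(i_M)$ and let $F$ be the (connected) homotopy fiber of $i_M$; up to highly connected maps $F$ is the second stage of the Goodwillie--Weiss tower over its first stage $\Imm(S^1,M)$, a space of sections of a bundle over the configuration space of two points of $S^1$, so $\pi_1(F)$ is a finitely generated abelian group and $K$ is its image in $\pi_1(\Emb(S^1,M))$. Moreover $\Emb(S^1,M)\simeq\underset{\BDelta}{\holim}\,\CC^\bullet\CPTM$, whose $0$-th term $\CC^0\CPTM\simeq\TM$ is simply connected and whose totalization layers each contribute only an abelian group to $\pi_1$; hence $\pi_1(\Emb(S^1,M))$ is residually solvable and in particular has no non-trivial perfect subgroup, so it suffices to show $H_1(F;\ZZ)=0$ (then the abelian group $\pi_1(F)$ is perfect, hence trivial, hence $K=0$).

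Next I would run the $\check{C}$ech spectral sequence of Theorem \ref{Tmainss} to compute $H^*(\Emb(S^1,M);\kk)$ in degrees $\le 2$ for $\kk=\QQ$ and $\kk=\FF_{\pp}$ for all primes $\pp$, and compare with $H^*(\Imm(S^1,M);\kk)$. Since $M$ is simply connected $H^*(M)$ sits in degrees $0,2,4$, and by Lemmas \ref{Lahg}, \ref{Lbhg} the cohomology of the spaces $\Delta_G$ entering $A^{\star\,*}_\bullet(M)$ is given in terms of $H^*(M)$, its cup products, and the Euler/diagonal classes; in total degrees $0,1,2$ the $E_1$-page and $d_1$ are therefore completely explicit. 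The coefficient ring matters only at $\kk=\FF_2$: there the differential leaving the total-degree-$1$ (resp.\ $2$) part is governed by the cup-square map $H^2(M;\FF_2)\to H^4(M;\FF_2)$, equivalently by multiplication by the Wu class $v_2=w_2$, and the hypothesis on the intersection form says precisely that some class of $H^2(M;\FF_2)$ has non-zero square (equivalently $w_2(M)\neq 0$ and $b_2>0$), which is exactly what makes this differential non-degenerate, so that the total-degree-$1$ line of $E_2$ has $\FF_2$-dimension $b_2$. Over $\QQ$ and over $\FF_{\pp}$ with $\pp$ odd the same bookkeeping goes through with no square obstruction and again gives dimension $b_2$ in total degree $1$. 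The corresponding (easier) computation for $\Imm(S^1,M)\simeq\Map(S^1,\TM)$ gives $\dim_\kk H^1(\Imm(S^1,M);\kk)=b_2$ for every $\kk$; and since $i_M$ is $\pi_1$-surjective, $i_M^*$ is injective — hence an isomorphism — on $H^1(-;\kk)$ for all $\kk$.

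Finally I would promote this to $K=0$. Passing to the cover $\widetilde{\Emb}$ of $\Emb(S^1,M)$ corresponding to $K\lhd\pi_1(\Emb(S^1,M))$, i.e.\ the pullback of the universal cover of $\Imm(S^1,M)$ (whose deck group is $H_2(M;\ZZ)$), makes the base of the fibration $F\to\widetilde{\Emb}\to\widetilde{\Imm}$ simply connected and identifies $H_1(\widetilde{\Emb};\ZZ)$ with $K$; running the $\check{C}$ech spectral sequence with the local coefficients $\kk[H_2(M;\ZZ)]$ through degree $2$ and comparing with $\widetilde{\Imm}$ shows $H_1(F;\kk)=0$, whence $K\otimes\kk=H_1(\widetilde{\Emb};\kk)=0$ for all $\kk$, and as $K$ is finitely generated, $K=0$. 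Therefore $\pi_1(i_M)$ is an isomorphism and $\pi_1(\Emb(S^1,M))\cong H_2(M;\ZZ)$. I expect the main obstacle to be the $\FF_2$-computation of the second step: one must pin down the $\check{C}$ech differentials in total degrees $1$ and $2$ precisely enough to see that the intersection-form hypothesis is exactly the condition under which $H^{\le 2}(\Emb(S^1,M);\FF_2)$ has the same dimensions as for $\Imm(S^1,M)$. A secondary technical point is carrying out the degree-$2$ comparison with the twisted coefficients $\kk[H_2(M;\ZZ)]$ (rather than only with constant coefficients), which is what forces $\pi_1(F)$ itself — not merely its $H_2(M;\ZZ)$-coinvariants — to vanish.
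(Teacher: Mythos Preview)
Your overall strategy differs from the paper's, and the final step has a genuine gap. The paper does not analyze the homotopy fiber $F$ or pass to covers. Instead it observes that $\Emb(S^1,M)\simeq\holim_{\BDelta}\CC^\bullet\CPTM$ is $\ZZ$-complete (each $\CC^n\CPTM$ is simply connected and each $\BDelta_n$ is a compact category, so Farjoun's theorem applies), hence $\pi_1(\Emb(S^1,M))$ is pro-nilpotent. Stallings' theorem then reduces the problem to showing that the composite $\Emb(S^1,M)\to L\TM\to K(H_2(M;\ZZ),1)$ induces an isomorphism on $H_1(-;\ZZ)$ and a surjection on $H_2(-;\ZZ)$. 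Both are checked by comparing the Bousfield--Kan spectral sequence for $L\TM$ with the \v{C}ech spectral sequence via the map of comodules $\TCCM\to\CCM$, using only constant coefficients over all fields and then the universal coefficient theorem.

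Your final step---running the \v{C}ech spectral sequence with local coefficients $\kk[H_2(M;\ZZ)]$ to compute the homology of a cover---is not available: the spectral sequence of Theorem~\ref{Tmainss} is built from $\CCM$ in $\SP$ via $C^S_*$, and no twisted version is developed (nor is it clear how to set one up compatibly with the Atiyah duality of Theorem~\ref{Tmaincomodule}). Without it you cannot conclude $H_1(F;\kk)=0$, and your reduction collapses. Your claim that $\pi_1(F)$ is abelian is also underjustified: being a section space over $C_2(S^1)$ does not by itself force abelian $\pi_1$, and in dimension $4$ the second layer is only $0$-connected. The Stallings route sidesteps both issues.

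One further correction: you place the intersection-form hypothesis in the total-degree-$1$ computation, but Lemma~\ref{L4dim}(1) gives $\BGSSS_2^{-1,2}\cong\HH^2$ for every field, with no hypothesis needed. The hypothesis enters only in degree~$2$: the paper identifies the image of $H^2(K(H_2(M;\ZZ),1))$ inside the Bousfield--Kan $E_2^{-2,4}$ for $L\TM$ as the span $V$ of the elements $a_i\otimes a_j-a_j\otimes a_i$, and shows (via Lemmas~\ref{Lpoincarediagonal} and~\ref{L4dim}(2)) that the comparison map $f_2\colon V\to\BGSSS_2^{-2,4}$ is injective over $\FF_2$ exactly when some diagonal entry of the inverse intersection matrix is nonzero. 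This is what yields the surjection on $H_2(-;\ZZ)$.
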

 The assumption does not depend on choices of a matrix. For example, $M=\mathbb{C}P^2\#\mathbb{C}P^2$, the connected sum of complex projective planes, satisfies the assumption  while $M=S^2\times S^2$ does not. 
For the case of $H_2(M)=0$, By Proposition 5.2 of \cite{aroneszymik},  $\Emb(S^1,M)$ is simply connected. We cannot prove this completely similarly to Corollary \ref{Tmain4dim}. We might need a relation of $\CECHC$ech s.s.  and the space of long knots. The case of all of the diagonal components of the matrix being zero is unclear for the author. \\
\indent Sinha's cosimplicial model can be considered as a resolution of $\Emb(S^1,M)$ into simpler spaces. We  resolve it into further simpler pieces in the category of chain complexes as an application of Theorems \ref{Tmaincomodule} and  \ref{Tmainss}.  To state the result, we need additional notations. Let $\Psi_n^o$ be the category of planer trees given in \cite{sinha} (see Definition \ref{Dfunctor}).  We regard the set $\GG(n)$ as a category (poset) having  a  morphism $G\to H$ for $E(G)\subset E(H)$. We denote by $\emptyset$ the graph in $\GG(n)$ with no edge. Let $\GG(n)^+$ be the poset made by adding an object $*$ to $\GG(n)$ and a morphism $*\to G$ for each graph $G\not=\emptyset$. 
Let $\TPsi$ be a category of pairs $(T,G)$ of a tree $T\in \cup_n\Psi_n^o$ and an object $G\in \GG(\,|v_r|-1)^+$, where $|v_r|$ denotes the valence of the root vertex of $T$ (see Definition \ref{Dfunctor}). Let $\CH_{\kk}$ be the category of chain complexes and chain maps, and $\hocolim$ be the homotopy colimit, and $C^S_*:\SP\to \CH_{\kk}$ be a functor, see subsection \ref{SSnt} and Definition \ref{Dchainspectra}.
%\indent  The following theorem presents a decomposition of the singular cochain  $C^*(\Emb(S^1,M))$ into  chain complexes of  Thom spectra.
\begin{thm}[Theorem \ref{Tthomcolimit}]\label{Tmainhomotopycolimit}
There exists a functor $\THOM:(\TPsi)^{op}\to \SP$ such that
\begin{enumerate}
\item for each $(T,G)\in \TPsi$, if $G\not=*$, $\THOM(T,G)$ is a natural model of the Thom spectra 
\[
\Sigma^{-mK}Th(\nu^{\times m}|_{\Delta_G} ) \qquad \text{with} \qquad m=|v_r|-1,
\] and if $G=*$\,, $\THOM(T,G)=*$ and
\item  if $M$ is  simply connected, there exists a zigzag of quasi-isomorphisms of chain complexes
\[
C^*(\Emb(S^1,M))\simeq \underset{(\TPsi)^{op}}{\hocolim}\  C^S_*\circ \THOM .
\]

\end{enumerate}
\end{thm}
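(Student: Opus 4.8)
The plan is to build the comodule side of Theorem~\ref{Tmaincomodule} into a homotopy colimit presentation by stacking three resolutions: the Bousfield--Kan totalization of Sinha's cosimplicial model, a cobar-type tree resolution supplied by the $\DD_1$-comodule structure, and a \v{C}ech-type resolution of the relative Thom spectrum $\CCM(n)$ along the fat diagonal. The category $\TPsi$ is exactly the Grothendieck construction fusing the last two: the tree $T$ records the position in the tree resolution and the graph $G\in\GG(|v_r|-1)^+$ records the position in the \v{C}ech resolution of $\CCM(|v_r|-1)$.

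For part (1) I would define, when $G\neq*$ and $m=|v_r|-1$, the spectrum $\THOM(T,G)$ to be R.~Cohen's relative non-unital model of $\Sigma^{-mK}Th(\nu^{\times m}|_{\Delta_G})$, i.e., the Thom-spectrum stratum attached to $\Delta_G\subset\TM^{\times m}$ inside the model realizing $\CCM(m)$ in Theorem~\ref{Tmaincomodule}, and $\THOM(T,*)=*$. On morphisms, a graph inclusion $E(G)\subset E(H)$ produces $\Delta_H\subset\Delta_G$, hence $\nu^{\times m}|_{\Delta_H}\hookrightarrow\nu^{\times m}|_{\Delta_G}$ and a map $\THOM(T,H)\to\THOM(T,G)$, whereas a tree morphism (Definition~\ref{Dfunctor}) acts through the $\DD_1$-comodule structure maps of $\CCM$ restricted to these strata. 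The delicate point is that these assignments constitute an honest functor $(\TPsi)^{op}\to\SP$: one must verify that the operadic insertions underlying the comodule structure preserve the stratification $\{\Delta_G\}_G$ of $\TM^{\times m}$ and are compatible with graph inclusions and tree contractions. Together with the bookkeeping of Sinha's planar trees, this is where I expect most of the work to lie.

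For part (2) I would run the zigzag
\[
C^*(\Emb(S^1,M))\ \simeq\ \tot\big(C^*(\CC^\bullet\CPTM)\big)\ \simeq\ \tot\big(C^S_*\,\CCM(\bullet+1)\big),
\]
the first equivalence being convergence of the Bousfield--Kan cohomology spectral sequence of $\CC^\bullet\CPTM$ (Definition~\ref{Dcosimplicial}), which holds because $M$ is simply connected, and the second applying Theorem~\ref{Tmaincomodule} cosimplicially, the zigzag of $\pi_*$-isomorphisms $(\CSINHA)^\vee\simeq\CCM$ of $\DD_1$-comodules inducing a zigzag of cosimplicial chain complexes. Since $\CCM$ is a $\DD_1$-comodule and, again by simple connectedness, the attendant cobar construction converges, this totalization is computed by the chain-level, comodule incarnation of Sinha's tree model for the totalization (cf.~\cite{sinha}):
\[
\tot\big(C^S_*\,\CCM(\bullet+1)\big)\ \simeq\ \underset{T\in(\cup_n\Psi_n^o)^{op}}{\hocolim}\ C^S_*\,\CCM(|v_r|-1).
\]
Here a homotopy limit over $\BDelta$ has become a homotopy colimit over trees purely algebraically, so no Spanier--Whitehead dual of the whole totalization is needed.

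It remains to resolve each $\CCM(m)$ along the fat diagonal. Because $\Delta_{\fat}(m)=\bigcup_G\Delta_G$ and $\Delta_G\cap\Delta_H=\Delta_{G\cup H}$ at the level of edge sets, the family $\{\Delta_G\}_{G\neq\emptyset}$ is a good cover of $\Delta_{\fat}(m)$, so Cohen's relative non-unital model yields
\[
\CCM(m)\ \simeq\ \underset{G\in(\GG(m)^+)^{op}}{\hocolim}\ \Sigma^{-mK}Th(\nu^{\times m}|_{\Delta_G}),
\]
with $*$ supplying the homotopy-cofibre direction and $\emptyset$ supplying $\Sigma^{-mK}Th(\nu^{\times m})$. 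Substituting this for each $\CCM(|v_r|-1)$ in the previous display, combining the homotopy colimit over $(\cup_n\Psi_n^o)^{op}$ with the homotopy colimits over the $(\GG(|v_r|-1)^+)^{op}$ into a single homotopy colimit over $(\TPsi)^{op}$ by the Fubini theorem, and using that $C^S_*\colon\SP\to\CH_\kk$ preserves homotopy colimits, one obtains
\[
C^*(\Emb(S^1,M))\ \simeq\ \underset{(\TPsi)^{op}}{\hocolim}\ C^S_*\circ\THOM ,
\]
as claimed. The main obstacle is coherence: the \v{C}ech resolution of $\CCM(m)$ must be carried out compatibly with the comodule structure maps, so that it varies correctly as $m=|v_r|-1$ ranges over the tree category, and compatibly with the graph inclusions --- which is precisely what the careful use of Cohen's model and of Sinha's tree category (Definition~\ref{Dfunctor}) is meant to secure.
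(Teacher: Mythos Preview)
Your overall architecture---Bousfield--Kan convergence for $\CC^\bullet\CPTM$, Atiyah duality to pass to $\CCM$, a tree resolution coming from the comodule structure, a \v{C}ech resolution of each $\CCM(m)$ over $\GG(m)^+$, and Fubini for the Grothendieck construction---is exactly the paper's strategy. Two points need sharpening.

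The substantive gap is in part~(1). Setting $\THOM(T,G)$ equal to the bare stratum $\DeltaT_G$ will not produce a strict functor on $(\TPsi)^{op}$. A tree morphism contracting a root edge acts through a partial composition $\KK(v_t)\hotimes\DeltaT_{G'}\to\DeltaT_G$, which is parametrised by the associahedron $\KK(v_t)$; there is no coherent way to pick points of the $\KK(m)$'s so that the resulting maps $\DeltaT_{G'}\to\DeltaT_G$ compose strictly. You flag this as ``delicate'' but do not resolve it. The paper's fix is to define
\[
\THOM(T,G)=\Map\bigl(\KK_T^{nr},\,\DeltaT_G\bigr),\qquad \KK_T^{nr}=\prod_{v\text{ non-root, non-leaf}}\KK(v),
\]
the same rectification device Sinha uses for $\DD_n[M]$. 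The mapping space stores the operad parameters, so the induced maps are honest; contractibility of each $\KK(m)$ ensures the homotopy type is still $\Sigma^{-mK}Th(\nu^{\times m}|_{\Delta_G})$. The companion construction $\FPsi X(T)=\Map(\KK_T^{nr},X(v_r-1))$ then gives a strict functor $(\Psi^o)^{op}\to\SP$ for any $\KK$-comodule $X$, and one has $\LL\fcolim\,\THOM\simeq\FPsi\CCM$ objectwise over $\Psi^o$.

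A smaller issue: the intermediate object ``$\tot(C^S_*\,\CCM(\bullet+1))$'' does not literally exist, since $\CCM$ is only a $\DD_1$-comodule, not a cosimplicial spectrum. The paper avoids this by never forming a cosimplicial $\CCM$: it passes to the tree category first via cofinality of $\mathcal{G}\circ\mathcal{F}:\Psi^o\to\BDelta$, obtaining $\hocolim_{\Psi^o}(\mathcal{G}\circ\mathcal{F})^*C^S_*(\CC^\bullet\CPTM^\vee)$, and only then applies the Atiyah duality of Theorem~\ref{Tmaincomodule} at the level of $\KK$-comodules through $\FPsi$, namely $\FPsi(\CSINHA^\vee)\simeq\FPsi\CCM$. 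No separate ``cobar convergence'' argument is needed; the only convergence input is Theorem~\ref{Tconvergence}.
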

We give intuitive explanation for this theorem. There is a standard  quasi-isomorphism $C_*(\Delta_{\fat}(n))\simeq \underset{G\in C_1}{\hocolim}\, C_*(\Delta_G)$ where $C_1=\GG(n)^{op}-\{\emptyset\}$. Since the relative complex $C_*(\TM^{\times n}, \Delta_{\fat})$ is a homotopy cofiber of the inclusion $C_*(\Delta_{\fat})\to C_*(\TM^{\times n})(=C_*(\Delta_{\emptyset}))$, we have   quasi-isomorphisms  
\[
C^*(\CC^{n-1}\CPTM)\simeq C_*(\TM^{\times n}, \Delta_{\fat}(n))\simeq \underset{G\in C_2}{\hocolim}\, C_*(\Delta_G),
\] 
where  $C_2=(\GG(n)^+)^{op}$ and we set $C_*(\Delta_*)=0$. We regard this presentation as a resolution of $C^*(\CC^{n-1}\CPTM)$. The category $\cup_n\Psi^o_n$ is a lax analogue of the category $\BDelta$. Actually, homotopy limits over these categories are weak equivalent. So, intuitively speaking,  existence of the functor $\THOM$  means potential compatibility of the resolution and the cosimplicial structure.  \\
\indent We shall explain why the author uses spectra, which also serves as  an outline of the arguments in the paper.  The author's  motivation is to derive  a new spectral sequence from Sinha's cosimplicial model.  
 The idea is to combine the cosimplicial model and  a procedure of  constructing a spectral sequence for $H^*(C_n(M))$ due to Bendersky and Gitler \cite{BG}. So we consider the above  duality $(*)$
, and describe the chain complex $C_*(\TM^{\times n},\Delta_{\fat}(n))$ by an augmented  $\CECHC$ech complex as follows.
\[
C_*(\Delta_\emptyset)\stackrel{\partial}{\longleftarrow}\underset{G\in \GG(n,1)}{\oplus}C_*(\Delta_G )\stackrel{\partial}{\longleftarrow}\underset{G\in \GG(n,2)}{\oplus}C_*(\Delta_G )\stackrel{\partial}{\longleftarrow}\underset{G\in \GG(n,3)}{\oplus}C_*(\Delta_G )\stackrel{\partial}{\longleftarrow}\cdots, 
\]
where   $\GG(n,p)\subset \GG(n)$ denotes the subset of graphs  with  $p$ edges.
We want to extend this diagram to the following commutative diagram $(**)$ of semisimplicial chain complexes by defining suitable face maps $d_i$. (Here, 'semi' means lack of degeneracy maps.)
\[
\xymatrix{
C^*(\CC^{n}\CPTM)\ar@<0.5ex>[d]^{(d^i)^*}  & C_*(\Delta_\emptyset) \ar[d]^{d_i}\ar[l]_{\qquad P.D.}&  \underset{G\in \GG(n+1,1)}{\oplus}C_*(\Delta_G )\ar[l] \ar[d]^{d_i} &  \underset{G\in \GG(n+1,2)}{\oplus}C_*(\Delta_G )\ar[l] \ar[d]^{d_i}  &    \ar[l]     \cdots \\
C^*(\CC^{n-1}\CPTM)  & C_*(\Delta_\emptyset) \ar[l]_{\qquad P.D.} &  \underset{G\in \GG(n,1)}{\oplus}C_*(\Delta_G )\ar[l]  &  \underset{G\in \GG(n,2)}{\oplus}C_*(\Delta_G )\ar[l]  &     \ar[l]     \cdots. 
}
\]
where $d^i$ is the coface map of $\CC^\bullet\CPTM$, and  $P.D.$ actually denotes the zigzag 
\[
C^*(\CC^n\CPTM)\longrightarrow  C_*(\Delta_\emptyset, \Delta_{\fat})\longleftarrow C_*(\Delta_\emptyset)
\]
of the cap product with the fundamental class and the quotient map. 
If we could construct a semisimplicial double complex in the right hand side  of $P.D.$ in the  diagram $(**)$, by taking the total complex, we have certain triple complex $C_{\bullet\, \star\, *}$, where $\bullet$ (resp. $\star$, $*$) denotes the cosimplicial (resp. $\CECHC$ech, singular) degree. Then by filtering with $\star +\bullet$, we would obtain a spectral sequence as in Theorem \ref{Tmainss}.\\
\indent  Unfortunately, it is difficult for the author to define  degeneracy maps $d_i$  fitting into the diagram $(**)$.  This difficulty is essentially analogous to the one in construction of certain  chain-level intersection product on $C_*(M)$.   We shall explain this point more precisely. 
The coface map $d^i:\CC^n\CPTM\to \CC^{n+1}\CPTM$  is a deformed diagonal and  the usual diagonal induces the intersection product on homology. So the maps $d_i$ should be something like a deformed intersection product. The simplicial identities for $d_i$ are analogous to the associativity of  an intersection  product. In addition, the map $(d^i)^*$ on the cochain is analogous to the cup product. So construction of   $d_i$ is analogous to construction of chain-level intersection product which is associative and compatible with the cup product through the duality.
%the corresponding face map between the homology $\oplus H_*(\Delta_G)$ is given by a shriek map which is the composition of an excision and the cap product with a Thom class  of the  diagonal. It is an exhausting task to lift this shriek map to a structure of chain level comodule by using machineries on the singular chain because we need to control various chain homotopies. 
 The author could not find such a product in the literature.\\
%\[
%C_*(X\times Y)\stackrel{AW}{\longrightarrow}C_*(X)\otimes C_*(Y),\qquad C_*(X)\otimes C_*(Y)\stackrel{EZ}{\longrightarrow}C_*(X\times Y)
%\]
\indent A nice solution to this  is found in a construction due to R. Cohen and Jones \cite{cohen, CJ} in string topology. They used spectra  to give a homotopy theoretic realization of the loop product, which led to a proof of an isomorphism between the loop product and a product  on  Hochschild cohomology  (see \cite{moriya} for a detailed account). Their key notion is the Atiyah duality which is an equivalence of the Spanier-Whitehead dual $M^\vee$  and the Thom spectrum $M^{-TM}=\Sigma^{-K}Th(\nu)$.   To prove their isomorphism, Cohen \cite{cohen} introduced a model of $M^{-TM}$  in the category  $\SP$, and refine the  duality to an equivalence of (non-unital) commutative symmetric ring spectra.  This equivalence can be regarded  as a multiplicative  version of the Poincar\'e duality. In fact, the multiplication on the model of $M^{-TM}$ works as an analogue of a chain level intersection product in their theory. So the author considers that it is efficient to construct necessary semisimplicial objects (or comodules) and their equivalence in $\SP$, then send them to $\CH_{\kk}$ by certain chain functor, and derive a spectral sequence. This is why we use spectra. \\
\indent Even if we use spectra, the (co)simplicial object is too rigid, and we use a laxer notion of  a left comodule  over an $A_\infty$-operad. \\
\indent As demonstrated in this paper, the duality is very useful to transfer structures on a configuration space to a Thom spectrum ( or  complex ) of fat diagonal, which is homotopically more accessible, and may be applied in many researches on configuration spaces. In future work, we will study collapse of Sinha's ( or Vassiliev's) spectral sequence for the space of long knots in $\RR^d$ \cite{sinha1} using the duality. \\
%\indent  We shall speculate   future  work. Our calculation on $H^*(\Emb(S^1,M))$ in this paper is  restricted to elementary method, i.e. linear algebra on the first  differential and degree argument. A main feature of $\check{\text{C}}$ech s.s. is interesting spectral level or chain level structure behind it as in Theorem \ref{Tmaincomodule} or \ref{Tmainhomotopycolimit}. We would like to use these structures for computation more effectively. When we turn to the  simplest case, the case of long knots modulo immersion $\Embbar(\RR, \RR^d)$, we can prove an analogue of Theorem \ref{Tmainhomotopycolimit}, where the analogue of the functor $\THOM$ is quite simple, in particular each Thom spectrum in the functor  is $\pi_*$-isomorphic to a suitably desuspended sphere spectrum, whose homology is concentrated to a single degree. 
%It will be interesting to study   Sinha's or Vassiliev's spectral sequence for $\Embbar(\RR,\RR^d)$ for general coefficient using this. \\
%\indent In another direction, in this paper, we construct the Thom spectrum $\CCM (n)$ which is $\pi_*$-isomorphic to $(C'_n(M))^\vee$. In a similar way, we can define a Thom spectrum equivalent to $(C_n(M))^\vee$ as a non-unital commutative symmetric ring spectrum. The author expects that this Thom spectrum could be reconstructed from some homotopy coherency version of Frobenius pair due to Rezk \cite{rezk}, which is a monoidal categorical notion for the pair of spectra modeled by $(\Sigma_\infty (M_+), M^{-TM})$ . If this is the case, it will be useful to study (rational) homotopy type of $C_n(M)$.  \\

\noindent \textbf{Acknowledgment}:\  The author  thanks Keiichi Sakai and Tadayuki Watanabe for  valuable comments on a version of this paper. Their comments motivated him to consider the $4$-dimensional case. He  also thanks Ryan Budney for pointing out errors in previous version of this paper. He also thanks Dev P. Sinha, and Victor Turchin for valuable comments.  This work is supported by JSPS KAKENHI Grant Number JP17K14192.  

\section{Preliminaries}
In this section, we fix notations and introduce basic notions. Nothing is essentially new. 
\subsection{Notations and terminologies}\label{SSnt}
\begin{itemize}

\item We borrow the following notations from \cite{sinha}. $\BDelta$ denotes the category of standard simplices in \cite[Definition 4.21]{sinha}. Its objects  are  the  finite ordered sets $[n]=\{0,\dots,n\}$ and morphisms are weakly order preserving maps. $\BDelta_n$ denotes the full subcategory of $\BDelta$ consists of objects $[k]$ with $k\leq n$. $P_\nu(\underline{n})$ is the category (poset) of non-empty subsets $S\subset \underline{n}$ in \cite[Definition 3.2]{sinha}.  A functor $\mathcal{G}_n:P_\nu(\underline{n+1})\to \BDelta_n$ is defined in \cite[Definition 6.3]{sinha}. $\mathcal{G}_n$ sends a set $S$ to $[\# S-1]$ and an inclusion $S\subset S'$ to the composition $[\# S-1]\cong S\subset S'\cong[\# S'-1]$ where $\cong$ denotes the order preserving bijection.
\item For a category $\CC$, a morphism of $\CC$ is also called a {\em map of $\CC$}. A {\em symmetric sequence in $\CC$} is a sequence $\{X_k\}_{k\geq 0}$( or $\{X(k)\}_{k\geq 1}$) of objects in $\CC$ equipped with an action of the $k$-th symmetric group $\Sigma_k$ on $X_k$ (or $X(k)$) for each $k$. The group $\Sigma_k$  acts from the right throughout this paper. 
\item  Let $\GG(n)$ be the set of graphs  defined in Introduction. For a graph $G\in \GG(n)$, we regard $E(G)$ as an ordered set with the lexicographical order. To ease notations, we  use the notation  $(i,j)$ with $i>j$ to denote the edge $(j,i)$ of a graph in $\GG(n)$.  For a map $f:\underline{n}\to \underline{m}$ of finite sets, denote by the same symbol $f$  the map
$\GG(n)\to \GG(m)$ defined by 
\[
E(f(G))=\{(f(i),f(j))\mid (i,j)\in E(G),\ f(i)\not=f(j)\}.
\] $f$ also denotes the natural map $\pi_0(G)\to \pi_0(f(G))$.

\item Our notion of a {\em model category} is that of \cite{hovey}. $\Ho(\MM)$ denotes the homotopy category of a model category $\MM$.
\item $\CG$ denotes the category  of all compactly generated spaces and continuous maps, see \cite[Definition 2.4.21]{hovey}. $\CG_*$ denotes the category of pointed compactly generated spaces and pointed maps.  $\wedge$ denotes the smash product of pointed spaces.
\item For a category $\CC$, a {\em cosimplicial object $X^\bullet$ in $\CC$ } is a functor $\BDelta\to \CC$. A map of cosimplicial object is a natural transformation. $X^n$ denotes the object of $\CC$ at $[n]$. We define maps
\[
d^i:[n]\to [n+1]\ (0\leq i\leq n+1),\ \qquad s^i: [n]\to [n-1]\ (0\leq i\leq n-1)
\]
by 
\[
d^i(k)=\left\{
\begin{array}{cc}
k & (k<i) \\
k+1 & (k\geq i)
\end{array}\right.,\qquad
s^i(k)=\left\{
\begin{array}{cc}
k & (k\leq i) \\
k-1 & (k>i)
\end{array}\right.
\]
 $d^i,\ s^i:X^n\to X^{n\pm 1}$ denote the maps corresponding to the same symbol. As is well-known, a cosimplicial object $X^\bullet$ is identified with a sequence of objects $X_0,X_1,\dots, X_n,\dots$ equipped with a family of maps $\{ d^i,\ s^i\}$ satisfying the cosimplicial identity, see \cite{GJ}. We call a cosimplicial object in $\CG$ a {\em cosimplical space}. Similarly, a simplicial object $X_\bullet$ in $\CC$ is a functor $\BDelta^{op}\to \CC$. $d_i,\ s_i:X_{n\pm 1}\to X_n$ denote the maps corresponding to $d^i,s^i$.
\item Our notion of a {\em symmetric spectrum} is that of Mandell-May-Schwede-Shipley\cite{MMSS}. A symmetric spectrum consists of a symmetric sequence $\{X_k\}_{k\geq 0}$  in  $\CG_*$  and a map $\sigma_X:S^1\wedge X_k\to X_{k+1}$ for each $k\geq 0$ which subject to certain conditions. The category of symmetric spectra is denoted by $\SP$. We denote by $\wedge =\wedge_S$ the cannonical symmetric monoidal product on $\SP$ given in \cite{MMSS} and by $\Sphere$ the sphere spectrum, the unit for $\wedge$.  In the rest of the paper the term 'spectrum' means symmetric spectrum. For a  spectrum  we refer to  the numbering of the underlying sequence as the  level. 
\item For $K\in \CG$ and $X\in \SP$, we define a tensor $K\hotimes X\in \SP$ by $(K\hotimes X)_k=(K_+)\wedge X_k$ where $K_+$ is $K$  with disjoint basepoint. This tensor is  extended to a functor $\CG\times \SP\to \SP$ in an obvious manner. For $K, L\in \CG$ and $X, Y\in \SP$,  there are natural isomorphisms
\[
K\hotimes (L\hotimes X)\cong (K\times L)\hotimes X,\quad K\hotimes (X\wedge Y)\cong (K\hotimes X)\wedge Y
\]
which we call the {\em associativity isomorphisms}. A natural isomorphism \\
$(K\times L)\hotimes (X\wedge Y)\cong (K\hotimes X)\wedge (L\hotimes Y)$ is defined by 
successive composition of the associativity isomorphisms and the  symmetry one for $\wedge$. 
We define a mapping object $\Map(K, X)\in \SP$ by $\Map(K,X)_k=\Map_*(K_+,X_k)$, where the right hand side is the usual internal hom object (mapping space) of $\CG_*$. This defines a functor $(\CG)^{op}\times \SP\to \SP$. $K\hotimes (-)$ and $\Map(K,-)$ forms an adjoint pair. We set $K^{\vee}=\Map (K,\Sphere)$ for $K\in \CG$.   
\item We use  the {\em stable model structure on $\SP$}, see \cite{MMSS}. This is only used in subsection \ref{SSchain} and section \ref{Shomotopycolim}. Weak equivalences in this model structure are called {\em stable equivalences}. {\em Level equivalences} and {\em $\pi_*$-isomorphisms} are more restricted classes of maps in $\SP$ (see \cite{MMSS}). The former are  levelwise weak homotopy equivalences and the latter are  maps which induce an isomorphism between (naive) homotopy groups defined as the colimit of the sequence of  canonical maps $\iota_k:\pi_*(X_k)\to \pi_{*+1}(X_{k+1})$. 
\item We say a  spectrum $X$ is {\em strongly semistable} if there exits  a number $\alpha>1$ such that for any sufficiently large $l$, the  map $\iota_{l}:\pi_{k}(X_l)\to \pi_{k+1}(X_{l+1})$ is an  isomorphism for each $k\leq \alpha l$. A strongly semistable spectra is semistable in the sense of \cite{schwede} so a stable equivalence between strongly semistable spectrum is a $\pi_*$-isomorphism.
% In the stable model structure, a trivial fibration $X\to Y$ is a level equivalence,  so cofibrant replacement of strongly semistable spectra is also strongly semistable. This property is used in the proof of Lemmas \ref{Lchaininvariance}, \ref{Lcochaintheory}.  
\item A {\em non-unital commutative symmetric ring spectrum} ( in short, {\em NUCSRS} ) is a spectrum $A$ with a commutative associative multiplication $A\wedge A\to A$ (but possibly without unit). A map of NUCSRS is a map of spectra prserving the multiplications.
%%%%%% 9/26 kokomade %%%%%%%%%%%%%
\item $\CH_{\kk}$ denotes the category of (possibly unbounded) chain complexes over $\kk$ and chain maps. Differentials raise the degree (see next item for our degree rule). We endow $\CH_{\kk}$ the model structure where weak equivalences are quasi-isomorphisms and fibrations are surjections. $\otimes=\otimes_k$ denotes the standard tensor product of complexes.  
\item We deal with modules with multiple degrees (gradings). For modules having superscript(s) and/or subscript(s), its total degree is given by the following formula.
\[
\text{(total degree)}=\text{(sum of superscripts)}-\text{(sum of subscripts)}.
\] 
For example, singular chains in $C_p(M)$ have  degree $-p$, and the total degree of a triple graded module $A^{\star\ *}_\bullet$  is $*+\star-\bullet$. $|a|$ denotes the (bi)degree of $a$. We sometimes omit super or subscripts if unnecessary. 
\item For a simplicial chain complex $C_\bullet^*$ (i.e. a functor $(\BDelta)^{op}\to \CH_{\kk}$) the {\em normalized complex } ( or {\em normalization} ) $NC^*_\bullet$ is a double complex defined by taking the normalized complex of a simplicial $\kk$-module in each chain degree.
\item For a small category $C$ and a cofibrantly generated model category $\mathcal{M}$ (in the sense of \cite{hovey}), we denote by $\FUN(C,\mathcal{M})$ the category of functors $C\to \mathcal{M}$ and natural transformations, which is endowed with  the projective model structure (see \cite{hirschhorn}). 
The colimit functor $\underset{C}{\colim}:\FUN(C,\MM)\to \MM$ is a left Quillen functor. Its left derived functor  is denoted by $\underset{C}{\hocolim}$ and called the homotopy colimit over $C$.% For $X\in\FUN((\BDelta)^{op},\CH_{\kk})$, $\underset{(\BDelta)^{op}}{\hocolim}\ X$ and the normalized total complex of $X$ as a simplicial chain complex is connected by a zigzag of natural quasi-isomorphisms. 
\item A {\em commutative differential bigraded algebra} (in short, {\em CDBA}) is a bigraded module $A^{\star\,*}$ equipped with a unital multiplication which is graded commutative for the total degree, and  preserves the bigrading, and a differential $\partial :A^{\star\, *}\to A^{\star+1,\,*}$ which satisfy the Leibniz rule for the total degree. A {\em map} of CDBA is a map of differential graded algebra preserving bigrading.  
%\item For two objects $x$, $y$ in a model category $\MM$, we say $x$ and $y$ are weak equivalent if $x$ and $y$ are connected by a zigzag of weak equivalences in $M$. The terms 'quasi-isomorphic' and '$\pi_*$-isomorphic' are similarly interpreted.

\end{itemize}

\subsection{$\check{\text{C}}$ech complex and homotopy colimit}
\begin{defi}\label{Dcheckfunctor}
Let $\MM$ be a cofibrantly generated model category. We define a functor 
\[
\CECHF:\FUN(P_\nu(\underline{n+1})^{op},\MM)\to \FUN(\Delta^{op},\MM)\quad\text{ by }\quad \CECHF X[k]=\bigsqcup_{f:[k]\to \underline{n+1}}X_{f([k])}\, , 
\]
where $f$ runs through all weakly order-preserving maps. For order preserving map $\alpha :[l]\to [k]\in \Delta$, the map $\CECHF X[k]\to \CECHF X[l]$ is the sum of the maps  $X_{f([k])} \to X_{f\circ \alpha ([l])}$ induced by the inclusion $f\circ \alpha ([l])\subset f([k])$. 
%By abuse of notations, we denote by $\CECHF$ the composition 
%\[
%\FUN(P_\nu(\underline{n+1})^{op},\MM)\to \FUN(\Delta^{op},\MM)\to \FUN(\Delta^{op}_n,\MM),
%\]
%where the right functor is the restriction.
\end{defi}
The following lemma is clear.
\begin{lem}\label{Lcheckfunctor} We use the notations of Definition \ref{Dcheckfunctor}. Let $X\in\FUN(P_\nu(\underline{n+1})^{op},\MM)$ be a functor.
\begin{enumerate}
\item  There exists an isomorphism  $:\underset{P_\nu(\underline{n+1})^{op}}{\hocolim}\, X\cong \underset{\Delta^{op}}{\hocolim}\, \CECHF X$   in $\Ho(\MM)$ which is natural for $X$. 
\item $X$ is cofibrant in $\FUN(P_\nu(\underline{n+1})^{op},\MM)$ if the following canonical map is a cofibration in $\MM$ for each $S\in P_\nu(\underline{n+1})$.
\[
\underset{\tiny
\begin{array}{c}
S'\supset S \\
S'\not= S
\end{array}}
{\colim}\ X_{S'}
\to X_S \, .
\]
\end{enumerate}
\end{lem}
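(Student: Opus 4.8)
\textbf{Proof plan for Lemma \ref{Lcheckfunctor}.} For part (2) the plan is to recognise $P_\nu(\underline{n+1})^{op}$ as a direct (Reedy) category. The function $S\mapsto (n+1)-\# S$ strictly increases along every non-identity morphism of $P_\nu(\underline{n+1})^{op}$ --- i.e.\ along every strict reverse inclusion $S\subsetneq S'$ --- and $P_\nu(\underline{n+1})^{op}$ is a finite poset with no non-identity isomorphisms, so it is a direct category. For a direct category the projective and Reedy model structures on $\FUN(P_\nu(\underline{n+1})^{op},\MM)$ coincide (all matching objects are terminal), and a diagram is Reedy cofibrant exactly when each relative latching map is a cofibration in $\MM$; see Hirschhorn. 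The latching object of $X$ at $S$ is by definition the colimit of $X_{S'}$ over the objects admitting a non-identity morphism to $S$, that is over $\{S'\mid S\subsetneq S'\subseteq\underline{n+1}\}$, which is precisely the source of the displayed map. This settles (2).

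For part (1) the plan is to factor the comparison through the category of simplices of $Y:=N(\underline{n+1})$, the nerve of the totally ordered set $\underline{n+1}$, which is the standard $n$-simplex; its set $Y_k$ of $k$-simplices is exactly the set of weakly order-preserving maps $[k]\to\underline{n+1}$ appearing in Definition \ref{Dcheckfunctor}. I would write $\Delta\downarrow Y$ for the category of simplices of $Y$, with the projection $\pi\colon\Delta\downarrow Y\to\Delta$, $([k],\sigma)\mapsto[k]$ (a discrete fibration with fibre $Y_k$ over $[k]$), and $j\colon\Delta\downarrow Y\to P_\nu(\underline{n+1})$, $([k],\sigma)\mapsto\sigma([k])$. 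With these, $\CECHF$ is canonically the composite of $(j^{op})^{\ast}$ with the left Kan extension along $\pi^{op}$, the latter being computed fibrewise over $Y_k$.

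First I would show $j^{op}$ is homotopy cofinal: for $S\in P_\nu(\underline{n+1})$ the comma category $S\downarrow j^{op}$ is isomorphic to the opposite of the category of simplices of $N(S)$, whose nerve is homotopy equivalent to $|N(S)|=|\Delta^{\# S-1}|$ and hence contractible; the cofinality theorem for homotopy colimits then gives a natural weak equivalence $\hocolim_{(\Delta\downarrow Y)^{op}}(X\circ j^{op})\xrightarrow{\ \sim\ }\hocolim_{P_\nu(\underline{n+1})^{op}}X$. Second, because $\pi$ is a discrete fibration, a chain of morphisms in $(\Delta\downarrow Y)^{op}$ lying over a fixed chain in $\Delta^{op}$ is uniquely determined by its initial simplex, which ranges over the fibre; hence the two-sided bar constructions computing $\hocolim_{(\Delta\downarrow Y)^{op}}(X\circ j^{op})$ and $\hocolim_{\Delta^{op}}\CECHF X$ are isomorphic simplicial objects, so these homotopy colimits agree. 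Composing the two equivalences and using a functorial objectwise-cofibrant replacement of $X$ --- which commutes with everything since $\CECHF$ is a levelwise coproduct and therefore preserves objectwise weak equivalences between objectwise cofibrant diagrams --- yields the asserted isomorphism in $\Ho(\MM)$, natural in $X$.

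The step I expect to require the most care is the homotopy-theoretic bookkeeping: the bar-construction model for $\hocolim$, and the identification of the left Kan extension along $\pi$ with the fibrewise coproduct, both behave correctly only on objectwise cofibrant diagrams, which is why one first replaces $X$ objectwise. This is harmless because the lemma is applied only with $\MM$ one of $\CG$, $\CG_*$, $\SP$, $\CH_\kk$, all of which are cofibrantly generated, admit functorial cofibrant replacement, and are tensored over simplicial sets. Modulo this, both parts reduce to standard facts about direct Reedy categories, the cofinality theorem for homotopy colimits (the homotopy-invariant form of Quillen's Theorem A), and homotopy colimits over Grothendieck constructions --- which is presumably why the statement is labelled ``clear''.
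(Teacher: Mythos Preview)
Your proposal is correct. For part (2) your Reedy/direct-category argument is exactly the standard unpacking of the paper's ``clear''. For part (1) the paper takes a different and shorter route: it observes that $\CECHF$ is left adjoint to the pullback $(i_n\circ\mathcal{G}_n)^{*}$ along $P_\nu(\underline{n+1})\xrightarrow{\mathcal{G}_n}\BDelta_n\hookrightarrow\BDelta$, hence is a left Quillen functor between projective model structures; since moreover $\colim_{\Delta^{op}}\circ\CECHF\cong\colim_{P_\nu(\underline{n+1})^{op}}$ on the nose, the composite left derived functors agree, which is the statement. Your route through the category of simplices of $N(\underline{n+1})$, cofinality of $j^{op}$, and the discrete-fibration identification of bar constructions is longer but more transparent: it exhibits the equivalence as an instance of the Thomason formula for homotopy colimits over a Grothendieck construction, and it does not require spotting the specific right adjoint $\mathcal{G}_n$. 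The paper's argument buys brevity by exploiting a functor already introduced in the preliminaries; yours buys portability and makes explicit why only the image $f([k])$ matters.
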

\begin{proof}
Let $(i_n\circ\mathcal{G}_n)^*: \FUN(\Delta^{op},\MM)\to \FUN(P_\nu(\underline{n+1})^{op},\MM)$ be the pullback by the composition of $\mathcal{G}_n$ and the inclusion $i_n:\BDelta_n\to \BDelta$. Clearly, the pair $(\CECHF,(i_n\circ \mathcal{G}_n)^*)$ is a Quillen adjoint pair, and it is also clear that $\colim_{P_\nu(\underline{n+1})^{op}}X$ and $\colim_{\Delta^{op}}\CECHF X$ is naturally isomorphic. The part 1 follows from these observations. The part 2 is also clear.
\end{proof}
\subsection{Goodwillie-Weiss embedding calculus and Sinha's cosimplicial model}
 In this subsection, we give the definition of the cosimplicial space $\CC^\bullet\CPTM$ modeling $\Emb (S^1,M)$, and state its property.  We begin with an  analogue of the punctured knot model in \cite[Definition 3.4]{sinha}, which is an intermediate object between $\Emb(S^1,M)$ and $\CC^\bullet\CPTM$. 
\begin{defi}\label{Dpunctured}
\begin{itemize}
 \item Let $S^1=[0,1]/0\sim 1$ and $J_i\subset S^1$ be the image of the interval $\left(1-\frac{1}{2^i}-\frac{1}{10^i}, 1-\frac{1}{2^i}\right)$ by the quotient map $[0,1]\to S^1$.  
\item We fix an embedding $M\to \RR^{N+1}$ for sufficiently large $N$. We endow $M$ with the Riemannian metric induced by the Euclidean metric on $\RR^{N+1}$ via this embedding. Let $\TM=STM$ denote the total space of the unit sphere tangent bundle of $M$. 
\item For a subset $S\subset \underline{n}$, let $E_S(M)$ be the space  of embeddings $S^1-\bigcup_{i\in S}J_i\to M$ of constant speed.
\item Define a functor $\mathcal{E}_n(M):P_{\nu}(\underline{n+1})\to \CG$ by assigning to a subset $S$ the space $E_S(M)$ and set 
\[
P_n\Emb (S^1,M):=\underset{P_{\nu}(\underline{n+1})}{\holim} \mathcal{E}_n(M). 
\]Let $\alpha_n:Emb(S^1,M)\to P_n\Emb(S^1,M)$ be the map induced from the restriction of domain. The category $P_\nu(\underline{n})$ is a subcategory of $P_\nu(\underline{n+1})$ via the standard inclusion $\underline{n}\to\underline{n+1}$. By our choice of $J_i$, we have a canonical restriction map $r_n:P_n\Emb(S^1,M)\to P_{n-1}\Emb(S^1,M)$. The maps $\alpha_n$ induces a map
\[
\alpha_\infty : \Emb(S^1,M)\to \underset{n}{\holim}\, P_n\Emb(S^1,M)
\]
where the right hand side is the homotopy limit of the tower $\cdots \stackrel{r_{n+1}}{\to} P_n\Emb(S^1,M)\stackrel{r_n}{\to}P_n\Emb(S^1,M)\stackrel{r_{n-1}}{\to}\cdots\stackrel{r_2}{\to}P_1\Emb(S^1,M)$.
\end{itemize}

\end{defi}
\begin{rem}\label{Rpunctured}
Our choice of $J_i$ is different from \cite{sinha} since we adopt the  reverse labeling of coface and codegeneracy maps of the cosimplicial model to \cite{sinha} for the author's preference. This does not cause any new problem.  
\end{rem}
\begin{lem}\label{Lpunctured}
The map $\alpha_n:\Emb(S^1,M)\to P_n\Emb (S^1,M)$ is $(n-1)(\dd-3)$-connected. In particular, $\alpha_\infty$ is a weak homotopy equivalence.
\end{lem}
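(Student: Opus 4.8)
The plan is to establish the connectivity estimate for $\alpha_n$ by the standard embedding calculus argument: the map $\alpha_n : \Emb(S^1,M) \to P_n\Emb(S^1,M)$ is the comparison from the actual embedding space to the $n$-th stage of the Taylor tower (here realized as a homotopy limit over $P_\nu(\underline{n+1})$ of the punctured-knot functor $\mathcal{E}_n(M)$). By the analyticity estimates of Goodwillie--Weiss for spaces of embeddings of a $1$-manifold into a $\dd$-manifold (\cite{GK,GW}), this map is $(n-1)(\dd-1-1-1) = (n-1)(\dd-3)$-connected; the codimension is $\dd-1$, and one loses one for the handle dimension of $S^1$ and one more in the usual way, giving the excess $(n-1)(\dd-3)$. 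Since $\dd \geq 4$ we have $\dd-3 \geq 1$, so the connectivity of $\alpha_n$ tends to infinity as $n\to\infty$.

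First I would recall that $P_n\Emb(S^1,M)$ as defined here via $\holim_{P_\nu(\underline{n+1})}\mathcal{E}_n(M)$ agrees (up to weak equivalence) with the polynomial approximation $T_n\Emb(S^1,M)$ of embedding calculus; this is exactly the content of Sinha's identification in \cite{sinha} adapted to our choice of the $J_i$ (see Remark \ref{Rpunctured}, which notes the relabeling changes nothing essential). Concretely, the functor $\mathcal{E}_n(M)$ restricts, on the punctured circle $S^1 - \bigcup_{i\in S}J_i$, to embeddings into $M$; as $S$ ranges over nonempty subsets of $\underline{n+1}$ one recovers the $n$-th cubical (in fact $(n+1)$-subset) approximation. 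Then the connectivity statement is the Goodwillie--Weiss convergence estimate: for $\Emb(P,N)$ with $\dim P = p$, $\dim N = q$, $q - p \geq 3$, the map to the $k$-th stage is $(k(q-p-2) - p + 1)$-connected, which with $p=1$, $q=\dd$, $k=n$ gives $(n-1)(\dd-3)$ after rewriting. The "in particular" statement is then immediate: $\alpha_\infty$ factors through the tower whose comparison maps become arbitrarily highly connected, so $\alpha_\infty$ induces isomorphisms on all homotopy groups, hence is a weak homotopy equivalence.

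The main obstacle is not the formal deduction but pinning down that \emph{our} model $P_n\Emb(S^1,M)$ — defined without basepoint conditions, using the closed circle rather than an interval with boundary conditions, and with the specific intervals $J_i$ chosen in Definition \ref{Dpunctured} — is genuinely equivalent to the embedding-calculus stage $T_n\Emb(S^1,M)$ to which the Goodwillie--Weiss estimates apply. I would handle this by the same device as Sinha: the poset $P_\nu(\underline{n+1})$ and the functor $S \mapsto E_S(M)$ reproduce, after the standard cofinality/Yoneda manipulation, the homotopy limit over the category of open subsets of $S^1$ that are disjoint unions of at most $n+1$ of the "standard" subintervals, and the nerve-of-covering argument shows this computes $T_n$. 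The closed-circle and no-basepoint variations only affect the manifold $P$ in the source, not the structure of the tower, so the analyticity estimate applies verbatim with $p = \dim S^1 = 1$. Once that identification is in place, the rest is bookkeeping with connectivity of homotopy limits of highly connected maps of cubical diagrams, which is routine.
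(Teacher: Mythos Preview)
Your proposal has a genuine gap: you assume that $P_n\Emb(S^1,M)$ agrees with the $n$-th Taylor stage $T_n\Emb(S^1,M)$, and invoke Sinha's identification for this. But Sinha's result in \cite{sinha} is for the \emph{long knot} case, i.e.\ embeddings of $[0,1]$ into a manifold with boundary, with endpoint conditions. The present paper works with the closed circle and no basepoint, and the author explicitly states in Remark~\ref{Rtaylor} that the comparison map $P_n\Emb(S^1,M)\to T_n\Emb(S^1,M)$ is \emph{not known} to be a weak equivalence. Your sentence ``the closed-circle and no-basepoint variations only affect the manifold $P$ in the source, not the structure of the tower'' glosses over exactly the point at issue: whether the specific poset $P_\nu(\underline{n+1})$ and the chosen arcs $J_i$ compute the same homotopy limit as the full Taylor approximation for a closed $1$-manifold source.

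The paper avoids this problem altogether. Instead of identifying $P_n$ with $T_n$, it fibers both $\Emb(S^1,M)$ and $P_n\Emb(S^1,M)$ over $\TM$ via evaluation of point and tangent at $0\in S^1$. The fibers are, respectively, $\Emb([0,1],M-\mathrm{Int}(D))$ and Sinha's punctured model $P_n\Emb([0,1],M-\mathrm{Int}(D))$ for long knots in a manifold with boundary. For \emph{that} model, Sinha's Theorem~3.5 (invoking Goodwillie--Klein--Weiss) gives the $(n-1)(\dd-3)$-connectivity directly, and the five-lemma on the map of homotopy fiber sequences transfers it to $\alpha_n$. So the paper's route is: reduce to the long-knot case by fibering over $\TM$, then cite the known estimate there. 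Your route would require first proving the content of Remark~\ref{Rtaylor}, which is left open.
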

\begin{proof}
Let $p:\Emb(S^1,M)\to \TM$ be the evaluation of value and tangent vector at $0\in S^1$. As is well-known, $p$ is a fibration. Let $D$ be a closed subset on $M$ diffeomorphic to closed $\dd$-dimensional disk. Let $\Emb([0,1], M- Int(D))$ be the space of embeddings $[0,1]\to M-Int (D)$ whose value and tangent vector at endpoints is a fixed value in $\partial D$ and vector. If we take a point of  $\TM$, for some choice of  the disk $D$, fixed endpoints, and embedded path between the points in $D$, we have the inclusion from $\Emb([0,1],M-Int(D))$ to the fiber of $p$ at the point. This inclusion is a weak homotopy equivalence. Its homotopy inverse is given by shrinking the disk $D$ to the point. Thus, we have a homotopy fiber sequence
\[
\Emb([0,1],M-Int(D))\to \Emb(S^1,M)\to \TM
\]   
Restricting the domain, we have similar fiber sequence $E_S(M-Int(D))\to E_S(M)\to \TM$, where the left hand side is the space defined in \cite[Definition.3.1]{sinha} with the obvious modification for $J_i$. (In \cite{sinha}, $M$ denotes a manifold with boundary so we apply the definitions to $M-Int(D)$ instead of our closed $M$.) Passing to homotopy limits, we have the following diagram
\[
\xymatrix{
\Emb([0,1],M-Int(D))\ar[r]\ar[d]&Emb(S^1,M)\ar[r]\ar[d]^{\alpha_n}&\TM\ar[d]^{id} \\
P_n\Emb([0,1],M-Int(D))\ar[r] & P_n\Emb(S^1,M)\ar[r]& \TM,}
\] 
where the both horizontal sequence are homotopy fiber sequences and  the left bottom corner is the punctured knot model in \cite[Definition.3.4]{sinha} (with the obvious modification for $J_i$). As in \cite[Theorem.3.5]{sinha}, by theorems of Goodwillie, Klein, and Weiss, the left vertical arrow is $(n-1)(\dd-3)$-connected, and so is the middle.
\end{proof}
\begin{rem}\label{Rtaylor}
Let $T_n\Emb(S^1,M)$ be the $n$-th stage of Taylor tower (or polynomial approximation). Restriction of the domain induces a map $P_n\Emb(S^1,M)\to T_n\Emb(S^1,M)$ which is compatible with canonical maps from $\Emb(S^1,M)$, but the author does not know this map is weak homotopy equivalence.
\end{rem}

Our cosimplicial space is analogous  to the well-known cosimiplical model of a free loop space  just like Sinha's original space is analogous to that of a based loop space. So the space $\CC^n\CPTM$ is related to a configuration space of $n+1$ points (not $n$ points).

\begin{defi}\label{Dcosimplicial}
Let $||-||$ denote the standard Euclidean norm in $\RR^{N+1}$.
\begin{itemize}
\item Let $C_n(M)=\{ (x_0,\dots ,x_{n-1})\in M^{\times n}\mid x_k\not=x_l\  \text{if}\ k\not= l\}$ be the ordered configuration space of $n$ points in $M$.  Similarly, we set $C_2([n])=\{(k,l)\in [n]^{\times 2}\mid k\not =l\}$.
\item  Let $C_n\langle [M]\rangle $ be the closure of the image of the map
\[
C_n(M)\to M^{\times n}\times (S^N)^{\times C_2([n-1])}\quad (x_k)_k\mapsto (x_k, u_{kl})_{kl}
\] 
where $u_{kl}=\frac{x_l-x_k}{||x_l-x_k||}$. $C_n\langle [M]\rangle $ is the same  as the space in Definition 1.3 of \cite{sinha}, though our labeling of points begins with $0$. Define a space $\CC^{n}\langle[M]\rangle$ by the following pullback diagram :
\[
\xymatrix{\CC^n\langle[ M]\rangle \ar[r] \ar[d] & \TM ^{\times n+1}\ar[d] \\
C_{n+1}\langle[M]\rangle \ar[r] & M^{\times n+1} }
\]
Here, the right vertical arrow is the product of standard projection and the bottom horizontal one is the composition of the canonical inclusion $C_{n+1}\CPTM\to  M^{\times n+1}\times (S^N)^{\times C_2([n])}$ and the projection.
\item Let $\tau : T_xM\to \RR^{N+1}$ be the linear monomorphism from the tangent space induced by the differential of the embedding fixed in Definition \ref{Dpunctured} and the identification $T_x\RR^{N+1}\cong \RR^{N+1}$ by the standard basis. Set $A'_{n+1}\CPTM:=M^{\times n+1}\times (S^N)^{\times ([n]^{\times 2})}$. Let $\beta'_{n+1}:\CC^n\CPTM \to A'_{n+1}\CPTM$ be the map given by
\[
\beta'_{n+1}(x_k, u_{kl}, y_k)=(x_k,u'_{kl}), \qquad u'_{kl}=\left\{
\begin{array}{cc}
u_{kl} & (k\not =l) \\
\tau(y_k) & (k=l)
\end{array}\right. ,
\]
where $y_k$ is a unit tangent vector at $x_k$. This is clearly a monomorphism.
For an integer $i$ with $0\leq i\leq n+1$, we define maps $d_i:[n+1]\to [n]$ by
\[
\begin{split}
d_i(k)&=\left\{
\begin{array}{cc}
k & (k\leq i)\\
k-1 & (k>i)
\end{array}\right. \qquad (0\leq i\leq n) \\
d_{n+1}& = d_0\circ \sigma, 
\end{split}
\]where $\sigma$ is the cyclic permutation $\sigma(k)=k+1 (\mathrm{modulo} \ n+2)$. (This $d_i$ is the same as $s^i$ in subsection \ref{SSnt} but we use the different notation to avoid confusion.) We define the map $d^i : A'_{n+1}\CPTM\to A'_{n+2}\CPTM$ by 
\[
d^i(x_k,u_{kl})_{0\leq k, l\leq n}=(x_{f(k)}, u_{f(k),f(l)})_{0\leq k, l\leq n+1},\qquad (f=d_i).
\] 
This map restricts to the  map $d^i:\CC^n\langle[M]\rangle\to \CC^{n+1}\langle[M]\rangle  $ via $\beta'_{n+1}$,\ $\beta'_{n+2}$. Similarly, we define the codegeneracy map $s^i:\CC^n\langle[M]\rangle\to \CC^{n-1}\langle[M]\rangle  \ (0\leq i\leq n-1)$ as the pullback by the map $s_i:[n-1]\to [n]$, $s_i(k)=\left\{
\begin{array}{cc}
k & (k\leq i) \\
k+1 & (k>i)
\end{array}\right.$. The collection $\CC^\bullet\CPTM=\{ \CC^n\CPTM, d^i, s^i\}$ forms a  cosimplicial space. Well-definedness  of this is verified in Lemma \ref{Lcosimplicialwell} below.
\item We call the Bousfield-Kan type cohomology spectral sequence associated to $\CC^\bullet\langle[ M]\rangle$  the {\em Sinha spectral sequence for $M$} in short, {\em Sinha s.s.}, and denote it by $\{\SINHASS_r\}_r$.

\end{itemize}
\end{defi}
Intuitively, an element of $C_n\langle [M]\rangle$ is a  configuration of $n$ points in $M$  some points of which are allowed to collide, or in other words, to be infinitesimally close, and the direction of collision is recorded as the unit vector $u_{kl}$  if $k$-th and $l$-th points collide. An element of   $\CC^n\CPTM$ is an element of $C_{n+1}\CPTM$, each point of which has a  tangent vector. For $0\leq i\leq n$, the map $d^i$ replaces the $i$-th point in a configuration with two  points colliding at the point along its vector. These points are  labeled  by $i,\ i+1$. Their  vectors are copies of the original vector  (see Figure \ref{Fcoface}). The map $d^{n+1}$ replaces the $0$-th points with two points similarly and labels them by $n+1, 0$ (and slides other labels). The map $s^i$ forgets $i+1$-th point and its vector.
\begin{figure}
\begin{center}
 %WinTpicVersion4.32a
{\unitlength 0.1in%
\begin{picture}(29.2000,9.8000)(2.0000,-10.8000)%
% ELLIPSE 2 0 0 0 Black Black  
% 4 830 1005 862 1038 832 1038 832 1038
% 
\special{sh 1.000}%
\special{ia 830 1005 32 33 0.0000000 6.2831853}%
\special{pn 8}%
\special{ar 830 1005 32 33 0.0000000 6.2831853}%
% VECTOR 0 0 3 0 Black Black  
% 2 835 1005 991 941
% 
\special{pn 20}%
\special{pa 835 1005}%
\special{pa 991 941}%
\special{fp}%
\special{sh 1}%
\special{pa 991 941}%
\special{pa 922 948}%
\special{pa 942 961}%
\special{pa 937 985}%
\special{pa 991 941}%
\special{fp}%
% ELLIPSE 2 0 0 0 Black Black  
% 4 2536 1000 2569 1032 2539 1032 2539 1032
% 
\special{sh 1.000}%
\special{ia 2536 1000 33 32 0.0000000 6.2831853}%
\special{pn 8}%
\special{ar 2536 1000 33 32 0.0000000 6.2831853}%
% ELLIPSE 2 0 3 0 Black Black  
% 4 2536 364 2048 628 2048 628 2048 628
% 
\special{pn 8}%
\special{ar 2536 364 488 264 0.0000000 6.2831853}%
% SPLINE 2 0 3 0 Black Black  
% 5 2193 553 2429 747 2515 962 2520 1016 2520 1016
% 
\special{pn 8}%
\special{pa 2193 553}%
\special{pa 2221 571}%
\special{pa 2275 607}%
\special{pa 2327 645}%
\special{pa 2352 665}%
\special{pa 2398 709}%
\special{pa 2418 733}%
\special{pa 2437 759}%
\special{pa 2455 786}%
\special{pa 2470 814}%
\special{pa 2483 844}%
\special{pa 2495 874}%
\special{pa 2504 906}%
\special{pa 2511 937}%
\special{pa 2516 969}%
\special{pa 2519 1001}%
\special{pa 2520 1016}%
\special{fp}%
% SPLINE 2 0 3 0 Black Black  
% 4 2874 558 2611 752 2542 1000 2542 1000
% 
\special{pn 8}%
\special{pa 2874 558}%
\special{pa 2843 574}%
\special{pa 2813 590}%
\special{pa 2784 606}%
\special{pa 2755 623}%
\special{pa 2727 640}%
\special{pa 2701 659}%
\special{pa 2676 679}%
\special{pa 2653 700}%
\special{pa 2633 722}%
\special{pa 2614 747}%
\special{pa 2599 773}%
\special{pa 2586 801}%
\special{pa 2576 830}%
\special{pa 2567 861}%
\special{pa 2560 892}%
\special{pa 2553 925}%
\special{pa 2543 991}%
\special{pa 2542 1000}%
\special{fp}%
% ELLIPSE 2 0 0 0 Black Black  
% 4 2628 267 2660 299 2630 299 2630 299
% 
\special{sh 1.000}%
\special{ia 2628 267 32 32 0.0000000 6.2831853}%
\special{pn 8}%
\special{ar 2628 267 32 32 0.0000000 6.2831853}%
% VECTOR 0 0 3 0 Black Black  
% 2 2633 267 2789 202
% 
\special{pn 20}%
\special{pa 2633 267}%
\special{pa 2789 202}%
\special{fp}%
\special{sh 1}%
\special{pa 2789 202}%
\special{pa 2720 209}%
\special{pa 2740 223}%
\special{pa 2735 246}%
\special{pa 2789 202}%
\special{fp}%
% LINE 2 1 3 0 Black Black  
% 2 2848 181 2123 477
% 
\special{pn 8}%
\special{pa 2848 181}%
\special{pa 2123 477}%
\special{da 0.070}%
% ELLIPSE 2 0 0 0 Black Black  
% 4 2263 418 2295 450 2265 450 2265 450
% 
\special{sh 1.000}%
\special{ia 2263 418 32 32 0.0000000 6.2831853}%
\special{pn 8}%
\special{ar 2263 418 32 32 0.0000000 6.2831853}%
% VECTOR 0 0 3 0 Black Black  
% 2 2268 418 2424 353
% 
\special{pn 20}%
\special{pa 2268 418}%
\special{pa 2424 353}%
\special{fp}%
\special{sh 1}%
\special{pa 2424 353}%
\special{pa 2355 360}%
\special{pa 2375 374}%
\special{pa 2370 397}%
\special{pa 2424 353}%
\special{fp}%
% STR 2 0 3 0 Black Black  
% 4 3510 117 3510 170 5 0 0 0
% $u_{i,i+1}=y_i$
\put(35.1000,-1.7000){\makebox(0,0){$u_{i,i+1}=y_i$}}%
% ELLIPSE 2 0 3 0 Black Black  
% 4 1393 2100 2960 3673 1297 531 143 1032
% 
\special{pn 8}%
\special{ar 1393 2100 1567 1573 3.8467830 4.6510457}%
% ELLIPSE 2 0 3 0 Black Black  
% 4 3132 2121 4768 3764 2343 682 1812 1000
% 
\special{pn 8}%
\special{ar 3132 2121 1636 1643 3.8434419 4.2090905}%
% ELLIPSE 2 0 3 0 Black Black  
% 4 3132 2121 4769 3766 3105 477 2982 569
% 
\special{pn 8}%
\special{ar 3132 2121 1637 1645 4.6155426 4.6958868}%
% STR 2 0 3 0 Black Black  
% 4 1619 666 1619 720 5 0 0 0
% $\stackrel{d^i}{\longmapsto}$
\put(16.1900,-7.2000){\makebox(0,0){$\stackrel{d^i}{\longmapsto}$}}%
% STR 2 0 3 0 Black Black  
% 4 350 550 350 650 5 0 0 0
% $M$
\put(3.5000,-6.5000){\makebox(0,0){$M$}}%
% STR 2 0 3 0 Black Black  
% 4 2400 390 2400 490 5 0 0 0
% $i$
\put(24.0000,-4.9000){\makebox(0,0){$i$}}%
% STR 2 0 3 0 Black Black  
% 4 2800 260 2800 360 5 0 0 0
% $i+1$
\put(28.0000,-3.6000){\makebox(0,0){$i+1$}}%
% STR 2 0 3 0 Black Black  
% 4 830 1020 830 1120 5 0 0 0
% $x_i$
\put(8.3000,-11.2000){\makebox(0,0){$x_i$}}%
% STR 2 0 3 0 Black Black  
% 4 960 740 960 840 5 0 0 0
% $y_i$
\put(9.6000,-8.4000){\makebox(0,0){$y_i$}}%
\end{picture}}%
\end{center}
\caption{intuition of the coface map $d^i$\ :\ $y_i$ is the vector at $x_i$}\label{Fcoface}
\end{figure}
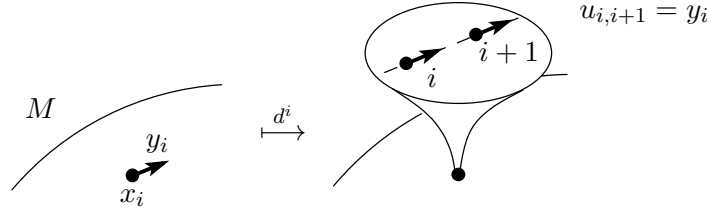

\begin{lem}\label{Lcosimplicialwell}

\begin{enumerate}
\item The map $C_n(M)\to M^{\times n}\times (S^N)^{\times C_2([n-1])}$ given in Definition  \ref{Dcosimplicial} restricts to homotopy equivalence $C_n(M)\to C_n\CPTM$.
\item The cosimplicial space $\CC^\bullet\langle [M]\rangle$ is well-defined.
\end{enumerate}
\end{lem}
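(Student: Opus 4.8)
The plan is to prove the two parts of Lemma \ref{Lcosimplicialwell} separately, both being essentially bookkeeping once the right framework is in place.

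\textbf{Part 1.} First I would recall the well-known fact (essentially \cite[Theorem 4.5]{sinha} or the Fulton--MacPherson/Axelrod--Singer compactification literature) that the projection $C_n\CPTM \to M^{\times n}$ is a homotopy equivalence onto $C_n(M)$: the closure adds to each configuration the limiting directions $u_{kl}$, but the map $C_n(M)\to M^{\times n}\times (S^N)^{\times C_2([n-1])}$ is already a homeomorphism onto its image, and the compactification deformation-retracts onto the open stratum $C_n(M)$ because the added boundary strata (where points collide) are collars. Concretely, one produces the retraction by a fiberwise scaling argument: pushing colliding points apart along their recorded directions $u_{kl}$, using a partition-of-unity/gluing construction to make the pushing consistent across all collision patterns. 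I would cite Sinha for this rather than reprove it, noting only the cosmetic change in point labeling (starting at $0$) and the choice of $J_i$ (Remark \ref{Rpunctured}), neither of which affects the homotopy type.

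\textbf{Part 2.} For well-definedness of $\CC^\bullet\CPTM$ as a cosimplicial space, I must check three things: (a) the set-maps $d_i:[n+1]\to[n]$ and $s_i:[n-1]\to[n]$ are weakly order-preserving and hence induce maps on $M^{\times \bullet}\times(S^N)^{\times C_2([\bullet])}$ and on the auxiliary spaces $A'_{\bullet}\CPTM$; (b) these maps restrict along the monomorphisms $\beta'_{n+1}$ to maps of the pullback spaces $\CC^n\CPTM\to\CC^{n\pm1}\CPTM$ — i.e. the image of a point in $C_{n+1}\CPTM$ (with tangent data) under $d^i$ genuinely lands in $C_{n+2}\CPTM$, which amounts to checking that when the $i$-th point is doubled, the resulting configuration with the recorded direction $\tau(y_i)$ really is a limit of honest configurations (this uses the closure definition of $C_{n}\CPTM$ and the fact that $\tau(y_i)\in S^N$ is exactly the direction along which one can separate the two copies); (c) the cosimplicial identities $d^jd^i=d^id^{j-1}$ ($i<j$), $s^js^i=s^is^{j+1}$ ($i\le j$), and the mixed relations, which for $0\le i\le n$ follow purely from the corresponding simplicial identities for the $d_i,s_i$ in $\BDelta$, and for the exceptional index $i=n+1$ (where $d_{n+1}=d_0\circ\sigma$ with $\sigma$ the cyclic shift) require separately verifying a handful of relations involving $\sigma$. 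This is a cyclic-set-type computation, and it is the only genuinely non-formal point: one checks that the cyclic permutation interacts with the other $d_i,s_i$ exactly as in the standard cosimplicial model of the free loop space (the analogy flagged in the text just before Definition \ref{Dcosimplicial}), so the identities reduce to identities among order maps and cyclic shifts on finite sets, each verified by evaluating on $k\in[m]$.

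\textbf{Main obstacle.} I expect the bookkeeping around the exceptional face map $d^{n+1}$ — verifying all cosimplicial identities that involve $d_{n+1}=d_0\circ\sigma$ and confirming that $d^{n+1}$ restricts correctly to the pullback $\CC^n\CPTM$ — to be the only step requiring care; everything else is either a citation to \cite{sinha} (for Part 1) or an immediate consequence of the simplicial identities in $\BDelta$. A clean way to organize it is to observe that $\CC^\bullet\CPTM$ is obtained from an honest cosimplicial object by precomposition with the cyclic structure, so the verification is really that the data assemble into a functor out of (a subcategory generated by) $\BDelta$, and then invoke the standard free-loop-space model as the template; I would present the exceptional relations in a short displayed list and check each by direct evaluation on elements, leaving the routine ones to the reader.
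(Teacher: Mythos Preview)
Your proposal is correct and follows essentially the same approach as the paper: Part 1 is handled by citation to Sinha's compactification results, and Part 2 reduces to checking that the maps land in the right subspaces (again by citation) plus direct verification of the cosimplicial identities, with the only genuine content being the relations involving the exceptional face $d^{n+1}$, which you correctly isolate. One minor correction: the relevant results for Part 1 and for the ``image lands in $\CC^{n\pm1}\CPTM$'' step are in \cite{sinha2} (the compactification paper), specifically Corollary~4.5, Theorem~5.10, and Proposition~6.6 there, not in \cite{sinha}.
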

\begin{proof}
The part 1 is proved in \cite[Corollary. 4.5, Theorem. 5.10]{sinha2}. For the part 2, by \cite[Proposition.6.6]{sinha2} the image of $d^i$ and $s^i$ is contained in $\CC^{n\pm 1}\CPTM$ ($C'_n\CPTM$ in the proposition is the same as $\CC^{n-1}\CPTM$ in our notation). Confirmation of the cosimplicial identities is a routine work. For example,  to confirm $d^{n+2}d^i=d^id^{n+1}:\CC^{n}\CPTM\to \CC^{n+2}\CPTM$ ($i<n+2$), it is enough to confirm the dual identity $d_id_{n+2}=d_{n+1}d_i:[n+2]\to [n]$. The both sides are equal to the map
\[
j\mapsto \left\{
\begin{array}{cc}
k & (k\leq i) \\
k-1 & (i<k<n+2) \\
0 & (k=n+2)
\end{array}
\right.\  (\text{if}\  i<n+1),  \qquad  
k\mapsto \left\{
\begin{array}{cc}
k & (k\leq n) \\
0 & (k=n+1,n+2)
\end{array}
\right.\ (\text{if}\  i=n+1)
\]

\end{proof}

\begin{lem}\label{Lcosimplicial}
Let $\mathcal{G}_n^*\CC^{\bullet}\langle [M]\rangle $ be the composition functor $P_\nu(\underline{n+1})\stackrel{\mathcal{G}_n}{\to} \BDelta_n\stackrel{\CC^\bullet\CPTM}{\to}\CG$. 
\begin{enumerate}

\item The homotopy limits of $\mathcal{E}_n(M)$  and $\mathcal{G}_n^*\CC^{\bullet}\langle [M]\rangle$ are connected by a zigzag of weak homotopy equivalences which are compatible with the inclusion $\underline{n}\to \underline{n+1}$.
\item The homotopy limit of $\CC^\bullet\langle[M]\rangle$ over $\BDelta_n$ and  that of $\mathcal{G}_n^*\CC^\bullet\langle [M]\rangle $ over $P_{\nu}(\underline{n+1})$ are connected by a zigzag of weak homotopy equivalences which are compatible with the inclusion $\underline{n}\to \underline{n+1}$.
\item The homotopy limit of $\CC^\bullet\langle[M]\rangle$ over $\BDelta$ and $\Emb(S^1,M)$ are  connected by a zigzag of weak homotopy equivalences.

\end{enumerate}
\end{lem}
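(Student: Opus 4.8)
The plan is to run, in the present setting of the \emph{closed} circle mapping into the \emph{closed} manifold $M$ (so with no endpoint or basepoint conditions), the argument of Sinha \cite{sinha, sinha1} that the homotopy limit of his cosimplicial model recovers the knot space: prove Parts 1 and 2, and then obtain Part 3 by assembling them with Lemma \ref{Lpunctured} and the standard identification $\tot\simeq\underset{\BDelta}{\holim}$. For Part 1 I would construct, for each $S\in P_\nu(\underline{n+1})$, an evaluation map $\mathrm{ev}_S\colon E_S(M)\to\CC^{|S|-1}\CPTM$ (note that $\CC^\bullet\CPTM$ takes the value $\CC^{|S|-1}\CPTM$ at $\mathcal{G}_n(S)=[|S|-1]$): the complement $S^1-\bigcup_{i\in S}J_i$ is a disjoint union of $|S|$ closed arcs, and an embedding of it collapses, arc by arc, to a configuration of $|S|$ pairwise distinct points of $M$ together with the normalized velocity at the midpoint of each arc, giving a point of the framed configuration space $C'_{|S|}(M)$; one then composes with the inclusion $C'_{|S|}(M)\hookrightarrow\CC^{|S|-1}\CPTM$, which is a weak equivalence by right-properness of $\CG$, being the pullback of the weak equivalence $C_{|S|}(M)\to C_{|S|}\CPTM$ of Lemma \ref{Lcosimplicialwell}(1) along the fibration $\CC^{|S|-1}\CPTM\to C_{|S|}\CPTM$. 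That $E_S(M)\to C'_{|S|}(M)$ is itself a weak equivalence is the closed-manifold version of Sinha's evaluation-map theorem (\cite{sinha, sinha1}, resting on \cite{sinha2}): a homotopy inverse grows each framed point into a short geodesic arc, and the verification is the usual scanning / tubular-neighborhood argument, into which Sinha's boundary conditions never enter. Since the $\mathrm{ev}_S$ need not strictly commute with the structure maps of the two diagrams — a morphism $S\subseteq S'$ restricts an embedding by splitting one arc at a tiny interval $J_i$, producing two nearby distinct framed points, whereas the corresponding coface of $\CC^\bullet\CPTM$ produces an infinitesimally collided pair — I would interpose a natural zigzag of weak equivalences exactly as in \cite{sinha}; as every space in $\CG$ is fibrant, this passes to a zigzag of weak equivalences on $\underset{P_\nu(\underline{n+1})}{\holim}$. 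Finally, since $\mathcal{E}_n(M)$ and $\mathcal{G}_n$ restrict along $P_\nu(\underline n)\subset P_\nu(\underline{n+1})$ to $\mathcal{E}_{n-1}(M)$ and $\mathcal{G}_{n-1}$, the construction is uniform in $n$ and the resulting zigzag is compatible with the restriction maps $r_n$, hence with the inclusions $\underline n\to\underline{n+1}$.

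Part 2 is Sinha's cofinality statement: the functor $\mathcal{G}_n\colon P_\nu(\underline{n+1})\to\BDelta_n$ is homotopy initial (the comma categories $(\mathcal{G}_n\downarrow[k])$ have contractible nerve), so for any diagram of fibrant spaces — here $\CC^\bullet\CPTM$ restricted to $\BDelta_n$ — the canonical comparison $\underset{\BDelta_n}{\holim}\,\CC^\bullet\CPTM\to\underset{P_\nu(\underline{n+1})}{\holim}\,\mathcal{G}_n^*\CC^\bullet\CPTM$ is a weak equivalence; this is \cite{sinha} applied verbatim, and is compatible with $n\mapsto n+1$ since $\mathcal{G}_n$ restricts to $\mathcal{G}_{n-1}$ and $\BDelta_{n-1}\subset\BDelta_n$. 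For Part 3 I would then chain $\Emb(S^1,M)\xrightarrow{\sim}\underset{n}{\holim}\,P_n\Emb(S^1,M)$ (Lemma \ref{Lpunctured}), $P_n\Emb(S^1,M)=\underset{P_\nu(\underline{n+1})}{\holim}\,\mathcal{E}_n(M)\simeq\underset{P_\nu(\underline{n+1})}{\holim}\,\mathcal{G}_n^*\CC^\bullet\CPTM$ (Part 1), $\simeq\underset{\BDelta_n}{\holim}\,\CC^\bullet\CPTM$ (Part 2), all compatibly in $n$, and pass to $\underset{n}{\holim}$, obtaining $\Emb(S^1,M)\simeq\underset{n}{\holim}\,\underset{\BDelta_n}{\holim}\,\CC^\bullet\CPTM$. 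It remains to identify $\underset{n}{\holim}\,\underset{\BDelta_n}{\holim}\,\CC^\bullet\CPTM\simeq\underset{\BDelta}{\holim}\,\CC^\bullet\CPTM$: choosing a Reedy-fibrant replacement $Y^\bullet$ of the cosimplicial space $\CC^\bullet\CPTM$, one has $\underset{\BDelta_n}{\holim}\,Y^\bullet\simeq\tot_n Y$ and $\underset{\BDelta}{\holim}\,Y^\bullet\simeq\tot Y$, the tower $\{\tot_n Y\}_n$ consists of fibrations, and $\tot Y=\underset{n}{\lim}\,\tot_n Y=\underset{n}{\holim}\,\tot_n Y$, which gives the claim.

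The main obstacle is Part 1: promoting the evaluation comparison to a genuinely natural zigzag of weak equivalences over $P_\nu(\underline{n+1})$ that is moreover strictly compatible with the maps $r_n$, so that it survives both $\underset{P_\nu(\underline{n+1})}{\holim}$ and the subsequent $\underset{n}{\holim}$. The weak-equivalence statement for each $\mathrm{ev}_S$ is not new — it is a routine re-run of Sinha's analysis of the aligned-map/evaluation model once one observes that his endpoint conditions play no role — but the coherence bookkeeping in $S$ and across $n$ is the delicate point; Parts 2 and 3 are, respectively, a direct invocation of Sinha's cofinality lemma and a standard $\tot$-tower argument.
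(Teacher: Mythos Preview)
Your proposal is correct and follows essentially the same route as the paper: Part 1 by adapting Sinha's Lemma 5.19 to the closed/unbased setting, Part 2 by the left cofinality of $\mathcal{G}_n$ (Sinha's Theorem 6.7), and Part 3 by chaining these with Lemma \ref{Lpunctured}. The only difference in emphasis is that the paper's sketch of Part 1 goes directly to Sinha's common-space technique (embedding both $E_S(M)$ and $\CC^{\#S-1}\CPTM$ into a subspace of compact subsets of $\CC^{\#S-1}\CPTM$ with the Hausdorff metric, as in \cite[Definition 5.14]{sinha}) rather than first describing a midpoint-evaluation map and then invoking the zigzag; your explicit tot-tower identification in Part 3 is a detail the paper leaves implicit.
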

\begin{proof}
The proof of the part 1 is completely analogous to the proof of Lemma 5.19 of \cite{sinha} so we omit details. Idea of the proof is consider the two space $\CC^{\# S-1}\CPTM$ and $E_S(M)$ as subspaces of a common space, where one can ``shrink components of embeddings until they become tangent vector'',as in Definition 5.14 of \cite{sinha}. The space is a subspace of the space of compact subspaces of $\CC^{\# S-1}\CPTM$ with the Hausdorff metric. This space and the inclusions can be chosen compatible with maps in $P_\nu(\underline{n+1})$. For example, the restriction $E_S(M)\to E_{S'}(M)$ corresponding to the inclusion $S=\underline{n+1}\subset S'=\underline{n+2}$ divides the component including the image of $0\in S^1$ into two components since the image of $J_{n+2}$ is removed. At the limit of shrinking components, this is consistent with the coface map $d^{n+1}$. These inclusions to the common space gives rise to zigzag of natural transformations which is a weak homotopy equivalence at each $S\subset \underline{n+1}$. This  induces the claimed zigzag.  
The part 2 follows from  the fact that the functor $\mathcal{G}_n$ is left cofinal (see Theorem 6.7 of \cite{sinha}). The part 3 follows from the part 1, 2 and Lemma \ref{Lpunctured}.
\end{proof} 

\subsection{Operads, comodules, and Hochschild complex}
The term {\em operad} means \textit{non-symmetric} (or \textit{non-$\Sigma$}) operad (see \cite{KM, muro}).   An operad $\oper=\{\oper(n)\}_{n\geq 1}$ in a symmetric monoidal category $(\CC, \otimes)$ is a symmetric sequence equipped with maps $(-\circ_i-):\oper(m)\otimes \oper(n)\to \oper(m+n-1)$  in $\CC$, called {\em partial compositions}, subject to certain conditions ($1\leq i\leq m$).  $\oper(n)$ is called the object at {\em arity} $n$. (We do not consider the object at arity $0$ so the sequence begins with $\oper(1)$.)  We mainly consider operads in $\CG$ (resp. in $\CH_{\kk}$), which are called topological operads (resp. chain operads), where the monoidal product is the standard cartesian product (resp. tensor product). Let $\oper$ be a topological operad.  $C_*(\oper)$ denotes the chain operad given by $C_*(\oper)(n)=C_*(\oper(n))$ with the induced structure.  We equip the sequence $\{\oper(n)\hotimes \Sph\}_n$ of spectra  with a   structure of an operad in $\SP$ as follows. The $i$-th partial composition  is given by
\[
\begin{split}
(\oper (m)\hotimes \Sph)\wedge (\oper (n)\hotimes \Sph )   &   \cong (\oper(m)\times \oper(n))\hotimes (\Sph\wedge \Sph) \\
  &   \cong  (\oper(m)\times \oper(n))\hotimes \Sph\stackrel{(-\circ_i-)\hotimes id}{\longrightarrow} \oper(m+n-1)\hotimes \Sph,
\end{split} 
\]
 see subsection \ref{SSnt} for the isomorphisms. The action of $\Sigma_n$ is the naturally induced  action. 
 We  denote this operad by the same symbol $\oper$.    We let  $\ASS$ denote the both of (discrete) topological  and $\kk$-linear versions of the associative operad by abuse of notations. For the $\kk$-linear version, we fix a generator $\mu\in \ASS(2)$ throughout this paper.   $\KK$ denotes  the Stasheff's associahedral operad, and $\ASS_\infty$ the cellular chain operad of $\KK$. Precisely speaking,  
$\ASS_\infty$ is generated by a set $\{\, \mu_k\in \ASS_\infty(k) \, \}_{k\geq 2}$ (\ $|\mu_k|=-k+2$\ )  with partial compositions. The differential is given by the following formula.
\[
d\mu _k=\sum_{{\footnotesize \begin{array}{c}
l,p,q\\ 
l+q=k+1
\end{array}}}(-1)^\zeta\, \mu_l\circ_{p+1}\mu_q
\]
where $\zeta=\zeta(l,p,q)=p+q(l-p-1)$. \\
%%%%%% 2019  12/21
\indent In the  following definition, we adopt the point-set description as if a category $\CC$ were the category of sets for simplicity.
\begin{defi}\label{Dcontra-module}
\begin{itemize}
\item Let $\oper$ be an operad over a symmetric monoidal category $\Cat$. A \textit{(left) $\oper$-comodule in $\Cat$} is 
 a symmetric sequence $X=\{X(n)\}_{n\geq 1}$ in $\Cat$ equipped with 
 a  map  
\[
(-\circ_i-):\oper(m)\otimes X(m+n-1)\to X(n)\in \Cat\, ,
\] called a {\em partial composition,} for each $m\geq 1,\ n\geq 1,\ 1\leq i\leq n$
which satisfy the following conditions.
\begin{enumerate}
\item For $a\in \oper(m), b\in \oper(l),$ and $x\in X(l+m+n-2)$,
\[
a\circ_i(b\circ_jx)=
\left\{
\begin{array}{cc}
b\circ_j(a\circ_{i+l-1}x) & \text{ if } j<i \\
(a\circ_{j-i+1}b)\circ_ix & \text{ if } i\leq j\leq i+m-1 \\
b\circ_{j-m+1}(a\circ_i x) &\text{ if } i+m-1<j
\end{array}
\right. \]

\item For the unit $1\in \oper(1)$ and $x\in X(n)$,
$1\circ _ix=x$
\item For $a\in \oper(m)$, $x\in X(m+n-1)$, and $\sigma \in \Sigma_n$,
\[
 (a\circ_ix)^{\sigma}=a\circ_{\sigma^{-1}(i)}(x^{\sigma_1})
\]
where  $\sigma_1\in \Sigma_{m+n-1}$ is the permutation induced by $\sigma$ with replacing a letter $\sigma^{-1}(i)$ with   $m$-letters $\sigma^{-1}(i),\dots, \sigma^{-1}(i)+m-1$. In other words, 
\[
\sigma_1(k)=\left\{
\begin{array}{ll}
\sigma(k) & (k<\sigma^{-1}(i) \text{ and }\sigma (k)<i) \\
\sigma(k)+m-1 & (k<\sigma^{-1}(i) \text{ and }\sigma (k)>i)  \\
i+k-\sigma^{-1}(i)  & (\sigma^{-1}(i)\leq k\leq \sigma^{-1}(i)+m-1)  \\
\sigma (k-m+1)  & (k>\sigma^{-1}(i)+m-1 \text{ and }\sigma (k-m+1)<i) \\
\sigma (k-m+1)+m-1 &(k>\sigma^{-1}(i)+m-1 \text{ and }\sigma (k-m+1)>i) 
\end{array}\right.
\]  
\end{enumerate}
A {\em map $f:X_1\to X_2$ of $\oper$-comodules} is a sequence of maps in $\Cat$ $\{f_n:X_1(n)\to X_2(n)\}_{n}$ which is compatible with the actions of symmetric groups and the partial compositions.   
\item A {\em (right) $\oper$-module  in $\Cat$} is a symmetric sequence $Y=\{Y(n)\}_{n\geq 1}$ equipped with a set of partial compositions $Y(n)\otimes \oper (m)\to Y(m+n-1)$ which satisfy the following conditions. 
\begin{enumerate}
\item For $a\in \oper(m), b\in \oper(l),$ and $y\in y(n)$,
\[
(y\circ_ja)\circ_i b=
\left\{
\begin{array}{cc}
(y\circ_i b)\circ_{j+l-1} a & \text{ if } i<j \\
y\circ_j(a\circ_{i-j+1}b) & \text{ if } j\leq i\leq j+m-1 \\
(y\circ_{i+m-1}b)\circ_j a &\text{ if } i >j+m-1
\end{array}
\right. .\]
\item For the unit $1\in \oper(1)$ and $y\in X(n)$,
$y\circ _i1=y$
\item For $a\in \oper(m)$, $y\in X(n)$, and $\sigma \in \Sigma_n$,
\[
 y^\sigma \circ_i a=(y\circ_{\sigma(i)} a)^{\sigma_2}
\]
where  $\sigma_2\in \Sigma_{m+n-1}$ is the permutation induced by $\sigma$ with replacing a letter $i$ with   $m$-letters $i,\dots,i+m-1$. 
In other words, 
\[
\sigma_2(k)=\left\{
\begin{array}{ll}
\sigma(k) & (k<i \text{ and }\sigma (k)<\sigma (i)) \\
\sigma(k)+m-1 & (k< i \text{ and }\sigma (k)>\sigma (i))  \\
\sigma(i) +k-i  & (i\leq k\leq i+m-1)  \\
\sigma (k-m+1)  & (k>i+m-1 \text{ and }\sigma (k-m+1)<\sigma (i) ) \\
\sigma (k-m+1)+m-1 &(k>i+m-1 \text{ and }\sigma (k-m+1)>\sigma (i)) 
\end{array}\right. .
\]

\end{enumerate}
A {\em map of  modules} is defined similarly to that of comodules.
\item For a topological operad $\oper$ (regarded as an operad in $\SP$), A {\em  $\oper$-comodule of NUCSRS} is a  $\oper$-comodule $X$ in $\SP$ such that each $X(n)$ is equipped with a structure of a NUCSRS and the action of $\Sigma_n$ on $X(n)$ and the partial composition $(a\circ_i-):X(n+m-1)\to X(n)$  is a map of NUCSRS for each $a\in \oper(m)$. A {\em map of comodules of NUCSRS} is a map of comodules which is also a map of NUCSRS at each arity.  
\item For a topological operad $\oper$ and a  $\oper$-module $Y$, we define a  $\oper$-comodule $Y^\vee$ of NUCSRS as follows.
\begin{enumerate}
\item We set $Y^\vee (n)=Y(n)^\vee$ (see subsection \ref{SSnt}).
\item For $f\in Y^\vee (n)$ and $\sigma \in \Sigma_n$, we define an action $f^\sigma$ by $f^\sigma(y)=f(y^{\sigma^{-1}})$ for each $y\in Y(n)$. 
\item For $a\in \oper(m)$ and $f\in Y^\vee (m+n-1)$, we define a partial compostion $a\circ_if$ by $a\circ_if(y)=f(y\circ_ia)$ for each $y\in Y(n)$.
\item We define a multiplication $Y^\vee(n)\wedge Y^\vee(n)\to Y^\vee(n)$ as the pushforward by the multiplication of $\Sphere$. (This is actually unital.)
\end{enumerate}
This construction is natural for maps of  $\oper$-modules.
\item A {\em  $\ASS$-comodule $X$ of CDBA} is a  $\ASS$-comodule (in $\CH_{\kk}$) such that each $X(n)$ is a CDBA  and the partial composition $\mu\circ_i(-):X(n)\to X(n-1)$ (with the fixed generator $\mu\in \ASS(2)$) and the action of $\sigma\in \Sigma_n$ preserves the differential , bigrading,  multiplication, and unit. 
\end{itemize}

\end{defi} 
The axioms for the partial composition of modules in Definition \ref{Dcontra-module} are the standard ones, which is naturally interpreted in terms of concatenation of trees. The action of $\sigma\in \Sigma_n$ is  interpreted as replacement of  labels $i$ on leaves with   labels $\sigma^{-1}(i)$, and the axioms is the natural one with this interpretation. The axioms for comodule is simply dual to those for module. The comodule in Example \ref{Ealgebrahochschild} may give some intuition for it.\\ 
\indent Composing the unity and  associativity isomorphisms, we obtain a natural isomorphism $K\hotimes X\cong (K\hotimes \Sphere)\wedge X$ in $\SP$. Let $\oper$ be a topological operad. Via this isomorphism,  a structure of $\oper$-comodule in $\SP$ on a symmetric sequence $X$ is  equivalent to a set of maps
\[
\oper (m)\hotimes X(m+n-1)\to X(n)
\]
which satisfy the  conditions completely similar to those given in Definition \ref{Dcontra-module}. We also call these maps  partial compositions, and in the rest of paper, we will define comodules in $\SP$ with these maps.
\begin{rem}
Precisely speaking, the notion of comodule in Definition \ref{Dcontra-module} should be named as contracomodule because our comodule to module is the same as contramodule to comodule in \cite{Positselski}, but for simplicity we adopt the terminology.
\end{rem}
\indent The following definition is essentially due to \cite{GJ} though we adopt a different  sign rule. 
% because the author cannot verify $(\tilde d)^2=0$ with their signs.
\begin{defi}\label{Dhochschild}
Let $X^*$ be a  $\ASS_\infty$-comodule in $\CH_{\kk}$.
We define a chain complex  $(\Hoch_\bullet X^*, \tilde d)$ called \textit{Hochschild complex of $X$},  as follows.
Set $\Hoch_nX^*=X^*(n+1)$. By our convention, the total degree is $*-\bullet$. The  differential $\tilde d$ is given as a map
\[
\tilde d =d-\delta : \underset{a-n=k}{\bigoplus}\Hoch_n X^a\longrightarrow \underset{a-n=k+1}{\bigoplus}\Hoch_n X^a.
\] 
Here $d$ is the internal (original) differential on $X^a(n+1)$ and $\delta$ is given by the following formula 
\[
\delta (x)=\sum_{i=0}^n\sum_{k=2}^{n-i+1}(-1)^{\epsilon}\mu_k\circ_{i+1}x 
 +\sum_{s=1}^n\sum_{k=s+1}^{n+1}(-1)^{\theta}\mu_k \circ_1 x^s
\] 
for $x\in X^a(n+1)$, where $\epsilon=\epsilon(a,i,k)=(a+i)(k+1)$, and $\theta=\theta(s,n,k,a) =sn+(k+1)a$, and $x^s$ denotes the image of $x$ by the action of  permutation in $\Sigma_{n+1}$ which transposes first $n-s+1$ letters and last $s$ letters. 
\end{defi}
The following example gives some intuition of definitions of comodule and Hochschild complex, while is not used later.
\begin{exa}\label{Ealgebrahochschild}
Let $\Cat$ denote the category of $\kk$-modules, and $A$ denote $\kk$-algebra.
Let $m_n\in \ASS(n)$ be the element defined by successive partial compositions of the generator $\mu\in \ASS(2)$.  Define a  $\ASS$-comodule $X_A$ by
\[
X_A(n)=A^{\otimes n}, \qquad m_k\circ_i(x_1\otimes\cdots\otimes x_{k+n-1})=x_1\otimes\cdots x_{i-1}\otimes (x_i\cdots x_{i+k-1})\otimes x_{i+k}\otimes\cdots\otimes x_{k+n-1},
\]
where $x_i\cdots x_{i+k-1}$ is the product in $A$. We regard $X_A$ as a $\ASS_\infty$-comodule via a map $\ASS_\infty\to \ASS$ of operads. The Hochschild complex of $X_A$  is the usual (unnormalized) Hochschild complex of the associative algebra $A$.
\end{exa}

\begin{lem}\label{Ldifferential}
With the notation of Definition \ref{Dhochschild}, $(\td)^2=0$.
\end{lem}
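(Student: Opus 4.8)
\textbf{Proof strategy for Lemma \ref{Ldifferential}.}
The plan is to expand $(\td)^2 = (d-\delta)^2 = d^2 - d\delta - \delta d + \delta^2$ and show each surviving contribution cancels. Since $d$ is the internal differential of the chain complex $X^a(n+1)$, we have $d^2 = 0$ automatically. Because each partial composition $\mu_k\circ_{i}(-)$ and each symmetric-group action is a chain map (by the definition of a $\ASS_\infty$-comodule in $\CH_{\kk}$ and the fact that $\ASS_\infty$ is a \emph{chain} operad), $\delta$ commutes with $d$ up to the sign conventions, so $d\delta + \delta d = 0$ once signs are accounted for; this reduces the problem to proving $\delta^2 = 0$.

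First I would record that $\delta$ splits as $\delta = \delta' + \delta''$, where $\delta'(x) = \sum_{i,k}(-1)^{\epsilon}\mu_k\circ_{i+1}x$ is the ``interior'' part and $\delta''(x) = \sum_{s,k}(-1)^\theta \mu_k\circ_1 x^s$ is the ``cyclic/boundary'' part coming from the coface map $d^{n+1}$. Then $\delta^2 = (\delta')^2 + (\delta'\delta'' + \delta''\delta') + (\delta'')^2$, and I would analyze the three pieces separately. For $(\delta')^2$: substituting one application of $\delta'$ into another produces terms of the form $\mu_k\circ_{j}(\mu_l\circ_i x)$, which by the comodule axiom (Definition \ref{Dcontra-module}, the $j<i$ / $i\le j\le i+m-1$ / $i+m-1<j$ trichotomy for comodules) rewrite either as a nested $\mu\circ\mu$ term falling under the $\ASS_\infty$ differential relation $d\mu_k = \sum (-1)^\zeta \mu_l\circ_{p+1}\mu_q$, or as a pair of ``distant'' compositions $\mu_l\circ(\mu_k\circ x)$ that appear twice with opposite sign. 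The fact that each $\mu_k$ in the comodule is a \emph{cycle}—more precisely, the Hochschild differential $\delta$ is built so that the quadratic terms reproduce exactly $d\mu_k$ applied through the comodule action—is what forces the cancellation; this is the standard mechanism by which a cellular-chain-of-associahedra structure yields a square-zero Hochschild-type differential. The sign bookkeeping is controlled by the explicit $\epsilon(a,i,k) = (a+i)(k+1)$ and $\zeta(l,p,q) = p + q(l-p-1)$, and matching them is a finite but somewhat delicate calculation.

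The cross term $\delta'\delta'' + \delta''\delta'$ and the term $(\delta'')^2$ require handling the permutations $x\mapsto x^s$ that swap the first $n-s+1$ and last $s$ letters: here I would use comodule axiom (3) (the $(a\circ_i x)^\sigma = a\circ_{\sigma^{-1}(i)}(x^{\sigma_1})$ relation) to push permutations past partial compositions, so that a composite like $\mu_k\circ_1((\mu_l\circ_{i+1}x)^s)$ becomes $\mu_k\circ_1(\mu_l\circ_? (x^{?}))$ with a controlled reindexing, and then again invoke the $\ASS_\infty$ differential relation together with the $j\le i \le j+m-1$ case of the comodule axiom to see the terms pair up and cancel. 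The term $(\delta'')^2$ additionally uses that composing two ``swap'' permutations returns a permutation of the same cyclic type, reflecting the identity $d^{n+1}d^{n+1} = d^0 d^{n+2}$ (or the appropriate cosimplicial identity) on the underlying combinatorics. I expect the main obstacle to be precisely this sign verification for the mixed and boundary terms: the combinatorial cancellation is forced, but one must check that $\epsilon$, $\theta$, and $\zeta$ conspire so that the two copies of each term carry opposite signs. I would organize this by reducing, via the comodule axioms, every quadratic term in $\delta^2(x)$ to one of finitely many normal forms indexed by (which two $\mu$'s are composed, in which order, and which permutation intervenes), and then check sign-cancellation within each normal form class; this is the computation referenced after the statement and is analogous to the classical verification that the Hochschild differential of an $A_\infty$-algebra squares to zero.
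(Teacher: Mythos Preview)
Your reduction to $\delta^2=0$ contains a genuine error. You claim that ``each partial composition $\mu_k\circ_i(-)$ \ldots\ is a chain map'' and hence $d\delta+\delta d=0$. But this is false for $k\geq 3$: the partial composition $\ASS_\infty(k)\otimes X\to X$ is a chain map, yet evaluating it at the specific element $\mu_k$ gives $d(\mu_k\circ_i x)=(d\mu_k)\circ_i x+(-1)^{|\mu_k|}\mu_k\circ_i(dx)$, and $d\mu_k\neq 0$ in $\ASS_\infty$ for $k\geq 3$. So $d\delta+\delta d$ is not zero; it produces exactly the terms $(d\mu_k)\circ_{i+1}x$ and $(d\mu_k)\circ_1 x^s$. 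Correspondingly, $\delta^2$ is not zero either: the ``nested'' case of your $(\delta')^2$ analysis (when $j\leq i\leq j+l-1$ in the comodule axiom) yields $(\mu_l\circ_{p+1}\mu_q)\circ_{i+1}x$, and these are precisely the terms that cancel against $(d\mu_k)\circ_{i+1}x$ via the $\ASS_\infty$ relation $d\mu_k=\sum(-1)^\zeta\mu_l\circ_{p+1}\mu_q$. You even say this in your $(\delta')^2$ discussion, but it contradicts your earlier claim that $d\delta+\delta d$ vanishes on its own.

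The paper's proof does not separate $d\delta+\delta d$ from $\delta^2$; instead it expands $(\td)^2$ fully, uses the Leibniz rule to extract the $(d\mu_k)\circ\,$-terms, and then lists six families of terms (the two $(d\mu_k)$ families and the four $\mu_l\circ(\mu_k\circ\,)$ families, with further subcases) and pairs them off: e.g.\ the $(d\mu_k)\circ_{i+1}x$ family cancels the nested subcase of $\mu_l\circ_{j+1}(\mu_k\circ_{i+1}x)$, and the ``distant'' subcases cancel each other. Your decomposition $\delta=\delta'+\delta''$ and your use of axiom~(3) to push permutations past compositions are correct organizational moves and match what the paper does implicitly, but the argument only closes once you keep the $(d\mu_k)$ contributions from $d\delta+\delta d$ in play rather than discarding them.
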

\begin{proof}
Roughly writing, 
\[
\begin{split}
(\tilde d)^2(x) =&\tilde d(dx-\delta x)=ddx-d\delta x-\delta dx-\delta\delta x \\
=&d(\mu_k\circ_{i+1}x+\mu_k\circ_1 x^s)+ (\mu_k\circ_{i+1}dx+\mu_k\circ_1 dx^s) \\
&-\mu_l\circ_{j+1}(\mu_k\circ_{i+1}x)+\mu_l\circ(\mu_k\circ_1x^s)
+\mu_l\circ_1(\mu_k\circ_{i+1}x)^t+\mu_l\circ_1(\mu_k\circ_1x^s)^t\\
=&(d\mu_k)\circ_{i+1}x+(d\mu_k)\circ_1 x^s\\
&-\mu_l\circ_{j+1}(\mu_k\circ_{i+1}x)+\mu_l\circ(\mu_k \circ_1 x^s)
+\mu_l\circ_1(\mu_k\circ_{i+1}x)^t+\mu_l\circ_1(\mu_k\circ_1x^s)^t\\
\end{split}
\]
(Here we already canceled the terms containing $dx$ since the cancellation of signs is obvious.)  
So we have six types of terms. To see which terms cancel with each other, we divide these terms into the following smaller classes.
\begin{enumerate}
\item $(d\mu_k)\circ_{i+1}x$, $d\mu_k=\sum\mu_l\circ_{p+1}\mu_q$
\item $(d\mu_k)\circ_1x^s$, $d\mu_k=\sum\mu_l\circ_{p+1}\mu_q$\ : \ (a) $s<p+1$\quad (b)  $p+q\leq s$\quad  (c) $p=0$ and \ $q>s$\quad (d) $p>0$ and $p+q>s\geq p+1$,
\item $\mu_l\circ_{j+1}(\mu_k\circ_{i+1}x)\ $\ :\ (a) $i<j$\quad (b) $j+l-1<i$\quad (c) $j\leq i\leq j+l-1$

\item $\mu_l\circ_{j+1}(\mu_k \circ_{1}x^s)$\ :\ (a) $j=0$\quad  (b)  $j>0$
\item $\mu_l\circ_1(\mu_k\circ_{i+1}x)^t$\ :\ (a) $i+1<n-k-t+3$ and  $l<s+i+1$\quad (b) $i+1<n-k-t+3$ and $l\geq s+i+1$\quad (c) $i+1\geq n-k-t+3$

\item $\mu_l\circ_1(\mu_k\circ_1x^s)^t$

\end{enumerate} 
Now we claim that the terms in (1) cancel with the terms in (3-c),\  (2-a) with (5-b),\  (2-b) with (5-c),\  (2-c) with (4-a),\  (2-d) with (6),\  (3-a) with (3-b),\  and (4-b) with (5-a).\\
\indent We shall verify the first and third cancellations. Other verifications are similar and omitted. For the first one, the coefficient of $(\mu_l\circ_{p+1}\mu_q)\circ_{i+1}x$ in the terms in (1) is $(-1)^{\alpha_1}$ where 
\[
\alpha_1=\zeta(l,p,q)+\epsilon(a,i,l+q+1)+1.
\]
For the terms in (3-c), by the rules of the partial composition , we have $\mu_l\circ_{j+1}(\mu_k\circ_{i+1}x)=(\mu_l\circ_{i-j+1}\mu_k)\circ_{j+1}x$. In order to match these terms with the terms in (1), we set $q'=k$, $p'+1=i-j+1$, and $i'+1=j+1$. This change of subscripts implies $\mu_l\circ_{j+1}(\mu_k\circ_{i+1}x)=(\mu_l\circ_{p'+1}\mu_{q'})\circ_{i'+1}x$. Clearly we have $j=i'$, $i=p'+i'$. The coefficient of $\mu_l\circ_{j+1}(\mu_k\circ_{i+1}x)$ in the terms (3-c) is $(-1)^{\alpha_2}$ where
\[
\alpha_2=\epsilon (a,i,k)+1+\epsilon (a+k-2,j,l)+1=\epsilon(a,p'+i',q')+\epsilon(a-q'+2,i',l)+2.
\]
When we substitute $q'=q$, $p'=p$, $i'=i$ in the last expression, elementary computation shows $\alpha_1+\alpha_2 \equiv 1\ (\mathrm{mod}\  2)$. Thus the terms (1) cancel with the terms (3-c). \\
\indent For the third case, the coefficient of $(\mu_l\circ_{p+1}\mu_q)\circ_1 x^s$ in the terms (2-b) is $(-1)^{\beta_1}$, where
\[
\beta_1=\zeta(l,p,q)+\theta(s,n,l+q-1,a)+1.
\] 
For the terms (5-c), the condition $i+1\geq n-k-t+3$ implies that $\mu_k$ acts on a part of the last $t$ letters. By this and the rule of the partial composition, we have
\[
\mu_l\circ_1 (\mu_k\circ_{i+1}x)^t=\mu_l\circ_1(\mu_k\circ_{i-n+k+t-1}(x^{t+k-1}))=(\mu_l\circ_{i-n+k+t-1}\mu_k)\circ_1 x^{t+k-1}.
\]
In order to match these terms with the terms in (2-b), we set $p'+1=i-n+k+t-1$, $q'=k$ and  $s'=t+k-1$. This change of subscripts implies 
$\mu_l\circ_1 (\mu_k\circ_{i+1}x)^t=(\mu_l\circ_{p'+1}\mu_{q'})\circ_1x^{s'}$. Clearly we have $t=s'-q'+1$ and $i=p'+n-s'+1$. The coefficient of $\mu_l\circ_1 (\mu_k\circ_{i+1}x)^t$ is $(-1)^{\beta_2}$, where
\[
\beta_2=\epsilon(a,i,k)+1+\theta(t,a-k+2,n-k+1,l)+1=\epsilon(a,p'+n-s'+1,q')+\theta(s'-q'+1,n-q'+1,a-q'+2,l)+2.
\]
When we subsutitute $q'=q$, $p'=p$, $s'=s$ in the last expression,  elementary computation shows $\beta_1+\beta_2 \equiv 1\ (\mathrm{mod}\  2)$. Thus the terms (2-b) cancel with the terms (5-c). 
\end{proof}

\section{Comodule $\CCM$}
The purpose of this section is to define the comodule $\CCM$.
\subsection{A model of a Thom spectrum}
We introduce a model of the Thom spectrum $N^{-TN}$ as a symmetric spectrum for a closed manifold $N$. This model is essentially due to Cohen \cite{cohen}, and  slightly different from Cohen's original non-unital model mainly in that we use expanding embeddings. 
\begin{defi}\label{Dtubulernbd}
Let $N$ be a closed manifold. We fix a Riemannian metric on $N$ and denote  by $d_N(-,-)$ the distance on $N$ induced by the metric.  The standard Euclidean norm on $\RR^k$ is denoted by $||-||$. The distance in $\RR^k$ is  induced by $||-||$.
\begin{itemize}

\item For a smooth embedding $e:N\to L$ to a Riemannian manifold $L$ we set a number 
\[
r(e)=\inf\left\{\left. \frac{d_L(e(x),e(y))}{d_N(x,y)}\  \right|\  x,y\in N,\ x\not= y\right\}.
\]
It is easy to see $r(e)>0$. 
We say $e$ is {\em expanding} if the inequality $r(e)\geq 1$ holds. $\Emb^{ex}(N,L)$ denotes the space of all expanding embeddings from $N$ to $L$ with the  topology induced by the $C^\infty$-topology.
\item For a smooth embedding $e:N\to \RR^k$, we define a number $|e|$ as the number of $i\in \underline{k}$ such that the composition 
\[
N\stackrel{e}{\longrightarrow}\RR^k\stackrel{\text{$i$-th projection}}{\longrightarrow} \RR 
\] is not a constant map.
\item Let $e:N\to \RR^{k}$ be a smooth embedding. For $\epsilon >0$, we denote by $\nu_\epsilon(e)$ the open subset of $\RR^k$ consisting of the points whose Euclidean distance from $e(N)$ are smaller than $\epsilon$. Let $L(e)$ denote the minimum of $1$ and the least upper bound of $\epsilon>0$ such that
there exists a retraction $\pi_e:\nu_\epsilon (e)\to e(N)$ satisfying the following conditions.
\begin{itemize}
\item For any $u\in\nu_\epsilon (e)$ and any $y\in N$, $||\pi_e(u)-u||\leq ||e(y)-u||$ and the equality holds if and only if $\pi_e(u)=e(y)$. 
\item For any $y\in N$, $\pi_e^{-1}(\{e(y)\})=B_\epsilon(e(y))\cap(e(y)+(T_yN)^{\perp })$. Here $B_\eps(e(y))$ is the open ball with center $e(y)$ and radius $\eps$.
\item The closure $\bar{\nu}_\eps(e)$ of $\nu_\eps(e)$ is a smooth submanifold of $\RR^{k}$ with boundary.
\end{itemize} 
(Such a retraction exists for a sufficiently small $\eps>0$ by a version of tubular neighborhood theorem, see \cite{MT}.) The retraction $\pi_e$ satisfying the above three conditions is unique.  We regard the map $\pi_e:{\nu}_\eps(e)\to e(N)$ as a disk bundle over $N$, identifying $N$ and $e(N)$.\\
\item Let $\TNT_k$ be the subspace of $\Emb^{ex}(N,\RR^k)\times \RR\times \RR^k$  consisting of the triples $(e,\eps, u)$ with $0<\eps<L(e)$.
 Define a subspace
$\partial \TNT_k\subset \TNT_k$ by
$(e,\epsilon, u)\in\partial \TNT_k\iff u\not \in \nu_{\epsilon}(e)$. We put 
\[
N^{-\tau}_k=\TNT_k/\partial \TNT_k
\]
We define a structure of a symmetric spectrum on $\NT$ as follows.
\begin{itemize}
\item We let $\Sigma_k$ act on $\RR^k$ and $\Emb^{ex}(N,\RR^k)$ by the standard permutation on components. The action of $\Sigma_k$ on $\NT_k$ is given by $[e,\eps, u]^\sigma=[e^\sigma,\eps, u^\sigma ]$. 
\item The map $S^1\wedge \NT_k\to \NT_{k+1}$ is given by $t\wedge [e,\eps, u]\mapsto [0\times e,\eps, (t,u)]$ where we regard $S^1=\RR \cup \{\infty\}$, and $0\times e:M\to \RR^{k+1}$ is given by $(0\times e)(x)=(0,e(x))$. 
\end{itemize}
\item We shall define a structure of NUCSRS on $\NT$. An element of $(\NT\wedge \NT)_k$ is represented by a data $\langle [e_1,\eps_1,u_1],[e_2,\eps_2,u_2];\sigma\rangle $ consisting of $[e_i,\eps_i,u_i]\in\NT_{k_i}$ ($i=1,2,\ k_1+k_2=k$) and $\sigma\in\Sigma_k$. We define a commutative associative multiplication $\mu:\NT\wedge \NT\to \NT$ by
\[
\mu(\langle [e_1,\eps_1,u_1],[e_2,\eps_2,u_2];\sigma\rangle  )=[e_{12},\ \eps_{12},\  (u_1,u_2)]^\sigma . 
\]
Here, $e_{12}=(e_1\times e_2) \circ \Delta $ where $\Delta:N\to N\times N$ is the  diagonal map, and 
\[
\eps_{12}=\min\left\{\left. \frac{\eps_1}{8^{|e_2|}},\, \frac{\eps_2}{8^{|e_1|}},\, L(e_{12}),\, \frac{L(e_1')}{8^{|e_{12}|-|e_1'|}},\dots,  \frac{L(e_m')}{8^{|e_{12}|-|e_m'|}}\ \right| \, m\geq 2,\, e_1':N\to \RR^{l_1},\dots, e_m':N\to \RR^{l_m}\right\}
\] where the finite sequence $(e'_1,\dots, e'_m)$ runs through the sequence of expanding embeddings satisfying 
$(e'_1\times \cdots \times e'_m)\circ \Delta^m=(e_{12})^{\tau}$ for a permutation $\tau\in \Sigma_{k_1+k_2}$, where $\Delta^m:N\to N^{\times m}$ denotes the diagonal map. 
\end{itemize}
\end{defi}
\begin{lem}\label{Lthomwelldef}
The structure of   NUCSRS on $\NT$ given in Definition \ref{Dtubulernbd} is well-defined
\end{lem}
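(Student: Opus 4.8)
The plan is to verify in turn that each piece of data entering the definition of $\mu$ is itself well-defined and then that $\mu$ respects the symmetric spectrum structure and satisfies the NUCSRS axioms. First I would check that $e_{12}=(e_1\times e_2)\circ\Delta$ is again an expanding embedding of $N$ into $\RR^{k_1+k_2}$: injectivity and immersion are clear since each factor is an embedding, and the expanding inequality $r(e_{12})\ge 1$ follows from $d_{\RR^{k}}(e_{12}(x),e_{12}(y))^2=d_{\RR^{k_1}}(e_1(x),e_1(y))^2+d_{\RR^{k_2}}(e_2(x),e_2(y))^2\ge d_N(x,y)^2$, using $r(e_1),r(e_2)\ge 1$. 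Next I would check that the set over which the infimum defining $\eps_{12}$ is taken is nonempty (the trivial decomposition $m=2$ with an obvious rewriting, or more carefully the decomposition into coordinate-axis embeddings, always qualifies) and that the infimum is strictly positive: each term $L(e_i')$ is positive by the tubular neighbourhood theorem, and there are only finitely many combinatorial types of such decompositions relevant to a fixed $e_{12}$ because $|e_i'|\ge 1$ and $\sum_i|e_i'|\le|e_{12}|\le k$, so the infimum is a finite minimum of positive numbers, hence positive. One then checks $0<\eps_{12}<L(e_{12})$, so that $(e_{12},\eps_{12},(u_1,u_2))$ indeed represents a point of $\NT_k$, and that the assignment is independent of the representatives $[e_i,\eps_i,u_i]$ (i.e. descends over $\partial\TNT_{k_i}$): if $u_i\notin\nu_{\eps_i}(e_i)$ then one shows $(u_1,u_2)\notin\nu_{\eps_{12}}(e_{12})$, which is exactly what the factors $\eps_i/8^{|e_{3-i}|}$ in the formula for $\eps_{12}$ are designed to guarantee.

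The second block of checks concerns compatibility with the $\Sigma_k$-action and the structure maps $\sigma_X$, i.e. that $\mu$ is a map of symmetric spectra. For the symmetric group actions one unwinds the definition of $(\NT\wedge\NT)_k$ as a coend over $\Sigma_{k_1}\times\Sigma_{k_2}\backslash\Sigma_k$ and verifies that $[e_{12},\eps_{12},(u_1,u_2)]^\sigma$ is well-defined on the equivalence classes $\langle-;\sigma\rangle$; here the key point is that $|e^\tau|=|e|$ and $L(e^\tau)=L(e)$ for any permutation $\tau$, since the definitions of $|\cdot|$ and $L(\cdot)$ only involve the Euclidean metric, which is permutation-invariant — hence $\eps_{12}$ is unchanged under permuting coordinates, and the formula is $\Sigma_k$-equivariant. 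Compatibility with $\sigma_X\colon S^1\wedge\NT_k\to\NT_{k+1}$ reduces, after prepending a zero coordinate, to the identity $(0\times e_1)\times(0\times e_2)$ vs $0\times(e_1\times e_2)$ up to a shuffle permutation, and to the observation that prepending a zero coordinate does not change $|e|$ or $L(e)$, so $\eps_{12}$ is stable under suspension; associativity and symmetry of $\wedge$ for spectra are then matched against associativity and symmetry of $\mu$ coordinatewise.

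Finally I would verify the NUCSRS axioms proper: commutativity of $\mu$ follows from commutativity of the diagonal $\Delta$ together with the symmetry of the formula for $\eps_{12}$ under exchanging the roles of $e_1$ and $e_2$ (again using $|e^\tau|=|e|$, $L(e^\tau)=L(e)$), modulo the coordinate-swap permutation built into the symmetry isomorphism for $\wedge$. Associativity $\mu\circ(\mu\wedge\mathrm{id})=\mu\circ(\mathrm{id}\wedge\mu)$ is where the somewhat elaborate definition of $\eps_{12}$ — in particular the terms $L(e_i')/8^{|e_{12}|-|e_i'|}$ ranging over all iterated diagonal decompositions — pays off: both sides produce the triple embedding $(e_1\times e_2\times e_3)\circ\Delta^3$, and one must show the two a priori different radii $(\eps_{12})_3'$ and $\eps_1'(\eps_{23})$ obtained by the two bracketings agree, or at least that each is $\le$ the other so that the resulting points of $\NT_k$ coincide; the extra $L(e_i')$-terms are precisely what makes $\eps_{12}$ already ``see'' all finer decompositions, so that re-bracketing cannot enlarge the relevant constraint set. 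I expect this associativity check to be the main obstacle: it is the one place where the combinatorics of the $8^{(\cdot)}$-weights and the quantifier over all diagonal factorizations must be handled carefully rather than formally, and it is the analogue in Cohen's framework of the associativity of the chain-level intersection product alluded to in the introduction. The unit statement is parenthetical (``This is actually unital'') and I would either omit it or note that the unit is supplied by the unit of $\Sphere$ under the pushforward description; in any case it is not needed for the NUCSRS structure as defined.
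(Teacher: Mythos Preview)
Your proposal is correct and follows essentially the same approach as the paper. The paper's proof is much terser: it defers all of your first two blocks of checks (expanding property of $e_{12}$, positivity of $\eps_{12}$, descent to the quotient, $\Sigma_k$-equivariance, compatibility with suspension, commutativity) to Cohen's original argument, and concentrates solely on the associativity of the radius $\eps_{12}$, which is the only point where the present definition differs from Cohen's.

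On that associativity check, the paper does exactly what you anticipate but more explicitly: it writes out $\eps_{(12)3}$ and $\eps_{1(23)}$ and shows both simplify to the same symmetric expression
\[
\min\Bigl\{\tfrac{\eps_1}{8^{|e_2|+|e_3|}},\ \tfrac{\eps_2}{8^{|e_1|+|e_3|}},\ \tfrac{\eps_3}{8^{|e_1|+|e_2|}},\ L(e_{123}),\ \tfrac{L(e_1')}{8^{|e_{123}|-|e_1'|}},\dots\Bigr\},
\]
the range of the tail running over all diagonal factorizations of $e_{123}$ up to coordinate permutation. The single identity that makes this work, and which you should state rather than leave implicit, is the additivity $|e_{12}|=|e_1|+|e_2|$; once you have it, the exponents on the $8$'s match and the two constraint sets for the $L(e_i')$-terms visibly coincide, so the two radii are \emph{equal} (not merely mutually bounding --- exact equality is required, since the output is a specific triple in $\TNT_k$, not an equivalence class up to shrinking $\eps$).
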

\begin{proof}
Most part of the proof is the same as the proof of \cite[Theorem 3]{cohen}. We shall only verify the associativity of the number $\eps_{12}$. Let $[e_i,\eps_i,u_i]$ be an element of $\NT_{k_i}$ for $i=1,2,3$. We denote by $\eps_{(12)3}$ (resp. $\eps_{1(23)}$ ) the number for the result of  the elements of $i=1,2$ (resp. $i=2,3$ ) being multiplied at first. By definition, we have
\[
\eps_{(12)3}=\min\left\{\left. \frac{\eps_{12}}{8^{|e_3|}},\, \frac{\eps_3}{8^{|e_{12}|}},\, L(e_{123}),\, \frac{L(e_1')}{8^{|e_{123}|-|e_1'|}},\dots,  \frac{L(e_m')}{8^{|e_{123}|-|e_m'|}}\ \right| \, m\geq 2,\, e_1':N\to \RR^{l_1},\dots, e_m':N\to \RR^{l_m}\right\},
\] 
where $e_{123}=(e_1\times e_2\times e_3)\circ \Delta^3$, and the finite sequence $(e'_1,\dots, e'_m)$ runs through the sequence of expanding embeddings satisfying 
$(e'_1\times \cdots \times e'_m)\circ \Delta^m=(e_{123})^{\tau}$ for some $\tau\in \Sigma_{k_1+k_2+k_3}$. By the obvious equality $|e_{12}|=|e_1|+|e_2|$, we have 
\[
\eps_{(12)3}=\min\left\{\left. \frac{\eps_{1}}{8^{|e_2|+|e_3|}},\, \frac{\eps_2}{8^{|e_1|+|e_3|}},\, \frac{\eps_3}{8^{|e_1|+|e_2|}},\, L(e_{123}),\, \frac{L(e_1')}{8^{|e_{123}|-|e_1'|}},\dots,  \frac{L(e_m')}{8^{|e_{123}|-|e_m'|}}\ \right| \, m\geq 2,\, e_1',\dots, e_m'\right\},
\]
where the finite sequence $(e'_1,\dots, e'_m)$ runs through the same set as above. The number $\eps_{1(23)}$ is also seen to be equal to the number of the right hand side.
 \end{proof}
\subsection{Construction of a comodule $\TCCM$}\label{SSTCCM}
\begin{defi}\label{Dlittleinterval}
\begin{itemize}
\item For a closed interval $c=[a,b]$, we set $|c|=b-a$, and call a point $(a+b)/2\in c$ the {\em center of} $c$. 
\item We define a version of little interval operad, denoted by $\DD_1$ as follows.
Let $\DD_1(n)$ be the set of $n$-tuples $(c_1,c_2,\dots,c_n)$ of closed subintervals $c_i\subset [-1/2,1/2]$ such that 
 $c_1\cup\cdots\cup c_n=[-1/2,1/2]$ and $c_i\cap c_j$ is a one-point set or empty if $i\not = j$, and 
 the labeling of $1,\dots, n$ is consistent with the usual order of the real line $\RR$ (so $-1/2\in c_1$ and $1/2\in c_n$). We topologize $\DD_1(n)$ as a subspace of $\RR^n$ by the inclusion sending each interval to its center. 
The partial composition is given by the way completely analogous to the usual little interval operad. 
\item We identify $H_0(\DD_1(2)$ with $\ASS(2)$ by  sending the generator represented by a  topological point to the generator $\mu$. 
 %For a configuration of little intervals, $\configc=(c_1,\dots, c_n)\in\DD_1(n)$, set a positive number  
%$d(\configc)$ as the minimum of distance between centers of two little intervals.
\end{itemize}
\end{defi}

Recall that we fixed a Riemannian metric on $M$ in Definition \ref{Dcosimplicial}. In the rest of the paper, we equip the space $\TM$ with the Sasaki metric, and the product $\TM^{\times n}$ the product metric. We fix a so small positive number $\rho$ that a geodesic of length $\rho$ exists for any initial value in $M$. After Lemma \ref{Ldiagonaltube}, we impose an additional assumption on $\rho$.  
\begin{defi}\label{Ddelta}
We shall define a map 
\[
\Delta'=\Delta[\configd,\configc;i]:\TM\to \TM^{\times m}
\]
 for each $\configd=(d_1,\dots,d_n)\in \DD_1(n)$, $\configc=(c_1,\dots,c_m)\in \DD_1(m)$
and $1\leq i\leq n$. Let $(x,y)$ denote a point of $\TM$ with $x\in M$ and $y\in ST_xM$. Let $s:[-\rho/2,\rho/2]\to M$ denote the geodesic segment with length parameter such that $s(0)=x$ and the  tangent vector of $s$ at $0$ is $y$.
Let $t_j\in [-1/2,1/2]$ be the center of $c_j$ and put $x_j=s(\rho\cdot |d_i|\cdot t_j)$ and set $y_j$ to be the tangent vector of $s$ at $\rho\cdot |d_i|\cdot t_j$. We set $\Delta'(x,y)=((x_1,y_1),\dots, (x_m,y_m))$, see Figure \ref{Fdiagonal}. 
\end{defi}
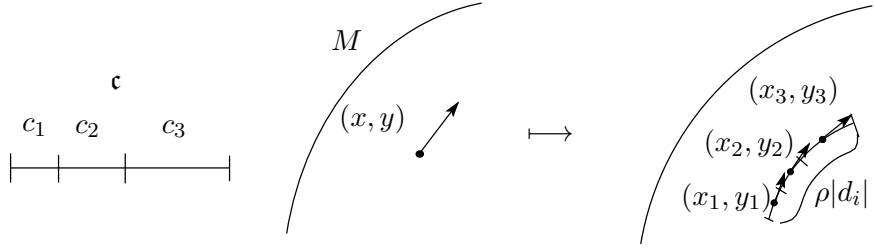
\begin{figure}
\begin{center}
 %WinTpicVersion4.32a
{\unitlength 0.1in%
\begin{picture}(93.2300,16.3100)(1.9000,-16.3100)%
% LINE 2 0 3 0 Black White  
% 2 930 863 2076 863
% 
\special{pn 8}%
\special{pa 930 863}%
\special{pa 2076 863}%
\special{fp}%
% ELLIPSE 2 0 3 0 Black Black  
% 4 3583 1490 4813 2998 3390 -31 2281 1182
% 
\special{pn 8}%
\special{ar 3583 1490 1230 1508 3.3320367 4.5581052}%
% ELLIPSE 2 0 3 0 Black Black  
% 4 5718 1516 7229 3044 5430 -30 3992 1219
% 
\special{pn 8}%
\special{ar 5718 1516 1511 1528 3.3103094 4.5262119}%
% LINE 2 0 3 0 Black Black  
% 8 930 937 930 789 2076 930 2076 801 1530 789 1530 942 1181 797 1181 937
% 
\special{pn 8}%
\special{pa 930 937}%
\special{pa 930 789}%
\special{fp}%
\special{pa 2076 930}%
\special{pa 2076 801}%
\special{fp}%
\special{pa 1530 789}%
\special{pa 1530 942}%
\special{fp}%
\special{pa 1181 797}%
\special{pa 1181 937}%
\special{fp}%
% STR 2 0 3 0 Black Black  
% 4 1480 347 1480 414 5 0 0 0
% $\configc$
\put(14.8000,-4.1400){\makebox(0,0){$\configc$}}%
% STR 2 0 3 0 Black Black  
% 4 1055 582 1055 649 5 0 0 0
% $c_1$
\put(10.5500,-6.4900){\makebox(0,0){$c_1$}}%
% STR 2 0 3 0 Black Black  
% 4 1333 588 1333 656 5 0 0 0
% $c_2$
\put(13.3300,-6.5600){\makebox(0,0){$c_2$}}%
% STR 2 0 3 0 Black Black  
% 4 1787 588 1787 656 5 0 0 0
% $c_3$
\put(17.8700,-6.5600){\makebox(0,0){$c_3$}}%
% STR 2 0 3 0 Black Black  
% 4 4796 668 4796 736 5 0 0 0
% $(x_2,y_2)$
\put(47.9600,-7.3600){\makebox(0,0){$(x_2,y_2)$}}%
% STR 2 0 3 0 Black Black  
% 4 5026 393 5026 459 5 0 0 0
% $(x_3,y_3)$
\put(50.2600,-4.5900){\makebox(0,0){$(x_3,y_3)$}}%
% STR 2 0 3 0 Black Black  
% 4 3633 623 3633 719 2 0 0 0
% $\longmapsto$
\put(36.3300,-7.1900){\makebox(0,0)[lb]{$\longmapsto$}}%
% ELLIPSE 2 0 3 0 Black Black  
% 4 5596 1311 6315 2041 5328 599 4817 1109
% 
\special{pn 8}%
\special{ar 5596 1311 719 730 3.3916995 4.3472241}%
% LINE 2 0 3 0 Black Black  
% 2 9513 1631 9492 1571
% 
\special{pn 8}%
\special{pa 9513 1631}%
\special{pa 9492 1571}%
\special{fp}%
% LINE 2 0 3 0 Black Black  
% 2 4920 1142 4877 1122
% 
\special{pn 8}%
\special{pa 4920 1142}%
\special{pa 4877 1122}%
\special{fp}%
% DOT 0 0 3 0 Black Black  
% 4 4926 1046 5012 883 5180 714 5180 714
% 
\special{pn 4}%
\special{sh 1}%
\special{ar 4926 1046 16 16 0 6.2831853}%
\special{sh 1}%
\special{ar 5012 883 16 16 0 6.2831853}%
\special{sh 1}%
\special{ar 5180 714 16 16 0 6.2831853}%
\special{sh 1}%
\special{ar 5180 714 16 16 0 6.2831853}%
% VECTOR 2 0 3 0 Black Black  
% 6 4931 1046 4980 893 5009 883 5109 738 5182 707 5328 588
% 
\special{pn 8}%
\special{pa 4931 1046}%
\special{pa 4980 893}%
\special{fp}%
\special{sh 1}%
\special{pa 4980 893}%
\special{pa 4941 950}%
\special{pa 4964 944}%
\special{pa 4979 963}%
\special{pa 4980 893}%
\special{fp}%
\special{pa 5009 883}%
\special{pa 5109 738}%
\special{fp}%
\special{sh 1}%
\special{pa 5109 738}%
\special{pa 5055 782}%
\special{pa 5079 782}%
\special{pa 5088 804}%
\special{pa 5109 738}%
\special{fp}%
\special{pa 5182 707}%
\special{pa 5328 588}%
\special{fp}%
\special{sh 1}%
\special{pa 5328 588}%
\special{pa 5264 615}%
\special{pa 5287 622}%
\special{pa 5289 646}%
\special{pa 5328 588}%
\special{fp}%
% DOT 0 0 3 0 Black Black  
% 2 3068 790 3075 790
% 
\special{pn 4}%
\special{sh 1}%
\special{ar 3068 790 16 16 0 6.2831853}%
\special{sh 1}%
\special{ar 3075 790 16 16 0 6.2831853}%
% VECTOR 2 0 3 0 Black Black  
% 2 3068 790 3271 523
% 
\special{pn 8}%
\special{pa 3068 790}%
\special{pa 3271 523}%
\special{fp}%
\special{sh 1}%
\special{pa 3271 523}%
\special{pa 3215 564}%
\special{pa 3239 565}%
\special{pa 3247 588}%
\special{pa 3271 523}%
\special{fp}%
% STR 2 0 3 0 Black Black  
% 4 4678 967 4678 1018 5 0 0 0
% $(x_1,y_1)$
\put(46.7800,-10.1800){\makebox(0,0){$(x_1,y_1)$}}%
% STR 2 0 3 0 Black Black  
% 4 2825 549 2825 607 5 0 0 0
% $(x,y)$
\put(28.2500,-6.0700){\makebox(0,0){$(x,y)$}}%
% STR 2 0 3 0 Black Black  
% 4 2689 136 2689 192 5 0 0 0
% $M$
\put(26.8900,-1.9200){\makebox(0,0){$M$}}%
% SPLINE 2 0 3 0 Black Black  
% 8 4959 1152 5016 1135 5092 988 5172 879 5297 790 5351 760 5362 686 5362 686
% 
\special{pn 8}%
\special{pa 4959 1152}%
\special{pa 4991 1145}%
\special{pa 5020 1133}%
\special{pa 5041 1112}%
\special{pa 5056 1085}%
\special{pa 5068 1054}%
\special{pa 5079 1021}%
\special{pa 5092 989}%
\special{pa 5107 959}%
\special{pa 5125 933}%
\special{pa 5145 908}%
\special{pa 5166 885}%
\special{pa 5189 863}%
\special{pa 5213 842}%
\special{pa 5238 823}%
\special{pa 5265 806}%
\special{pa 5295 791}%
\special{pa 5327 777}%
\special{pa 5351 760}%
\special{pa 5362 732}%
\special{pa 5363 698}%
\special{pa 5362 686}%
\special{fp}%
% STR 2 0 3 0 Black Black  
% 4 5281 949 5281 997 5 0 0 0
% $\rho |d_i|$
\put(52.8100,-9.9700){\makebox(0,0){$\rho |d_i|$}}%
% LINE 2 0 3 0 Black Black  
% 2 4937 966 4989 998
% 
\special{pn 8}%
\special{pa 4937 966}%
\special{pa 4989 998}%
\special{fp}%
% LINE 2 0 3 0 Black Black  
% 2 5045 804 5100 852
% 
\special{pn 8}%
\special{pa 5045 804}%
\special{pa 5100 852}%
\special{fp}%
% LINE 2 0 3 0 Black White  
% 2 5328 581 5370 700
% 
\special{pn 8}%
\special{pa 5328 581}%
\special{pa 5370 700}%
\special{fp}%
% STR 2 0 3 0 Black White  
% 4 400 500 400 600 5 0 0 0
% $\quad$
\put(4.0000,-6.0000){\makebox(0,0){$\quad$}}%
\end{picture}}%
\end{center}
\caption{the map $\Delta'$\ :\ the geodesic segment is devided into the pieces  of rate of length $|c_1|:|c_2|:|c_3|$}\label{Fdiagonal}
\end{figure}
The following lemma is clear from the definition of $\Delta[\configd,\,\configc\, ;i]$.
\begin{lem}\label{Ldiagonalcomposition}
For any configurations $\configd, \configc_1, \configc_2$ and numbers $i, j$, the following equality holds.
\[
\Delta[\configd,\, \configc_1\circ_j\configc_2\,;i\,]=\Delta[\configd,\,\configc_1\,;i\,]\circ\Delta[\configd\circ_i\configc_1,\, \configc_2\,;i+j-1\,].
\]
{\rightline{\qedsymbol}}
\end{lem}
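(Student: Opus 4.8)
The plan is to unwind both sides of the claimed identity into explicit samplings of a single geodesic and then to match them coordinate by coordinate. Fix $(x,y)\in\TM$ with $x\in M$, $y\in ST_xM$, and let $s:[-\rho/2,\rho/2]\to M$ be the unit-speed geodesic with $s(0)=x$, $\dot s(0)=y$ from Definition \ref{Ddelta}. Write $\configd=(d_1,\dots,d_n)$, $\configc_1=(c_1,\dots,c_{m_1})$, $\configc_2=(c'_1,\dots,c'_{m_2})$ and let $t_k$ (resp.\ $t'_l$) be the center of $c_k$ (resp.\ $c'_l$). Here $\circ$ on the right-hand side is read as ``substitute into the $j$-th slot'': it is the composite $\TM\stackrel{\Delta[\configd,\configc_1;i]}{\longrightarrow}\TM^{\times m_1}$ followed by the map replacing the $j$-th coordinate $(x_j,y_j)$ by its image under $\Delta[\configd\circ_i\configc_1,\configc_2;i+j-1]:\TM\to\TM^{\times m_2}$. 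First I would record two combinatorial facts about $\DD_1$: the intervals of $\configc_1\circ_j\configc_2$ are $c_1,\dots,c_{j-1}$, then the images of $c'_1,\dots,c'_{m_2}$ under the order-preserving affine bijection $[-1/2,1/2]\to c_j$, then $c_{j+1},\dots,c_{m_1}$, and since that bijection is $t\mapsto t_j+|c_j|\,t$ the center of the $l$-th inserted interval is $t_j+|c_j|\,t'_l$; dually, the $(i+j-1)$-th interval of $\configd\circ_i\configc_1$ is the affine image of $c_j$ inside $d_i$, of length $|d_i|\cdot|c_j|$.

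Next I would isolate the only geometric input, namely that a geodesic is determined by its base point and unit initial velocity. Put $(x_j,y_j):=\big(s(\rho|d_i|t_j),\dot s(\rho|d_i|t_j)\big)$, the $j$-th coordinate of $\Delta[\configd,\configc_1;i](x,y)$. The geodesic $\tilde s$ with $\tilde s(0)=x_j$, $\dot{\tilde s}(0)=y_j$, defined on $[-\rho/2,\rho/2]$ by the standing hypothesis on $\rho$, agrees with $u\mapsto s(\rho|d_i|t_j+u)$ on the component of $0$ in the overlap of their domains; since $|d_i|,|c_j|\le 1$ and $t_j+|c_j|t'_l\in c_j\subseteq[-1/2,1/2]$, the sampling parameters $\rho|d_i||c_j|t'_l$ of $\tilde s$ and the values $\rho|d_i|t_j+\rho|d_i||c_j|t'_l=\rho|d_i|(t_j+|c_j|t'_l)$ all lie in that overlap, whence $\tilde s\big(\rho|d_i||c_j|t'_l\big)=s\big(\rho|d_i|(t_j+|c_j|t'_l)\big)$ and similarly for velocities.

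With these in place the verification is a blockwise comparison of the $m_1+m_2-1$ coordinates. For an index before $j$ or after $j+m_2-1$ the substitution does nothing and the coordinate corresponds on both sides to one of $c_1,\dots,c_{j-1},c_{j+1},\dots,c_{m_1}$, so each side returns $\big(s(\rho|d_i|t_k),\dot s(\rho|d_i|t_k)\big)$. For the index $j+l-1$, $1\le l\le m_2$: the left-hand side samples $s$ at $\rho|d_i|$ times the center $t_j+|c_j|t'_l$ of the $l$-th inserted interval, giving $\big(s(\rho|d_i|(t_j+|c_j|t'_l)),\dot s(\rho|d_i|(t_j+|c_j|t'_l))\big)$; the right-hand side first produces $(x_j,y_j)$ and then, using $|(\configd\circ_i\configc_1)_{i+j-1}|=|d_i||c_j|$ and the geodesic $\tilde s$ through $(x_j,y_j)$, returns $\big(\tilde s(\rho|d_i||c_j|t'_l),\dot{\tilde s}(\rho|d_i||c_j|t'_l)\big)$, which is the same point by the previous paragraph. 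Since $(x,y)$ was arbitrary, the two maps agree and the lemma follows.

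The step needing the most care will be the index bookkeeping for the three blocks, together with checking that every geodesic segment invoked on the right-hand side stays inside $[-\rho/2,\rho/2]$, the domain of $s$ and of each $\tilde s$; both reduce to the inequalities $|d_i|\le 1$, $|c_j|\le 1$ and the containment $t_j+|c_j|t'_l\in c_j$, and the only degenerate case, a one-point interval $c_j$ with $|c_j|=0$, is vacuous because then all sampling parameters collapse to $0$.
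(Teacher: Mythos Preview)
Your proof is correct: the coordinate-by-coordinate verification, the combinatorics of centers under $\circ_j$, and the use of geodesic uniqueness to identify $\tilde s(u)$ with $s(\rho|d_i|t_j+u)$ on the relevant range are all sound, and your domain checks for the geodesic parameters are exactly what is needed. The paper itself gives no proof of this lemma at all, declaring it ``clear from the definition of $\Delta[\configd,\configc;i]$'' and placing the $\qedsymbol$ immediately after the statement; your argument is precisely the explicit unwinding that justifies that claim, so there is nothing to compare beyond noting that you have supplied the details the paper omits.
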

\begin{lem}\label{Lexpanding}
For  any sufficiently small positive number $\rho$, the following condition holds. The map $\Delta[\configd,\configc;i]$ is expanding for any numbers $n\geq 1$, $m\geq 1$,  and $i$ with $1\leq i\leq n$, and  elements $\configd\in \DD_1(n),\ \configc\in \DD_1(m)$, 
\end{lem}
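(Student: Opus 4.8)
The plan is to identify $\Delta'=\Delta[\configd,\configc;i]$ with a product of time‑shifts of the geodesic flow on $\TM=STM$, and then to control that flow by a Grönwall estimate. Write $\phi_u:\TM\to\TM$ for the time‑$u$ geodesic flow, $\phi_u(x,y)=(\gamma(u),\dot\gamma(u))$ where $\gamma$ is the geodesic with $\gamma(0)=x$, $\dot\gamma(0)=y$. Geodesics have constant speed, so $\phi_u$ preserves the unit tangent bundle $\TM$; since $M$ is closed, $\phi_u$ is defined for all $u\in\RR$ and is a diffeomorphism, being the flow of the geodesic spray, a smooth vector field on the compact manifold $\TM$. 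Unwinding Definition \ref{Ddelta}: if $t_j$ denotes the center of $c_j$, then the $j$-th component $(x_j,y_j)$ of $\Delta'(x,y)$ is precisely $\phi_{\rho\,|d_i|\,t_j}(x,y)$, so
\[
\Delta'=(\phi_{s_1},\dots,\phi_{s_m}):\TM\to\TM^{\times m},\qquad s_j=\rho\,|d_i|\,t_j .
\]
As $d_i$ and each $c_j$ are subintervals of $[-1/2,1/2]$, we have $|d_i|\le 1$ and $|t_j|\le 1/2$, hence $|s_j|\le\rho/2$ for all $j$. In particular each $\phi_{s_j}$ is a diffeomorphism, so $\Delta'$ is an injective immersion out of a compact manifold, i.e. a smooth embedding; it remains only to bound the ratio appearing in the definition of $r(-)$.

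The key input is a uniform bi‑Lipschitz bound for $\phi_u$ in the Sasaki metric. Since the geodesic spray is a smooth vector field on the compact manifold $\TM$, the variational equation for $d\phi_u$ together with Grönwall's inequality produces a constant $C=C(M,g)$ with $\|d\phi_u|_v\|\le e^{C|u|}$ for all $v\in\TM$ and all $u$. Thus $\phi_u$ is $e^{C|u|}$‑Lipschitz, and applying the same bound to $\phi_{-u}=(\phi_u)^{-1}$ yields, for all $v,w\in\TM$,
\[
d_{\TM}(\phi_u v,\phi_u w)\ \ge\ e^{-C|u|}\,d_{\TM}(v,w).
\]

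Finally I would assemble everything using that the product (Riemannian) metric on $\TM^{\times m}$ has distance function $d_{\TM^{\times m}}\big((a_j)_j,(b_j)_j\big)^2=\sum_j d_{\TM}(a_j,b_j)^2$. If $m=1$, then $c_1=[-1/2,1/2]$, so $t_1=0$, $s_1=0$, and $\Delta'=\phi_0=\mathrm{id}_{\TM}$ is trivially expanding. If $m\ge 2$, then for any $v\ne w$ in $\TM$,
\begin{align*}
\frac{d_{\TM^{\times m}}\big(\Delta'(v),\Delta'(w)\big)^2}{d_{\TM}(v,w)^2}
&=\sum_{j=1}^m\frac{d_{\TM}(\phi_{s_j}v,\phi_{s_j}w)^2}{d_{\TM}(v,w)^2}\\
&\ge\ \sum_{j=1}^m e^{-2C|s_j|}\ \ge\ m\,e^{-C\rho}\ \ge\ 2e^{-C\rho},
\end{align*}
where the middle inequality uses $|s_j|\le\rho/2$. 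Hence whenever $\rho\le(\ln 2)/C$ — a bound depending only on $(M,g)$ — we get $r(\Delta')\ge 1$ for all $n,m,i,\configd,\configc$; that is, $\Delta'$ is expanding. The only genuine ingredient is the Grönwall estimate for the geodesic flow, so I do not anticipate a real obstacle; the one point to watch is bookkeeping — the degenerate case $m=1$ must be separated off (the crude bound $m\,e^{-C\rho}$ is $<1$ there, but $\Delta'$ is the identity), and the new smallness requirement on $\rho$ must be imposed consistently with the other constraints on $\rho$ used elsewhere (the existence of a length‑$\rho$ geodesic for every initial value, and any further smallness assumptions made later).
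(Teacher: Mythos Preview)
Your argument is correct and takes a genuinely different route from the paper's. The paper first invokes Lemma~\ref{Ldiagonalcomposition} to reduce to the case $m=2$ (since a composition of expanding embeddings is expanding), then for $m=2$ proceeds by a local--global split: for nearby pairs it compares with the flat Euclidean model to extract a constant $\geq\sqrt{7}/2$, and for distant pairs it compares $\Delta'$ with the ordinary diagonal $\Delta$ (which is $\sqrt{2}$-expanding), using that $\Delta'$ is $C^0$-close to $\Delta$ when $\rho$ is small. Your approach instead recognises $\Delta'$ as a tuple $(\phi_{s_1},\dots,\phi_{s_m})$ of geodesic-flow shifts with $|s_j|\le\rho/2$, applies a Gr\"onwall bound to get $d(\phi_{s_j}v,\phi_{s_j}w)\ge e^{-C|s_j|}d(v,w)$, and then uses the Pythagorean distance formula on $\TM^{\times m}$ to conclude $r(\Delta')^2\ge m\,e^{-C\rho}$. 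This handles all $m\ge 2$ uniformly, avoids the reduction via Lemma~\ref{Ldiagonalcomposition}, and sidesteps the near/far dichotomy; the paper's argument, by contrast, is more bare-handed and gives an explicit flat-model constant independent of any derivative bound on the spray. One small point worth making explicit is that the product Riemannian distance on $\TM^{\times m}$ really is $\bigl(\sum_j d_{\TM}(a_j,b_j)^2\bigr)^{1/2}$ (this holds for products of complete Riemannian manifolds, and $\TM$ is compact), which you use without comment.
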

\begin{proof}
It is enough to prove the case of $m=2$ since for $m\geq 3$, $\Delta'$ is equal to a successive composition of $(\Delta')$'s of  arity 2 by Lemma \ref{Ldiagonalcomposition}. We set $\rho_0=|d_i|\rho $. We shall consider the case that $M$ is a metric vector space $V$ as a local model. Take points $(x,y), (v,w)\in \widehat{V}=V\times SV$, where $SV$ is the  unit sphere in $V$. Put $\configc=(c_1,c_2)$.  Let $-s$,\ $t$ be the centers of $c_1$, $c_2$ respectively ($0<s,t <1/2,\ s+t=1/2$). By definition,
we have $\Delta'(x,y)=[(x-\rho_0 sy, y), (x+\rho_0 ty,y)]$. When we set $a=||x-v||$ and $b=||y-w||$, we easily see
\[
\begin{split}
||\Delta' (x,y)-\Delta' (v,w)||^2 &  \geq 2 a^2-\rho_0 |s-t|ab+\{ \rho_0^2(s^2+t^2)/4 +2\} b^2 \\
                                &  \geq  2 a^2 -\rho_0 |s-t|(a^2+b^2)/2+ \{ \rho_0^2(s^2+t^2)/4 +2\} b^2. 
\end{split}
\]
So  we have  
\[
 \frac{||\Delta' (x,y)-\Delta' (v,w)||}{||(x,y)-(v,w)||}\geq \frac{\sqrt{7}}{2} \ \text{\  for \ }\rho<1 \quad \cdots (*). 
\]
There exists a  number $r>0$ such that for sufficiently small $\rho$,  for any point $p\in M$and any pair $(x,y), (v,w)\in T_pM\times ST_pM$ with $||x||,||v||\leq r$, the following inequality
holds.
\[
\frac{d(\Delta'_{M}(\exp x,\exp ' y),\Delta'_M (\exp v,\exp ' w))}{d(\Delta'_{T_pM}
(x,y),\Delta'_{T_pM}(v,w))}>1-\frac{1}{100}\quad \cdots (**),  
\]
where $\exp $ is the exponential map at $p$ and $\exp'$ is its differential. Combining the inequalities (*) and (**), for $(x,y),(v,w)\in \TM$,  we see

$d_{\TM^2}(\Delta'(x,y),\Delta'(v,w)) >d_{\TM} ((x,y),(v,w))$ if $d_{M}(x,v)\leq r$.  For the case of $d_M(x,v)> r$,   if we take  $\rho$ sufficiently small relative to $r$,  the following inequality holds. 
\[
\frac{d(\Delta'(x,y),\Delta'(v,w))}{d(\Delta(x,y),\Delta(v,w))}>1-\frac{1}{100}\text{\  for \ }(x,y),\ (v,w)\in \TM \text{\ with\ }d(x,v)>r.
\]
Here, $\Delta:\TM\to\TM^{\times 2}$ is the usual diagonal. Then,  if $d_{M}(x,v)> r$, we have the  inequality
\[
d(\Delta'(x,y),\Delta'(v,w))>\Bigl(1-\frac{1}{100}\Bigr)\,\sqrt[]{2}\,d((x,y),(v,w)).
\]
Thus, we have shown the lemma.
\end{proof}
The following lemma is an exercise of Riemannian geometry. 
\begin{lem}\label{Ldiagonaltube}
For any sufficiently small positive number $\rho$ the following condition holds. For any $n\geq 2$, $G\in \GG(n)$ and set of positive numbers $\{\eps_{ij}\mid i<j,\ (i,j)\in E(G)\}$ satisfying $\underset{(i,j)\in E(G)}{\sum} \eps_{ij}<\rho$, the inclusion of subspaces of $M^{\times n}$
\[
\{(x_1,\dots, x_n)\mid \  ^\forall (i,j)\in E(G)\ \ x_i=x_j\  \}\longrightarrow  \{(x_1,\dots, x_n)\mid \  ^\forall (i,j)\in E(G)\ \ d(x_i,x_j)\leq\eps_{ij}\  \}
\]
is a homotopy equivalence. \hspace{\fill} {\qedsymbol}
\end{lem}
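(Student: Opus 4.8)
The plan is to exhibit an explicit strong deformation retraction of the larger space $W:=\{(x_1,\dots,x_n)\mid d(x_i,x_j)\le\eps_{ij}\ \forall\,(i,j)\in E(G)\}$ onto the smaller one $\Delta:=\{(x_1,\dots,x_n)\mid x_i=x_j\ \forall\,(i,j)\in E(G)\}$ (note $\Delta\subset W$ since the $\eps_{ij}$ are positive); this makes the inclusion a homotopy equivalence.

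First I would fix the retracting data. For each connected component $C$ of $G$ choose a spanning tree $T_C$ and set $r_C=\min C$; write $r(i)=r_C$ when $i\in C$. Running along the $T_C$-path from $i$ to $r(i)$, the triangle inequality and the inclusion $E(T_C)\subset E(G)$ give $d(x_i,x_{r(i)})\le\sum_{(a,b)\in E(G)}\eps_{ab}<\rho$ for every $(x_1,\dots,x_n)\in W$. The ``additional assumption on $\rho$'' promised before the statement will be that $3\rho$ is below the injectivity radius and the convexity radius of $M$; then every geodesic ball of radius $3\rho$ is geodesically convex, minimizing geodesics between its points are unique and depend smoothly on the endpoints, and $d_M(\cdot,\cdot)^2$ has convex restriction to every geodesic segment of $M\times M$ lying inside $\{(p,q):d_M(p,q)<3\rho\}$. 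Such a $\rho>0$ exists because $M$ is compact. Under this assumption, for each $(x_1,\dots,x_n)\in W$ and each $i$ there is a unique minimizing geodesic $\gamma_i\colon[0,1]\to M$ with $\gamma_i(0)=x_i$ and $\gamma_i(1)=x_{r(i)}$, and $\gamma_i$ depends smoothly on the configuration.

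Next I would define $H\colon W\times[0,1]\to M^{\times n}$ by $H_t(x_1,\dots,x_n)=(\gamma_1(t),\dots,\gamma_n(t))$ and verify the four properties of a strong deformation retraction. Three of them are immediate: $H_0=\mathrm{id}$; $H_1$ sends a configuration to $(x_{r(1)},\dots,x_{r(n)})$, which lies in $\Delta$ since $r(i)=r(j)$ for each edge $(i,j)$; and $H_t$ restricts to the identity on $\Delta$ for all $t$, because there every $\gamma_i$ is the constant geodesic at $x_{r(i)}=x_i$. The fourth --- that $H$ takes values in $W$ --- is the crux. For an edge $(i,j)$ the geodesics $\gamma_i,\gamma_j$ both end at $p:=x_{r(i)}=x_{r(j)}$, and $d(\gamma_i(t),\gamma_j(t))\le d(\gamma_i(t),p)+d(p,\gamma_j(t))<2\rho$, so $t\mapsto(\gamma_i(t),\gamma_j(t))$ is a geodesic segment of $M\times M$ contained in $\{d_M<3\rho\}$; hence $t\mapsto d(\gamma_i(t),\gamma_j(t))^2$ is convex on $[0,1]$, and as its values at $t=0$ and $t=1$ are $d(x_i,x_j)^2$ and $0$, it stays $\le d(x_i,x_j)^2\le\eps_{ij}^2$, giving $d(\gamma_i(t),\gamma_j(t))\le\eps_{ij}$. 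Continuity of $H$ follows from the smooth dependence of unique minimizing geodesics on their endpoints together with continuity of $(x_1,\dots,x_n)\mapsto x_{r(i)}$.

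The one genuinely non-formal ingredient, and the main obstacle, is the convexity of $d_M^2$ along short geodesics of $M\times M$ (equivalently, positivity of the convexity radius together with nonnegativity of the Hessian of $\tfrac{1}{2}d_M^2$ near the diagonal); this is the standard comparison-geometry fact alluded to by ``an exercise in Riemannian geometry'', and it is precisely where the extra smallness of $\rho$ enters. As an alternative packaging one can instead take $\rho$ small enough that $W$ lies in a tubular neighborhood of $\Delta$ and is fibrewise star-shaped in normal-exponential coordinates, and retract along the fibres; checking the star-shapedness reduces to the same convexity statement.
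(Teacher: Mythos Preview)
The paper gives no proof, labeling the lemma ``an exercise of Riemannian geometry.'' Your argument is a correct such exercise: the retraction along geodesics to a chosen root in each component is the natural construction, and you have correctly isolated the only nontrivial step, the local convexity of $d_M^2$ along product geodesics near the diagonal. For the record, that convexity does hold for small $\rho$: in normal coordinates at the common endpoint $p$, with $v_k=\exp_p^{-1}(x_k)$, the curvature correction in the expansion of $s\mapsto d(\exp_p(sv_i),\exp_p(sv_j))^2$ is controlled by $|v_i\wedge v_j|^2\le |v_i||v_j|\,|v_i-v_j|^2\le\rho^2|v_i-v_j|^2$, so the second derivative is $2|v_i-v_j|^2\bigl(1+O(\kappa\rho^2)\bigr)$ with $\kappa$ a sectional-curvature bound, and stays nonnegative once $\rho$ is small enough.
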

\noindent {\bf Assumption:}\ In the rest of paper, we fix the number $\rho$ so that Lemmas \ref{Lexpanding} and \ref{Ldiagonaltube} hold. \\

\indent We shall define a $\DD_1$-comodule $\TCCM$ of NUCSRS. 
We set 
\[
\MT(n)=\NT \quad \text{for}\quad N= \TM ^{\times n},
\]
see Definition \ref{Dtubulernbd}. We first define a subspectrum $\TCCM(\configc )\subset \MT(n)$ as follows.
\[
\TCCM(\configc )_k=\Bigl\{[e,\epsilon ,u]\in \MT(n)_k\  \Bigl| \ \epsilon<\frac{\rho }{2}\cdot \min \{\,|c_1|,\dots, |c_n|\,\}\  \Bigr\}
\] 
We define a subspectrum $\TCCM(n)\subset \Map(\DD_1(n), \MT(n))$ as follows.
\[
\phi\in \TCCM(n)_k\iff {}^{\forall} \configc\in\DD_1(n),\ \phi(\configc)\in \TCCM(\configc)_k\, . 
\]
It is clear that the inclusion $\TCCM(n)\to \Map(\DD_1(n), \MT(n))$ is a level-equivalence for any $n\geq 1$. We denote the sequence $\{\TCCM (n)\}$ by $\TCCM$.\\
\indent We shall define an action of $\Sigma_n$ on $\TCCM(n)$, with which we regard $\TCCM$ as a symmetric sequence. For $\configc=(c_1,\dots,c_n)\in\DD_1(n)$ and $\sigma \in\Sigma_n$, we define $\configc^{\, \sigma}\in \DD_1(n)$ to be the configuration of the sub-intervals of length $|c_{\sigma(1)}|,\ |c_{\sigma(2)}|,\dots,|c_{\sigma(n)}|$ placed from the side of $-1/2$ to the side of $1/2$.  For $[e,\epsilon,u]\in \MT(n)_k$ and $\sigma \in \Sigma_n$, we set $[e,\epsilon,u]_\sigma=[e\circ \underline{\sigma}, \epsilon, u]$ where $\underline{\sigma}:\TM^{\times n}\to \TM^{\times n}$ is given by $(z_1,\dots, z_n)\mapsto (z_{\sigma^{-1}(1)},\dots z_{\sigma^{-1}(n)})$. (To distinguish the action of $\Sigma_k$ which is a part of structure of spectrum, we use the subscript $[-]_\sigma$.)  
\begin{defi}\label{DTCCMaction}
With the above notations, for $\phi\in \TCCM(n)_k$ and $\sigma\in \Sigma_n$, we define an element $\phi^\sigma\in \TCCM(n)_k$
\[
\phi^\sigma(\configc)= \{ \phi(\configc^{\sigma^{-1}}) \}_\sigma .
\]
Clearly, $\phi\mapsto \phi^\sigma$ gives a  $\Sigma_n$-action on $\TCCM(n)$.
\end{defi}
\indent In order to define a partial composition on $\TCCM$, we shall define a map 
\[
\Xi=\Xi[\configd,\configc;i\,]:\MT (n+m-1) \to \MT (n).
\]
For an element $[e,\eps, u] \in \MT(n+m-1)_k$, 
we put 
\begin{itemize}
\item $e'=e \circ (1_{i-1}\times  \Delta'\times 1_{n-i}): \TM^{\times n} \to \RR^{k}$,  \\
where $\Delta'=\Delta[\configd,\configc;i]$ and $1_{l}$ is the identity on $\TM^{\times l}$, and  \vspace{1mm}
\item $\displaystyle \epsilon'=\frac{1}{8^{m-1}}\min\{\eps,\  L(e,\configd\circ_i\configc )\}$,  where $L(e,\configc')$ is the minimum of the numbers \\
$L(e\circ \Delta[\configc_1,\configc_2;j])$ where the triple $(\configc_1,\configc_2, j)$ runs through those which satisfy \\$\configc'=(\configc_1\circ_j\configc_2)\circ_l\configc_3$ for some configuration $\configc_3$ and number $l$. 
\end{itemize}
 By Lemma \ref{Lexpanding}, $e'$ is expanding. We set $\Xi ([e,\eps,u])=[e',\eps', u]$. 
Clearly, $\Xi$ is well-defined map of spectra. 
\begin{defi}\label{DTCCMcomposition} We use the above notations.
\begin{itemize}
\item We define  a partial composition
\[
(-\circ_i-): \DD_1(m)\hotimes \TCCM(n+m-1)\longrightarrow \TCCM(n)
\]
 on $\TCCM$ by setting 
\[
(\configc\circ_i\phi)(\configd)=\Xi\bigl(\,\phi(\configd\circ_i\configc)\, \bigr),\quad \text{where}\quad \Xi=\Xi[\configd,\configc;i\,], 
\]
for elements $\phi \in \TCCM(n+m-1),\ \configc\in \DD_1(m),\ \configd\in \DD_1(n)$.\\
\item We  define a multiplication $\tilde \mu:\TCCM(n)\wedge \TCCM(n)\to \TCCM(n)$ by 
\[
\tilde\mu(\langle \phi_1,\phi_2;\sigma\rangle )(\configd)=\mu(\langle \phi_1(\configd),\phi_2(\configd); \sigma\rangle  )
\] where $\mu$ denotes the multiplication given in Definition \ref{Dtubulernbd}.
\end{itemize}
With these operations and the action of $\Sigma_n$ in Definition \ref{DTCCMaction}, we regard $\TCCM$ as a $\DD_1$-comodule of NUCSRS.
\end{defi}
\begin{lem}\label{LTCCMwelldef}
The structure of  $\DD_1$-comodule of NUCSRS on $\TCCM$ given in Definition \ref{DTCCMcomposition} is well-defined. 
\end{lem}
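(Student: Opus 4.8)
The plan is to break the statement into four parts: (1) well-definedness of the auxiliary map $\Xi=\Xi[\configd,\configc;i]\colon\MT(n+m-1)\to\MT(n)$ as a map of spectra; (2) the fact that the partial compositions and the multiplication $\tilde\mu$ actually take values in the subspectrum $\TCCM(n)\subset\Map(\DD_1(n),\MT(n))$, and that the $\Sigma_n$-action preserves it; (3) the three comodule axioms of Definition \ref{Dcontra-module}; and (4) the NUCSRS axioms on each $\TCCM(n)$ together with the requirement that the symmetric group actions and the partial compositions be maps of NUCSRS. The unifying observation is that everything is assembled ``pointwise in $\configd\in\DD_1(n)$'' out of the spectrum $\MT(n)=(\TM^{\times n})^{-\tau}$, whose own structure of NUCSRS is already known to be well-defined by Lemma \ref{Lthomwelldef}; the genuinely new content is the control of the radius parameter $\eps'$ and of the geodesic reparametrizations, for which the two inputs are Lemmas \ref{Lexpanding} and \ref{Ldiagonalcomposition}.

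For (1): by Lemma \ref{Lexpanding} the map $\Delta'=\Delta[\configd,\configc;i]$ is expanding, and since $r(f\circ g)\ge r(f)r(g)$ and $r(f\times g)\ge\min\{r(f),r(g)\}$, the composite $e'=e\circ(1_{i-1}\times\Delta'\times 1_{n-i})$ is again expanding. For the radius one notes that, among the decompositions occurring in the minimum defining $L(e,\configd\circ_i\configc)$, the one $\configd\circ_i\configc=(\configd\circ_i\configc)\circ_l(\mathrm{triv})$ with $\configc_1=\configd$, $\configc_2=\configc$, $j=i$ contributes precisely $L(e')$, so $L(e,\configd\circ_i\configc)\le L(e')$; hence $\eps'=\tfrac1{8^{m-1}}\min\{\eps,L(e,\configd\circ_i\configc)\}$ satisfies $0<\eps'<L(e')$ (for $m\ge2$ because the factor $8^{-(m-1)}<1$, and for $m=1$ because then $e'=e$ and $\eps'\le\eps<L(e)$). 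Finally $\Xi$ is continuous, $\Sigma_k$-equivariant (a coordinate permutation of $\RR^k$ commutes with precomposition in the domain and leaves $L(e,-)$ and the radius bound unchanged), and it commutes with the suspension maps $t\wedge[e,\eps,u]\mapsto[0\times e,\eps,(t,u)]$ since $L(0\times e,-)=L(e,-)$ and $(0\times e)\circ\psi=0\times(e\circ\psi)$; so $\Xi$ is a map of spectra. Continuity of $(-\circ_i-)$ in $\configc\in\DD_1(m)$ is clear, since $\Delta[\configd,\configc;i]$, $\configd\circ_i\configc$ and $L(e,-)$ depend continuously on $\configc$.

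For (2): if $\phi(\configd\circ_i\configc)=[e,\eps,u]\in\TCCM(\configd\circ_i\configc)_k$ then $\eps<\tfrac\rho2$ times the shortest interval length of $\configd\circ_i\configc$; since each interval of $\configd\circ_i\configc$ has length either some $|d_j|$ or $|d_i|\,|c_l|\le|d_i|$, that shortest length is $\le\min_j|d_j|$, and therefore $\eps'\le\eps<\tfrac\rho2\min_j|d_j|$, i.e. $(\configc\circ_i\phi)(\configd)\in\TCCM(\configd)_k$ for every $\configd$. For $\tilde\mu$: the formula for $\eps_{12}$ in Definition \ref{Dtubulernbd} gives $\eps_{12}\le\min\{\eps_1,\eps_2\}$, so $\mu$ only shrinks the radius and $\tilde\mu$ preserves $\TCCM(n)$; likewise the $\Sigma_n$-action of Definition \ref{DTCCMaction}, which only reindexes $\DD_1(n)$ and permutes the factors of $\TM^{\times n}$ by an isometry, preserves $\TCCM(n)$ and is compatible with the spectrum structure.

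For (3) and (4): unitality and the two equivariance axioms follow directly from the definitions (using Lemma \ref{Ldiagonalcomposition} with a trivial configuration for unitality). Commutativity and associativity of each $\tilde\mu$ are inherited from $\mu$ (Lemma \ref{Lthomwelldef}), since $\tilde\mu$ is $\mu$ applied levelwise in $\configd$ and $\wedge$ is symmetric monoidal; and, on the embedding level, that $\Xi$ and the $\Sigma_n$-action are maps of NUCSRS follows from $(\psi\times\psi)\circ\Delta=\Delta\circ\psi$ on $\TM^{\times n}$ (precomposition by $\psi$ commutes with the diagonal). This leaves the delicate point, which I expect to be the main obstacle: after using associativity and equivariance of the operad $\DD_1$ and the geodesic reparametrization identity of Lemma \ref{Ldiagonalcomposition} for the underlying embeddings $e\mapsto e\circ\Delta[\cdots]$, the three associativity cases of Definition \ref{Dcontra-module} for the comodule, together with the compatibility of $\Xi$ with $\mu$, reduce to a bookkeeping of the radius — one must check that the constant $\tfrac1{8^{m-1}}$ in $\eps'$ and the minimum $L(e,\configc')$, taken over all two-step decompositions of $\configc'$ broadly enough to be stable under both $\Xi$ and $\mu$, were chosen so that all these identities hold on the nose at the level of the triples $[e,\eps,u]$. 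This is a careful but essentially mechanical computation of the same type as the verification of the associativity of $\eps_{12}$ in the proof of Lemma \ref{Lthomwelldef}; everything else is either formal or is geometric input already isolated in Lemmas \ref{Lexpanding}, \ref{Ldiagonalcomposition} and \ref{Ldiagonaltube}.
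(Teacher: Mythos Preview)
Your proposal is correct and covers the same ground as the paper's proof, though with the emphasis redistributed. The paper's argument is terse: it invokes Lemma~\ref{Ldiagonalcomposition} for axiom~(1), declares axiom~(2) clear, and then spends most of its space on axiom~(3), unwinding $(\configc\circ_i\phi)^\sigma$ and $\configc\circ_{\sigma^{-1}(i)}(\phi^{\sigma_1})$ explicitly and reducing the equivariance to the two identities $\configd^{\sigma^{-1}}\circ_i\configc=(\configd\circ_{\sigma^{-1}(i)}\configc)^{\sigma_1^{-1}}$ and $\{\Xi_1(x)\}_\sigma=\Xi_2(x_{\sigma_1})$. You instead front-load the well-definedness of $\Xi$ and the target-spectrum checks (which the paper asserts as ``clear'' just before Definition~\ref{DTCCMcomposition}) and treat the equivariance as routine, while flagging the radius bookkeeping in the associativity axiom as the genuinely delicate step --- a step the paper in fact glosses over entirely with its one-line appeal to Lemma~\ref{Ldiagonalcomposition}. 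Both treatments leave that $\eps'$-accounting as an exercise, so neither is more complete than the other on the hard part; your write-up is simply more explicit about where the work is hiding. (One small note: Lemma~\ref{Ldiagonaltube} is not actually used here --- it enters later, in Lemma~\ref{Tsemistable}.)
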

\begin{proof}
By Lemma \ref{Ldiagonalcomposition}, we see the equality of (1) in Definition \ref{Dcontra-module} holds. The equality in (2) in the same definition is clear.\\
\indent We shall prove the equality in (3).  Take elements $\configc \in \DD_1(m), \configd\in \DD_1(n),\ \phi \in \TCCM(m+n-1), $ and $\sigma\in \Sigma_n$. By definition,
\[
\begin{split}
(\configc\circ_i \phi)^\sigma(\configd)& =\{\configc\circ_i\phi(\configd^{\,\sigma^{-1}})\}_{\sigma} \\
&=\bigl\{\Xi_1(\phi\bigl(\configd^{\,\sigma^{-1}}\circ_i\configc)\bigr)\bigr\}_{\sigma},\\
\configc\circ_{\sigma^{-1}(i)}(\phi^{\sigma_1})(\configd )&=\Xi_2\bigl\{
\phi\bigl((\configd\circ_{\sigma^{-1}(i)}\configc)^{\sigma_1^{-1}}\bigr)_{\sigma_1}\bigr\},
\end{split}
\]
where $\Xi_1=\Xi{[\configd^{\,\sigma^{-1}},\configc;i\,]}$, and $\Xi_2= \Xi{[\configd,\configc;\sigma^{-1}(i)\,]}$. It is easy to check the following equalities:
\[
\configd^{\,\sigma^{-1}}\circ_i\configc  = (\configd\circ_{\sigma^{-1}(i)}\configc)^{\sigma_1^{-1}}, \qquad
\bigl\{\Xi_1(x)\bigr\}_\sigma =\Xi_2 (x_{\sigma_1}) 
\]
By these equalities, we have verified the desired equality. Compatibility of the multiplication with the partial composition is obvious.
\end{proof}

\subsection{Construction of the comodule $\CCM$}
 For an element $[e,\eps, u]\in \MT(n)$  which is not the base point,  put $\pi_e(u)=((x_1,y_1),\dots, (x_n,y_n))$ with $x_i\in M$ and $y_i\in ST_{x_i}M$.   Then we define $\DeltaT_{p q}(\configc)\subset \TCCM (\configc)$ by the following equivalence.
\[
[e,\eps, u ]\in \DeltaT_{pq}(\configc )_k \ \iff \ [e,\eps, u]=*, \text{ or }\  d_M(x_p,x_q)\leq \delta_{pq}(\configc \,,\eps )
\]
where
\[
\delta_{pq}(\configc\, ,\eps )=
\frac{\rho}{2}\bigl(\,|c_p|+|c_q|\,\bigr)-\eps ,
\]
which is a positive number by the definition of $\TCCM (\configc)$.
Define a subspectrum $\DeltaT_{pq}(n)\subset \TCCM(n)$ by
\[
\phi\in \DeltaT_{pq}(n)_k \ \iff \ {}^{\forall} \configc\in\DD_1(n),\  \phi(\configc)\in \DeltaT_{pq}(\configc )_k
\]
The following lemma is a key to define the comodule $\CCM$. Most of  the technical definitions by now are necessary to make this lemma hold.
\begin{lem}\label{Ldiagonalinclusion}
\begin{enumerate}
\item For any numbers $n\geq 1$, $m\geq 2$ and element $\configc\in \DD(m),$ let $\configc\circ_i\DeltaT_{pq}(n+m-1)$ denotes the subspectrum of $\TCCM(n)$ consisting of the images of $\configc\circ_i(-)$. We have the following inclusion at each level $k$.
\[
\configc\circ_i\DeltaT_{pq}(n+m-1)
\subset
\left\{
\begin{array}{ll}
\{*\} & (i\leq p<q\leq i+m-1)\vspace{1mm}\\
\DeltaT_{p\,i}(n) & (p<i\leq q\leq i+m-1)\vspace{2mm}\\
\DeltaT_{p,\, q-m+1}(n) & (p<i,i+m-1<q)\vspace{2mm}\\
\DeltaT_{i,\, q-m+1}(n) & (i\leq p\leq i+m-1<q)\vspace{2mm}\\
\DeltaT_{p-m+1,\, q-m+1}(n) & (i+m-1<p<q)
\end{array}
\right.
\]
More precisely, for example, the second inclusion means $\configc\circ_i\DeltaT_{pq}(n+m-1)_k\subset \DeltaT_{p\,i}(n)_k$ for each $k$.
\item The image  of $\DeltaT_{pq}(n)\wedge \TCCM(n)$ by the multiplication $\tilde \mu$ given in Definition \ref{DTCCMcomposition} is contained in $\DeltaT_{pq}(n)$.
\end{enumerate}
\end{lem}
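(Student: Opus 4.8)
The plan is to verify both inclusions configuration by configuration. An element $\psi\in\TCCM(n)$ lies in $\DeltaT_{p'q'}(n)$ (resp.\ equals the basepoint) as soon as $\psi(\configd)\in\DeltaT_{p'q'}(\configd)$ (resp.\ $\psi(\configd)=*$) for every $\configd\in\DD_1(n)$, and this condition refers only to $\eps$ and to the base points in $M$ of the coordinates of $\pi_e(u)$. So I would fix $\configd=(d_1,\dots,d_n)$ and $\configc=(c_1,\dots,c_m)$, put $f=\configd\circ_i\configc$, and keep at hand the elementary bookkeeping: the intervals of $f$ have lengths $|f_j|=|d_j|$ for $j<i$, $|f_{i+\ell-1}|=|d_i|\,|c_\ell|$ for $1\le\ell\le m$, and $|f_j|=|d_{j-m+1}|$ for $j>i+m-1$; the center $t_\ell$ of $c_\ell$ satisfies $|t_\ell|\le\tfrac12-\tfrac{|c_\ell|}{2}$; and $t_{\ell'}-t_\ell\ge\tfrac12(|c_\ell|+|c_{\ell'}|)$ for $\ell<\ell'$, with equality only if $\ell'=\ell+1$.

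The key preliminary, which I would set up first, is a comparison of the two retractions occurring in a partial composition. Write $[e,\eps,u]=\phi(f)$ and $[e',\eps',u]=\Xi[\configd,\configc;i]\bigl([e,\eps,u]\bigr)=(\configc\circ_i\phi)(\configd)$, so $e'=e\circ g$ with $g=1_{i-1}\times\Delta[\configd,\configc;i]\times1_{n-i}$ and $\eps'\le 8^{-(m-1)}\eps\le\eps/8$. If $[e',\eps',u]\ne*$ (otherwise there is nothing to prove), then $u\in\nu_{\eps'}(e')\subseteq\nu_\eps(e)$ (so both $\pi_e(u)$ and $\pi_{e'}(u)$ are defined), and since $e'(\TM^{\times n})=e\bigl(g(\TM^{\times n})\bigr)$ sits inside $e(\TM^{\times(n+m-1)})$ we get $\|u-\pi_e(u)\|=\operatorname{dist}\bigl(u,e(\TM^{\times(n+m-1)})\bigr)\le\operatorname{dist}\bigl(u,e'(\TM^{\times n})\bigr)<\eps'$, whence $\|\pi_e(u)-\pi_{e'}(u)\|<2\eps'$. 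As $e$ is expanding (Lemma \ref{Lexpanding}) and the projections $\TM^{\times(n+m-1)}\to\TM\to M$ are distance non-increasing, letting $\xi_1,\dots,\xi_{n+m-1}$ and $x_1,\dots,x_n$ be the base points in $M$ of the coordinates of $\pi_e(u)$ and $\pi_{e'}(u)$, one obtains $d_M(x_j,\xi_j)<2\eps'$ for $j<i$, $d_M(x_{j-m+1},\xi_j)<2\eps'$ for $j>i+m-1$, and $d_M\bigl(s(\rho|d_i|t_\ell),\xi_{i+\ell-1}\bigr)<2\eps'$ for $1\le\ell\le m$, where $s$ is the unit-speed geodesic through $x_i$ in the direction of the vector part of the $i$-th coordinate of $\pi_{e'}(u)$. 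Finally $\phi\in\DeltaT_{pq}(n+m-1)$ with $\phi(f)\ne*$ gives $d_M(\xi_p,\xi_q)\le\delta_{pq}(f,\eps)=\tfrac\rho2(|f_p|+|f_q|)-\eps$, while $d_M\bigl(x_i,s(\rho|d_i|t_\ell)\bigr)\le\rho|d_i|\,|t_\ell|\le\tfrac{\rho|d_i|}2(1-|c_\ell|)$ trivially.

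Granting this, the four ``surviving'' cases reduce to short telescoping triangle inequalities. For instance, for $p<i\le q\le i+m-1$ the target edge is $(p,i)$ and
\[
d_M(x_p,x_i)<2\eps'+\Bigl(\tfrac\rho2\bigl(|d_p|+|d_i||c_{q-i+1}|\bigr)-\eps\Bigr)+2\eps'+\tfrac{\rho|d_i|}2\bigl(1-|c_{q-i+1}|\bigr)=\tfrac\rho2\bigl(|d_p|+|d_i|\bigr)+4\eps'-\eps,
\]
which is $\le\tfrac\rho2(|d_p|+|d_i|)-\eps'=\delta_{pi}(\configd,\eps')$ because $5\eps'\le\eps$; the cases $p<i\le i+m-1<q$, $i+m-1<p<q$ and the (unstated) case $p<q<i$, landing in $\DeltaT_{pq}(n)$, are identical with no geodesic term, and $i\le p\le i+m-1<q$ is the mirror image of the displayed one. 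The remaining case $i\le p<q\le i+m-1$, where one must show $(\configc\circ_i\phi)(\configd)=*$ for all $\configd$, is the one I expect to be the main obstacle, as it is the only place the geometry of the geodesic diagonal is used essentially. I would argue by contradiction: were the output not the basepoint, $\xi_p$ and $\xi_q$ would be $2\eps'$-close to the distinct samples $s(\rho|d_i|t_{p-i+1})$ and $s(\rho|d_i|t_{q-i+1})$, so --- using that, under the standing smallness hypotheses on $\rho$ (which I would explicitly enlarge to ensure this), geodesic segments of length $\le\rho$ realize the distance ---
\[
d_M(\xi_p,\xi_q)\ge\rho|d_i|\bigl(t_{q-i+1}-t_{p-i+1}\bigr)-4\eps'\ge\tfrac{\rho|d_i|}2\bigl(|c_{p-i+1}|+|c_{q-i+1}|\bigr)-4\eps',
\]
whereas $\phi(f)\ne*$ gives $d_M(\xi_p,\xi_q)\le\tfrac{\rho|d_i|}2(|c_{p-i+1}|+|c_{q-i+1}|)-\eps$; together these force $\eps<4\eps'$, contradicting $\eps'\le\eps/8$. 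Hence the geodesic insertion pushes the output off the tube of $e'$, i.e.\ to the basepoint; the exponent $m-1$ in $\eps'$ is precisely what makes $4\eps'<\eps$.

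For part 2 I would evaluate $\tilde\mu(\langle\phi_1,\phi_2;\sigma\rangle)$ at $\configd$, getting $[e_{12},\eps_{12},(u_1,u_2)]^\sigma$ as in Definitions \ref{Dtubulernbd} and \ref{DTCCMcomposition}. The relabeling $[-]^\sigma$ permutes the ambient Euclidean coordinates isometrically, so changes neither $\eps_{12}$ nor the $M$-coordinates of the foot point, and can be ignored; moreover $\eps_{12}\le\eps_1/8$ since $|e_2|\ge1$. Assuming the foot point $e_{12}(\hat w)$ of $(u_1,u_2)$ on $e_{12}(\TM^{\times n})$ is not the basepoint, we have $\|u_1-e_1(\hat w)\|\le\|(u_1,u_2)-e_{12}(\hat w)\|<\eps_{12}$, so $e_1(\hat w)$ lies within $2\eps_{12}\le\eps_1/4$ of $\pi_{e_1}(u_1)$, whence the base points $w_j$ of $\hat w$ satisfy $d_M(w_p,\xi^{(1)}_p),\,d_M(w_q,\xi^{(1)}_q)<\eps_1/4$, where $\xi^{(1)}_j$ are the base points of the coordinates of $\pi_{e_1}(u_1)$. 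Since $\phi_1\in\DeltaT_{pq}(n)$ gives $d_M(\xi^{(1)}_p,\xi^{(1)}_q)\le\tfrac\rho2(|d_p|+|d_q|)-\eps_1$ (or $\phi_1(\configd)=*$, making the product $*$), one triangle inequality yields $d_M(w_p,w_q)<\tfrac\rho2(|d_p|+|d_q|)-\eps_1/2\le\delta_{pq}(\configd,\eps_{12})$, as needed. Thus everything except the basepoint case of part 1 is formal once the retraction-comparison estimate above is available; that estimate, together with the minimality of short geodesics it is combined with, is the only genuinely geometric input.
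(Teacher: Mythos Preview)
Your proof is correct and follows essentially the same route as the paper: the core estimate $\|e(\pi_e u)-e'(\pi_{e'}u)\|<2\eps'$ together with the expanding property of $e$ yields the $2\eps'$ comparison between the two foot-point configurations, after which each case is a short triangle-inequality computation, with the $i\le p<q\le i+m-1$ case handled by contradiction via the geodesic separation of the inserted points. You are slightly more explicit than the paper in two places---you record the elementary inequality $t_{\ell'}-t_\ell\ge\tfrac12(|c_\ell|+|c_{\ell'}|)$ and you note that the argument needs $\rho$ small enough for short geodesics to realize distance (the paper uses this silently)---and you actually write out part~2, which the paper omits as ``similar''; none of this is a genuine departure.
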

%%%%%%%% 2019 12/22
\begin{proof}
We shall show the part 1. The proof of the part 2 is similar and omitted.  Let $\configc\in \DD_1(m)$ and $\configd\in \DD_1(n)$ and $\phi\in \DeltaT_{pq}(n+m-1)_k$. Let $(e,\eps, u)$ be a representative of $\phi(\configd\circ_i\configc)$. 
Put 
\[
\begin{split}
((x_1,y_1),\dots, (x_{n+m-1},y_{n+m-1}))&=\pi_e(u), \\ 
((x'_1,y'_1),\dots, (x'_{n+m-1},y'_{n+m-1}))&=\{(1_{i-1})\times \Delta'\times (1_{n-i})\}(\pi_{e'}(u))
\end{split}
\]
under the notations given in the paragraph above Definition \ref{DTCCMcomposition}.
 We shall show the first inclusion, the case of $i\leq p<q\leq i+m-1$.  
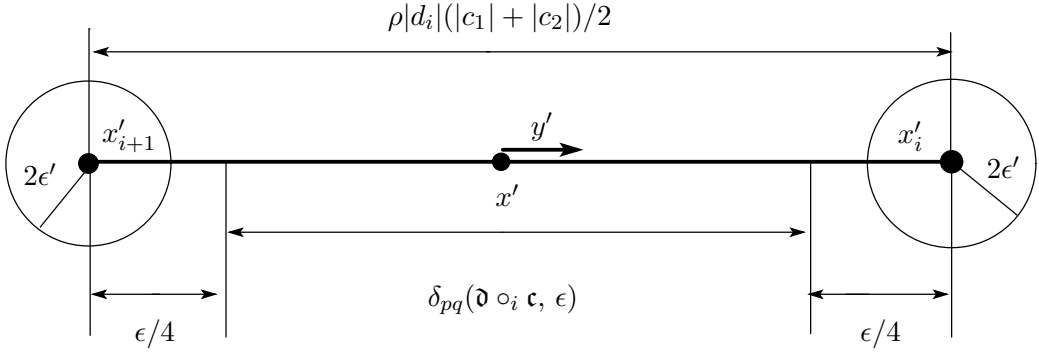
\begin{figure}
\begin{center}
 %WinTpicVersion4.32a
{\unitlength 0.1in%
\begin{picture}(94.8300,17.3900)(0.3000,-18.9400)%
% LINE 2 0 3 0 Black Black  
% 2 9513 1631 9492 1571
% 
\special{pn 8}%
\special{pa 9513 1631}%
\special{pa 9492 1571}%
\special{fp}%
% LINE 0 0 3 0 Black White  
% 2 506 976 5006 976
% 
\special{pn 20}%
\special{pa 506 976}%
\special{pa 5006 976}%
\special{fp}%
% CIRCLE 2 0 0 0 Black Black  
% 4 2657 976 2693 1003 2693 1003 2693 1003
% 
\special{sh 1.000}%
\special{ia 2657 976 45 45 0.0000000 6.2831853}%
\special{pn 8}%
\special{ar 2657 976 45 45 0.0000000 6.2831853}%
% CIRCLE 2 0 0 0 Black Black  
% 4 5015 976 5060 1012 5060 1012 5060 1012
% 
\special{sh 1.000}%
\special{ia 5015 976 58 58 0.0000000 6.2831853}%
\special{pn 8}%
\special{ar 5015 976 58 58 0.0000000 6.2831853}%
% CIRCLE 2 0 0 0 Black Black  
% 4 497 985 533 1021 533 1021 533 1021
% 
\special{sh 1.000}%
\special{ia 497 985 51 51 0.0000000 6.2831853}%
\special{pn 8}%
\special{ar 497 985 51 51 0.0000000 6.2831853}%
% VECTOR 0 0 3 0 Black White  
% 2 2666 913 3053 913
% 
\special{pn 20}%
\special{pa 2666 913}%
\special{pa 3053 913}%
\special{fp}%
\special{sh 1}%
\special{pa 3053 913}%
\special{pa 2986 893}%
\special{pa 3000 913}%
\special{pa 2986 933}%
\special{pa 3053 913}%
\special{fp}%
% CIRCLE 2 0 3 0 Black White  
% 4 5015 985 5015 1426 5015 1426 5015 1426
% 
\special{pn 8}%
\special{ar 5015 985 441 441 0.0000000 6.2831853}%
% CIRCLE 2 0 3 0 Black White  
% 4 497 985 497 985 497 544 506 544
% 
\special{pn 8}%
\special{ar 497 985 0 0 4.7327943 4.7123890}%
% CIRCLE 2 0 3 0 Black White  
% 4 497 985 497 553 497 553 497 553
% 
\special{pn 8}%
\special{ar 497 985 432 432 0.0000000 6.2831853}%
% LINE 2 0 3 0 Black White  
% 2 515 985 245 1318
% 
\special{pn 8}%
\special{pa 515 985}%
\special{pa 245 1318}%
\special{fp}%
% LINE 2 0 3 0 Black White  
% 2 506 976 506 1885
% 
\special{pn 8}%
\special{pa 506 976}%
\special{pa 506 1885}%
\special{fp}%
% LINE 2 0 3 0 Black White  
% 2 5015 976 5015 1894
% 
\special{pn 8}%
\special{pa 5015 976}%
\special{pa 5015 1894}%
\special{fp}%
% LINE 2 0 3 0 Black White  
% 2 1217 976 1217 1894
% 
\special{pn 8}%
\special{pa 1217 976}%
\special{pa 1217 1894}%
\special{fp}%
% LINE 2 0 3 0 Black White  
% 2 4277 985 4277 1876
% 
\special{pn 8}%
\special{pa 4277 985}%
\special{pa 4277 1876}%
\special{fp}%
% LINE 2 0 3 0 Black White  
% 2 5015 976 5357 1255
% 
\special{pn 8}%
\special{pa 5015 976}%
\special{pa 5357 1255}%
\special{fp}%
% VECTOR 2 0 3 0 Black White  
% 4 2666 1345 1235 1345 2666 1345 4241 1345
% 
\special{pn 8}%
\special{pa 2666 1345}%
\special{pa 1235 1345}%
\special{fp}%
\special{sh 1}%
\special{pa 1235 1345}%
\special{pa 1302 1365}%
\special{pa 1288 1345}%
\special{pa 1302 1325}%
\special{pa 1235 1345}%
\special{fp}%
\special{pa 2666 1345}%
\special{pa 4241 1345}%
\special{fp}%
\special{sh 1}%
\special{pa 4241 1345}%
\special{pa 4174 1325}%
\special{pa 4188 1345}%
\special{pa 4174 1365}%
\special{pa 4241 1345}%
\special{fp}%
% STR 2 0 3 0 Black White  
% 4 2690 1060 2690 1150 5 0 0 0
% $x'$
\put(26.9000,-11.5000){\makebox(0,0){$x'$}}%
% STR 2 0 3 0 Black White  
% 4 2870 700 2870 790 5 0 0 0
% $y'$
\put(28.7000,-7.9000){\makebox(0,0){$y'$}}%
% STR 2 0 3 0 Black White  
% 4 2666 1606 2666 1696 5 0 0 0
% $\delta_{pq}(\configd\circ_i\configc,\,\eps)$
\put(26.6600,-16.9600){\makebox(0,0){$\delta_{pq}(\configd\circ_i\configc,\,\eps)$}}%
% STR 2 0 3 0 Black White  
% 4 845 1785 845 1875 5 0 0 0
% $\eps/4$
\put(8.4500,-18.7500){\makebox(0,0){$\eps/4$}}%
% STR 2 0 3 0 Black White  
% 4 4643 1785 4643 1875 5 0 0 0
% $\eps/4$
\put(46.4300,-18.7500){\makebox(0,0){$\eps/4$}}%
% STR 2 0 3 0 Black White  
% 4 245 949 245 1039 5 0 0 0
% $2\eps'$
\put(2.4500,-10.3900){\makebox(0,0){$2\eps'$}}%
% STR 2 0 3 0 Black White  
% 4 5285 913 5285 1003 5 0 0 0
% $2\eps'$
\put(52.8500,-10.0300){\makebox(0,0){$2\eps'$}}%
% STR 2 0 3 0 Black White  
% 4 710 740 710 840 5 0 0 0
% $x'_{i+1}$
\put(7.1000,-8.4000){\makebox(0,0){$x'_{i+1}$}}%
% STR 2 0 3 0 Black White  
% 4 4800 740 4800 840 5 0 0 0
% $x'_{i}$
\put(48.0000,-8.4000){\makebox(0,0){$x'_{i}$}}%
% LINE 2 0 3 0 Black White  
% 4 500 970 500 280 500 280 500 280
% 
\special{pn 8}%
\special{pa 500 970}%
\special{pa 500 280}%
\special{fp}%
\special{pa 500 280}%
\special{pa 500 280}%
\special{fp}%
% LINE 2 0 3 0 Black White  
% 2 5010 960 5010 300
% 
\special{pn 8}%
\special{pa 5010 960}%
\special{pa 5010 300}%
\special{fp}%
% VECTOR 2 0 3 0 Black White  
% 4 2600 400 530 400 530 400 530 400
% 
\special{pn 8}%
\special{pa 2600 400}%
\special{pa 530 400}%
\special{fp}%
\special{sh 1}%
\special{pa 530 400}%
\special{pa 597 420}%
\special{pa 583 400}%
\special{pa 597 380}%
\special{pa 530 400}%
\special{fp}%
\special{pa 530 400}%
\special{pa 530 400}%
\special{fp}%
% VECTOR 2 0 3 0 Black White  
% 2 2580 400 5010 400
% 
\special{pn 8}%
\special{pa 2580 400}%
\special{pa 5010 400}%
\special{fp}%
\special{sh 1}%
\special{pa 5010 400}%
\special{pa 4943 380}%
\special{pa 4957 400}%
\special{pa 4943 420}%
\special{pa 5010 400}%
\special{fp}%
% STR 2 0 3 0 Black White  
% 4 2650 120 2650 220 5 0 0 0
% $\rho |d_i|(|c_1|+|c_2|)/2$
\put(26.5000,-2.2000){\makebox(0,0){$\rho |d_i|(|c_1|+|c_2|)/2$}}%
% VECTOR 2 0 3 0 Black White  
% 6 870 1670 520 1670 830 1670 1160 1670 1160 1670 1160 1670
% 
\special{pn 8}%
\special{pa 870 1670}%
\special{pa 520 1670}%
\special{fp}%
\special{sh 1}%
\special{pa 520 1670}%
\special{pa 587 1690}%
\special{pa 573 1670}%
\special{pa 587 1650}%
\special{pa 520 1670}%
\special{fp}%
\special{pa 830 1670}%
\special{pa 1160 1670}%
\special{fp}%
\special{sh 1}%
\special{pa 1160 1670}%
\special{pa 1093 1650}%
\special{pa 1107 1670}%
\special{pa 1093 1690}%
\special{pa 1160 1670}%
\special{fp}%
\special{pa 1160 1670}%
\special{pa 1160 1670}%
\special{fp}%
% VECTOR 2 0 3 0 Black White  
% 2 4580 1670 4270 1670
% 
\special{pn 8}%
\special{pa 4580 1670}%
\special{pa 4270 1670}%
\special{fp}%
\special{sh 1}%
\special{pa 4270 1670}%
\special{pa 4337 1690}%
\special{pa 4323 1670}%
\special{pa 4337 1650}%
\special{pa 4270 1670}%
\special{fp}%
% VECTOR 2 0 3 0 Black White  
% 2 4460 1670 4990 1670
% 
\special{pn 8}%
\special{pa 4460 1670}%
\special{pa 4990 1670}%
\special{fp}%
\special{sh 1}%
\special{pa 4990 1670}%
\special{pa 4923 1650}%
\special{pa 4937 1670}%
\special{pa 4923 1690}%
\special{pa 4990 1670}%
\special{fp}%
\end{picture}}%
\end{center}
\caption{the first inclusion of Lemma \ref{Ldiagonalinclusion} (1) with $n=2$\ :\ The bold line is a part of the geodesic segment used to define $\Delta'$. $(x',y')$ is the $i$-th component of $\pi_{e'}(u)\in \TM^{\times n}$. $x_i$ and $x_{i+1}$ exist in the interior of the disks at $x_i'$ and $x_{i+1}'$ if  $(\configc \circ_i\phi)(\configd)\not=*$. }\label{Fcomposition}
\end{figure}

The situation of the case $n=2$ is as in Figure \ref{Fcomposition} (so $p=i,\ q=i+1$).
We first give a sketch of the proof for $n=2$.
We suppose $(\configc \circ_i\phi)(\configd)\not=*$ and will show a contradiction. Since the map $\Delta'$ arranges points along a geodesic and the length of the geodesic segment between $x'_i$ and $x'_{i+1}$ is $\rho |d_i|(|c_1|+|c_2|)/2$, we have $d_M(x'_i,x'_{i+1})>\delta(\configd\circ\configc,\eps)$.  As we have taken $\eps'$ in the  definition of $\Xi$ sufficiently small , $x_i$ and $x'_{i}$ (resp. $x_{i+1}$ and $x'_{i+1}$) are sufficiently close. These observations imply $d_M(x_i,x_{i+1})>\delta(\configd\circ\configc,\eps)$ or  equivalently, $\phi(\configd\circ_i\configc)\not\in\DeltaT_{pq}(\configd\circ_i \configc)$. \\
\indent We shall give the formal proof. Since the image of $e'$ is contained in the image of $e$, Under the assumption $(\configc \circ_i\phi)(\configd)\not=*$, since the map $\pi_e$ sends $u$ to its closest point in $e(M)=M$, we have 
\[
||u-e(\pi_e(u))||\leq ||u-e'(\pi_{e'}(u))||<\epsilon'.
\]
So we have 
\[
||e'(\pi_{e'}(u))-e(\pi_e(u))||\leq ||e'(\pi_{e'}(u))-u||+||u-e(\pi_e(u))||<2\epsilon'.
\] As $e'=e\circ (1_{i-1})\times \Delta'\times (1_{n-i})$, and $e$ is expanding, we have
\[
d\{\, ((x'_1,y'_1),\dots, (x'_{n+m-1},y'_{n+m-1})),\ ((x_1,y_1),\dots, (x_{n+m-1},y_{n+m-1}))\,\}<2\eps',
\] 
where $d$ denotes the distance in $(\TM)^{\times n+m-1}$. So we have
\[
d_M(x_j,x'_j)\leq d_{\TM}((x_j,y_j),(x'_j,y'_j))<2\epsilon' \text{\ for\ }j=1,\dots,n+m-1
\]
By this inequality and the definition of the map $\Delta'$, we have the following inequality.
\[
\begin{split}
d_M(x_p,x_q)& \geq d_M(x'_p,x'_q)-d_M(x_p,x'_p)-d_M(x_q,x'_q) \\
            & \geq d_M(x'_p,x'_q)-4\epsilon' \\
            & \geq  \frac{\rho}{2}\cdot  |d_i|\bigl(\,|c_{p-i+1}|+|c_{q-i+1}|\,\bigr)-4\epsilon'\\
            & = \frac{\rho}{2} \bigl(\,|(\configd\circ_i\configc)_{p}|+|(\configd\circ_i\configc)_{q}|\,\bigr)-4\epsilon'\\
            & \geq \frac{\rho}{2} \bigl(\,|(\configd\circ_i\configc)_{p}|+|(\configd\circ_i\configc)_{q}|\,\bigr)-\eps /2 \\
            &    > \delta_{pq}(\configd \circ_i \configc\, ,\eps) 
\end{split}
\]
This inequality means $\phi(\configd\circ_i\configc)\not\in \DeltaT_{pq}(\configd\circ_i \configc)$, which is a contradiction. So $(\phi\circ_i\configc)(\configd)=*$ and we have proved the first inclusion.\\
\indent We shall show the second inclusion, the case of $p<i\leq q\leq i+m-1$. Let $(x',y')\in \TM$ be the $i$-th component of $\pi_{e'}(u)$. Clearly, we have 
\[
((x'_i,y'_i),\dots, (x'_{i+m-1},y'_{i+m-1}))=\Delta '(x',y').
\]
By an argument similar to the above, we have the following inequality.
\[
\begin{split}
d_M(x'_p,x') & \leq d_M(x'_p,x_p)+d_M(x_p,x_q)+d_M(x_q,x_q')+d_M(x'_q,x') \\
            & \leq 2\eps'+ \delta_{pq}(\configd\circ_i\configc\, ,\epsilon )+2\epsilon'+\frac{\rho}{2}\cdot  |d_i|\bigl(\, 1-|c_{q-i+1}|\, \bigr)\\
            &=\frac{\rho}{2} \bigl(\, |d_p|+|d_i||c_{q-i+1}|\, \bigr)-\epsilon
+4\epsilon'+\frac{\rho}{2} |d_i|\bigl(\, 1-|c_{q-i+1}|\, \bigr)\\
&\leq \frac{\rho}{2}\bigl( \, |d_p|+|d_i|\,\bigr)-\epsilon /2\\
&< \delta_{pq}(\configd\, ,\epsilon').
\end{split}
\]
This implies the second inclusion. The other cases are similar to the first and second cases.
\end{proof}
Let $\DeltaT_{\fat}(n)$ be the subspectrum of  $\TCCM(n)$ whose space at level $k$ is given by
\[
\DeltaT_{\fat}(n)_k=\underset{1\leq p<q\leq n}{\bigcup} \DeltaT_{pq}(n)_k.
\]
Since $\{\DeltaT_{pq}(n)\}^\sigma=\DeltaT_{\sigma^{-1}(p),\sigma^{-1}(q)}(n)$, $\DeltaT_{\fat}(n)$ is stable under action of $\Sigma_n$.
By Lemma \ref{Ldiagonalinclusion}, the sequence $\{\Delta_{\fat}^{-\tau}(n)\}_{n\geq 0}$ is stable under partial compositions and is an ideal for the multiplication $\tilde \mu$. So the sequence $\{\Delta_{\fat}^{-\tau}(n)\}_{n\geq 0}$ inherits structure of a { comodule } from $\TCCM$ we can define the quotient comodule as follows.
\begin{defi}
We define a  spectrum $\CCM(n)$ by the quotient (collapsing to $*$)
\[
\CCM (n)_k =\TCCM (n)_k/\DeltaT_{\fat}(n)_k
\] 
for each $k\geq 0$. We regard the sequence $\CCM=\{\CCM(n)\}_{n\geq 1}$ as a comodule of NUCSRS with the structure  induced by that on $\TCCM$.
\end{defi}

\section{Atiyah duality for comodules}\label{Satiyahdualty}
%\subsection{statement}\label{SSatiyahstatement}
\begin{defi}\label{Dconfigurationmodule}
 We shall define the following zigzag consisting of $\DD_1$-comodules of NUCSRS and  maps between them.
\[
(\CSINHA)^\vee\stackrel{(i_0)^\vee}{\longleftarrow}(\widetilde{F}^M)^\vee \stackrel{(i_1)^\vee }{\longrightarrow} (F^M)^\vee \stackrel{q_*}{\longleftarrow} 
F'_M\stackrel{p_*}{\longrightarrow} F_M\stackrel{\Phi}{\longleftarrow}\CCM
\]

\begin{itemize}
\item Set $\CSINHA (n)=\CC^{n-1}\langle [M]\rangle $. When we regard a configuration as an element of $\CSINHA (n)$, we label its points by $1,\dots,n$ instead of $0,\dots,n-1$. We give the sequence $\CSINHA=\{\CSINHA(n) \}_{n\geq 1}$ a structure of a  $\ASS$-module as follows. For the unique element $\mu\in \ASS(2)$ and an element $x\in \CSINHA(n)$, we set $x\circ_i\mu=d^{i-1}(x)$ where, $d^{i-1}$ is the coface operator of $\CC^\bullet \langle [M]\rangle$. The action of $\Sigma_n$ on $\CSINHA(n)$ is given by  permutations of labels.  $(\CSINHA)^{\vee}$ is the $\ASS$-comodule of NUCSRS given in Definition \ref{Dcontra-module}. By pulling back the action by the unique operad morphism $\DD_1\to \ASS$, we also regard $(\CSINHA)^\vee$ as a  $\DD_1$-comodule. 
\item Let $F^M(n)$ be the subspace of $\DD_1(n)\times \TM^{\times n}$ defined by the following condition. For an element $(\configc;(x_1,y_1), \dots, (x_n,y_n))\in \DD_1(n)\times \TM^{\times n}$ with $x_i\in M$ and $y_i\in ST_{x_i}M$, 
\[
\begin{split}
(\configc ;(x_1,y_1), & \dots, (x_n,y_n))\in F^M(n)\\
& \iff d(x_i,x_j)\geq \frac{\rho}{2} (\,|c_i|+|c_j|\, ) \text{ for each pair $(i,j)$ with $i\not =j$,} 
\end{split}
\]
where $\rho$ is the  number fixed in subsection \ref{SSTCCM}.
\item The sequence $\{F^M(n)\}$ has a structure of a $\DD_1$-module. For $\configc\in \DD_1(n)$ and $(\configd;z_1,\dots,z_n)\in F^M(n)$, we set
$(\configd;z_1,\dots, z_n)\circ_i\configc=(\configd\circ_i\configc;z_1,\dots,\Delta'(z_i),\dots,z_n)$, where $\Delta'=\Delta[\configd\, ,\configc\,;i]$ is given in Definition \ref{Ddelta}. The symmetric group acts on $F^M(n)$ by permutations of little intervals and components
.  The $\DD_1$-comodule of NUCSRS $(F^M)^\vee$ is the one induced by $F^M$. 

\item We shall define a symmetric sequence of spectra $\{\Sphere_M(n)\}_{n}$. Set  $\tilde{\Sphere}_M(n)_k=\TNT_k$  for $N=\TM^{\times n}$ (see Definition \ref{Dtubulernbd}).
Define a subspace
$\partial (\tilde{\Sphere}_M(n))_k\subset \tilde{\Sphere}_M(n)_k$ by
$(e,\epsilon, v)\in\partial \tilde{\Sphere}_M(n)_k\iff ||v||\geq \epsilon$. We put
\[
\Sphere_M(n)_k=\tilde{\Sphere}_M(n)_k/\partial \tilde{\Sphere}_M(n)_k\ .
\]
We regard $\Sphere_M(n)$ as a NUCSRS by a multiplication defined similarly to the one of $\NT$ given in Definition \ref{Dtubulernbd}.
\item Set $F_M(n):=\Map(F^M(n),\Sphere_M(n))$. We give the sequence $\{F_M(n)\}_{n}$ a structure of $\DD_1$-comodule as follows:
For $\configc\in \DD_1(n)$ and $f\in F_M(n+m-1)$, set $ \configc\circ_i f$ to be the following composition
\[
\xymatrix{F^M(m)\ar[r]^{(-\circ_i\configc)\qquad }&F^M(n+m-1)\ar[r]^{f}&\Sphere_M(n+m-1)\ar[r]^{\qquad \alpha}&\Sphere_M(n)}\ .
\]
Here, $\alpha$ is given by 
\[
\alpha([e,\eps, v])=[e',\eps', v]\ ,
\]
where $e'$ and $\epsilon'$ are those defined in the paragraph above Definition \ref{DTCCMcomposition}. Similarly to $(\CSINHA)^\vee$, we define a multiplication on $F_M(n)$ as the pushforward by the multiplication on $\Sphere_M(n)$.
\item We define a map $\widetilde{\Phi}_n:\TCCM(n)\to F_M(n)$ of spectra by 
\[
\widetilde{\Phi}_n (\phi)  ((\configc;z_1,\dots,z_n))
=[e,\bar \epsilon,u-e(z_1,\dots, z_n)]
\]
Here, we write $\phi(\configc)=[e,\eps, u]$ and we set 
$\bar \epsilon=\epsilon/4$. $\widetilde{\Phi}_n$ induces a morphism $\Phi_n:\CCM (n)\to F_M(n)$ which forms a morphism of comodules, as is proved in Lemma \ref{LAtiyahdual} below.

\item 
We shall define a $\DD_1$-module $\widetilde{F}^M$. Set 
\[
\widetilde{F}^M_1(n)=[0,1]\times \DD_1(n)\times \CSINHA (n)/\sim ,
\]
where the equivalence relation is generated  by the relation $(t,\configc, z) \sim (s,\configd,z')\iff (s=t=0$\ and $z=z')$. $\widetilde{F}^M(n)$ is the subspace of $\widetilde{F}^M_1(n)$ consisting of elements $(t,\configc, z=(x_k,u_{kl}, y_k))$ satisfying the condition that
 \[ 
t\not=0 \Rightarrow z \in Int (\CSINHA(n)) \text{\ and\ } d_M(x_i,x_j)\geq t\cdot \frac{\rho}{2}(\,|c_i|+|c_j|\, ).
\] 
Here, $Int(\CSINHA(n))$ is the subspace consisting of the elements $(x_k,u_{kl},y_k)$ such that $x_k\not=x_l$ if $k\not= l$, or equivalently, $(x_k,u_{kl})$ belongs to $C_n(M)$ via the canonical inclusion $C_n(M)\subset C_n\CPTM$.   
We endow the sequence $\{\widetilde{F}^M(n)\}_n$ with the  $\DD_1$-module structure analogous to that of $F^M$. The difference is that we use the number $t\rho$ instead of $\rho$ in the definition of $\Delta'$ for $t>0$ and use the module structure on $\CSINHA$ for $t=0$. The obvious inclusions $i_0:\CSINHA(n)\to \widetilde{F}^M(n)$ and $i_1:F^M(n)\to \widetilde{F}^M(n)$ to $t=0,1$  give rise to morphisms of $\DD_1$-modules $i_0:\CSINHA\to \widetilde{F}^M$, \ $i_1:F^M\to \widetilde{F}^M$.\\
\item  In order to define $F'_M$, and $p_*$,\ $q_*$, we shall define a symmetric sequence of symmetric spectra $\{{\Sphere}'_M(n)\}_n$.
Let  $\widetilde{\Sphere}'_M(n)$ be  the subspace of $\Emb((\TM)^{\times n},\RR^k)\times \RR\times S^k$ consisting of triples $(e,\epsilon, v)$ with $0<\eps<L(e)$. 
We put 
\[
\Sphere'_M(n)_k=\widetilde{\Sphere}'_M(n)_k/\{(e,\epsilon, \infty)\mid e,\ \eps\ \text{arbitrary} \}.
\]
where we regard $S^k=\RR^k\cup\{\infty\}$ We regard $\Sphere'_M(n)$ as a symmetric spectrum analogously to $\Sphere_M(n)$. Let $p:\Sphere'_M(n)\to \Sphere_M(n)$  be the map induced by the collapsing map \\
$S^k\to \RR^k/\{v\mid ||v||\geq \epsilon \}$ and $q:\Sphere'_M\to \Sphere$ be the map forgetting the data $(e,\epsilon)$. Set 
$F'_M(n)=\Map (F^M(n),\Sphere'_M(n))$. We regard $\{F'_M(n)\}$ as a $\DD_1$-comodule of NUCSRS analogously to $F_M$. the pushforwards $p_*$ and $q_*$ are clearly morphisms of comodules of NUCSRS.  
\end{itemize}
\end{defi}
Verification of well-definedness of the objects defined in Definition \ref{Dconfigurationmodule} is  routine work. For example, the associativity of the composition of $\CSINHA$ follows from the  cosimplicial identities of $\CC^\bullet\CPTM$, and that of $F^M$ can be verified similarly to the associativity of little cubes operads. We omit details.
\begin{lem}\label{LAtiyahdual}

$\widetilde{\Phi}_n$ uniquely factors through a map $\Phi_n:\CCM(n)\to F_M(n)$, and the sequence $\{\Phi_n\}$ is a map of  $\DD_1$-comodules of NUCSRS.
\end{lem}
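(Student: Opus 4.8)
The plan is to prove the two assertions separately: first that $\widetilde{\Phi}_n$ carries the subspectrum $\DeltaT_{\fat}(n)$ to the basepoint of $F_M(n)$, which yields the (necessarily unique) factorization $\Phi_n\colon\CCM(n)=\TCCM(n)/\DeltaT_{\fat}(n)\to F_M(n)$; and then that $\widetilde{\Phi}_n$ itself already intertwines the $\Sigma_n$-actions, the partial compositions and the multiplications, so that the induced $\Phi_n$ inherit all these compatibilities (the structures on $\CCM$ being by construction the ones pushed forward from $\TCCM$ through the quotient). Thus it suffices to establish the vanishing on $\DeltaT_{\fat}(n)$ and the intertwining for $\widetilde{\Phi}_n$.

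For the vanishing I would argue at a fixed level $k$. Take $\phi\in\DeltaT_{pq}(n)_k$ with $p<q$ and a point $(\configc;z_1,\dots,z_n)\in F^M(n)$, and write $z_l=(a_l,b_l)$ with $a_l\in M$; the goal is $\widetilde{\Phi}_n(\phi)((\configc;z_1,\dots,z_n))=*$ in $\Sphere_M(n)_k$, which (as $(\configc;z)$ is arbitrary) gives $\widetilde{\Phi}_n(\phi)=*$ in $F_M(n)_k$. If $\phi(\configc)=*$ this is clear, so suppose $\phi(\configc)=[e,\eps,u]$ with $u\in\nu_\eps(e)$ and put $\pi_e(u)=((x_1,y_1),\dots,(x_n,y_n))$; membership in $\DeltaT_{pq}(n)_k$ gives $d_M(x_p,x_q)\leq\tfrac{\rho}{2}(|c_p|+|c_q|)-\eps$. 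Assume for contradiction that $\|u-e(z_1,\dots,z_n)\|<\bar\eps=\eps/4$. Since $\pi_e(u)$ is the point of $e(\TM^{\times n})$ closest to $u$ (a defining property of the retraction $\pi_e$ in Definition \ref{Dtubulernbd}), also $\|u-e(\pi_e(u))\|<\eps/4$, hence $\|e(\pi_e(u))-e(z_1,\dots,z_n)\|<\eps/2$; as $e$ is expanding this forces $d_{\TM^{\times n}}(\pi_e(u),(z_1,\dots,z_n))<\eps/2$, so $d_M(x_l,a_l)<\eps/2$ for every $l$ (the projection $\TM\to M$ being $1$-Lipschitz for the Sasaki metric). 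Then the triangle inequality gives $d_M(a_p,a_q)<\tfrac{\eps}{2}+\bigl(\tfrac{\rho}{2}(|c_p|+|c_q|)-\eps\bigr)+\tfrac{\eps}{2}=\tfrac{\rho}{2}(|c_p|+|c_q|)$, contradicting the defining inequality of $F^M(n)$. Hence $\|u-e(z_1,\dots,z_n)\|\geq\bar\eps$, i.e. $[e,\bar\eps,u-e(z_1,\dots,z_n)]$ is the basepoint of $\Sphere_M(n)_k$, as wanted. The precise value $\bar\eps=\eps/4$ is exactly what makes the three triangle-inequality terms close up against the $-\eps$ gap between the inequality defining $F^M(n)$ (points at least $\tfrac{\rho}{2}(|c_i|+|c_j|)$ apart) and the one defining $\DeltaT_{pq}$.

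For the intertwining I would check each compatibility levelwise by evaluating both sides on a test point $(\configd;z_1,\dots,z_n)\in F^M(n)$ and observing that the three data of a point of $\Sphere_M(n)_k$ — the expanding embedding, the radius, and the vector in $\RR^k$ — transform in the same way on the two sides. The $\Sigma_n$-equivariance $\widetilde{\Phi}_n(\phi^\sigma)=\widetilde{\Phi}_n(\phi)^\sigma$ is immediate because the actions on $\TCCM(n)$, $F^M(n)$ and $\Sphere_M(n)$ are all implemented by the reindexing $\configc\mapsto\configc^{\sigma^{-1}}$ together with the permutation $\underline{\sigma}$ of the $\TM$-factors, and $\widetilde{\Phi}_n$ is manifestly natural in these. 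For the partial composition, with $\Delta'=\Delta[\configd,\configc;i]$, $w=(z_1,\dots,\Delta'(z_i),\dots,z_n)$ and $\phi(\configd\circ_i\configc)=[\tilde e,\tilde\eps,\tilde u]$, one computes that $\widetilde{\Phi}_n(\configc\circ_i\phi)((\configd;z))$ and $\bigl(\configc\circ_i\widetilde{\Phi}_{n+m-1}(\phi)\bigr)((\configd;z))$ both carry embedding $\tilde e\circ(1_{i-1}\times\Delta'\times1_{n-i})$ and vector $\tilde u-\tilde e(w)$, using $\tilde e\circ(1_{i-1}\times\Delta'\times1_{n-i})(z_1,\dots,z_n)=\tilde e(w)$; the radii match through the way the scaling constants ($\bar\eps=\eps/4$ against the $1/8^{\,m-1}$ in $\Xi$ and in the map $\alpha$) are arranged, and the comodule axioms of Definition \ref{Dcontra-module}(1) for $\{\Phi_n\}$ then follow from Lemma \ref{Ldiagonalcomposition} just as in the proof of Lemma \ref{LTCCMwelldef}. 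Multiplicativity $\widetilde{\Phi}_n\circ\tilde\mu=\tilde\mu\circ(\widetilde{\Phi}_n\wedge\widetilde{\Phi}_n)$ is handled the same way: the multiplication recipes for $\MT(n)$ (Definition \ref{Dtubulernbd}) and for $\Sphere_M(n)$ are identical (diagonal of the embeddings, the $\eps_{12}$-rule, concatenation and $\sigma$-shuffle of the $\RR^k$-vectors), and $(u_1,u_2)-e_{12}(z)=(u_1-e_1(z),u_2-e_2(z))$ since $e_{12}=(e_1\times e_2)\circ\Delta$. As $\widetilde{\Phi}_n$ and the quotient $\TCCM(n)\to\CCM(n)$ respect every structure, so does $\Phi_n$, which is therefore a map of $\DD_1$-comodules of NUCSRS.

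The hard part is the estimate in the second paragraph — the choice $\bar\eps=\eps/4$ and the combined use of "$\pi_e(u)$ is the nearest point of $e(\TM^{\times n})$" and "$e$ is expanding"; this is the one step with genuine content. The rest is definition-chasing, whose one fiddly aspect is keeping the multiplicative constants ($\eps/4$, $1/8^{\,m-1}$, and the $L(e,\configc')$-terms) aligned on both sides of the partial-composition and multiplication identities.
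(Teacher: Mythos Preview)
Your argument is correct and matches the paper's own proof: the vanishing of $\widetilde{\Phi}_n$ on $\DeltaT_{pq}(n)$ is established by the same triangle-inequality estimate using the expanding property of $e$ and the nearest-point characterization of $\pi_e$, only you phrase the resulting contradiction as violating the defining inequality of $F^M(n)$ whereas the paper phrases it as violating that of $\DeltaT_{pq}(\configc)$. For the comodule-map assertion the paper simply declares it ``obvious'', so your sketch of the symmetry, partial-composition and multiplication checks goes beyond what the paper supplies.
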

\begin{proof}
We shall show that for any element $\phi\in\DeltaT_{pq}(n)$, $\widetilde{\Phi}_n(\phi)=*$. Suppose that there exists an element $(\configc\,;z_1,\dots,z_n)\in F^M(n)$ such that $\widetilde{\Phi}_n(\phi)(\configc\,;z_1,\dots,z_n)\not=*\in \Sphere_M(n)$. If we put $\phi(\configc)=[e,\eps,u]$, the inequality $|| u-e(z_1,\dots, z_n)||<\eps/4$ holds. So we have $||u_i-e(\pi_{e}u)||<\eps/4$. Thus we have
\[
||e(\pi_e u)-e(z_1,\dots, z_n)||\leq ||e(\pi_e u)-u||+||u-e(z_1,\dots, z_n)||<\eps/2
\]
As $e$ is expanding, we have $d(\pi_e(u),(z_1,\dots, z_n))<\eps/2$ where $d$ denotes the distance in $\TM^{\times n}$. If we write $z_i=(x_i,y_i)$ and $\pi_e(u)=((\bar x_1,\bar y_1),\dots, (\bar x_n,\bar y_n))$ as  pairs of a point of $M$ and a tangent vector, it follows that $d_M(\bar x_i,x_i)<\eps/2$, and  the following inequality.
\[
\begin{split}
d(\bar x_p,\bar x_q) & \geq  d(x_p,x_q)-d(x_p,\bar x_p )-d(x_q,\bar x_q )\\
 & > \frac{\rho}{2} (\,|c_p|+|c_q|\, )-\eps  
  = \delta_{pq}(\configc\, ,\eps ).
\end{split}
\]
This inequality contradicts the assumption $\phi\in \DeltaT_{pq}(n)$. Thus we have proved $\widetilde{\Phi}_n(\DeltaT_{pq}(n))=*$. This implies the former part of the lemma. The latter part is obvious.
\end{proof}
\begin{defi}\label{Dsemistablemodule}
A $\DD_1$-comodule of NUCSRS is {\em strongly semistable} if the spectrum $X(n)$ is strongly semistable for each $n$. A map $f:X\to Y$ of $\DD_1$-comodules of NUCSRS is a {\em $\pi_*$-isomorphism} if each map $f_n:X(n)\to Y(n)$ is a $\pi_*$-isomorphism. 
\end{defi}

The following is a version of Atiyah duality which respects our comodules. We devote the rest of this section to its proof.

\begin{thm}\label{TAtiyahdual}
As  $\DD_1$-comodule of NUCSRS, $(\CSINHA)^\vee$ and $\CCM$ are $\pi_*$-isomorphic. Precisely speaking, all the comodules in the zigzag in Definition \ref{Dconfigurationmodule} are strongly semistable and all the maps in the same zigzag are $\pi_*$-isomorphisms.\\

\end{thm}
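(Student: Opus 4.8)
The plan is to establish the two assertions separately: first that every comodule appearing in the zigzag of Definition \ref{Dconfigurationmodule} is strongly semistable, and then that every map in that zigzag is a $\pi_*$-isomorphism. For strong semistability one works level by level. Each spectrum in the zigzag is, at level $k$, built from a Thom-type quotient of a space of triples $(e,\epsilon,u)$ (or $(e,\epsilon,v)$) where $e$ ranges over expanding embeddings of $\widehat{M}^{\times n}$ into $\RR^k$ (or mapping objects thereof). The space $\Emb^{ex}(\widehat{M}^{\times n},\RR^k)$ is $\big((\dd n+\dim\widehat{M}^{\times n})/2\big)$-connected for $k$ large relative to $n$ by general position, so it is highly connected with connectivity growing linearly in $k$; the Thom space $\widehat{M}^{\times n\,-\tau}_k$ is then $(k-\dd n-1)$-connected, and the stabilization maps $\iota_k$ are isomorphisms in the stable range, which is exactly the definition of strong semistability. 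The same estimate, with $\alpha>1$ chosen uniformly, applies to $\Sphere_M(n)$, $\Sphere'_M(n)$, $F_M(n)=\Map(F^M(n),\Sphere_M(n))$, $F'_M(n)$, $(\CSINHA)^\vee(n)$, and $(\widetilde F^M)^\vee(n)$, since $F^M(n)$, $\widetilde F^M(n)$, and $\CSINHA(n)=\CC^{n-1}\CPTM$ are finite-dimensional spaces and $\Map(K,-)$ shifts connectivity down only by $\dim K$. I would package this as a single lemma: a levelwise Thom spectrum of the tangent (or a pulled-back) bundle over a fixed finite complex, built via expanding embeddings into $\RR^k$, is always strongly semistable, then apply it to each entry of the zigzag.

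For the maps, the strategy is to check each one is a level equivalence or, where that fails, a $\pi_*$-isomorphism, and then invoke the last sentence of the \textbf{strongly semistable} bullet in subsection \ref{SSnt} — a stable equivalence between strongly semistable spectra is a $\pi_*$-isomorphism — to upgrade stable equivalences. Going through the zigzag: $p:\Sphere'_M(n)\to\Sphere_M(n)$ is the collapse $S^k\to \RR^k/\{\|v\|\ge\epsilon\}$, which is a based homotopy equivalence onto its target after one suspension, so $p_*:F'_M\to F_M$ is a level equivalence; $q:\Sphere'_M(n)\to\Sphere$ forgets $(e,\epsilon)$ and is a level equivalence because the space of pairs $(e,\epsilon)$ with $0<\epsilon<L(e)$ is contractible (it deformation retracts by shrinking $\epsilon$ and using that $\Emb^{ex}(\widehat M^{\times n},\RR^k)$ is connected for $k$ large), whence $q_*:F'_M\to (F^M)^\vee$ is a level equivalence. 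The maps $i_0:\CSINHA\to\widetilde F^M$ and $i_1:F^M\to\widetilde F^M$ are the inclusions at $t=0,1$ of a homotopy (the deformation that slides points along geodesics and scales the separation parameter $t$); both are deformation retracts of $\widetilde F^M$ onto the respective ends, hence homotopy equivalences of spaces, so $(i_0)^\vee$ and $(i_1)^\vee$ are level equivalences. It remains to treat $\Phi:\CCM\to F_M$.

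The map $\Phi_n:\CCM(n)\to F_M(n)$ is the heart of the matter and is a genuine Atiyah-duality statement: I would show it is a stable equivalence by identifying, up to the equivalences already produced, the source with the Spanier–Whitehead dual of $F^M(n)\simeq \CC^{n-1}\CPTM$ and the target with that same dual, the map being Cohen's duality pairing (Thom collapse composed with the diagonal). Concretely, $\widetilde\Phi_n$ sends $\phi$ with $\phi(\configc)=[e,\epsilon,u]$ to $(\configc;z)\mapsto[e,\bar\epsilon,u-e(z)]$; over a fixed $\configc$ this is the standard map from $\widehat M^{\times n\,-\tau}\wedge F^M(n)_+$-adjoint realizing that $\widehat M^{\times n\,-\tau}$ is dual to the complement $\widehat M^{\times n}-\Delta_\fat$, which is homotopy equivalent to $F^M(n)$ (indeed to $\CC^{n-1}\CPTM=C'_n(M)$). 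This is exactly R.\ Cohen's theorem (the relative, parametrized form) that the Pontryagin–Thom construction $N^{-\tau}\xrightarrow{\ \simeq\ } \Map(N,\Sphere)$ for the complement is an equivalence; the relative version here is that collapsing $\Delta_\fat^{-\tau}(n)$ corresponds to restricting the dual to $F^M(n)$. Since we arranged the constants ($\epsilon<\frac{\rho}{2}\min|c_i|$, $\delta_{pq}$, $\bar\epsilon=\epsilon/4$) precisely so that Lemma \ref{Ldiagonalinclusion} and Lemma \ref{LAtiyahdual} hold, $\Phi_n$ is defined and is a stable equivalence fiberwise over $\DD_1(n)$, hence a stable equivalence; being a map between strongly semistable spectra, it is a $\pi_*$-isomorphism. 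I expect \emph{this last step} — checking that $\Phi_n$ is a stable equivalence, i.e.\ that the relative Atiyah/Cohen duality map is an equivalence with all the parametrization over $\DD_1(n)$ and the fat-diagonal collapse in place — to be the main obstacle, since it requires either carefully importing Cohen's argument in the relative setting or reproving the Thom-isomorphism/Alexander-duality comparison by hand; everything else is a sequence of contractibility and deformation-retract checks.
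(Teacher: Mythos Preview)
Your proposal is correct and follows essentially the same route as the paper: strong semistability from connectivity of the embedding-space parameter $\mathcal{E}_k$, then check each map in the zigzag, with $\Phi_n$ reducing to classical Atiyah duality (the paper cites Browder rather than reproving Cohen's argument, using Lemma~\ref{Tsemistable} to pass to a fixed Thom space). One small correction: the space of pairs $(e,\epsilon)$ is only highly connected (connectivity $\sim k/2$), not contractible, so $q$ is a $\pi_*$-isomorphism rather than a level equivalence --- but since you have already established strong semistability, this is all you need, and indeed the paper only claims $p$ and $q$ are $\pi_*$-isomorphisms.
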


%\subsection{proof}\label{SSatiyahproof}
\begin{defi}\label{Dmap}
\begin{itemize}
\item For $G\in\GG(n)$, and $\configc\in \DD_1(n)$, we define two subspectra $\DeltaT_G(\configc),\  \DeltaT_{\fat}(\configc) \subset \TCCM(\configc)$ by 
\[
\DeltaT_G(\configc )=\left\{
\begin{array}{cc}
\underset{(p,q)\in E(G)}{\bigcap}\DeltaT_{pq}(\configc) & (G\not=\emptyset) \vspace{2mm}\\ 
\TCCM(\configc)  & (G=\emptyset),
\end{array}\right.  \qquad \DeltaT_{\fat}(\configc)=\underset{1\leq p<q\leq n}{\bigcup}\DeltaT_{pq}(\configc).
\]
 Similarly, we define a subspectrum $\DeltaT_G\subset \TCCM(n)$ by 
\[
\DeltaT_G=\left\{
\begin{array}{cc}
\underset{(p,q)\in E(G)}{\bigcap}\DeltaT_{pq} &  (G\not=\emptyset) \vspace{2mm} \\
 \TCCM(n)  &  (G=\emptyset).
\end{array}\right.
\]
Here, the union and intersections are taken in the level-wise manner.
\item We fix an expanding embedding $e_0:\TM\to \RR^K$  and a positive number $\eps_0<L(e_0)$ and a configuration $\configc_0\in\DD_1(n)$ such that $\eps_0<\frac{1}{4}\min\{|c_1|,\dots,|c_n|\}$. We set $\nu=\nu_{\epsilon_0}(e_0)$. We impose an additional condition on $\eps_0$  in Definition \ref{Diroiro}, which is satisfied by any sufficiently small $\eps_0$,
  and  we will assume $K$ is a multiple of $4$ in the proof of Theorem \ref{Tam}. (We may impose the assumption on $K$ from the beginning, but for the convenience of verification of signs, we do not do so.)  

\item Consider $\nu^{\times n}\subset \RR^{nK}$ as a disk bundle over $\TM^{\times n}$ and denote by $\nu_G$ the restriction $\nu^{\times n}|_{\Delta_G}$ (see Introduction).  Let $\lambda_G:Th(\nu_G)\to \DeltaT_G(\configc_0)_{nK}$ be the map $[u]\mapsto  [(e_0)^{\times n},\epsilon_0,u]$. $\lambda_G$ induces a morphism $\lambda_G:\Sigma^{nK}Th(\nu_G)\to \DeltaT_G(\configc_0)$ in  $\Ho(\SP)$, where $\Sigma$ denotes the suspension.

\end{itemize}
\end{defi}
\begin{lem}\label{Lexpandinghomotopy}
For any closed smooth manifold $N$ and $k\geq 1$, the inclusion $I:\Emb^{ex}(N,\RR^k)\to \Emb(N,\RR^k)$ is a homotopy equivalence.
\end{lem}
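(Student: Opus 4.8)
The plan is to realize $\Emb^{ex}(N,\RR^k)$ as a deformation retract of $\Emb(N,\RR^k)$ by rescaling. The elementary observation is that $r(\lambda e)=\lambda\, r(e)$ for every $\lambda>0$, immediate from the definition of $r$ and the fact that $\RR^k$ carries the Euclidean distance. Hence multiplying $e$ by $\max\{1,1/r(e)\}$ produces an expanding embedding and leaves unchanged those $e$ that are already expanding. To upgrade this to a homotopy one needs the rescaling factor to depend continuously on $e$, which is the only genuine point.

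Accordingly, the first step is to prove that $r\colon \Emb(N,\RR^k)\to(0,\infty)$ is continuous for the $C^\infty$ topology; continuity for the weaker $C^1$ topology would already suffice. The key is that the difference quotient $(x,y)\mapsto \|e(x)-e(y)\|/d_N(x,y)$, a priori defined only on $N\times N$ minus the diagonal $\Delta$, extends continuously over the diagonal once one replaces $N\times N\setminus\Delta$ by the compact space obtained from it by adjoining the unit tangent sphere bundle of $N$ along $\Delta$ (the spherical, or Fulton--MacPherson, blow-up); the extended value on a unit tangent vector $v$ is $\|de(v)\|$, and the verification is the first-order Taylor expansion of $e$ together with compactness of $N$. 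Since $e$ is injective and an immersion, the resulting function $\bar g_e$ is continuous and strictly positive on this fixed compact space, so $r(e)=\min \bar g_e>0$; and since $e\mapsto \bar g_e$ is continuous into $C^0$ of that compact space, $r$ is continuous.

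The second step is formal. Put $m(e)=\min\{1,r(e)\}$, a continuous function with values in $(0,1]$, and define $H\colon \Emb(N,\RR^k)\times[0,1]\to \Emb(N,\RR^k)$ by $H_t(e)=\bigl(1-t+t\,m(e)\bigr)^{-1}e$. This is well defined because $1-t+t\,m(e)$ is a convex combination of $1$ and a positive number, hence positive, and a positive rescaling of an embedding is again an embedding; moreover $H_0=\mathrm{id}$; if $r(e)\geq 1$ then $m(e)=1$ and $H_t(e)=e$ for all $t$; and $r\bigl(H_1(e)\bigr)=r(e)/m(e)=\max\{1,r(e)\}\geq 1$, so $H_1$ has image in $\Emb^{ex}(N,\RR^k)$. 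Thus $H$ is a strong deformation retraction of $\Emb(N,\RR^k)$ onto $\Emb^{ex}(N,\RR^k)$, and in particular $I$ is a homotopy equivalence.

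The \textbf{main obstacle} is the first step: one must check that $r$ is honestly continuous and not merely upper semicontinuous, which forces the careful analysis of $\|e(x)-e(y)\|/d_N(x,y)$ as $x\to y$ via the blow-up of the diagonal. Once that is in place everything else is purely formal.
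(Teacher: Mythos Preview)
Your proof is correct and follows essentially the same rescaling strategy as the paper: both use that $r(\lambda e)=\lambda\,r(e)$ and multiply $e$ by a continuous positive factor depending on $r(e)$ to force $r\geq 1$, with the homotopy given by interpolating the scaling factor. Your version is slightly more careful (you justify continuity of $r$ via the blow-up of the diagonal, which the paper simply takes for granted) and yields a strong deformation retraction rather than merely a two-sided homotopy inverse; the paper instead chooses a smooth auxiliary function $f$ with $f(x)>1/x$ for $x<1$ and $f(x)\geq 1$ for $x\geq 1$ in place of your $1/\min\{1,r(e)\}$.
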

\begin{proof}
Let $f:\RR_{>0}\to \RR$ be a $C^\infty$-function which satisfies the following inequality
\[
f(x)>\frac{1}{x}\quad (x<1),\qquad f(x)\geq 1 \quad (x\geq 1)
\]
We define a continuous map $F:\Emb(N,\RR^k)\to \Emb^{ex}(N,\RR^k)$ by $e\mapsto f(r(e))\cdot e$ where $r(e)$ is the number given in Definition \ref{Dtubulernbd}, and $\cdot$ denotes the component-wise scaler multiplication. A homotopy from $F\circ I$ to $id$ is given by $(t,e)\mapsto \{t+(1-t)f(r(e))\}\cdot e$, and a homotopy from $I\circ F$ to $id$ is given by the same formula. 
\end{proof}
\begin{lem}\label{Tsemistable}
We use the notations in Definitioin \ref{Dmap}. For each $n\geq 1$  and $G\in\GG(n)$, $\CCM(n)$ and $\DeltaT_G$ are strongly semistable, and each map in the following zigzags  in  $\Ho(\SP)$ is an isomorphism.
\[
\begin{split}
\Sigma^{nK}Th(\nu_G)    &    \stackrel{\lambda_G}{\longrightarrow}\DeltaT_G(\configc_0)\longleftarrow \DeltaT_G\, ,    \\
\Sigma^{nK}\bigl\{ Th(\nu^{\times n})/Th(\nu^{\times n}|_{\Delta_{\fat}(n)}) \bigr\}  &    \stackrel{\lambda_G}{\longrightarrow}\DeltaT_\emptyset(\configc_0)/\{ \DeltaT_{\fat}(\configc_0)\} \longleftarrow \CCM (n).
\end{split}
\] 
Here, see Introduction for $\Delta_{\fat}(n)$, and   the right  maps are the evaluations at  $\configc_0 $. 
\end{lem}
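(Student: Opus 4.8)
The plan is to deduce everything from R.~Cohen's identification of his model with the relevant Thom spectrum \cite{cohen}, relativized to the subspaces $\Delta_G\subset\TM^{\times n}$, combined with the fact that evaluating at the fixed configuration $\configc_0$ changes nothing because $\DD_1(n)$ is contractible.

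First I would handle the evaluation maps. By construction, at each level $k$ the spectra $\TCCM(n)$, $\DeltaT_{\fat}(n)$ and $\DeltaT_G$ are spaces of sections over $\DD_1(n)$ of the families $\configc\mapsto\TCCM(\configc)_k$, $\configc\mapsto\DeltaT_{\fat}(\configc)_k$, $\configc\mapsto\DeltaT_G(\configc)_k$; these families are (quasi)fibrations over the contractible base $\DD_1(n)$ --- the same observation already used for the stated level equivalence $\TCCM(n)\to\Map(\DD_1(n),\MT(n))$ --- so evaluation at $\configc_0$ is a homotopy equivalence onto the fibre, compatibly with the inclusions of subspectra $\DeltaT_{\fat}(n)\subset\TCCM(n)$ and $\DeltaT_G\subset\TCCM(n)$. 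Hence $\DeltaT_G\to\DeltaT_G(\configc_0)$ is a level equivalence, and since these inclusions are levelwise cofibrations, so is the induced map $\CCM(n)=\TCCM(n)/\DeltaT_{\fat}(n)\to\DeltaT_\emptyset(\configc_0)/\DeltaT_{\fat}(\configc_0)$. Once the targets are known to be strongly semistable, all four maps are $\pi_*$-isomorphisms, in particular isomorphisms in $\Ho(\SP)$.

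Next I would run Cohen's argument for the core identification $\lambda_G$. Unwinding the definitions, $\DeltaT_G(\configc_0)_k=\widetilde X_k/\partial\widetilde X_k$, where $\widetilde X_k$ is the space of triples $(e,\eps,u)$ with $e\in\Emb^{ex}(\TM^{\times n},\RR^k)$, $0<\eps<\min\{L(e),\frac{\rho}{2}\min_i|(c_0)_i|\}$, $u\in\nu_\eps(e)$, and $\pi_e(u)$ lying in the $\eps$-dependent tube $\{((x_i,y_i))_i: d_M(x_p,x_q)\le\delta_{pq}(\configc_0,\eps)\text{ for }(p,q)\in E(G)\}$ around $\Delta_G$. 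Now $\Emb^{ex}(\TM^{\times n},\RR^k)$ is homotopy equivalent to $\Emb(\TM^{\times n},\RR^k)$ by Lemma~\ref{Lexpandinghomotopy}, hence highly connected (with connectivity growing linearly in $k$) by general position; the $\eps$-coordinate contributes only a contractible factor; and by Lemma~\ref{Ldiagonaltube} the tube deformation retracts onto $\Delta_G$ --- when $E(G)$ is too large for the literal hypothesis $\sum\eps_{ij}<\rho$, one first passes to a spanning linear forest $G'$ of $G$ (so that $\sum_{(p,q)\in E(G')}\delta_{pq}(\configc_0,\eps)<\rho$, using $\sum_i|(c_0)_i|=1$) and compares the two tubes around the common subspace $\Delta_{G'}=\Delta_G$. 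Therefore $\DeltaT_G(\configc_0)_k$ is, through a range linear in $k$ and compatibly with the structure maps, the Thom space of the rank $k-n\dim\TM$ normal bundle of $\TM^{\times n}$ in $\RR^k$ restricted to $\Delta_G$; at $k=nK$ this is $Th(\nu_G)$, and $\lambda_G$ is the adjoint of the resulting comparison map. The Thom isomorphism together with Freudenthal then make the structure maps $\pi_j$-isomorphisms for $j\le\alpha k$ with any fixed $\alpha<2$ once $k$ is large, so $\DeltaT_G(\configc_0)$ (hence $\DeltaT_G$, by the previous step) is strongly semistable, and $\lambda_G$ exhibits it as the suspension spectrum of $Th(\nu_G)$ placed in level $nK$, i.e.\ $\lambda_G$ is a stable equivalence. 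This is a relative form of \cite[Theorem 3]{cohen}.

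Finally I would assemble the second zigzag. The subspectra satisfy $\DeltaT_{\fat}(\configc_0)=\bigcup_{G\in\GG(n,1)}\DeltaT_G(\configc_0)$ and $\DeltaT_{G_1}(\configc_0)\cap\cdots\cap\DeltaT_{G_r}(\configc_0)=\DeltaT_{G_1\cup\cdots\cup G_r}(\configc_0)$, all inclusions being levelwise cofibrations, so the canonical map $\hocolim\,\DeltaT_G(\configc_0)\to\DeltaT_{\fat}(\configc_0)$ over the poset $\GG(n)^{op}-\{\emptyset\}$ is a $\pi_*$-isomorphism; by the previous step this homotopy colimit is identified, compatibly with the inclusions $\DeltaT_G(\configc_0)\subset\DeltaT_\emptyset(\configc_0)$, with $\Sigma^{nK}\hocolim\,Th(\nu^{\times n}|_{\Delta_G})\simeq\Sigma^{nK}Th(\nu^{\times n}|_{\Delta_{\fat}(n)})$, using $\Delta_{\fat}(n)=\bigcup_{G\ne\emptyset}\Delta_G$. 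Taking homotopy cofibres of the cofibrations $\DeltaT_{\fat}(\configc_0)\hookrightarrow\DeltaT_\emptyset(\configc_0)$ and $Th(\nu^{\times n}|_{\Delta_{\fat}(n)})\hookrightarrow Th(\nu^{\times n})$, the map $\lambda_\emptyset$ gives the desired stable equivalence $\Sigma^{nK}\{Th(\nu^{\times n})/Th(\nu^{\times n}|_{\Delta_{\fat}(n)})\}\to\DeltaT_\emptyset(\configc_0)/\DeltaT_{\fat}(\configc_0)$; and since a homotopy cofibre of a map of strongly semistable spectra is again strongly semistable (combining the ranges of isomorphism through the associated long exact sequence and shrinking $\alpha$ slightly), $\CCM(n)$ is strongly semistable, which together with the first step proves all the assertions. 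The hard part will be the bookkeeping in the Cohen step: making the tube retractions of Lemma~\ref{Ldiagonaltube} uniform and compatible across the whole poset $\GG(n)$ so that the patching of the last paragraph goes through, dealing with graphs whose edge set is too large for the stated hypothesis of that lemma, and pinning down one value of $\alpha>1$ that works simultaneously for all of the structure maps involved.
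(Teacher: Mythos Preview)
Your proposal is correct and follows essentially the same strategy as the paper: reduce to Cohen's model via the level equivalence given by evaluation at $\configc_0$, then use the high connectivity of $\Emb^{ex}(\TM^{\times n},\RR^k)$ (Lemma~\ref{Lexpandinghomotopy} plus Whitney) together with the tube retraction of Lemma~\ref{Ldiagonaltube} to identify each level, in a range, with the appropriate Thom space.

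Two small differences are worth noting. First, where you invoke ``Thom isomorphism together with Freudenthal'' informally, the paper makes the connectivity estimate precise by packaging the data $(e,\eps)$ into a base space $\E_k=\{(e,\eps)\}$ which is $(k/2-n(2\dd-1)-1)$-connected, observing that $\NT_k$ is the quotient of a Thom-space fibre bundle over $\E_k$ by the basepoint section, and then reading off from the Serre spectral sequence that the comparison map is $(3k/2-2n(2\dd-1)-2)$-connected; this gives an explicit $\alpha$ and avoids any ambiguity about ``a range linear in $k$''. Second, for the quotient in the second zigzag the paper does not spell out the homotopy-colimit/cofibre patching you describe; it simply remarks that the same fibre-bundle argument applies directly to the pair $(\DeltaT_\emptyset(\configc_0),\DeltaT_{\fat}(\configc_0))$ once the assumption on $\rho$ (which is exactly what makes Lemma~\ref{Ldiagonaltube} available for the relevant tubes) is in force. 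Your poset argument is a legitimate and more structural alternative, and your care about graphs with too many edges for the literal hypothesis of Lemma~\ref{Ldiagonaltube} (passing to a spanning forest with the same $\Delta_G$) is a point the paper leaves implicit.
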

\begin{proof}
For simplicity, we shall prove the claim for the maps in the first line for the case of $G=\emptyset$. The same proof works for general $G$ thanks to the assumption on $\rho$ given in subsection \ref{SSTCCM}. Set $N=(\TM)^{\times n}$. The evaluation at $\configc_0$ and the inclusion $\DeltaT_\emptyset(\configc_0)\subset \NT$ are clearly level equivalences. So all we have to prove is that $\DeltaT_\emptyset$ is strongly semistable and that the composition of  $\lambda_G$ and the inclusion, which is also denoted by $\lambda_G:\Sigma^{nK}Th(\nu_G)\to \NT$, is an isomorphism in $\Ho(\SP)$.    We define a space $\E_k$ by
\[
\E_k=\{(e,\eps) \mid e\in \Emb^{ex}(N,\RR^k),\ 0<\eps< L(e)\}.
\]
By Lemma \ref{Lexpandinghomotopy} and Whitney's theorem, $\E_k$ is $(k/2-n(2\dd -1)-1)$-connected. Let $P:\BNT_k\to \E_k$ be the fiber bundle obtained from the obvious projection $\TNT_k\to \E_k$ by collapsing  the complements of $\nu_\eps(e)$'s in a fiberwise manner (see Definition \ref{Dtubulernbd}). So each fiber of the map $P$ is a Thom space homeomorphic to $Th(\nu_G)$.  $P$ has a section $s:\E_k\to \BNT_k$  to the basepoints, and there is an obvious homeomorphism 
\[
\BNT_k /s(\E_k)\cong \NT_k.
\]
With this observation, by observing the Serre spectral sequence for $P$, we see the composition
\[
S^{k-nK}\wedge Th(\nu_G)\stackrel{\lambda_G}{\longrightarrow}S^{k-nK}\wedge \NT_{nK}\stackrel{\text{action of \ }\Sph}{\longrightarrow} \NT_k
\]
is $(3k/2-2n(2\dd -1)-2)$-connected. This implies $\NT$ is strongly semistable and $\lambda_G$ is an isomorphism. The same proof works for general $G$ and the maps in the second line thanks to the assumption on $\rho$ given in subsection \ref{SSTCCM}.
\end{proof}

\begin{proof}[Proof of Theorem \ref{TAtiyahdual}]
Similarly to the proof of Lemma \ref{Tsemistable}, it is easy to see $\Sphere_M$ and $\Sphere'_M$ are strongly semistable, which  implies each comodule in the zigzag in Definition \ref{Dconfigurationmodule} is strongly semistable, combined with the fact that the spaces $F^M(n)$, $\widetilde{F}^M(n)$, $\CSINHA(n)$ have homotopy types of finite CW complexes.  It is clear that $p$ and $q$ are $\pi_*$-isomorphisms, so are $p_*$ and $q_*$. $i_0$ and $i_1$ are homotopy equivalences for each $n$ since $\widetilde{F}^M(n)$ is homotopy equivalent to the mapping cylinder of the inclusion $C_n(M)\subset C_n\CPTM$ which is also a homotopy equivalence, so $(i_0)^\vee$ and $(i_1)^\vee$ are $\pi_*$-isomorphisms. $\Phi_n$ is a $\pi_*$-isomorphism since it reduces the  equivalence of the original Atiyah duality in the (homotopy) category of classical spectra via Lemma \ref{Tsemistable} (see \cite{browder}). 
\end{proof}

\section{Spectral sequences}
\subsection{A chain functor}\label{SSchain}
\begin{defi}\label{Dchainspectra}
\begin{itemize}
\item For a chain complex $C_*$, $C[k]_*$ is the chain complex given by $C[k]_l=C_{k+l}$ with the same differential as $C_*$ (without extra sign.). 
\item Fix a fundamental cycle $w_{S^1}\in C_1(S^1)$. We shall define a chain complex $C_*^S(X)$ for a symmetric spectrum $X$. Define a chain map 
$i_k^X:\bar C_*(X_k)[k]\to \bar C_*(X_{k+1})[k+1]$ by $i_k^X(x)=(-1)^l\sigma_*(w_{S^1}\times x)$ for $x\in \bar C_l(X_k)$, where $\sigma:S^1\wedge X_k\to X_{k+1}$ is the structure map of $X$. We define $C_*^S(X)$ as the colimit of the sequence $\{\bar C_*(X_k)[k]; i_k^X\}_{k\geq 0}$. Clearly the procedure $X\mapsto C_*^S(X)$ is extended to a functor $\SP\to \CH_{\kk}$ in an obvious manner. 
\item We denote by $H^S_*(X)$ the homology group of $C^S_*(X)$.
\item Let $\FCW$ denote the full subcategory of $\CG$ spanned by finite CW complexes.  We define a functor $C^*_S:(\FCW)^{op}\to \CH_{\kk}$ by $C^q_S(X)=C_{-q}^S(X^\vee)$.
\end{itemize}
\end{defi}

\begin{lem}\label{Lchaininvariance}
\begin{enumerate}
\item If $f:X\to Y$ is a stable equivalence between strongly semistable  spectra, the induced map $f_*:C_*^S(X)\to C_*^S(Y)$ is a quasi-isormorphism. 
\item If $X\to Y\to Z$ is a homotopy cofiber sequence of strongly semistable spectra, $C_*^S(X)\to C_*^S(Y)\to C_*^S(Z)$ is a homotopy cofiber sequence in $\CH_{\kk}$.
\end{enumerate}
\end{lem}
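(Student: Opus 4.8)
The plan is to compute $H_*(C^S_*(X))$ as a filtered colimit of ordinary homology groups and to use that filtered colimits are exact. Since $C^S_*(X)=\colim_k \bar C_*(X_k)[k]$ is a filtered colimit in $\CH_{\kk}$ and homology commutes with filtered colimits, one gets $H_n(C^S_*(X))\cong \colim_k \bar H_{n+k}(X_k;\kk)$, the transition maps being those induced by the structure maps of $X$ through the suspension isomorphism. Consequently, for (1) it suffices to prove that for each fixed $n$ the map $\bar H_{n+k}(X_k;\kk)\to \bar H_{n+k}(Y_k;\kk)$ induced by $f_k$ is an isomorphism for all sufficiently large $k$: a map of filtered colimits which is an isomorphism on a cofinal family of terms is an isomorphism.

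So the first real step is to show that $f_k\colon X_k\to Y_k$ is a highly connected map once $k$ is large. Since $f$ is a stable equivalence between strongly semistable spectra it is a $\pi_*$-isomorphism. Fixing one constant $\alpha>1$ witnessing strong semistability of both $X$ and $Y$, iteration of the structure maps identifies, for $k\gg 0$ and $m\le\alpha k$, the group $\pi_m(X_k)$ with the naive homotopy group $\pi_{m-k}(X)$, and similarly for $Y$; since $f$ is a $\pi_*$-isomorphism, $f_k$ induces an isomorphism on $\pi_m$ for all $m\le\alpha k$. Hence $f_k$ is (essentially) $\alpha k$-connected, so it induces an isomorphism on $\bar H_m(-;\kk)$ for $m<\alpha k$; taking $m=n+k$ with $k$ large enough that $n+k<\alpha k$ finishes (1). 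For $k\gg 0$ the spaces $X_k$, $Y_k$ are connected because the strongly semistable spectra at hand are bounded below after a shift --- they are Spanier--Whitehead duals of finite complexes or shifted Thom spectra of finite-dimensional bundles over closed manifolds --- so the usual basepoint subtleties do not intervene. I expect this passage from a $\pi_*$-isomorphism to a levelwise homology isomorphism in a growing range to be the main obstacle: it must genuinely use strong semistability, since a $\pi_*$-isomorphism by itself carries no levelwise information.

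For (2) I would use that $C^S_*$ sends the levelwise reduced mapping cone to the algebraic mapping cone. There is a natural quasi-isomorphism $\bar C_*(\mathrm{Cone}\,f_k)\simeq \mathrm{Cone}\bigl(\bar C_*(X_k)\to \bar C_*(Y_k)\bigr)$ compatible with the suspensions; applying the shift $[k]$ and passing to the filtered colimit --- using that the algebraic mapping cone, being a finite colimit, commutes with filtered colimits and that filtered colimits preserve quasi-isomorphisms --- yields a natural quasi-isomorphism $C^S_*(\mathrm{Cone}\,f)\simeq \mathrm{Cone}\bigl(C^S_*(X)\to C^S_*(Y)\bigr)$. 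Thus $C^S_*(X)\to C^S_*(Y)\to C^S_*(\mathrm{Cone}\,f)$ is a homotopy cofiber sequence in $\CH_{\kk}$, a purely formal point requiring no semistability. Finally, a homotopy cofiber sequence $X\to Y\to Z$ of strongly semistable spectra furnishes a stable equivalence $\mathrm{Cone}\,f\to Z$ over $Y$; as $\mathrm{Cone}\,f$ is again strongly semistable (the homotopy exact sequence of the levelwise cofiber sequences together with strong semistability and boundedness of $X$, $Y$ gives the needed range estimate), this map is a $\pi_*$-isomorphism and (1) shows $C^S_*(\mathrm{Cone}\,f)\to C^S_*(Z)$ is a quasi-isomorphism. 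Combining, $C^S_*(X)\to C^S_*(Y)\to C^S_*(Z)$ is a homotopy cofiber sequence, which proves (2).
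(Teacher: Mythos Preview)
Your argument for part (1) is essentially the paper's: the paper simply writes that it ``clearly follows from the Hurewicz Theorem and the fact that stable equivalences between strongly semistable spectra are $\pi_*$-isomorphisms,'' and you have unpacked exactly that. Your observation that one needs the levels $X_k,Y_k$ to be (simply) connected for large $k$ --- which you justify by appealing to the bounded-below nature of the spectra actually used in the paper --- is a point the paper leaves implicit.

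For part (2) the two proofs follow the same skeleton (produce a strict model for the homotopy cofiber, show it is strongly semistable, then apply part (1) to compare with $Z$), but realize it differently. The paper invokes the factorization axioms of the stable model structure on $\SP$: it replaces $X\to Y$ by a cofibration $X'\to Y'$ along level trivial fibrations $X'\to X$, $Y'\to Y$, so that $X',Y'$ inherit strong semistability for free, and then works with the strict cofiber $Y'/X'$. You instead use the levelwise mapping cone and the natural quasi-isomorphism $\bar C_*(\mathrm{Cone}\,f_k)\simeq \mathrm{Cone}(\bar C_*(f_k))$, passing to the filtered colimit. Your route is more elementary (no model-category input), while the paper's is terser and gets strong semistability of the replacements automatically from the level-equivalence property of trivial fibrations. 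Both arguments must still verify that the chosen cofiber model is strongly semistable; the paper asserts this in one line, while you sketch the range estimate via Blakers--Massey/boundedness. Note that your phrase ``the homotopy exact sequence of the levelwise cofiber sequences'' is slightly loose --- cofiber sequences of spaces do not give a long exact sequence in $\pi_*$ on the nose --- but with the boundedness you invoke, the required range of exactness does hold.
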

\begin{proof}
The first part clearly follows from the Hurewicz Theorem and the fact that stable equivalences between strongly semistable spectra are $\pi_*$-isomorphisms. To prove the second part, we take the  following replacements, using the factorization of the model category $\SP$.  
\[
\xymatrix{
\emptyset \ar[r] & X' \ar[r]\ar[d] & Y' \ar[d] \\
& X\ar[r]^f & Y
}
\]
Here, the top horizontal arrows are cofibrations and the vertical arrows are trivial fibrations. Since trivial fibrations in $\SP$ are level trivial fibrations, $X'$ and $Y'$ are strongly semistable. This implies the cofiber $Y'/X'$ is strongly semistable and the canonical map $C_*^S(Y'/X')\to C_*(Z)$ is a quasi-isomorphism by the first part. We have completed the proof of the second part. 
\end{proof}
 
\begin{lem}\label{Lcochaintheory}
There exists a zigzag of natural transformations between $C^*$ and $C^*_S:(\FCW)^{op}\to \CH_{\kk}$, in which each natural transformation is an objectwise quasi-isomorphism.  
\end{lem}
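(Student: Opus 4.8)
The plan is to build the zigzag out of the evaluation pairing between a finite complex and its Spanier--Whitehead dual, identifying the colimit of the resulting ``duals'' with singular cochains. Fix $X\in\FCW$. Since $(X^\vee)_k=\Map(X,\Sphere)_k=\Map_*(X_+,S^k)$, we have the evaluation map $\mathrm{ev}_k\colon\Map_*(X_+,S^k)\wedge X_+\to S^k$, natural in $X$. Composing the Eilenberg--Zilber shuffle map for reduced chains with $(\mathrm{ev}_k)_*$ and using $\bar C_*(X_+)=C_*(X)$ gives a natural chain map $\bar C_*\bigl((X^\vee)_k\bigr)\otimes C_*(X)\to\bar C_*(S^k)$, whose adjoint (available because $C_*(X)$ is degreewise free over the PID $\kk$) is a natural chain map
\[
\Theta_k^X\colon \bar C_*\bigl((X^\vee)_k\bigr)\longrightarrow \mathrm{Hom}_\kk\bigl(C_*(X),\bar C_*(S^k)\bigr)
\]
into the internal hom complex of $\CH_{\kk}$, contravariant in $X$. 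First I would check that, after the shifts $[k]$ in Definition \ref{Dchainspectra}, the $\Theta_k^X$ are compatible with the two colimit systems: the structure map $\sigma$ of $X^\vee$ satisfies $\mathrm{ev}_{k+1}\circ(\sigma\wedge\mathrm{id}_{X_+})=\mathrm{id}_{S^1}\wedge\mathrm{ev}_k$ as maps $S^1\wedge\Map_*(X_+,S^k)\wedge X_+\to S^{k+1}$, and applying reduced chains (the Koszul signs of the shuffle map absorbing the signs $(-1)^l$ in the definition of $i^{X^\vee}_k$) shows that $\Theta_k^X$ intertwines $i^{X^\vee}_k$ on the source with postcomposition by the chain-level suspension $\bar C_*(S^k)\to\bar C_*(S^{k+1})[1]$ (cross with $w_{S^1}$, the same map used to build $C^S_*(\Sphere)\simeq\kk$) on the target. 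Passing to colimits yields a natural chain map $\Theta\colon C^S_*(X^\vee)\to D_*(X):=\colim_k\mathrm{Hom}_\kk\bigl(C_*(X),\bar C_*(S^k)\bigr)[k]$.

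Next I would identify $D_*(X)$ with cochains. Each transition map of the colimit defining $D_*(X)$ is $\mathrm{Hom}_\kk(C_*(X),-)$ applied to the chain-level suspension $\bar C_*(S^k)\to\bar C_*(S^{k+1})[1]$, a quasi-isomorphism; since $C_*(X)$ is a bounded-above complex of free $\kk$-modules, hence K-projective, $\mathrm{Hom}_\kk(C_*(X),-)$ preserves quasi-isomorphisms, so every transition map of $D_*(X)$ is one. As filtered colimits of $\kk$-modules are exact, the canonical map from the $k=0$ term $\mathrm{Hom}_\kk(C_*(X),\bar C_*(S^0))=\mathrm{Hom}_\kk(C_*(X),\kk)=C^*(X)$ (normalized chains, for which $\bar C_*(S^0)=\kk$) into $D_*(X)$ is a natural quasi-isomorphism; the regrading $q\mapsto -q$ built into the definition of $C^*_S$ is exactly what makes this identification land on $C^*(X)$ rather than $C^{-*}(X)$.

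It then remains to prove that $\Theta$ is an objectwise quasi-isomorphism; this is the chain-level incarnation of Spanier--Whitehead duality and is the crux of the argument. I would argue by induction on the number of cells of a finite CW structure on $X$. For $X=\ast$ both $C^S_*(X^\vee)$ and $D_*(X)$ are naturally quasi-isomorphic to $\kk$, and $\Theta$ is the identity on $H_0$. For the inductive step, a cofibre sequence $A\hookrightarrow X\to X/A$ of finite CW pairs is carried by $(-)^\vee$ to a cofibre sequence of spectra that are strongly semistable (the dual of a finite complex is strongly semistable, as one checks from the model $(-)^\vee_k=\Map_*(-_+,S^k)$ using Freudenthal's suspension theorem and Whitney embedding), so $C^S_*\bigl((-)^\vee\bigr)$ sends it to a homotopy cofibre sequence of complexes by Lemma \ref{Lchaininvariance}(2), while $D_*(-)\simeq C^*(-)$ sends it to the long exact sequence of the pair $(X,A)$; the evaluation pairings are compatible with these cofibre sequences, and the five lemma completes the induction. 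Combining the three steps gives the zigzag $C^*(-)\hookrightarrow D_*(-)\xleftarrow{\ \Theta\ }C^S_*\bigl((-)^\vee\bigr)=C^*_S(-)$ of natural transformations of functors $(\FCW)^{op}\to\CH_{\kk}$, each an objectwise quasi-isomorphism.

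The main obstacle is precisely the verification that $\Theta$ is a quasi-isomorphism, together with the sign bookkeeping needed for the compatibility with the $i^{X^\vee}_k$ in the first step; everything else is formal homological algebra. An alternative for the last step is to observe that the restriction of $C^S_*$ to finite cell spectra is monoidal --- its lax structure maps $C^S_*(A)\otimes C^S_*(B)\to C^S_*(A\wedge B)$ are quasi-isomorphisms there, by a cellular induction reducing to $A=B=\Sphere$ --- with $C^S_*(\Sphere)\simeq\kk$, hence preserves dual objects, so $C^S_*(X^\vee)\simeq C^S_*(\Sigma^\infty_+X)^\vee\simeq C_*(X)^\vee=C^*(X)$; but the evaluation-pairing construction has the advantage of producing the required natural transformations explicitly, which is what the statement asks for.
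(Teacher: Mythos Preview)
Your approach is correct in outline but takes a genuinely different route from the paper. The paper invokes Mandell's axiomatic characterization of cochain theories \cite{mandell}: any functor $(\FCW)^{op}\to\CH_{\kk}$ satisfying his axioms is connected to $C^*$ by a zigzag of objectwise quasi-isomorphisms, so the proof reduces to checking those axioms for $C^*_S$; the only non-trivial one is extension/excision, which the paper verifies by observing that the levelwise fibre sequence $(X/A)^\vee_*\to X^\vee\to A^\vee$ is a homotopy cofibre sequence of strongly semistable spectra and applying Lemma~\ref{Lchaininvariance}(2). You instead construct the zigzag by hand via the evaluation pairing and prove $\Theta$ is a quasi-isomorphism by cell induction---essentially a direct chain-level proof of Spanier--Whitehead duality. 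Both arguments ultimately rest on the same underlying fact (dualizing a CW cofibre sequence yields a cofibre sequence of strongly semistable spectra), so your inductive step is a hand-rolled version of the paper's excision check; the paper's route is shorter because Mandell's theorem absorbs the construction of the zigzag, while yours is self-contained and makes the comparison maps explicit. Two minor points: Whitney embedding is irrelevant to the strong semistability of $X^\vee$---Freudenthal alone gives the needed connectivity of $\Map_*(X_+,S^k)\to\Omega\Map_*(X_+,S^{k+1})$; and in the inductive step the quotient $X/A$ requires the \emph{based} dual $(X/A)^\vee_*=\Map_*(X/A,\Sphere)$ rather than the unbased $(X/A)^\vee$, a distinction the paper handles by phrasing excision as a comparison of homotopy fibres of $C^*_S(X/A)\to C^*_S(*)$ and $C^*_S(X)\to C^*_S(A)$.
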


\begin{proof}
All we have to do is to verify $C^*_S$ satisfies the axioms of {\em cochain theory} in \cite{mandell}. (In \cite{mandell}, it was proved that any functor satisfying the axioms can be connected with  the singular cochain by a zigzag as in the claim.) The only non-trivial axiom is the {\em Extension/Excision axiom} which states if $X\in \FCW$ and $A\subset X$ is an inclusion of a subcomplex, the map from the homotopy fiber $F(T(X/A)\to T(*))$ to the  homotopy fiber $F(T(X)\to T(A))$ is a quasi-isomorphism (see \cite{mandell} for the notations).
We shall verify this axiom for $T=C^*_S$. For a pointed space $Y$, Let $Y^\vee_*$ denote the  spectrum given by $(Y^\vee_*)_k=\Map_*(Y,S^k)$ where $\Map_*$ is the space of based maps. The fiber sequence 
\[
(X/A)_*^\vee \longrightarrow (X_+)^\vee \longrightarrow (A_+)^\vee
\]
is a homotopy fiber sequence in $\SP$ because  $A\subset X$ is an inclusion of finite CW-complexes and $\Sphere$ is strongly semistable, and also a homotopy cofiber sequuence because the two  sequences are the same in $\SP$.  By (2) of Lemma \ref{Lchaininvariance}, the induced sequence
\[
C^S_{*}((X/A)^\vee_*)\longrightarrow C^*_S(X) \longrightarrow C^*_S(A)
\]  
is a  homotopy cofiber sequence and also  a fiber one since the two  sequences are the same in $\CH$.  By a similar argument, we see the sequence 
\[
C^S_{*}((X/A)^\vee_*)\longrightarrow C^*_S(X/A) \longrightarrow C^*_S(*)
\]
is a homotopy fiber sequnce. Thus, we have verified the axiom.

\end{proof}
The functor $C^S_*$ does not have any compatibility with symmetry isomorphisms of the monoidal products $\wedge$ in $\SP$, but it have some compatibility with the tensor $\hotimes$ with a space. 
\begin{lem}\label{Lchaincompatible}

\begin{enumerate}
\item For $U\in \CG$ and $X\in \SP$, the collection of Eilenberg-Zilber shuffle map $\{EZ:C_*(U)\otimes \bar C_*(X_k)[k]\to \bar C_*((U_+)\wedge  X_k)[k]\}_k$ induces a quasi-isomorphism 
\[
C_*(U)\otimes C_*^S(X)\to C_*^S(U\hotimes X).\]
\item Let $\oper$ be a topological operad and $Y$ be a  $\oper$-comodule in $\SP$. A natural  structure of a chain $C_*(\oper)$-comodule on the collection $C^S_*Y=\{C^S_*(Y(n))\}_{n}$  is defined as follows. The partial composition is given by the composition
\[
C_*(\oper(m))\otimes C^S_*(Y(m+n-1))\to  C^S_*(\oper(m) \hotimes Y(m+n-1)) \to C^S_*(Y(n)),
\]
where the left map is the one defined in the first part  and the right map is induced by the partial composition on $Y$. The action of $\Sigma_n$ on $C_*^S(Y)(n)$ is the natural induced one.  
\end{enumerate}
\end{lem}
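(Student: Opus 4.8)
The plan is to prove part (1) and then deduce part (2) essentially formally.

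For part (1), I would first check that the shuffle maps $EZ\colon C_*(U)\otimes\bar C_*(X_k)[k]\to\bar C_*((U\hotimes X)_k)[k]$, where $(U\hotimes X)_k=U_+\wedge X_k$, are compatible with the two structure maps $i_k^X$ and $i_k^{U\hotimes X}$ of the colimit systems, i.e.\ that $EZ\circ(\mathrm{id}\otimes i_k^X)=i_k^{U\hotimes X}\circ EZ$; since $C_*(U)\otimes(-)$ commutes with the filtered colimit defining $C_*^S(-)$, this produces the desired chain map $C_*(U)\otimes C_*^S(X)\to C_*^S(U\hotimes X)$. The identity is a direct computation. By naturality of the cross product, $EZ\bigl(c\otimes(\sigma_X)_*(w_{S^1}\times x)\bigr)=(\mathrm{id}_{U_+}\wedge\sigma_X)_*\bigl(c\times(w_{S^1}\times x)\bigr)$, so the left-hand composite sends $c\otimes x$ to $(-1)^{|x|}(\mathrm{id}_{U_+}\wedge\sigma_X)_*\bigl(c\times(w_{S^1}\times x)\bigr)$. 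On the other hand the structure map of $U\hotimes X$ is $\mathrm{id}_{U_+}\wedge\sigma_X$ precomposed with the twist $S^1\wedge U_+\cong U_+\wedge S^1$, and the Koszul sign $(-1)^{|w_{S^1}|\,|c|}=(-1)^{|c|}$ produced by that twist cancels precisely against the sign picked up when $w_{S^1}\times(c\times x)$ is rewritten as $c\times(w_{S^1}\times x)$ via associativity of the cross product; so the right-hand composite yields the same element, and the square commutes exactly.

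For the quasi-isomorphism claim, I would use that the sequence $0\to C_*(*)\to C_*(X_k)\to\bar C_*(X_k)\to 0$ is (split) short exact because $\bar C_*(X_k)$ is degreewise free. Tensoring with $C_*(U)$ and comparing, via the Eilenberg--Zilber maps (compatible by naturality in the second variable), with the short exact sequence $0\to C_*(U\times *)\to C_*(U\times X_k)\to\bar C_*(U_+\wedge X_k)\to 0$, the classical Eilenberg--Zilber theorem (the outer two maps are quasi-isomorphisms) together with the five lemma shows each level map $C_*(U)\otimes\bar C_*(X_k)\to\bar C_*(U_+\wedge X_k)$ is a quasi-isomorphism. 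Since homology commutes with filtered colimits, the induced map $C_*(U)\otimes C_*^S(X)\to C_*^S(U\hotimes X)$ is a quasi-isomorphism.

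For part (2), the partial compositions and $\Sigma_n$-actions are defined as in the statement; that each partial composition is a well-defined chain map is immediate from part (1) and the functoriality of $C^S_*$. The comodule axioms of Definition \ref{Dcontra-module} then follow formally: the unit axiom and the equivariance axiom are obtained by applying the functor $C^S_*$ and the natural transformation of part (1) to the corresponding axioms for $Y$ as a $\oper$-comodule in $\SP$ (no new signs enter, since the $\Sigma$-actions on spectra are honest maps), while the associativity axiom reduces to that for $Y$ once one knows the natural transformation of part (1) is compatible with the associativity isomorphisms $K\hotimes(L\hotimes X)\cong(K\times L)\hotimes X$ — which is exactly the associativity and naturality of the Eilenberg--Zilber shuffle product, together with the fact that the operad structure on $C_*(\oper)$ is $C_*$ of the partial compositions of $\oper$ postcomposed with the shuffle maps. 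The step requiring the most care is precisely this sign and naturality bookkeeping: the interplay of the shift $[k]$, the twist $S^1\wedge U_+\cong U_+\wedge S^1$ concealed in the structure map of $U\hotimes X$, and the internal-degree signs in the definition of the maps $i_k$ (for part (1)), and the analogous compatibility of the part-(1) transformation with the associativity isomorphisms (for part (2)); both are routine once performed consistently with the conventions fixed around Definition \ref{Dchainspectra}, and nothing else in the argument presents a genuine obstacle.
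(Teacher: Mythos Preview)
Your proposal is correct and follows essentially the same approach as the paper: for part (1) you verify that the shuffle maps intertwine the structure maps $i_k^X$ and $i_k^{U\hotimes X}$ via the sign cancellation between the Koszul sign from the twist $S^1\wedge U_+\cong U_+\wedge S^1$ and the sign built into $i_k^X$, and for part (2) you reduce to the associativity/naturality of the shuffle product, which is exactly the content of the paper's commutative square. The only difference is that you spell out the quasi-isomorphism claim (levelwise Eilenberg--Zilber plus filtered colimits), which the paper leaves implicit.
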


\begin{proof}
The cross product $w_{S^1}\times x$ is equal to $ EZ(w_{S^1}\otimes x)$ by definition, and the the shuffle maps are associative and compatible with symmetry isomorphisms of monoidal products without any chain homotopy for normalized singular chain, so the maps $EZ$ are compatible with the maps $i_k^X$ in Definition \ref{Dchainspectra} (the sign  commuting an element of $C_*(U)$ and $w_{S^1}$ is canceled with the sign attached in the definition of $i^X_k$). This imply the first part. The second part follows from commutativity of the following diagram which is clear from the property of the shuffle map mentioned above. 
\[
\xymatrix{
C_*(U)\otimes C_*(V)\otimes C_*^S(X)\ar[r]\ar[d] &  C_*(U)\otimes C_*^S(V\hotimes X)\ar[d] \\
C_*(U\times V)\otimes C_*^S(X)\ar[r] & C_*^S((U\times V)\hotimes X),
}
\]
where $U,V\in\CG$ and $X\in\SP$ and the left vertical arrow is induced by the EZ shuffle map and other arrows are  given by the first part. 

\end{proof}
\subsection{Construction of $\check{\text{C}}$ech spectral sequence}\label{SSconstructionss}
\begin{defi}\label{Dcheckmodule}
We shall define a {\em $C_*(\DD_1)$-comodule $\CECH_{\star\,*}^M$ of double complexes}, which is  consisting of the following data. 
\begin{itemize}
\item A  sequence of double complexes $\{\CECH_{\star\,*}^M(n) \}_{n\geq 1}$ with two differentials  $d$ and $\partial$ of degree $(0,1)$ and $(1,0)$ respectively, 
\item an action of $\Sigma_n$ on $\CECH_{\star\,*}^M(n)$ which preserves the bigrading, and \vspace{1mm}
\item a partial composition $(-\circ_i-):C_k(\DD_1(m))\otimes \CECH_{\star \,*}^M(m+n-1)\to \CECH_{\star,\,*+k}^M(n)$ 
\end{itemize} 
which satisfy the following compatibility conditions in addition to the conditions in Definition \ref{Dcontra-module}.
\[
d\partial =\partial d,\quad d(\alpha\circ_ix)=d\alpha \circ_ix+(-1)^{|\alpha|}\alpha \circ_i dx,\quad \partial(\alpha\circ_i x)=\alpha\circ_i \partial x \, .
\]
We define the double complex   $\CECH_{\star\,*}^M(n) $  by
\[
\CECH_{p\,*}^M(n)=
\underset{
G\in \GG(n,p)}{\bigoplus} C^S_*(\DeltaT _G) 
\]
for $p\geq 0$, and $\CECH_{p,*}^M(n)=0$ for $p<0$, where $\GG(n,p)\subset \GG(n)$ is the set of graphs with $p$ edges (see Definition \ref{Dmap} for $\DeltaT_G$). The differential  $d$ is the original differential of $C^S_*(\DeltaT_G)$.  
The other differential $\partial$ is given by the signed sum 
\[
\partial=\sum_{t=1}^p(-1)^{t+1}\partial_t
\] where $\partial_t$ is induced by the inclusion $\DeltaT_G\to \DeltaT_{G_t}$ where the graph $G_t$  is defined by removing the $t$-th edge from $G$ (in the lexicographical order).   The action of $\sigma$ on $\TCCM(n)$ restricts to a map $\sigma : \DeltaT_G\to \DeltaT_{\sigma^{-1}(G)}$ (see subsection \ref{SSnt} for $\sigma^{-1}(G)$).  This map induces a chain map $\sigma_*:C_*^S(\DeltaT_G)\to C_*^S(\DeltaT_{\sigma^{-1}(G)})$ by the standard pushforward of chains.  For $G\in G(n,p)$, let $\sigma_G\in \Sigma_p$ denote the following composition 
\[
\underline{p}\cong E(\sigma^{-1}(G))\to E(G)\cong \underline{p},
\]
where $\cong$ denotes the order preserving bijection and the middle map is given by $(i,j)\mapsto (\sigma(i),\sigma(j))$.  We define the action of $\sigma$ on $\CECH^M(n)$ as $ \mathrm{sgn}(\sigma_G) \cdot \sigma_*$ on each summand. We shall define the partial composition.
 Let $f_i:\underline{m+n-1}\to \underline{n}$ be the order preserving serjection which satisfy $f(i+t)=f(i)$ for $t=1,\dots, m-1$. For  elements $\alpha\in C_*(\DD_1(m))$ and $x\in C_*^S(\DeltaT_G)$ with $G\in\GG(n+m-1)$, if $\# E(f_i(G))=\# E(G)$,  the partial composition $\alpha\circ_ix\in C_*^S(\DeltaT_{f_iG})$ is defined similarly to Lemma \ref{Lchaincompatible} with the map $(-\circ_i-):\DD_1(m)\hotimes \DeltaT_G\to \DeltaT_{f_iG}$, and  if $\#E(f_i(G))<\#E(G)$,  $\alpha\circ_ix$ is zero. This partial composition is well-defined by Lemma \ref{Ldiagonalinclusion}. We have completed the definition of $\CECH^M$. The compatibility between $d,\ \partial,$ and $(-\circ_i-)$ is obvious. \\
\indent Let $\tot \CECH^M_{\star\, *}(n)$ denote the total complex. Its differential is given by $d+(-1)^q\partial$ on $\CECH^M_{\star\, q}(n)$. We regard the sequence $\tot \CECH^M_{\star\, *}=\{\tot \CECH^M_{\star\, *}(n)\}_n$ as a chain $C_*(\DD_1)$-comodule with the induced structure.  We fix an operad map $f:\ASS_\infty\to C_*(\DD_1)$, and regard $\tot\CECH^M$ as a $\ASS_\infty$-comodule by pulling back the partial compositions by $f$. We consider the Hochschild complex $\Hoch_{\bullet}(\tot\CECH^M_{\star\,*})$ associated to this $\ASS_\infty$-comodule, see Definition \ref{Dhochschild}.
 The total degree of elements of  $\Hoch_\bullet(\tot\CECH_{\star\,*}^M)$ is $-*-\star-\bullet$. We define two filtrations $\{F^{-p}\}$ and $\{\bar F^{-p}\}$ on this complex as follows.
  $F^{-p}$ (resp. $\bar F^{-p}$) is generated by the homogeneous parts  whose degree satisfies $\star+\bullet\leq p$ (resp. $\bullet\leq p$). We call the spectral sequence associated to $\{F^{-p}\}$  the {\em $\check{C}$ech spectral sequence}, in short, {\em $\check{C}$ech s.s.} and denote it by $\{\BGSSS^{\,-p,\,q}_r\}_r$. The spectral sequence associated to $\{\bar F^{-p}\}$ is denoted by $\{\overline{\SINHASS}^{\,-p,\,q}_r\}_r$. 
\end{defi}
\begin{lem}\label{Lsinhass}
The spectral sequence $\overline{\SINHASS}_r$  in Definition \ref{Dcheckmodule} and Sinha spectral sequence $\SINHASS_r$ in Definition \ref{Dcosimplicial} are isomorphic after the $E_1$-page.
\end{lem}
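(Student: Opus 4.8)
The plan is to compare the two spectral sequences by identifying the filtered complexes that give rise to them, at least from the $E_1$-page onward. Recall that Sinha's spectral sequence $\SINHASS_r$ is the Bousfield--Kan cohomology spectral sequence of the cosimplicial space $\CC^\bullet\CPTM$, so its $E_1$-page in cohomological degree $p$ is $H^*(\CC^p\CPTM)$ together with the alternating sum of the cofaces as $d_1$, and after the $E_1$-page it is the spectral sequence of the total complex of the cochain double complex of the cosimplicial space. On the other hand, $\overline{\SINHASS}_r$ is associated to the filtration $\{\bar F^{-p}\}$ by Hochschild degree $\bullet$ on $\Hoch_\bullet(\tot\CECH^M_{\star\,*})$, so its $E_0$-page in Hochschild degree $p$ is $\tot\CECH^M_{\star\,*}(p+1)$ with its internal differential $d+(-1)^q\partial$, and its $E_1$-page is the homology of that complex in each Hochschild degree, with $d_1$ induced by the Hochschild differential $\delta$ (which through the chosen operad map $\ASS_\infty\to C_*(\DD_1)$ reduces on $E_1$ to the alternating sum of the maps $\mu\circ_{i+1}(-)$ and the cyclic term, i.e. to the comodule structure maps dual to the cofaces $d^i$).

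First I would compute the $E_1$-page of $\overline{\SINHASS}_r$. By definition $\tot\CECH^M_{\star\,*}(n)$ is the total complex of the augmented $\CECHC$ech double complex $\bigoplus_{G\in\GG(n,p)}C^S_*(\DeltaT_G)$ with the face maps $\partial_t$ given by edge-deletion. By Lemma~\ref{Tsemistable}, each $\DeltaT_G$ is strongly semistable and $\pi_*$-isomorphic to the suspension spectrum of $Th(\nu_G)$, so $C^S_*(\DeltaT_G)$ computes $C_*(Th(\nu_G))\cong C_*(\TM^{\times n},\TM^{\times n}-\Delta_G)$ up to a shift (Thom isomorphism), which by Poincar\'e--Lefschetz duality is $C^*(\Delta_G)$ up to that shift; here I would use Lemma~\ref{Lchaininvariance} to pass homology through $C^S_*$ and the fact that strong semistability is preserved under the relevant cofiber sequences. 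Taking the homology of the $\CECHC$ech differential $\partial$ in each fixed internal degree, the augmented $\CECHC$ech complex of the cover of $\DeltaT_{\fat}(n)$ by the $\DeltaT_G$, $G\neq\emptyset$, is exact away from the ends (this is the standard nerve/Mayer--Vietoris resolution argument, also invoked in the paragraph after Theorem~\ref{Tmainhomotopycolimit}), so the homology of $\tot\CECH^M_{\star\,*}(n)$ is $C^*(\CC^{n-1}\CPTM)$ up to quasi-isomorphism. Hence the $E_1$-page of $\overline{\SINHASS}_r$ in Hochschild degree $p$ is $H^*(\CC^p\CPTM)$, matching the $E_1$-page of Sinha's spectral sequence as a bigraded group.

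Next I would check that the $d_1$-differentials agree. On $\overline{\SINHASS}_1$ the differential is induced by the Hochschild differential $\delta$ of Definition~\ref{Dhochschild}; after the identification above, the terms $\mu_k\circ_{i+1}(-)$ with $k=2$ together with the cyclic term $\mu_2\circ_1(x^s)$ induce exactly the partial compositions $x\mapsto x\circ_i\mu$ of the $\ASS$-comodule structure on $(\CSINHA)^\vee$ (after transporting along the zigzag of Theorem~\ref{TAtiyahdual}, which is a $\pi_*$-isomorphism of $\DD_1$-comodules, hence induces the same maps on homology), and these in turn are the duals of the cofaces $d^i$ of $\CC^\bullet\CPTM$ by the very definition of $\CSINHA$ (Definition~\ref{Dconfigurationmodule}: $x\circ_i\mu=d^{i-1}(x)$). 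The higher Hochschild terms $\mu_k$ for $k\geq 3$ raise $\CECHC$ech degree and so contribute to higher differentials, not to $d_1$; and the signs work out because the sign conventions in Definition~\ref{Dhochschild} were chosen to match the alternating-sum convention of the Bousfield--Kan differential. Therefore $d_1$ on $\overline{\SINHASS}_1$ is the alternating sum of the coface-duals, which is precisely $d_1$ of Sinha's spectral sequence.

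Finally, to upgrade the agreement of $(E_1,d_1)$ to an isomorphism of all later pages, I would observe that both spectral sequences, from $E_1$ on, are the spectral sequences of filtered complexes that become quasi-isomorphic as filtered complexes once we pass to the $\CECHC$ech-resolved model: the Bousfield--Kan total complex of $C^*(\CC^\bullet\CPTM)$ on one side, and $\Hoch_\bullet(\tot\CECH^M_{\star\,*})$ filtered by $\bullet$ on the other, are connected by the zigzag of quasi-isomorphisms furnished by Theorem~\ref{TAtiyahdual} (applied arity-by-arity) and the $\CECHC$ech resolution, and this zigzag is filtration-preserving for the $\bullet$-filtration because the zigzag maps do not change Hochschild/cosimplicial degree. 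A filtration-preserving quasi-isomorphism of filtered complexes which induces an isomorphism on $E_1$ induces an isomorphism on all subsequent pages. The main obstacle I anticipate is the bookkeeping in this last step: making precise in what category the "zigzag of quasi-isomorphisms of cosimplicial objects / comodules" lives so that it genuinely induces a filtered quasi-isomorphism on the associated Hochschild-type total complexes, and checking that the operad map $\ASS_\infty\to C_*(\DD_1)$ together with the chain functor $C^S_*$ transports the comodule structure compatibly with the Bousfield--Kan cosimplicial structure on $C^*(\CC^\bullet\CPTM)$ — essentially verifying that the lax structure encoded by $\ASS_\infty$ does not perturb anything below the filtration degree where Sinha's strictly cosimplicial differential lives. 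Everything else is a routine combination of the Thom isomorphism, Poincar\'e--Lefschetz duality, the Mayer--Vietoris/nerve resolution, and Lemma~\ref{Lchaininvariance}.
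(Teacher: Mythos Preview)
Your proposal is correct and your third paragraph is exactly the paper's argument: construct a zigzag of filtration-preserving quasi-isomorphisms between $\Hoch_\bullet(\tot\CECH^M)$ and the normalized total complex of $C^*(\CC^\bullet\CPTM)$ via the $\CECHC$ech resolution (Lemma~\ref{Lcheckfunctor} applied to $G\mapsto\DeltaT_G$, giving $\tot\CECH^M(n)\simeq C^S_*(\CCM(n))$), the Atiyah duality zigzag (Theorem~\ref{TAtiyahdual} together with Lemma~\ref{Lchaininvariance}), and Lemma~\ref{Lcochaintheory}, and then observe that this zigzag is an isomorphism on $E_1$. The paper simply omits your separate hands-on verification of the $E_1$-page and $d_1$ in the first two paragraphs, since that is subsumed by the filtered zigzag once it is in place.
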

\begin{proof}
Put $N_0=\# \{(i,j)\mid i,j\in \underline{n},\ i<j\}$.  By applying Lemma \ref{Lcheckfunctor} to the functor $X:P_\nu(N_0)=\GG(n)-\{\emptyset\}\to \SP$ given by $X_G=\DeltaT_G$, we see the map $\tot\CECH_{\star,*}^M(n)\to \C^S_*(\CCM(n))$ induced by the collapsing (quotient) map $\TCCM (n)\to \CCM(n)$ is a quasi-isomorphsim. Combining this with Theorem \ref{TAtiyahdual} and Lemma \ref{Lchaininvariance}, two comodules $C^S_*(\CSINHA^\vee)$ and $\tot\CECH_{\star,*}$ are quasi-isomorphic. Clearly, $\Hoch_\bullet C^S_*(\CSINHA)$ is quasi-isomorphic to the normalized complex of $C^S_*(\CC^\bullet\langle[M]\rangle^\vee)$, which is quasi-isomorphic to the normalized  total complex of $C^*(C^\bullet\langle[M]\rangle)$ by Lemma \ref{Lcochaintheory}. Thus,  $\Hoch_\bullet\tot \CECH_{\star,*}^M$ and  the normalized total complex of $C^*(C^\bullet\langle[M]\rangle)$ are connected by a zigzag of quasi-isomorphisms which  preserve the filtration. This zigzag induces a zigzag of morphisms of spectral sequences which are isomorphisms after the $E_1$-page because the homology of $\tot \CECH_{\star,*}(n+1)$ is isomorphic to $H^*(\CC^{n}\langle[M]\rangle)$ under the  zigzag.
\end{proof}
\subsection{Convergence}\label{SSconvergence}
In  this subsection, we assume $M$ is orientable. 
We shall prepare some notations and terminologies which is necessary to analyze the $E_1$-page of $\CECHC$ech s.s.

%The Hochschild complex of singular chain of $\check{\text{C}}$ech complex associated to $\CCM$ has three gradings singular cohain grading $*$, minus check grading $\star$, and minus simplicial grading $\bullet $. For example, for a graph $G$ with $n+1$ vertices, an element of  $C^S_k(\DeltaT_G)$ has triple grading $(*,-\star,-\bullet)=(dr-k,-r,-n)$, Here, $*$ is the degree of the corresponding cochain via Poinca\'re duality. the total degree is $*-\star-\bullet$
\begin{defi}\label{Diroiro}
\begin{itemize}
%\item $\DeltaT_G=\cup_{(i,j)\in G}\Delta_{ij}(n)$

\item We fixed an embedding $e_0:\TM\to \RR^K$  and a number $\eps_0$ in Definition \ref{Dmap}. 
We also fix an isotopy $\iota_t:\TM\to\RR^{2K}$ with $\iota_0 = 0\times e_0$ and $\iota_1=\Delta_{\RR^K}\circ e_0$ where $0\times e_0:\TM\to \RR^{2K}$ is given by $(0\times e_0)(z)=(0,e_0(z))$, and $\Delta_{\RR^K}$ is the diagonal map on $\RR^K$. We impose the additional condition that  $\epsilon_0$ is smaller than 
$\min\{L(\iota_t)\mid 0\leq t\leq 1\}$. We also fix 1-parameter family of bundle map $\kappa_t:\nu_{\epsilon_0}(0\times e_0)\to \nu_{\epsilon_0}(\iota_t)$ with $\kappa_0=id$. 

\item We fix the following  classes.
\[
\begin{split}
\tw & \in H_{2\dd -1}(\TM),\qquad      \omega_\Delta\in H^{2\dd -1}(\TM\times \TM, \Delta(\TM)^c),\qquad  w_{S^K}\in H_K(S^K), \qquad 
    \omega_{S^K} \in H^K(S^K),\\
  \omega_\nu & \in  H^{K-2\dd +1}(Th(\nu)),\qquad \omega(n)\in H^{n(K-2\dd +1)}(Th(\nu^{\times n})),\qquad \gamma\in H^{\dd}(\TM\times \TM, (\TM\times_M\TM)^c)\, .
\end{split}
\]
Here, $\tw$ is a fundamental class of $\TM$, and $\omega_\Delta$ is a diagonal class 
  such that  the equality 
\[
(\tw\times \tw)\cap \omega_\Delta=\Delta_* (\tw)  \in H_{2\dd-1}(\TM^{\times 2} )
\] holds ($\Delta(\TM)^c$ is the complement of the tubular neighborhood of the (standard, non-deformed) diagonal), and  $w_{S^K}$ is the $K$ times product $(w_{S^1})^{\times n}$ of the class $w_{S^1}$ fixed in Definition \ref{Dchainspectra}, and $\omega_{S^K}$ is the class such that $w_{S^K}\cap \omega_{S^K}$ is the class represented by a point.  $\omega_\nu$ is the Thom class  satisfying the equality 
\[
\kappa_1^*(\omega_\Delta\cdot (\omega_\nu\times \omega_\nu))=\omega_{S^K}\times \omega_\nu, 
\] where  $\omega_{\Delta}\cdot (\omega_\nu\times\omega_\nu)$ is naturally regarded as  a Thom class for the bundle $\nu_{\epsilon_0}(\Delta_{\RR^K}\circ e_0)$.  We set $\omega(n)=\omega_\nu^{\times n}$. $\gamma$ is also a Thom class of a tubuler neighborhood of $\TM\times _M\TM$ in $\TM\times \TM$. 
\item We call a graph in $\GG(n)$ which does not contain a cycle (a closed path) a {\em tree}. For a graph $G\in \GG(n)$,  vertices $i$ and $j$ are said to be {\em disconnected} in $G$ if $i$ and $j$ belong to different connected components of $G$. 
\item For $i<j$, let $\pi_{ij}:\TM^{\times n}\to \TM^{\times 2}$ be the projection given by $\pi_{ij}(z_1,\dots, z_n)=(z_i,z_j)$.  Set $\Delta_{ij}=\Delta_G$ for $E(G)=\{(i,j)\}$, and 
\[
\gamma_{ij}=\pi_{ij}^*(\gamma)\in H^{\dd}(\TM^{n},(\Delta_{ij})^c ).
\]
For a tree $G\in \GG(n)$, write  $E(G)$ as $\{\, (i_1,j_1)<\cdots <(i_r,j_r)\,\}$ with $i_t<j_t$ for $t=1,\dots r$. We put 
\[
w_G=\tw ^{\,\times  n}\cap \gamma_{i_1,\, j_1}\cdots\gamma_{i_r,\, j_r}\in H_{n(2\dd -1)-r\dd }(\Delta_G).
\] Clearly, $w_G$ is a fundamental class of $\Delta_G$.
\item Let $G\in \GG(n,r)$ be a tree. Suppose $i$ and $i+1$ are disconnected in $G$. Let $d_i:\underline{n}\to \underline{n-1}$ be the map given by 
\[
d_i(j)=\left\{\begin{array}{ll}
j & (j\leq i\ )\\
j-1 &  (j\geq i+1)
\end{array}\right.
\] and 
set $H=d_i(G)\in \GG(n-1)$.  We   define   maps
\begin{align*}
\phi_G: & \bar H_{*}(Th(\nu_G))\to H_{*-nK}^S(\DeltaT_G),\quad &  \zeta_G: &H_*^S(\DeltaT_G)\to H^{-*-dr}(\Delta_G), \\
 \mu_i :& H^S_*(\DeltaT_G)\to H^S_*(\DeltaT_H),\quad & m_i:&H^{*}(\Delta_G)\to H^{*}(\Delta_H).
\end{align*}
  $\phi_G$ is the composition 
\[
\bar H_*(Th(\nu_G))\stackrel{(\lambda_G)_*}{\rightarrow} \bar H_*(\DeltaT_G(\configc_0 )_{nK})\to H_{*-nK}^S(\DeltaT_G(\configc_0) )
\to H_{*-nK}^S(\DeltaT_G) \, ,
\] where $\lambda_G$ is the map defined in Definition \ref{Dmap}, and  the second map is the canonical one and the third is the inverse of evaluation at $\configc_0$. Clearly $\phi_G$ is an isomorphism. $\zeta_G$ is the composition $(w_G\cap-)^{-1}\circ (-\cap \omega(n))\circ \phi_G^{-1}$ consisting of
 \[
\xymatrix{H^S_*(\DeltaT_G)\ar[r]^{\phi_G^{-1}}  &  \bar H_{*+nK}(Th(\nu_G))\ar[r]^{-\cap \omega(n)}   & H_{*+n(2\dd -1)}(\Delta_G )\ar[r]^{(w_G\cap-)^{-1}} &H^{-*-\dd r}(\Delta_G ).  }
\]
$\mu_i$ is the map induced by the partial composition $\mu\circ_i-$ where $\mu\in H_0(\DD_1(2))=\ASS (2)$  is the  fixed generator.
 $m_i$ is given by $(-1)^A\Delta_i^*$ where $A=*+\dd r+n$ with $r=\# E(G)$, and $\Delta_i^*$ denotes the pullback by the restriction to $\Delta_H$ of the  diagonal 
\[
\Delta_i:\TM^{\times n-1}\to \TM^{\times n}, \qquad (z_1,\dots, z_{n-1})\mapsto (z_1,\dots, z_i,z_i,\dots, z_{n-1}).
\] 
\item We denote by $H^S\CECH^M_{\star\, *}(n)$  the bigraded chain complex  obtained taking homology  of $\CECH^M_{\star\, *}(n)$ for the differential $d$, see Definition \ref{Dcheckmodule}. Its differential is induced by the  differential $(-1)^q\partial$ on $\CECH^M_{\star\, q}(n)$.  We regard the collection $H^S\CECH^M=\{H^S\CECH^M(n)\}$ as an $\ASS$-comodule with the structure induced by $\CECH^M$.
As a $\kk$-module, $H^S\CECH^M(n)$ is the direct sum $\bigoplus_{G\in \GG(n)}H^S_*(\DeltaT_G)$. We denote by $aG$ the element of $H^S\CECH^M(n)$ corresponding to  $a\in H_{*}^S(\DeltaT_G)$. 
\item The homology of the Hochschild complex  $\Hoch_\bullet(H^S\CECH^M_{\star\,*})$ has the bidegree  $(-\bullet-\star,-*)$. We denote the homogeneous part of bidegree $(p,q)$ by $H_{-p,-q}(\Hoch (H^S\CECH^M))$.
\item For two graphs $G, H\in \GG(n)$ with $E(G)\cap E(H)=\emptyset$,  the product $GH\in \GG(n)$ denotes the graph with $E(GH)=E(G)\cup E(H)$.   Let $i,j,k\in \underline{n}$ be distinct vertices, and  $[ijk]\in \GG(n)$ denote the graph with $E([ijk])=\{(i,j),\ (j,k)\}$ For a graph $G\in\GG(n)$, the products $G[ijk]$ and $G[jki]$ and $G[kij]$ have the same connected component (if they are defined) so we have $\nu_{G[ijk]}=\nu_{G[jki]}=\nu_{G[kij]}$. Using these  equalities, and the isomorphisms $\phi_{G'}$ for $G'=G[ijk], G[jki],$ and  $G[kij]$, we identify the three groups $H^S_*(\DeltaT_{GH[ijk]})$, $H^S_*(\DeltaT_{G[jki]})$, and $H^S_*(\DeltaT_{G[kij]})$ with one another.  Under this identification, let $I(n)\subset H^S\CECH^M(n)$ be the sub-module generated by 
\begin{itemize}
\item the summands of graphs which are not trees, and 
\item the elements of the form $aG[jki]+(-1)^s aG[ijk]+(-1)^{s+t}aG[kij] $ for $(i,j),(j,k),(i,k)\not\in E(G)$, where $a\in H^S_*(\DeltaT_{G[ijk]})$ and $s+1$ is the number of edges of $G$ between $(i,j)$ and $(i,k)$, and $t+1$ is the one between $(i,k)$ and $(j,k)$,  
\end{itemize}
\item  We say a graph $G\in \GG(n) $ with a  edge set $E(G)=\{(i_1,j_1)<\dots <(i_r,j_r)\}$ is {\em distinguished} if  the following inequalities hold. 
\[
i_1<j_1,\dots, i_r<j_r,\quad i_1\leq \cdots \leq  i_r.
\] We denote by $\GG(n)^{dis}\subset \GG(n)$ the subset of the distinguished graphs.

\end{itemize}
\end{defi}

%\[
%\CECH(n):H^S_*((\TMT)^n)\stackrel{\delta}{\leftarrow}\oplus_{G\in\TGG_n(1)}H^S_*(\DeltaT_G)\stackrel{\delta}{\leftarrow}\oplus_{G\in\TGG_n(2)}H^S_*(\DeltaT_G)\stackrel{\delta}{\leftarrow}\cdots
%\]
%where $\delta(aG)=\sum_{t}(-1)aG_t$ where $G_t$ is the graph made by removing the $t$-th edge (in the lexicographical order) from $G$. $\CECH$ has the natural structure of dg-$\ASS$-contra module whose action of symmetric group is natural map induced on the homology group multiplied by signature of the permutation induced on the order of edges.
The following lemma is obvious by the definition of $\CECHC$ech s.s.
\begin{lem}\label{LcechE2}
With the notations in  Definition \ref{Diroiro}, the $E_2$-page of $\check{C}$ech s.s. is isomorphic to the  homology of the Hochschild complex of $H^S\CECH^M_{\star\, *}$. More precisely, there exists an isomorphism of $\kk$-modules 
\[
\BGSSS_2^{p\, q}\cong H_{-p,-q}(\Hoch (H^S\CECH^M ))\quad \text{for each } (p,q). 
\]\hfill \qedsymbol

\end{lem}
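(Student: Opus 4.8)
The plan is to read the first two pages of $\{\BGSSS_r\}$ directly off the multi-grading, since by Definition \ref{Dcheckmodule} this spectral sequence is just the one associated to the filtered complex $\bigl(\Hoch_\bullet(\tot\CECH^M_{\star\,*}),\{F^{-p}\}\bigr)$, with $F^{-p}$ spanned by the homogeneous elements satisfying $\star+\bullet\leq p$ (an exhaustive, bounded-below decreasing filtration, as $\star,\bullet\geq 0$). First I would determine the effect of the Hochschild differential $\tilde d=d-\delta$ of Definition \ref{Dhochschild} on the weight $\sigma:=\star+\bullet$. The internal differential $d$ of $\tot\CECH^M$ is $d_{C^S}+(-1)^q\partial$; the singular differential $d_{C^S}$ acts summandwise on the $C^S_*(\DeltaT_G)$ and changes only $*$, so it preserves $\sigma$, whereas the $\CECHC$ech differential $\partial$ sends $\CECH^M_{p,*}$ to $\CECH^M_{p-1,*}$ and hence lowers $\sigma$ by $1$. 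In $\delta$, the leading term uses $\mu=\mu_2$: since $f(\mu)$ is a $0$-chain of $\DD_1(2)$ and the partial compositions of $\CECH^M$ preserve the $\CECHC$ech degree $\star$, each operation $\mu\circ_i(-)$ fixes $\star$ and $*$ and drops the arity — hence $\bullet$ — by $1$, so it too lowers $\sigma$ by $1$. Every higher correction $\mu_k$ with $k\geq 3$ instead drops $\bullet$ by $k-1\geq 2$ while still fixing $\star$ (its $\DD_1$-composite has $\CECHC$ech degree $0$), so it lowers $\sigma$ by $k-1\geq 2$. Therefore $\tilde d$ preserves the filtration, and the unique term preserving $\sigma$ exactly is $d_{C^S}$.

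Consequently $d_0=d_{C^S}$ on $E_0$, and $E_1$ is obtained by taking $d_{C^S}$-homology summandwise: as a multigraded module $E_1\cong\bigoplus_n\bigoplus_{G\in\GG(n)}H^S_*(\DeltaT_G)$, which is exactly the module underlying the Hochschild complex $\Hoch_\bullet(H^S\CECH^M_{\star\,*})$ of Definition \ref{Diroiro}, the Hochschild degree $n$ part being the arity-$(n+1)$ summands graded by the number of edges. The induced differential $d_1$ is the part of $\tilde d$ lowering $\sigma$ by exactly $1$; by the count above this is $\pm\partial$ together with the Hochschild faces $\mu\circ_i(-)$ coming from $k=2$, and nothing else. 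On $H^S$-homology these are precisely the maps $\partial_t$ and $\mu\circ_i(-)$ that define, respectively, the $\partial$-differential of $H^S\CECH^M_{\star\,*}$ and the $\ASS$-comodule structure from which $\Hoch_\bullet(H^S\CECH^M)$ is built; the higher $\mu_k$ are filtration-lowering of order $\geq 2$, so they drop out of $d_1$ (feeding instead into $d_r$ for $r\geq 2$). Hence $(E_1,d_1)$ is the complex $\Hoch_\bullet(H^S\CECH^M_{\star\,*})$ and $E_2=H\bigl(\Hoch_\bullet(H^S\CECH^M)\bigr)$. Matching $\BGSSS_2^{p\,q}$ with $H_{-p,-q}$ is then the bookkeeping of the total degree $-*-\star-\bullet$ and of the associated-graded index $-\sigma=-(\star+\bullet)$ recorded in Definition \ref{Diroiro}.

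The only genuine work is the weight count of the first paragraph, and within it the point that the $A_\infty$ corrections $\mu_k$ ($k\geq 3$) lower $\sigma$ by $k-1\geq 2$ — this rests on $|\mu_k|=-k+2$, so that $f(\mu_k)$ has chain degree $k-2$, together with the arity drop $k-1$ and the $\CECHC$ech degree $0$ of the partial compositions — and the verification that the signs in $d_1$ reproduce on the nose the Hochschild differential of $H^S\CECH^M$ as written in Definition \ref{Dhochschild}. Given the definitions already in place, both are routine, which is why the statement is essentially immediate.
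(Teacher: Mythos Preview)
Your argument is correct and is exactly the unpacking of definitions that the paper has in mind; the paper itself gives no proof of this lemma (it simply declares it ``obvious by the definition of $\check{C}$ech s.s.'' and places a \qedsymbol). Your weight count on $\sigma=\star+\bullet$ is the right way to see it: $d_{C^S}$ is filtration-preserving, $\partial$ and the $\mu_2$-faces each lower $\sigma$ by one, and the higher $\mu_k$ ($k\geq 3$) lower it by $k-1\geq 2$, so $E_1$ is the $d_{C^S}$-homology and $d_1$ is precisely the Hochschild differential of the $\ASS$-comodule $H^S\CECH^M_{\star\,*}$.
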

\begin{lem}\label{Lcechreduction}
With the notations in Definition \ref{Diroiro}, $I(n)$ is acyclic, i.e., $H_\partial (I(n))=0$,  and  the sequence $\{I(n)\}_n$ is closed under the partial compositions and symmetric group actions.
\end{lem}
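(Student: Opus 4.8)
\emph{Plan.} I would treat the two assertions separately, settling the closure statement first by a direct check and then proving acyclicity by a filtration argument that reduces everything to the classical computation of the homology of the ``complex of connected graphs''.

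\emph{Closure under the symmetric group and the partial compositions.} For $\sigma\in\Sigma_n$ the action relabels vertices, carrying $\DeltaT_G$ to $\DeltaT_{\sigma^{-1}(G)}$ (up to the sign $\mathrm{sgn}(\sigma_G)$); this sends a non-tree summand to a non-tree summand, and a generator $aG[jki]+(-1)^saG[ijk]+(-1)^{s+t}aG[kij]$ to a generator of the same shape with $i,j,k$ replaced by $\sigma^{-1}(i),\sigma^{-1}(j),\sigma^{-1}(k)$, the remaining sign bookkeeping being a routine check; hence $I(n)^{\sigma}=I(n)$. For a partial composition, recall from Definition \ref{Dcheckmodule} that on $x\in C^S_*(\DeltaT_G)$ with $G\in\GG(n+m-1)$ the operation $\alpha\circ_i x$ is, up to the coefficient $\alpha$, induced by the map $\DD_1(m)\hotimes\DeltaT_G\to\DeltaT_{f_iG}$ when $\#E(f_iG)=\#E(G)$ (where $f_i$ merges the vertices $i,\dots,i+m-1$) and is zero otherwise; this is legitimate by Lemma \ref{Ldiagonalinclusion}. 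A Betti-number computation (with $\#E$ preserved one gets $b_1(f_iG)-b_1(G)=(m-1)+(c(f_iG)-c(G))\ge 0$) shows that a non-tree $G$ is sent to a non-tree $f_iG$ or to zero, and that a three-term generator is sent either into the span of non-tree summands or to a three-term generator for $f_i(G)$. Thus $\{I(n)\}_n$ is a sub-comodule.

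\emph{Acyclicity, setup.} Since $\Delta_G$, hence $\DeltaT_G$ and $\nu_G$, depends only on the partition $\pi_0(G)$, a non-bridge edge removal induces the identity $\partial_e\colon C^S_*(\DeltaT_G)\to C^S_*(\DeltaT_{G-e})$, whereas a bridge removal strictly refines $\pi_0$. Therefore the filtration $F_p=\bigoplus_{\#\pi_0(G)\ge p}H^S_*(\DeltaT_G)$ is a decreasing filtration with $\partial(F_p)\subseteq F_p$, and the differential on $F_p/F_{p+1}$ uses only non-bridge removals. Fixing a partition $P=\{B_1,\dots,B_p\}$ of $\underline n$, the part of $F_p/F_{p+1}$ spanned by graphs with $\pi_0(G)=P$ is, as a complex graded by number of edges, isomorphic to $\bigl(\bigotimes_a \Gamma_{B_a}\bigr)\otimes A_P$, where $\Gamma_B$ is the complex spanned in edge-degree $q$ by the connected graphs on the vertex set $B$ with $q$ edges, with differential the alternating sum of non-bridge removals, and $A_P=H^S_*(\DeltaT_G)$ for any $G$ with $\pi_0(G)=P$, a graded $\kk$-module with zero differential.

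\emph{Acyclicity, conclusion, and the main obstacle.} It is classical, and is the graph-complex reduction underlying \cite{BG}, that $\Gamma_B$ has homology concentrated in its minimal edge-degree $|B|-1$, where it is the free $\kk$-module on the spanning trees of $B$ modulo the three-term Arnold/Jacobi relations (and is free of rank $(|B|-1)!$); equivalently the projection of $\Gamma_B$ onto this module, placed in degree $|B|-1$ with zero differential, is a quasi-isomorphism with degreewise free — hence acyclic — kernel. Since $\kk$ is a PID or a field and every complex in sight is degreewise free with free homology, the Künneth theorem gives that the kernel of $\bigl(\bigotimes_a\Gamma_{B_a}\bigr)\otimes A_P\to\bigl(\bigotimes_a H_{|B_a|-1}(\Gamma_{B_a})\bigr)\otimes A_P$ is acyclic. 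On the other hand the part of $\mathrm{gr}\,I(n)$ inside this $P$-summand is spanned by (i) the summands $H^S_*(\DeltaT_G)$ with $G$ not a tree, i.e.\ the whole of $\bigl(\bigotimes_a\Gamma_{B_a}\bigr)$ in edge-degrees exceeding $\sum_a(|B_a|-1)$, tensored with $A_P$, and (ii) the images of the three-term generators, which span $R_P\otimes A_P$ with $R_P\subseteq\bigl(\bigotimes_a\Gamma_{B_a}\bigr)_{\sum_a(|B_a|-1)}$ the span of the three-term patterns. The crux — the step I expect to demand the most care — is the identification of $R_P$ with the image of the differential into degree $\sum_a(|B_a|-1)$, which is precisely the classical fact that every Arnold relation among spanning trees (a boundary of a unicyclic graph) is a consequence of the three-term ones; one proves it by an induction on cycle length reducing an $\ell$-cycle relation to $3$-cycle relations, or one quotes it from \cite{BG}. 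Granting this, $\mathrm{gr}\,I(n)$ restricted to each $P$-summand is exactly the acyclic kernel above, so $\mathrm{gr}\,I(n)$ is acyclic, hence so is $I(n)$; this same fact also shows $I(n)$ is a sub-complex, since the only boundaries of non-tree summands that are not again non-tree summands are boundaries of unicyclic graphs, which lie in $R_P$.
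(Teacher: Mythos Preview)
Your argument is correct but organised differently from the paper's. The paper proceeds by exhibiting a complementary subcomplex rather than filtering: it observes that the span $\CECH(\GG(n)^{dis}):=\bigoplus_{G\in\GG(n)^{dis}}H^S_*(\DeltaT_G)$ of summands indexed by \emph{distinguished} graphs is closed under $\partial$ (removing an edge from a distinguished graph yields a distinguished graph), invokes the dual of \cite{FT} to conclude that the inclusion $\CECH(\GG(n)^{dis})\hookrightarrow H^S\CECH^M(n)$ is a quasi-isomorphism, and then checks (referring forward to the proof of Lemma~\ref{Lgraphincl}) that the induced map $\CECH(\GG(n)^{dis})\to H^S\CECH^M(n)/I(n)$ is an isomorphism of modules, whence $I(n)$ is acyclic. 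Your filtration by $\#\pi_0$ and reduction to the connected-graph complex $\Gamma_B$ is essentially an unpacking of the Felix--Thomas input; it has the advantage of being self-contained and of explicitly producing the verification that $I(n)$ is $\partial$-closed via the reduction of long-cycle relations to three-term ones, a point the paper leaves implicit. The paper's route is shorter and has the side benefit that the distinguished-graph basis it introduces reappears in Lemma~\ref{Lgraphincl} and Theorem~\ref{Tam}. One small remark on your write-up: you do not actually need $A_P$ to be free over $\kk$ (and the paper does not assume $H^*(M)$ free at this point); what your K\"unneth step really uses is that the kernel of $\bigotimes_a\Gamma_{B_a}\to\bigotimes_a H_{|B_a|-1}(\Gamma_{B_a})$ is degreewise free and acyclic, so its tensor with $A_P$ is acyclic regardless.
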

\begin{proof}
 Since $\GG(n)^{dis}$ is stable under removing edges, the submodule $\bigoplus_{G\in \GG(n)^{dis}}H^S_*(\DeltaT_G)$ of $H^S\CECH^M(n)$ is a subcomplex. By the argument similar to (the dual of) \cite{FT}, we see the inclusion $\CECH(\GG(n)^{dis}):=\bigoplus_{G\in \GG(n)^{dis}}H^S_*(\DeltaT_G)\subset H^S\CECH^M(n)$ is a quasi-isomorphism. We easily see the map $\CECH(\GG(n)^{dis})\to \CECH(n)/I(n)$ induced by the inclusion is an isomorphism (see the proof of Lemma \ref{Lgraphincl}).  
\end{proof}

\begin{lem}\label{Lisotopythom}
Let $\bar e_t :\TM\to \RR^{2K}$ be an isotopy with $\bar e_0=0\times e_0$ and $\bar e_1=e_0\times 0$ and $F_t:\nu_{\eps_0}(\bar e_0)\to \nu_{\eps_0}(\bar e_t)$ be an isotopy which is also a bundle map covering $\bar e_t$, and satisfy $F_0=id$. Then we have the equality
\[
(F_1)^*(\omega_\nu \times \omega_{S^K})=(-1)^K\omega_{S^K}\times \omega_\nu\,  .
\]
Here $\omega_\nu \times \omega_{S^K}$ is considered as a class of $H^{2K-2\dd +1}(Th(\nu_{\eps_0}(\bar e_1)))$ via the  map collapsing the subset $ \nu_{\eps_0}(e)\times \RR^K-\nu_{\eps_0}(\bar e_1)$ and  $\omega_{S^K}\times \omega_\nu$ is similarly understood.
\end{lem}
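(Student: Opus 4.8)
The plan is to reduce the claimed identity to a statement about Thom classes on a total space of a bundle over $\TM$, and then to exploit the naturality of Thom classes under the bundle isotopy $F_t$ together with the elementary sign rule for transposing a $K$-dimensional and an $\dd$-codimensional class. First I would observe that, since $\bar e_t$ is an isotopy of embeddings $\TM\to\RR^{2K}$ through maps whose tubular neighborhoods all have width $\geq\eps_0$ (by our choice of $\eps_0$ in Definition \ref{Diroiro}), the bundle isotopy $F_t$ gives a canonical homotopy equivalence of Thom spaces $Th(\nu_{\eps_0}(\bar e_0))\simeq Th(\nu_{\eps_0}(\bar e_t))$, so the pullback $F_1^*$ carries Thom classes to Thom classes and it suffices to identify the two sides as Thom classes of the \emph{same} bundle $\nu_{\eps_0}(\bar e_0)=\nu_{\eps_0}(0\times e_0)$ and then compare their normalizations.

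The core computation is then purely about the normal bundle of $0\times e_0:\TM\to\RR^{2K}=\RR^K\times\RR^K$, whose normal bundle splits as $\RR^K\oplus\nu_{\eps_0}(e_0)$ (the first $\RR^K$ coming from the complementary Euclidean factor). Under this splitting the Thom class of the bundle is the exterior product of the Euclidean Thom class $\omega_{S^K}$ on the first factor and $\omega_\nu$ on the second, and the class $F_1^*(\omega_\nu\times\omega_{S^K})$ is, by the compatibility of $F_1$ with the splitting up to the swap of the two $\RR^K$-factors, the exterior product in the \emph{other} order, i.e. $\omega_\nu\times\omega_{S^K}$ viewed on $\nu_{\eps_0}(e_0)\oplus\RR^K$. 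The two orderings differ precisely by the graded-commutativity sign for commuting a degree-$K$ class past a degree-$(K-2\dd+1)$ class on a $\TM$-parametrized product; but since everything is a Thom class and Thom isomorphisms are compatible with the Künneth sign, the only surviving discrepancy after identifying both with Thom classes of the identical rank-$2K$ bundle is the sign of the linear swap $\RR^K\times\RR^K\to\RR^K\times\RR^K$ acting on orientations, which is $(-1)^{K\cdot K}=(-1)^K$ since $K$ was taken to be a multiple of $4$ only later, so in general $(-1)^{K^2}=(-1)^K$. This yields the factor $(-1)^K$ claimed.

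Concretely, the steps in order are: (i) fix the splitting of $\nu_{\eps_0}(0\times e_0)$ as $\RR^K\oplus\nu_{\eps_0}(e_0)$ and record that $\omega_{S^K}\times\omega_\nu$ is its Thom class with the product orientation; (ii) note $F_1$ covers an isotopy to $e_0\times 0$, whose normal bundle splits as $\nu_{\eps_0}(e_0)\oplus\RR^K$ with Thom class $\omega_\nu\times\omega_{S^K}$; (iii) identify the two rank-$2K$ bundles via $F_1$ and observe that the induced bundle automorphism is, fiberwise, the coordinate swap of the two $\RR^K$ summands (up to a bundle map homotopic to it through the isotopy, which does not change induced maps on cohomology); (iv) compute that this swap multiplies the product orientation by $(-1)^{K^2}=(-1)^K$, which is exactly the relation between the two Thom classes; (v) conclude $F_1^*(\omega_\nu\times\omega_{S^K})=(-1)^K\,\omega_{S^K}\times\omega_\nu$ in $H^{2K-2\dd+1}(Th(\nu_{\eps_0}(0\times e_0)))$, which is the assertion after reinterpreting the left side via the stated collapse maps.

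The main obstacle I anticipate is bookkeeping: being careful that the collapse maps mentioned in the statement ("$\omega_\nu\times\omega_{S^K}$ is considered as a class of $H^{2K-2\dd+1}(Th(\nu_{\eps_0}(\bar e_1)))$ via the map collapsing $\nu_{\eps_0}(e)\times\RR^K-\nu_{\eps_0}(\bar e_1)$") really do implement the bundle-splitting identification I am using, and that the graded-commutativity sign for the exterior product of classes over the base $\TM$ (rather than over a point) is accounted for correctly — in particular making sure no extra sign depending on $\dd$ sneaks in, since the two Thom classes have the \emph{same} total degree, so the only asymmetry is the orientation swap of the two Euclidean factors and the answer is independent of $\dd$. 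The rest is a routine application of naturality of the Thom isomorphism and the fact, already used repeatedly in the paper, that an isotopy of tubular neighborhoods of fixed minimal width induces an equivalence of the associated Thom spectra.
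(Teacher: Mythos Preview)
Your overall strategy—reduce to an orientation comparison and exploit that $F_t$ is a bundle isotopy—is the right one, and is what the paper does. But step~(iii) contains a real gap that makes the sign computation in (iv) unjustified.

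You assert that the induced bundle map is ``fiberwise, the coordinate swap of the two $\RR^K$ summands (up to a bundle map homotopic to it through the isotopy)''. This cannot be right as stated: $F_t$ is an isotopy with $F_0=\mathrm{id}$, so $F_1$ is orientation-preserving as a self-map of $\RR^{2K}$, whereas the pure swap $(u,v)\mapsto(v,u)$ has determinant $(-1)^{K}$ and is orientation-reversing for odd~$K$. Hence $F_1$ is not homotopic to the swap. Moreover, if $F_1$ \emph{were} the swap, graded commutativity of the cross product would give
\[
(\mathrm{swap})^*(\omega_\nu\times\omega_{S^K})=(-1)^{|\omega_\nu|\,|\omega_{S^K}|}\,\omega_{S^K}\times\omega_\nu=(-1)^{K(K-2\dd+1)}\,\omega_{S^K}\times\omega_\nu,
\]
and $K(K-2\dd+1)\equiv K(K+1)\equiv 0\pmod 2$, so you would obtain no sign at all—not $(-1)^K$. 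Your computation in (iv) invokes instead ``the sign of the linear swap acting on orientations'' of $\RR^{2K}$, i.e.\ its determinant; but that is a different quantity from the pullback of the cross product, and it is not what governs the relation between the Thom classes.

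The paper closes this gap by writing down an explicit local linear model $F_t(u,v)=((1-t)u-tv,\ tu+(1-t)v)$, so that $F_1(u,v)=(-v,u)$ is a \emph{rotation}: the swap composed with a negation on one $\RR^K$ factor. The sign $(-1)^K$ then arises as the product of the Koszul sign $(-1)^{K(K-2\dd+1)}$ with the sign contributed by that negation. Your argument is missing exactly this negation; once you pin down $F_1$ concretely (or argue carefully via ambient-induced orientations, tracking where the base $\TM$ sits in each factor), the rest of your outline goes through. A small related slip: the normal bundles $\nu_{\eps_0}(\bar e_i)$ have rank $2K-2\dd+1$, not $2K$.
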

\begin{proof}
Since the only problem is the orientation, it is enough to see a variation of a basis via a local model. Let $e_0:\RR^{2\dd-1}\to \RR^K$ be the inclusion to the subspace of elements with the last $K-2\dd+1$ coordinates being zero. A covering isotopy is given by $F_t(u,v)=((1-t)u-tv,\ tu+(1-t)v)$ for $u,v\in \RR^K$. Since $F_1(u,v)=(-v,u)$, the derivative $(F_1)_*$ maps a basis $\{\mathbf{ a},\mathbf{b}\}$ of the tangent space of $\RR^{2K}$ to $\{\mathbf{ b},-\mathbf{a}\}$, where $\mathbf{a}$ and  $\mathbf{b}$ denote  basis of $T\RR^K\times 0$ and $0\times T\RR^K$ respectively. This implies $(F_1)^*(\omega_\nu\times \omega_{S^K})=(-1)^K(-1)^{K(K-2\dd+1)}\omega_{S^K}\times \omega_\nu=(-1)^K\omega_{S^K}\times \omega_\nu$. 
\end{proof}
\begin{lem}\label{Lcomposition}
We use the notations in Definition \ref{Diroiro}. Let $G\in \GG(n,r)$ be a tree with $i$ and $i+1$ are disconnected in $G$. 
and set $H=d_i(G)\in \GG(n-1)$. Then, the following  diagram is commutative.
\[
\xymatrix{ H^S_*(\DeltaT_G) \ar[r]^{\mu_i}\ar[d]^{\zeta_G}  & H^S_*(\DeltaT_H)\ar[d]^{\varepsilon_1\zeta_H} \\
 H^{-*-\dd r}(\Delta_G) \ar[r]^{m_i}   & H^{-*-\dd r}(\Delta_H), }
\]
where  $\varepsilon_1=(-1)^B$ with  $B= K(*+1+(K-1)/2)$.

\end{lem}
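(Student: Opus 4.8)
The plan is to unwind both composites $\zeta_H\circ\mu_i$ and $m_i\circ\zeta_G$ into their constituent maps and compare the two sides factor by factor, keeping careful track of the sign contributions. Recall $\zeta_G = (w_G\cap-)^{-1}\circ(-\cap\omega(n))\circ\phi_G^{-1}$ and similarly for $\zeta_H$. So the left-bottom composite is $m_i\circ(w_G\cap-)^{-1}\circ(-\cap\omega(n))\circ\phi_G^{-1}$, while the top-right composite is $\varepsilon_1\cdot(w_H\cap-)^{-1}\circ(-\cap\omega(n-1))\circ\phi_H^{-1}\circ\mu_i$. First I would rewrite everything on the level of Thom spectra: by Lemma \ref{Tsemistable}, $\phi_G$ and $\phi_H$ are isomorphisms identifying $H^S_*(\DeltaT_G)$ with $\bar H_{*+nK}(Th(\nu_G))$ (resp. for $H$), and under these identifications the map $\mu_i$ corresponds to a map $Th(\nu_G)\to Th(\nu_H)$ induced by the partial composition $\mu\circ_i(-)$ with the fixed generator $\mu\in H_0(\DD_1(2))$. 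Concretely this is the map of Thom spaces covering the diagonal $\Delta_i:\TM^{\times(n-1)}\to\TM^{\times n}$ restricted to $\Delta_H\to\Delta_G$, built from the expanding-embedding construction $\Xi$ of subsection \ref{SSTCCM}. Thus the whole statement reduces to a commutative square of Thom spectra together with a Poincar\'e--Lefschetz duality statement relating the Thom collapse, the diagonal, and the cap product with $w_G$, $w_H$.

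Next I would isolate the geometric content: the diagonal $\Delta_i$ induces on Thom spaces the pullback $Th(\nu_H)=Th(\nu^{\times(n-1)}|_{\Delta_H})\to Th(\nu^{\times n}|_{\Delta_G})=Th(\nu_G)$, and the key identity is that capping with $w_G$, pushing along $\mu_i$ (i.e.\ intersecting with the diagonal class), and capping with $w_H$ are compatible up to sign. This is exactly where Lemma \ref{Lisotopythom} enters: the passage between the two orderings of coordinates in $\RR^{2K}$ (the $0\times e_0$ versus $e_0\times 0$ embeddings, mediated by the fixed isotopy $\iota_t$ and bundle maps $\kappa_t$ of Definition \ref{Diroiro}) produces precisely the sign $(-1)^K$ per suspension coordinate, and the normalizing choice $\kappa_1^*(\omega_\Delta\cdot(\omega_\nu\times\omega_\nu))=\omega_{S^K}\times\omega_\nu$ in Definition \ref{Diroiro} is what makes the Thom classes $\omega(n)$, $\omega(n-1)$ match up under $\Delta_i$. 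I would combine this with the defining property $(\tw\times\tw)\cap\omega_\Delta=\Delta_*(\tw)$ of the diagonal class and the definition $w_G=\tw^{\times n}\cap\gamma_{i_1j_1}\cdots\gamma_{i_rj_r}$ to check that $\Delta_i^*$ carries the relevant classes correctly; since $i$ and $i+1$ are disconnected in $G$, the edge classes $\gamma_{i_tj_t}$ are unaffected by merging vertices $i,i+1$, so $w_G$ pulls back to $w_H$ up to a sign coming from reordering the $\tw$ factors and the suspension coordinates.

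The main obstacle will be the bookkeeping of signs: collecting the contributions of (i) the sign $(-1)^A$ with $A=*+\dd r+n$ built into the definition of $m_i$, (ii) the Koszul signs from the iterated cap products and from commuting the degree-$K$ suspension classes past degree-$*$ homology classes, (iii) the $(-1)^K$-per-coordinate sign from Lemma \ref{Lisotopythom} (giving $(-1)^{K\cdot(\text{something})}$), and verifying that all of these assemble into exactly $\varepsilon_1=(-1)^B$ with $B=K(*+1+(K-1)/2)$. The factor $(K-1)/2$ strongly suggests that one of the contributions is a triangular number $\binom{K}{2}$ arising from fully transposing two blocks of $K$ suspension coordinates, which is consistent with Lemma \ref{Lisotopythom}'s local computation $F_1(u,v)=(-v,u)$ iterated $K$ times. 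I would verify this by choosing an explicit local model as in the proof of Lemma \ref{Lisotopythom}, tracking an ordered basis through $\Delta_i$, the Thom collapse, and the cap products, and reading off the total sign; the degree-$*$ variable enters linearly (hence the $K*$ term) because $*$ homology classes must be commuted past the $K$ new suspension coordinates. Everything else is routine naturality of the cap product and of Thom isomorphisms, which I would state but not belabor.
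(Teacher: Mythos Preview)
Your plan is essentially the paper's own approach: reduce both composites to the Thom-space level via $\phi_G,\phi_H$, factor the map $\mu_i$ through an intermediate Thom space, and extract the sign $\varepsilon_1$ from Lemma~\ref{Lisotopythom} together with the normalization $\kappa_1^*(\omega_\Delta\cdot(\omega_\nu\times\omega_\nu))=\omega_{S^K}\times\omega_\nu$. The paper makes this precise by building an explicit five-row diagram with intermediate bundles $\nu'=\nu_{\eps_0}(e_0^{\times n}\circ\Delta_i)|_{\Delta_H}$ and $\nu''=\nu_{\eps_0}(e_0^{\times i-1}\times(0\times e_0)\times e_0^{\times n-i})|_{\Delta_H}$ (both of fiber dimension $nK-(n-1)(2\dd-1)$, so that a genuine space-level collapse $\mu':Th(\nu_G)\to Th(\nu')$ exists) and a map $\alpha:Th(\nu_H)\to Th(\nu')$ built from $w_{S^K}\times(-)$, a coordinate transposition $T$, and $1\times\kappa_1\times 1$; the two triangles of that diagram are then checked directly, with the sign $\varepsilon_2=(-1)^{K(*'+(K-1)/2+i+1)}$ appearing in $\alpha$ and combining with the sign $(-1)^{(n+i+1)K}$ in $\omega'$ to give $\varepsilon_1$. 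Your sketch glosses over the need for $\nu'$ (there is no direct space-level map $Th(\nu_G)\to Th(\nu_H)$ since they sit in $\RR^{nK}$ and $\RR^{(n-1)K}$ respectively), but once you introduce it your outline matches the paper's argument.
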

\begin{proof}
The claim follows from the commutativity of the following diagram $(*)$.
\[
\xymatrix{H^S_*(\DeltaT _G)\ar[r]^{\mu_i}&     H^S_*(\DeltaT _{H})    &   \\
\bar H_{*+nK}(Th(\nu_G))\ar[r]^{\mu'}\ar[u]^{\phi_G}\ar[d]^{\omega(n)}&        \bar H_{*+nK}(Th(\nu'))\ar[u]^{\phi'_H}\ar[d]^{ \omega'}
   &   \bar H_{*+(n-1)K}(Th(\nu_H))\ar[ul]^{\phi_H}\ar[l]^{\alpha}\ar[dl]^{ \varepsilon_1\omega (n-1)}   \\
H_{*+n(2\dd-1)}(\Delta_G)\ar[r]^{\mu''}&H_{*+(n-1)(2\dd-1)}(\Delta_{H}) &    \\
H^{-*-dr}(\Delta_G)\ar[u]^{w_G}\ar[r]^{m_i} & H^{-*-dr}(\Delta_{H})\ar[u]^{w_{H}} & \\
}
\]
Here,  
\begin{itemize}
\item  $\nu'$ is the disk bundles over $\Delta_H$ of fiber dimension $nK-(n-1)(2\dd-1)$ defined by 
\[
\nu'=\nu_{\eps_0}(e_0^{\times n}\circ \Delta_i)|_{\Delta_H},
\]
where  the restriction is taken as a disk bundle over $\TM^{\times n-1}$ and see Definition \ref{Diroiro} for $\Delta_i$. 
\item $\omega'\in \bar H^{nK-(n-1)(2\dd -1)}(Th(\nu'))$ is given by 
\[
\omega'=(-1)^C (\omega_\nu)^{\times i-1}\times \omega_\Delta\cdot (\omega_\nu\times\omega_\nu)\times (\omega_\nu)^{\times n-i-1} \ \text{with}\   C=(n+i+1)K.
\]
\item $\phi'_H$ is defined using the following map $\lambda_H'$ similarly to $\phi_H$.
\[
\lambda'_H:\nu^{\times n}\ni u\longmapsto \bigl(\,e_0^{\times n}\circ \Delta_i ,\ \eps_0\   , u\,\bigr)\in \DeltaT(\configc_0)_{nK}\,.
\]

\item $\mu'$ is the map collapsing the subset $\nu_G-\nu'$, where $\nu'$ and $\nu_G$ are regarded as subsets in $\RR^{nK}$.
 \item $\mu''$ is the  composition
\[
H_*(\Delta_G)\to H_*(\Delta_G, \Delta_i (\Delta_{H})^c)\to H_{*-2\dd+1}(\Delta_i(\Delta_{H}))\cong H_{*-2\dd+1}(\Delta_{H}).
\]
Here the first map is the standard quotient map, and the third  is the inverse of the diagonal, and the second  is the cap product with the class
\[
(-1)^{i+1+n}1\times \cdots \times \omega_{\Delta}\times \cdots \times 1 \qquad (\omega_\Delta\ \text{ in the $i$-th factor }).
\]
\item  $\alpha$ is the  composition $(1\times \kappa_1\times 1)_*\circ T\circ (\varepsilon_2 w_{S^K}\times -)$ of the maps
\[
\begin{split}
\bar H_{*'}(Th(\nu_{H})) &   \stackrel{\varepsilon_2 w_{S^K}\times -}{\longrightarrow}  \bar H_{*'+K}(S^K\wedge Th(\nu_{H})) \stackrel{ T }{\longrightarrow} \bar H_{*'+K}(Th(\nu ''))\\ 
&\stackrel{(1\times \kappa_1\times 1)_*}{\longrightarrow} \bar H_{*'+K}(Th(\nu' )), 
\end{split}
\]
where $\nu''$ is the disk bundle over $\Delta_H$ of the same fiber dimension as $\nu'$ given by
\[
 \nu''=\nu_{\eps_0}(e'')|_{\Delta_H},\ \text{with}\  e''=e_0^{\times i-1}\times (0\times e)\times e^{\times n-i}_0:\TM^{\times n-1}\to \RR^{nK}, 
\]
and    
\[
\varepsilon_2=(-1)^{D}, \quad D= {K(*'+(K-1)/2+i+1)}, 
\]  
 and $T$ is the composition of the transposition of $S^K$ from the first to the $i$-th component with the map induced by the  map  collapsing 
the subset $(\nu ^{\times i-1}\times \RR^K \times \nu ^{\times n-i+1})|_{\Delta_H}-\nu''$, and $1\times \kappa_1\times 1$ is induced by the restriction of  the product map 
\[
1\times \kappa_1\times 1:\RR^{(i-1)K}\times \nu_{\eps_0}(0\times e_0)\times \RR^{(n-i-1)K}\to \RR^{(i-1)K}\times \nu_{\eps_0}(\Delta_{\RR^K}\circ e_0)\times \RR^{(n-i-1)K}
\] with $\kappa_1$ in $i$-th component. 
\item The arrows to which a (co)homology class assigned denote the cap products with the class.
\end{itemize}
\indent Our sign rules for graded products are the usual graded commutativity except for the compatibility of cross and cap products for which we use the following  rule
\[
(a\times b)\cap (x \times y )=(-1)^{(\,|a|-|x|\, )|y|}(a\cap x)\times (b\cap y).
\]
These are the rules based on the definitions in \cite{hatcher}.
With these  rules, the commutativity of the squares in the  diagram $(*)$ is  clear since the map $\Delta'$ defined in subsection \ref{SSTCCM} is isotopic to the usual diagonal. We shall prove  commutativity of the two triangles in the same diagram. 
The commutativity of the upper triangle follows from the commutativity of the following diagram.
\[
\xymatrix{\bar H_l(Th(\nu )_{H})\ar[r]^{T\circ (\varepsilon_2 w_{S^K}\times-) }\ar[d]^{(\lambda_G)_*}  &  \bar H_{l+K}(Th(\nu'') )\ar[r]^{(1\times \kappa_1\times 1)_*}\ar[rd]^{(\lambda''_H)_*} & \bar H_{l+K}(Th(\nu') ) \ar[d]^{(\lambda'_H)_*}
\\ 
\bar H_l(\DeltaT_{H}(\configc_0)_{(n-1)K})\ar[rr]^{\varepsilon_2 w_{S^K}\times}   && H_{l+K}(\DeltaT_{H}(\configc_0)_{nK}).}
\] 
Here,  $\lambda''_H$ is given by 
$u \longmapsto   \bigl(\,e_0^{\times i-1}\times (0\times e_0)\times e_0^{\times n-i},\, \epsilon_0,\, u\, \bigr)$. Commutativity of the left trapezoid follows from Lemma \ref{Lisotopythom} (the sign $\varepsilon_2$ is the product of the sign in $i_k^X$ in Definition \ref{Dchainspectra} and the sign in Lemma \ref{Lisotopythom}), and that of the right triangle follows from homotopy between $\lambda'_H\circ \kappa_1$ and $\lambda''_H$ constructed from the isotopy $\kappa_t$ in Definition \ref{Dcheckmodule}.   
%By $\kappa_t$, we see the composition $\phi'_0\circ \alpha$ induced by a map which factors though $\TMT_K\wedge \cdots S^K\wedge \TMT_K\cdots \wedge \TMT_K$ up to homotopy. On the other hand, $\phi_e$, in the statement of the present lemma factors though $S^K\wedge \TMT_K\wedge\cdots\wedge \TMT_K$. isotopy between $e\times\cdots 0\times e\cdots e$ and $0\times e\times \cdots\times e $ and bundle maps on it, we see that upper triangle is commutative up to sign. to see sign, we consider the case of $n=2$. let $u_i$ be the basis of the tangent of $e(\TM)$ in the $i$-th component and $n_i$is the normal basis. Since $K$ is even, $u_i$ and $n_i$ is odd dimension.
%\[
%\begin{split}
%(u_1,n_1,u_2,n_2)&\sim (u_1+tu_2,n_1,-tu_1+u_2,n_2)\sim (u_1+u_2,n_1,-u_1+u_2,n_2) \\
%& \sim (u_2,n_1,-u_1,n_2)\sim -(u_2,n_1,u_1,n_2)\sim (u_2,u_1,n_1,n_2) 
%\end{split}
%\]
%This equivalence of orientations implies the isotopy takes $\omega_\nu\times \omega_{S^K}$ to $\omega_{S^K}\times \omega_\nu$.
We shall show the lower triangle is commutative. We see
\[
\begin{split}
\varepsilon_1 \alpha (x)\cap \omega'&=\{(\kappa_1)_*T_*(w_{S^K}\times x)\} \cap (\omega \times \cdots  \omega_\Delta(\omega\times \omega)\cdots \times \omega) \\
                      &= \{ (\kappa_1)_*T_*(w_{S^K}\times x) \}\cap (\omega \times \cdots  (\kappa_1^{-1})^*(\omega_{S^K}\times \omega ) \cdots \times \omega) \\
&=(\kappa_1)_*\{T_*(w_{S^K}\times x)\cap (\omega \times \cdots  (\omega_{S^K}\times \omega ) \cdots \times \omega)\ \} \\
&=( \kappa_1)_* T_* \{ (w_{S^K}\times x)\cap T^*(\omega \times \cdots  (\omega_{S^K}\times \omega ) \cdots \times \omega) \} \\
&=(\kappa_1)_*  T_*\{(w_{S^K}\times x)\cap \omega_{S^K}\times\omega\times\cdots\times \omega) \}\\
&= (w_{S^K}\times x)\cap (\omega_{S^K}\times\omega \times\cdots\times \omega) \\
&=x\cap \omega(n-1) .
\end{split}
\]
Here, $(\kappa_1)_*$ is  abbreviation of $(1\times \kappa_1\times 1)_*$ and  $\omega$  of  $\omega_\nu$.  All capped classes are considered as  elements of the homology of the base space $\Delta_{H}$ of involved disk bundles by projections. The second equality follows from the definition of $\omega_\nu$. As an endomorphism on the base space, $T_*$  and $(1\times \kappa_1\times 1)_*$ are the identity hence the sixth equality holds. 
\end{proof}
The following  lemma is easily verified and a proof is omitted.
\begin{lem}\label{Lcechdiff}
Let $G\in \GG(n,r)$ be a tree and $K\in \GG(n,r-1)$ be the tree made by removing the $t$-th edge $(i,j)$ from $G$. Under the notations in Definition \ref{Diroiro}, the following diagram is commutative.
\[
\xymatrix{
H^S_*(\DeltaT_G)\ar[d]^{\zeta_G}\ar[r]& H^S_*(\DeltaT_{K})\ar[d]^{\zeta_{K}} \\
H^{-*-\dd r}(\Delta_G)\ar[r]&H^{-*-\dd (r-1)}(\Delta_{K})}
 \]
{ where the top horizontal arrow is induced by the inclusion and  the bottom one is given by $(-1)^{(r-t)\dd}\Delta_{ij}^{!}$ with $\Delta_{ij}^{!}(x)=\gamma_{ij}\cdot x$.} \hspace{\fill} \qedsymbol
\end{lem}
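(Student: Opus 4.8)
The plan is to unwind $\zeta_G$ and $\zeta_K$ into the three pieces out of which they are built in Definition~\ref{Diroiro} — the Atiyah-duality isomorphism $\phi^{-1}$, the Thom isomorphism $-\cap\omega(n)$, and the Poincar\'e-duality isomorphism $(w\cap-)^{-1}$ — and to verify the square one ``floor'' at a time. Since $G$ is a tree and $(i,j)$ is its $t$-th edge, removing $(i,j)$ gives $\Delta_G=\Delta_K\cap\Delta_{ij}$ as subspaces of $\TM^{\times n}$ and $\nu_G=\nu_K|_{\Delta_G}$, so the closed inclusion $j\colon\Delta_G\hookrightarrow\Delta_K$ lifts to a closed inclusion of Thom spaces $\iota\colon Th(\nu_G)\to Th(\nu_K)$; moreover $\iota$ is compatible with $\lambda_G,\lambda_K$, with the inclusion $\DeltaT_G\subset\DeltaT_K$, and with the two restrictions of $\omega(n)$ (each being $\omega(n)$ pulled back along the relevant map to $Th(\nu^{\times n})$). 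It therefore suffices to check that $\iota$ is compatible with $\phi_G,\phi_K$; that $-\cap\omega(n)$ carries $\iota_*$ to $j_*$; and that $(w\cap-)^{-1}$ carries $j_*$ to $(-1)^{(r-t)\dd}\Delta_{ij}^{!}$.

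The first is immediate from naturality of $\phi$: $\lambda_G$ and $\lambda_K$ are given by the same formula $[u]\mapsto[e_0^{\times n},\eps_0,u]$, so $\lambda_K\circ\iota$ is $\lambda_G$ followed by the inclusion $\DeltaT_G(\configc_0)_{nK}\hookrightarrow\DeltaT_K(\configc_0)_{nK}$, and the remaining steps defining $\phi$ (passage to $C^S_*$ and inversion of the level-equivalence ``evaluation at $\configc_0$'') are functorial. The second is the projection formula: applying it to $\iota$ as a map of (closed-disk-bundle) pairs, with $\iota^*\omega(n)=\omega(n)$, gives $\iota_*\bigl(y\cap\omega(n)\bigr)=\iota_*(y)\cap\omega(n)$ in $H_*(\nu_K)$, and pushing forward along the bundle projections $p_G,p_K$ with $p_K\circ\iota=j\circ p_G$ turns this into $(-\cap\omega(n))\circ\iota_*=j_*\circ(-\cap\omega(n))$. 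At this stage the inclusion-induced map $H^S_*(\DeltaT_G)\to H^S_*(\DeltaT_K)$ has been identified, via $\phi_G^{-1},\phi_K^{-1}$ and cap with $\omega(n)$, with the pushforward $j_*\colon H_*(\Delta_G)\to H_*(\Delta_K)$.

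The heart of the argument is the last floor, which is the assertion that Poincar\'e--Lefschetz duality exchanges $j_*$ with the Gysin map $\Delta_{ij}^{!}$ of the codimension-$\dd$ inclusion $\Delta_G\hookrightarrow\Delta_K$; concretely, $j_*(w_G\cap x)=(-1)^{(r-t)\dd}\,w_K\cap(\gamma_{ij}\cdot x)$ for $x\in H^{-*-\dd r}(\Delta_G)$. I would first reorder the product defining $w_G$: writing $E(G)=\{(i_1,j_1)<\cdots<(i_r,j_r)\}$ with $(i,j)=(i_t,j_t)$ and moving the degree-$\dd$ class $\gamma_{i_t j_t}$ past the $r-t$ later degree-$\dd$ factors (using $\dd^2\equiv\dd$), one gets $w_G=(-1)^{(r-t)\dd}\,w_K\cap\gamma_{ij}$. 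Since $\Delta_{ij}$ is transverse to $\Delta_K$ (a routine local computation in the spirit of Lemma~\ref{Ldiagonaltube}), $\gamma_{ij}$ restricts on $\Delta_K$ to a Thom class of a tubular neighbourhood of $\Delta_G$ in $\Delta_K$, so naturality of cap products for the map of pairs $(\Delta_K,\Delta_K\setminus\Delta_G)\to(\Delta_K,\ast)$ gives $w_K\cap(\gamma_{ij}\cdot x)=j_*\bigl((w_K\cap\gamma_{ij})\cap x\bigr)$; substituting the reordering identity finishes the proof.

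I expect the only genuine difficulty to be the sign bookkeeping in this last floor — checking that the reordering contributes exactly $(-1)^{(r-t)\dd}$ and that the Poincar\'e-duality identification introduces no further sign. This works out precisely because $w_G$ was defined in Definition~\ref{Diroiro} as $\tw^{\times n}$ capped with the \emph{lexicographically} ordered product of the $\gamma_{ij}$'s, so that $w_K\cap\gamma_{ij}$ is literally that product with its last factor restored; everything else reduces to formal properties of cap products, the projection formula, and the naturality of the Atiyah-duality construction already exploited in Lemma~\ref{Lcomposition}.
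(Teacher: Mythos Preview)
Your proof is correct. The paper actually omits the proof of this lemma entirely (it is flagged with ``easily verified and a proof is omitted''), so there is nothing to compare against; your three-floor decomposition via $\phi^{-1}$, $-\cap\omega(n)$, and $(w\cap-)^{-1}$ is exactly the intended unwinding, and your sign computation $w_K\cap\gamma_{ij}=(-1)^{(r-t)\dd}w_G$ from the lexicographic ordering in Definition~\ref{Diroiro} is the only nontrivial step.
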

%%%%%%%%% 2019 8/14
%{\bf Attention :} In the rest of the paper, we let $*$ denote {\em cohomological} degree while it have denoted {\em homological} one until now. 
\begin{defi}\label{Dcdga}
\begin{itemize}

\item In the following, we deal with modules $X$ which is a direct sum of submodules labeled by graphs in $\GG(n)$ we denote by $X^{tr}\subset X$ the modules consisting of the summands labeled by trees in $\GG(n)$.

\item We define a  $\ASS$-comodule  $A^{\star\, *}_M$ of CDBA (see Definition \ref{Dcontra-module}).
Put $H^*_G=H^*(\Delta_G)$. Let $\wedge(g_{ij})$ be the free bigraded commutative algebra generated by 
elements $g_{ij}\ (1\leq i<j\leq n)$ with bidegree $(-1,\dd)$.  For notational convenience, we set $g_{ij}=(-1)^{\dd}g_{ji}$ for $i>j$ and $g_{ii}=0$.  For $G\in \GG(n)$ with  $E(G)=\{(i_1,j_1)<\cdots <(i_r,j_r)\}$, we set 
$g_G=g_{i_1,j_1}\cdots g_{i_r,j_r}$. Put 
\[
\tilde A_M^{\star\, *}(n)=\underset{G\in \GG(n)}{\bigoplus} H^*_G\, g_G. 
\]
Here, $H^*_Gg_G$ is a copy of $H^*_G$ with degree shift. For $G\in \GG(n,r)$ and $a\in H^l_G$, the bidegree of the element $ag_G\in \tilde A_M(n)$ is $(-r,l+\dd r)$. 
We give a graded commutative multiplication on $\tilde A_M(n)$ as follows.
For $a\in H^l_G$, $b\in H^m_H$, we set 
\[
(ag_G)\cdot (bg_H)=\left\{
\begin{array}{ll}
(-1)^{mr(\dd-1)+s}\, (a\cdot b)\, g_{GH} \ \in \ H^{l+m}_{GH}\, g_{GH} & \quad (E(G)\cap E(H)=\emptyset) \vspace{2mm}\\
0   & \quad (\text{otherwise})
\end{array}\right.
\]
 Here, we set $r=\# E(G)$, and $a$ is regarded as an element of $H^*_{GH}$ by pulling back by the map $i_G:\Delta_{GH}\to \Delta_G$ induced by the quotient map $\pi_0(G)\to \pi_0(GH)$, and similarly for $b$, and the product $a\cdot b$ is taken in $H^*_{GH}$.  $s$ is the number determined by the equality $g_G\cdot g_H=(-1)^sg_{GH}$ for the product in $\wedge(g_{ij})$. \\
 \indent 
Let $J(n)\subset \tilde A_M(n)$ be the ideal  generated by the following element
\[
a(g_{ij}g_{jk}+g_{jk}g_{ki}+g_{ki}g_{ij})g_G, \qquad  bg_K\ 
\]
where $G, K\in \GG(n), \ a \in H^*_{G[ijk]},\ b\in \in H^*_K$ are elements such that $(i,j), (j,k), (k,i)\not\in E(G)$,  and $K$ is not a tree. Here, by definition, $\Delta_G$ depends only on $\pi_0(G)$, so we have $\Delta_{G[ijk]}=\Delta_{G[jki]}=\Delta_{G[kij]}$. With these identities, we regard $a$ as an element of $H_{G[jki]}^*=H_{G[kij]}$, and the first type of the generators as an element of 
\[
H_{G[ijk]}g_{G[ijk]}\oplus H_{G[jki]}g_{G[jki]}\oplus H_{G[kij]}g_{G[kij]}.
\] 
We define an algebra $A_M^{\star\, *}(n)$ as the following quotient.
\[
A_M^{\star\, *}(n)=\tilde A_M^{\star\, *}(n)/J(n)
\]
 Since the restriction of the quotient map $\tilde A_M(n)^{tr}\to A_M(n)$ is surjective, we may define  a differential, a partial composition, and an action of $\Sigma_n$ on the sequence $A_M=\{A_M(n)\}_{n}$ through $\tilde A_M(n)^{tr}$. We define a map $\tilde \partial :\tilde A_M(n)^{tr}\to \tilde A_M(n)^{tr}$ by
\[
\tilde \partial (ag_G)=\sum_{t=1}^{r}(-1)^{(l+t-1)(\dd -1)}\Delta^!_{i_t,j_t}(a)g_{i_1,j_1}\cdots \check g_{i_t,j_t}\cdots g_{i_r,j_r} \qquad (G\in \GG(n),\  a\in H^l_G)
\]
where $\Delta^!_{ij}(a)= \gamma_{ij}\cdot a$ and   $\check g_{ij}$ means removing $g_{ij}$.  It is easy to see 
$\tilde \partial(\tilde A_M(n)^{tr}\cap J(n))\subset \tilde A_M(n)^{tr}\cap J(n)$. We define the differential $\partial$ on $A_M(n)$ to be the map induced by $\tilde \partial$.
For the generator $\mu \in\ASS(2)$ fixed in Definition \ref{Diroiro}, and an element  $ag_G\in \tilde A_M(n)^{tr}$,  we define the partial composition $\mu\circ_i(ag_G)$ by 
\[
\mu\circ_i(ag_G)=\left\{
\begin{array}{ll}
\Delta^*_i(a)g_H & \quad \text{if  $i$ and $i+1$ are disconnected in $G$,} \\
0& \quad \text{otherwise},
\end{array}\right.
\]
where $H=d_i(G)$   (see Definition \ref{Diroiro}). The action of $\sigma\in \Sigma_n$ on $\tilde A_M(n)^{tr}$ is given by 
$(ag_G)^\sigma=a^\sigma(g_G)^\sigma$ where $a^\sigma$ is  the pullback of $a$ by $(\sigma_G)^{-1}$ (see Definition \ref{Dcheckmodule}) and $(g_G)^\sigma$ denotes $g_{\tau(i_1)\tau(j_1)}\cdots g_{\tau(i_r)\tau(j_r)}$ with $\tau=\sigma^{-1}$.  The partial composition and the action of $\Sigma_n$ on $\{\tilde A_M(n)^{tr}\}_n$ are easily seen to preserve the submodule  $\{J(n)\cap \tilde A_M(n)^{tr}\}_n$ and induces a strucutre of a $\ASS$-comodule on $A_M$.
\item Let $s_i:\underline{n}\to \underline{n+1}$ denote the order preserving monomorphism skipping $i+1$ for $1\leq i\leq n$.  $s_i$ naturally induces the monomorphism $s_i:\pi_0(G)\to \pi_0(s_iG)$ (see subsection \ref{SSnt}), which in turn, induces $(s_i)^*:\Delta_{s_iG}\to \Delta_G$. Let $s_i$ also denote the induced map $(s_i^*)^*:H^*(\Delta_G)\to H^*(\Delta_{s_iG})$. By further abuse of notations, we also denote by $s_i$ the map $A_M(n)\to A_M(n+1)$ given by $s_i(ag_G)=s_i(a)g_{s_iG}$.
\item Define a simplicial CDBA $A^{\star\, *}_\bullet(M)$ (a functor from $(\BDelta)^{op}$ to the category of CDBA's) as follows. We set 
\[
A^{\star\, *}_n(M)=A^{\star\, *}_M(n+1). 
\]
If we consider an element of $A_M(n+1)$ as an element of $A_n(M)$, we relabel its subscripts with $0,1,\dots, n$ instead of $1,2,\dots, n+1$. For example, $g_{01}\in A_n(M)$ corresponds to $g_{12}\in A_M(n+1)$.   The partial compositions and the maps $s_i$ (defined in the previous item) are also considered as beginning  with $(-\circ_0-)$  and $s_0$ ( originally written as $(-\circ_1-)$ and $s_1$). The face map $d_i:A_n(M)\to A_{n-1}(M)$ ($0\leq i\leq n$) is given by $d_i=\mu\circ_i(-)$ ($i<n$) and  $d_n=\mu\circ_0(-)^\sigma$ where $\sigma=(n,0,1,\dots, n-1)$. The degeneracy map $s_i:A_n(M)\to A_{n+1}(M)$ ($0\leq i\leq n$) is the map defined in the previous item.
\end{itemize}
\end{defi}

\begin{lem}\label{Ldiagonal}
Let $i,j,k$ be numbers with $i<j<k$. The equalities $\gamma_{ij}\gamma_{ik}=\gamma_{ij}\gamma_{jk}=\gamma_{ik}\gamma_{jk}$ hold.
\end{lem}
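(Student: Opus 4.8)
The plan is to recognise all three cup products as Thom classes of one and the same oriented tubular neighbourhood inside $\TM^{\times n}$, so that the asserted equalities reduce to a comparison of induced orientations, which is an elementary exterior-algebra computation.

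\emph{Step 1.} Write $\Delta_{ij}\subset\TM^{\times n}$ for the closed submanifold on which $z_i$ and $z_j$ have the same base point in $M$ (so $\Delta_{ij}=\Delta_G$ for $E(G)=\{(i,j)\}$). One has the set-theoretic identity $\Delta_{ij}\cap\Delta_{ik}=\Delta_{ij}\cap\Delta_{jk}=\Delta_{ik}\cap\Delta_{jk}=:\Delta_{ijk}$, the locus on which $z_i,z_j,z_k$ all share a base point; equivalently $\Delta_{ijk}=\Delta_G$ for the tree $G$ with edge set any two of $(i,j),(j,k),(i,k)$. Choosing the tubular neighbourhoods compatibly, the cup products $\gamma_{ij}\gamma_{ik}$, $\gamma_{ij}\gamma_{jk}$, $\gamma_{ik}\gamma_{jk}$ therefore all lie in $H^{2\dd}(\TM^{\times n},(\Delta_{ijk})^c)$.

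\emph{Step 2.} The submanifold $\Delta_{ijk}$ fibres over $M$ with fibre $(S^{\dd-1})^{\times 3}\times\TM^{\times n-3}$, hence is connected, and its normal bundle in $\TM^{\times n}$ is the pullback of $TM\oplus TM$, hence oriented (recall $M$ is assumed orientable here). By the Thom isomorphism $H^{2\dd}(\TM^{\times n},(\Delta_{ijk})^c)\cong H^0(\Delta_{ijk})\cong\kk$, and restriction to a single normal disc fibre $(D^{2\dd},S^{2\dd-1})$ is an isomorphism onto $\kk$. On such a fibre introduce base-displacement coordinates $a$ (displacement of the base point of $z_j$ from that of $z_i$) and $b$ (of $z_k$ from $z_i$), so that $\Delta_{ij}$ meets the fibre in $\{a=0\}$ and $\Delta_{ik}$ in $\{b=0\}$. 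Since $\gamma_{ij}=\pi_{ij}^*\gamma$ with $\gamma$ a Thom class of $\TM\times_M\TM$, the restriction of $\gamma_{ij}$ to the fibre is the fibre Thom class $\tau_a$ supported in the $a$-plane; likewise $\gamma_{ik}\mapsto\tau_b$, and $\gamma_{jk}\mapsto\tau_c$, where $c$ is the displacement of the base point of $z_k$ from that of $z_j$, so that $c=b-a$ componentwise. Hence $\gamma_{ij}\gamma_{ik}$, $\gamma_{ij}\gamma_{jk}$, $\gamma_{ik}\gamma_{jk}$ restrict on the fibre to $\tau_a\smile\tau_b$, $\tau_a\smile\tau_c$, $\tau_b\smile\tau_c$. (The single orientation convention fixing $\gamma$ enters all three the same way, so only the relation $c=b-a$, up to a sign common to $a,b,c$, is used.)

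\emph{Step 3.} It remains to check that these three generators coincide, i.e.\ that the orientations $da\wedge db$, $da\wedge dc$, $db\wedge dc$ of the normal $2\dd$-plane agree. From $dc_\ell=db_\ell-da_\ell$: wedging $\bigwedge dc_\ell$ with $da_1\wedge\cdots\wedge da_{\dd}$ only the all-$db$ term survives, giving $da\wedge dc=da\wedge db$; wedging with $db_1\wedge\cdots\wedge db_{\dd}$ only the all-$da$ term survives, giving $db\wedge dc=(-1)^{\dd}db\wedge da=(-1)^{\dd(\dd+1)}da\wedge db=da\wedge db$, as $\dd(\dd+1)$ is even. Thus the three products have the same image under the injective restriction to a fibre, hence coincide in $H^{2\dd}(\TM^{\times n},(\Delta_{ijk})^c)$ and a fortiori in $H^{*}(\TM^{\times n})$. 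The only genuine work is the orientation bookkeeping of Step 2 — verifying that $\gamma_{ij}$ restricts to $\tau_a$ and that the convention defining $\gamma$ propagates uniformly through $\pi_{ij}^*,\pi_{ik}^*,\pi_{jk}^*$ — together with the parity cancellation $(-1)^{\dd(\dd+1)}=1$; the rest is formal.
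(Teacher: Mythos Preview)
Your proof is correct and follows essentially the same approach as the paper: recognise all three cup products as Thom classes in $H^{2\dd}(\TM^{\times n},(\Delta_{ijk})^c)$, then reduce to checking that the induced orientations on the normal bundle agree. The paper's proof is a terse two sentences (``identify the corresponding orientations \dots\ by observing the corresponding basis''), whereas you spell out the exterior-algebra computation $da\wedge db=da\wedge dc=db\wedge dc$ on a normal fibre explicitly, including the parity cancellation $(-1)^{\dd(\dd+1)}=1$; this is exactly the verification the paper leaves to the reader.
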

\begin{proof}
The three classes are Thom class in $H^*(\TM^{\times n},\Delta_{[ijk]}^c)$. So to prove the equality, it is enough to identify the corresponding orientations. This is easily done by observing the corresponding basis.
\end{proof}
\begin{thm}\label{Tam}
Suppose $M$ is orientable.
\begin{enumerate}
\item  The two $\ASS$-comodule $H^S\CECH^M_{\star\, *}$ and $A^{\star\,*}_M$ of differential bigraded $\kk$-modules are quasi-isomorphic in a manner where $H^S\CECH^M_{-p,-q}$ and $A^{p,q}_M$ correspond for integers $p,q$. (For $H^S\CECH^M$, see Definition \ref{Diroiro}.) 
\item The $E_2$-page of $\check{C}$ech s.s. in Definition \ref{Dcheckmodule} is isomorphic to the total homology of the normalized complex $N A^{\star\,*}_\bullet (M)$. The homogeneous part $\BGSSS^{\,p\,q}_2$ consists of  the summands whose degrees  satify $p=\star-\bullet,\ q=*$. 
\end{enumerate}
\end{thm}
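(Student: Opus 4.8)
For part (1) the plan is to build an explicit comparison map out of the Atiyah/Poincar\'e duality isomorphisms $\zeta_G$ of Definition \ref{Diroiro}; part (2) then follows by applying the Hochschild functor and invoking Lemma \ref{LcechE2}.

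The first step for (1) is to reduce both sides to their ``tree parts''. On the $A_M$ side this is built in: since $J(n)$ contains every summand $H^*_K g_K$ with $K$ not a tree, $A_M(n)=\tilde A_M(n)/J(n)$ is a quotient of $\bigoplus_{G\ \text{tree}}H^*_G g_G$ by the Arnold relations. On the other side, Lemma \ref{Lcechreduction} gives that $I(n)\subset H^S\CECH^M(n)$ is an acyclic sub-$\ASS$-comodule, so $H^S\CECH^M\to H^S\CECH^M/I$ is a quasi-isomorphism of $\ASS$-comodules and $H^S\CECH^M(n)/I(n)$ is the tree part of $\bigoplus_G H^S_*(\DeltaT_G)$ modulo the analogous Arnold relations; both quotients carry the distinguished-graph basis indexed by $\GG(n)^{dis}$. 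Then I would define, on tree summands, $\Theta(aG)=(-1)^{\varepsilon(G,|a|)}\,\zeta_G(a)\,g_G$ with $\zeta_G=(w_G\cap-)^{-1}\circ(-\cap\omega(n))\circ\phi_G^{-1}$ as in Definition \ref{Diroiro}. Each $\zeta_G$ is an isomorphism (for $\phi_G$ by Lemma \ref{Tsemistable}; capping with the Thom class $\omega(n)$ and with the fundamental class $w_G$ are isomorphisms by Thom and Poincar\'e duality, $M$ being orientable), so $\Theta$ is an isomorphism of bigraded $\kk$-modules carrying $H^S\CECH^M_{-p,-q}$ onto $A_M^{p,q}$ once the degree shift $g_G$ is accounted for (a tree with $r$ edges contributes $\DeltaT_G$ in \v Cech degree $-r$ and $H^*_Gg_G$ in bidegree $(-r,\,\cdot+\dd r)$).

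The heart of the matter, and the step I expect to be the main obstacle, is to choose the correcting signs $\varepsilon(G,|a|)$ so that $\Theta$ is compatible with all the structure at once. One must match: (i) the differentials, using Lemma \ref{Lcechdiff}, which identifies the \v Cech summand $\partial_t$ with $(-1)^{(r-t)\dd}\Delta^!_{ij}$, against the sign $(-1)^{(l+t-1)(\dd-1)}$ in $\tilde\partial$; (ii) the partial compositions, using Lemma \ref{Lcomposition}, which relates $\mu_i$ to $\varepsilon_1 m_i=\varepsilon_1(-1)^A\Delta_i^*$ — the awkward $\varepsilon_1=(-1)^{K(*+1+(K-1)/2)}$ becomes trivial once $4\mid K$, which is precisely why that hypothesis is imposed, leaving $(-1)^A$ to be absorbed into $\varepsilon$ — against $\mu\circ_i(ag_G)=\Delta_i^*(a)g_{d_i(G)}$; (iii) the $\Sigma_n$-actions, matching $\mathrm{sgn}(\sigma_G)\,\sigma_*$ with $(ag_G)^\sigma=a^\sigma(g_G)^\sigma$; and (iv) the Arnold relations, where $\Theta$ must carry the \v Cech generator $aG[jki]+(-1)^saG[ijk]+(-1)^{s+t}aG[kij]$ to a unit multiple of $a(g_{ij}g_{jk}+g_{jk}g_{ki}+g_{ki}g_{ij})g_G$ — this uses Lemma \ref{Ldiagonal} to see that $\zeta_{G[ijk]},\zeta_{G[jki]},\zeta_{G[kij]}$ agree under $\Delta_{G[ijk]}=\Delta_{G[jki]}=\Delta_{G[kij]}$, and identifies $s,t$ with the Koszul signs for $g_G\cdot g_{ij}g_{jk}$, etc. Solving the resulting system of sign congruences for $\varepsilon$ (a bookkeeping computation that $4\mid K$ makes manageable) shows that $\Theta$ descends to an isomorphism $H^S\CECH^M/I\xrightarrow{\ \sim\ }A_M$ of $\ASS$-comodules of differential bigraded modules; composing with $H^S\CECH^M\to H^S\CECH^M/I$ proves (1).

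For (2), Lemma \ref{LcechE2} gives $\BGSSS_2^{p\,q}\cong H_{-p,-q}(\Hoch(H^S\CECH^M))$. Pulling (1) back along $\ASS_\infty\to\ASS$ yields a levelwise quasi-isomorphism of $\ASS_\infty$-comodules $H^S\CECH^M\simeq A_M$; since $\Hoch$ is functorial and sends such maps to quasi-isomorphisms — filter the Hochschild complex by the Hochschild degree $\bullet\geq0$, on $E_1$ the map becomes the arity-wise quasi-isomorphism, and the pages are degreewise finite so the comparison converges — we get $H(\Hoch(H^S\CECH^M))\cong H(\Hoch(A_M))$. Finally, restricted along $\ASS_\infty\to\ASS$, $\Hoch_\bullet(A_M)$ is, by inspection of Definitions \ref{Dhochschild} and \ref{Dcdga}, the unnormalized total complex of the simplicial CDBA $A^{\star\,*}_\bullet(M)$: at level $n$ it is $A_M(n+1)=A_n(M)$, with internal differential $\partial$ and with $\delta$ equal, up to the standard Koszul signs, to the alternating sum of the face maps $d_i=\mu\circ_i(-)$, the top one being the cyclic face $\mu\circ_0(-)^\sigma$. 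Since the normalization of a simplicial module is a quasi-isomorphism onto its unnormalized chains, $H(\Hoch(A_M))\cong H(NA^{\star\,*}_\bullet(M))$ as bigraded modules with the degree identification $p=\star-\bullet$, $q=*$ of the theorem — which is the asserted description of the $E_2$-page.
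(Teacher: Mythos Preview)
Your proposal is correct and follows essentially the same route as the paper: define the comparison on tree summands by $a\,G\mapsto \pm\,\zeta_G(a)\,g_G$, kill $I(n)$ via Lemma \ref{Lcechreduction}, and verify compatibility with $\partial$, $\mu\circ_i(-)$, $\Sigma_n$, and the three-term relations using Lemmas \ref{Lcechdiff}, \ref{Lcomposition}, and \ref{Ldiagonal} respectively, with the assumption $4\mid K$ killing $\varepsilon_1$. The only substantive difference is that the paper actually produces the sign: it sets $\varepsilon_3=(-1)^{E}$ with
\[
E(*',n,r)=*'(n+\dd r)+\dd r n+\tfrac{n(n+1)}{2}+\tfrac{\dd r(r+1)}{2},
\]
and checks the two congruences $E(*',n-1,r)-E(*',n,r)\equiv *'+\dd r+n$ and $E(*'+\dd,n,r-1)-E(*',n,r)\equiv (*'+1)\dd$ modulo $2$, which match exactly the sign $A$ in $m_i$ and the sign in $\tilde\partial$; your ``solve the resulting system of sign congruences'' is precisely this step, so you have the argument but not the witness. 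Your treatment of part (2) is slightly more careful than the paper's (you spell out why a levelwise quasi-isomorphism of $\ASS$-comodules induces one on Hochschild complexes, which the paper leaves implicit).
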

\begin{proof}
For the part 1, we consider the composition
\[
\xymatrix{
H_{-*}^S(\DeltaT_G)\ar[r]^{\quad \zeta_G}
& H^{*-\dd r}_G \ar[r] &   H^{*-\dd r}_Gg_G}.
\]
The right map is given by $a\mapsto \varepsilon_3ag_G$ with the  sign  
\[
\varepsilon_3=(-1)^E, \quad E=E(*',n,r)={*'(n+\dd r)+\dd rn+n(n+1)/2+\dd r(r+1)/2}
\]
on $H^{*'}_G$.  This composition defines an isomorphism as bigraded $\kk$-modules between $H^S\CECH^M(n)^{tr}$ and $\tilde{A}_M(n)^{tr}$. By Lemma \ref{Ldiagonal}, this isomorphism maps $H^S\CECH^M_{-\star,-*}(n)^{tr}\cap I(n)$ into  $\tilde{A}^{\star\,*}_M(n)^{tr}\cap J(n)$ isomorphically. A quasi-isomorphism $H^S\CECH^M(n)\to A_M(n)$ is defined by the following composition
\[
\begin{split}
H^S\CECH^M_{-\star,-*}(n)& \to H^S\CECH^M_{-\star,-*}(n)/I(n)\cong H^S\CECH^M_{-\star,-*}(n)^{tr}/\{ H^S\CECH^M_{-\star, -*}(n)^{tr}\cap I(n) \} \\
& \cong \tilde A^{\star\,*}_M(n)^{tr}/\{ \tilde{A}^{\star\,*}_M(n)^{tr}\cap J(n)\}\cong A^{\star\,*}_M(n).
\end{split}
\]

where the first map is the quotient map which is a quasi-isomorphism by Lemma \ref{Lcechreduction},  the second and forth maps are induced by inclusions, the third map is the isomorphism defined above. For the above number $E$, we see
\[
\begin{split}
E(*',n-1,r)-E(*',n,r) & \equiv *'+\dd r+n \\
E(*'+\dd, n, r-1)-E(*',n,r) & \equiv (*' +1)\dd 
\end{split}
\]
module $2$.
Now we shall take the integer $K$ as a multiple of $4$. With this assumption and the above equalities for $E$,  compatibility of the quasi-isomorphism with the partial composition follows from the Lemma \ref{Lcomposition} as $\varepsilon_1=1$. Compatibility with the ($\CECHC$ech) differentials follows from Lemma \ref{Lcechdiff}. Compatibilty with the actions of $\Sigma_n$ is clear. The sign $\mathrm{sgn} (\sigma_G)$ in Definition \ref{Dcheckmodule}, the sign occurring in permutation of $\gamma_{ij}$ and the sign occurring in permutation of $g_{ij}$ are cancelled. Thus the isomorphism is a morphism of $\ASS$-comodule. 
 For the part 2, by part 1, the $E_2$-page is isomorphic to the homology of Hochschild complex $\Hoch_\bullet(A_M)$, which is isomorphic to unnormalized total complex of $A_\bullet (M)$ so is quasi-isomorphic to the normalized complex.
\end{proof}
Sinha proved the convergence of his spectral sequences using the Cohen-Tayhor spectral sequences. Here, we prove the convergence of our spectral sequences simultaneously by a independent method.
\begin{thm}\label{Tconvergence}
If $M$ is   simply connected, both of the $\check{C}$ech s.s.  and Sinha s.s. for $M$ converge to $H^*(\Emb(S^1,M))$.
\end{thm}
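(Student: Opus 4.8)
The plan is to reduce the convergence statement for both spectral sequences to a single strong-convergence criterion applied to the cosimplicial space $\CC^\bullet\CPTM$, using the connectivity estimates already available. First I would recall from Lemma~\ref{Lsinhass} and Theorem~\ref{TAtiyahdual} (via Lemmas~\ref{Lchaininvariance}, \ref{Lcochaintheory}) that both $\CECHC$ech s.s. and Sinha s.s. arise, after the $E_1$-page, from filtrations of (a model of) the normalized total complex of $C^*(\CC^\bullet\CPTM)$; the two filtrations differ only by which of the two extra gradings ($\star$ versus $\star+\bullet$) is used, but both refine the cosimplicial filtration by $\bullet$. So it suffices to prove that the Bousfield--Kan cohomology spectral sequence of the cosimplicial space $\CC^\bullet\CPTM$ converges strongly to $H^*(\holim_{\BDelta}\CC^\bullet\CPTM)$, and then invoke Lemma~\ref{Lcosimplicial}(3) to identify the target with $H^*(\Emb(S^1,M))$.

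The key step is a connectivity/convergence estimate. The standard tool here is the following: if $X^\bullet$ is a cosimplicial space such that the canonical map $\holim_{\BDelta}X^\bullet\to \holim_{\BDelta_n}X^\bullet$ is $c(n)$-connected with $c(n)\to\infty$, then the Bousfield--Kan spectral sequence converges completely. So the plan is: first, by Lemma~\ref{Lcosimplicial}(1),(2) the $n$-th partial homotopy limit $\holim_{\BDelta_n}\CC^\bullet\CPTM$ is weakly equivalent to $P_n\Emb(S^1,M)$; second, by Lemma~\ref{Lpunctured} the map $\alpha_n:\Emb(S^1,M)\to P_n\Emb(S^1,M)$ is $(n-1)(\dd-3)$-connected, and since $\dd\geq 4$ we have $(\dd-3)\geq 1$, so the connectivity $(n-1)(\dd-3)$ tends to infinity with $n$. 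This is exactly the hypothesis needed for complete (hence strong) convergence of the cosimplicial spectral sequence. The simple connectivity of $M$ is used here precisely to guarantee that $\Emb(S^1,M)$ and the spaces $\CC^n\CPTM\simeq C'_{n+1}(M)$ are simply connected (the configuration spaces of a simply connected manifold of dimension $\geq 3$ are simply connected), so that the Bousfield--Kan spectral sequence is well-defined and its abutment is genuinely $H^*$ with the chosen coefficients; without it the cosimplicial space is not pointwise simply connected and the cohomology spectral sequence need not converge to the cohomology of the homotopy limit.

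To transfer this to the $\CECHC$ech s.s. I would argue that its filtration is a refinement of the cosimplicial filtration: the associated graded of the $\CECHC$ech filtration in a fixed cosimplicial degree $\bullet=p$ is exactly the $\CECHC$ech resolution of $C^*(\CC^p\CPTM)$, which has bounded length in each cohomological degree (finitely many graphs on $p+1$ vertices), so passing to the diagonal filtration does not destroy convergence. Concretely, since $\CC^p\CPTM$ has the homotopy type of a finite-dimensional complex in each degree and the $\CECHC$ech complex computing $H^*(\CC^p\CPTM)$ is degreewise finite, the filtration $\{F^{-p}\}$ is exhaustive and the filtration quotients stabilize, so the convergence of Sinha s.s.\ (which is the associated graded spectral sequence for the coarser filtration $\{\bar F^{-p}\}$, by Lemma~\ref{Lsinhass}) implies the convergence of $\CECHC$ech s.s.\ to the same target $H^*(\Emb(S^1,M))$ by a standard comparison of filtered complexes.

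The main obstacle I expect is the bookkeeping needed to justify that the two filtrations $\{F^{-p}\}$ and $\{\bar F^{-p}\}$ on the Hochschild complex of $\tot\CECH^M_{\star\,*}$ are exhaustive and complete (or at least conditionally convergent with vanishing $\lim^1$), since the complexes involved are unbounded in the subscript (singular) direction; one has to verify that in each total degree only finitely many pieces contribute, which follows from the connectivity estimate of Lemma~\ref{Lpunctured} combined with finite-dimensionality of each $\CC^p\CPTM$ up to homotopy, but making this precise requires care. The homotopy-theoretic input (Lemma~\ref{Lpunctured} and the identification of partial homotopy limits in Lemma~\ref{Lcosimplicial}) is the real content, and it is already in hand, so the remaining work is the standard convergence argument for a second-quadrant spectral sequence coming from a tower of fibrations with connectivity tending to infinity.
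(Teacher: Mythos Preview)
Your route is genuinely different from the paper's and it is worth spelling out the contrast. You argue convergence of the Sinha spectral sequence externally, via the Goodwillie--Klein--Weiss connectivity of the tower $\Emb(S^1,M)\to P_n\Emb(S^1,M)$ from Lemma~\ref{Lpunctured} and Lemma~\ref{Lcosimplicial}, and then try to push this over to the $\CECHC$ech spectral sequence by a filtration comparison. The paper instead works internally: it first proves an explicit vanishing line $\BGSSS^{-p,q}_2=0$ whenever $q/p<s_{\dd}:=\min\{\dd/3,2\}$ by a bare-hands degree count on generators of $NA_\bullet(M)$ (this is exactly where simple connectivity of $M$ is used, to force each nondegenerate generator to carry $*$-degree $\geq 2$ per extra factor). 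From this vanishing line both $\BGSSS$ and $\bar\SINHASS$ converge to $H(NA_\bullet(M))$; a slope comparison between the two filtrations then yields the same vanishing line on $\bar\SINHASS_\infty$, and Bousfield's Theorem~3.4 in \cite{bousfield} identifies the target with $H^*(\Emb(S^1,M))$. The paper's approach buys an explicit vanishing line that is reused in the degree arguments of Section~\ref{Sexample}; your approach would not produce that for free.

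Two issues with your proposal. The minor one: $\Emb(S^1,M)$ is \emph{not} simply connected in general (Corollary~\ref{Tmain4dim} computes its $\pi_1$ as $H_2(M;\ZZ)$ for many $4$-manifolds), so that remark is wrong, though only simple connectivity of the $\CC^n\CPTM$ is actually needed. The substantive one is your transfer step. You correctly observe that $F^{-p}\subset\bar F^{-p}$ and that in each fixed cosimplicial degree the $\CECHC$ech grading is bounded, so the two filtrations are commensurable and share the same abutment. But $\BGSSS$ is a half-plane spectral sequence with \emph{entering} differentials, and commensurability of filtrations does not by itself force $RE_\infty=0$ or make the incoming differentials into a fixed bidegree eventually vanish; ``standard comparison of filtered complexes'' is not enough here. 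You still owe a strong-convergence criterion on the $\CECHC$ech side---either by extracting a vanishing line from the tower connectivity and pushing it through the commensurability, or by some other argument. The paper sidesteps this entirely by establishing the vanishing line directly on $\BGSSS_2$, which is the real technical core of its proof.
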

\begin{proof}
We set a number $s_{\dd}$ by $s_{\dd}=\min\{\frac{\dd}{3},2\}$. If $\dd\geq 4$, clearly $s_{\dd}>1$. Recall that $\{\BGSSS_r\}_r$  denotes the $\CECHC$ech s.s. By Lemma \ref{Lsinhass}, we identify the Sinha-s.s. with the spectral sequence $\bar \SINHASS_r$. 
We shall first show the claim that $\BGSSS^{\,-p,\,q}_2=0$ if $q/p < s_{\dd}$.  If a graph $G\in \GG(n+1)$ has $k$ discrete vertices,  $H^*(\Delta_G)$ is isomorphic to $H^*(\TM)^{\otimes k}\otimes H^*(\Delta_{G'})\otimes \{\text{torsions}\}$, where $G'\in \GG(n+1-k)$ is the graph made by removing discrete vertices. With this observation and   simple connectivity of $M$,  we see that  generators of the normalization $NA_n(M)$ are presented as $a_{1}\cdots a_{k} bg_{G}$ where $G$ is a graph in $\GG(n+1)$ with $r$ edges and $k$ discrete vertices except for the vertex $0$, and $a_{t}\in H^{\geq 2}(\TM)$ considered as $t$-th discrete component, and $b\in H^*_{G'}$. We may ignore the torsion part in estimation of degree by the universal coefficient theorem.  The bidegree $(-p,q)$ of this element satisfies conditions $p=n+r$,  and $q\geq 2k+r\dd$.  Clearly we have  $k+2r\geq n+\epsilon$  with $\epsilon=0$ or $1$ according to whether the vertex $0$ has valence $0$ in $G$. With this,  if $\dd\leq 5$, we have the following estimation.
\[
\frac{q}{p}-\frac{\dd}{3}\geq \frac{(6k+(3r-p)\dd }{3(n+r)}\geq \frac{(6-d)k+\dd\epsilon }{3(n+r)}\geq 0
\]
 If $\dd\geq 6$, we have the following estimation.
\[
\frac{q}{p}-2 = \frac{2\epsilon +(\dd -6)r}{n+r}\geq 0
\]
We have shown the claim.  Since the filtration $\{F^{-p}\}$ of $\CECHC$ech s.s. is exhaustive, and the total homology of each $F^{-p}$ is of finite type, the $\CECHC$ech s.s. $\{\BGSSS_r\}_r$ converges to the total homology $H(NA_\bullet(M))$ of the normalized complex. By the same reason, $\{\bar \SINHASS_r\}$ also converges to $H(NA_\bullet(M))$. We shall show $\bar \SINHASS^{\,-p,\,q}_r=0$ if $q/p<s_{\dd} $ for sufficiently large $r$. Suppose there exists a non-zero element $x\in \bar \SINHASS^{\,-p,\,q}_\infty$  with $q/p < s_{\dd}$.  $x$ is considered as an element of 
$(\bar F^{-p}/\bar F^{-p+1})H(NA_\bullet(M))$.  Take a class $x'$ in $\bar F^{-p}H(NA_\bullet(M))$ representing $x$. Take the smallest $p'$ such that $F_{-p'}H(NA_\bullet(M))$ contains $x'$. so $\BGSSS^{\,-p',\,q+p'-p}_\infty $ is not zero and $p'\geq p$ as $\bar F^{-p}\supset F^{-p}$. In the coordinate plane of bidegree,  $x'$ and $x$ are on the same line of the form $-p+q= constant$. This and $p'\geq p$ imply the 'slope' of $x'$ is smaller than $s_{\dd}$, which contradicts to the claim.
This vanishing result on $\bar \SINHASS_r$ and (cohomology version of) Theorem 3.4 in \cite{bousfield} imply the convergence of $\bar \SINHASS_r$ and $\BGSSS_r$ to $H^*(\Emb(S^1,M))$.
\end{proof}
\section{Algebraic presentations of $E_2$-page of $\CECHC$ech spectral sequence}\label{Salgform}
In this section, we assume $M$ is oriented and simply connected and $H^*(M)$ is a free  $\kk$-module.
%%%%%% 2019 8/14
\begin{defi}\label{Dpoincare}
\begin{itemize}
\item A {\em Poincar\'e algebra of dimension $d$} is a graded commutative algebra $\HH^*$ with a linear isomorphism $\epsilon:\HH^{d}\to \kk$ 
such that the bilinear form  defined as the composition
\[
\HH^*\otimes \HH^* \stackrel{\text{multiplication}}{\longrightarrow} \HH^* \stackrel{\text{projection}}{\longrightarrow}\HH^d\stackrel{\epsilon}{\to} \kk,
\]
induces an isomorphism  $\HH^*\cong (\HH^{d-*})^\vee$.We call $\epsilon$ the {\em orientation of \ $\HH$}. 
\item We denote by $\Delta_{\HH}$ the {\em diagonal class for $\HH^*$} given by
\[
\sum _i (-1)^{|a_i^*|} a_i\otimes a_i^*\in (\HH\otimes \HH)^d
\]
where $\{a_i\}$ and $\{a^*_i\}$ are two basis of $\HH^*$ such that $\epsilon(a_i\cdot a_j^*)=\delta_{ij}$,\ the Kronecker delta. This definition does not depend on a choice of a basis $\{a_i\}$.
\item Let $\HH$ be a Poincar\'e algebra $\HH$ of dimension $d$ with $\HH^1=0$. We set $\HH^{\leq d-2}=\oplus_{p\leq d-2}\HH^p$ and $\HH^{\geq 2}=\oplus_{p\geq 2}\HH^p$ and define a graded $\kk$-module $\HH^{\geq 2}[d-1]$ by $(\HH^{\geq 2}[d-1])^p=X^{p-d+1}$ with $X^*=\HH^{\geq 2}$. We denote by $\bar a $ the element in $(\HH^{\geq 2}[d-1])^p$ corresponding to $a\in \HH^{p-d+1}$. We define a Poincar\'e algebra $S\HH$ of dimension $2{d}-1$ as follows. As graded $\kk$-module, we set
\[
S\HH^*=\HH^{\leq d-2} \oplus \HH^{\geq 2}[d-1].
\]
For $a, b\in \HH^{\leq d-2}$ the multiplication $a\cdot b$ in $S\HH$ is  the one in $\HH$ except for the case $|a|+|b|=d$, in which we set $a\cdot b=0$.  We set 
$ a\cdot \bar b=\overline{ab}$ for  $a\in \HH^{\leq d-1},\ b\in \HH^{\geq 2}$, and  $\bar a\cdot \bar b=0$ for $a,b\in \HH^{\geq 2}$. We give the same orientation on $S\HH$ as on $\HH$ via the identity $S\HH^{2d-1}=\HH^d$. 
\item We  regard $\HH=H^*(M)$ as a Poincar\'e algebra with the orientation 
\[
H^{\dd}(M)\stackrel{ w_M\cap }{\longrightarrow} H_0(M)\cong \kk,
\] where $w_M$ is the fundamental class of $M$ determined by the orientation on $M$ and the latter isomorphism  sends the class represented by a point to $1$.
\end{itemize}
 \end{defi}
 The following lemma is obvious.
 \begin{lem}\label{Lpoincarediagonal}
 With the notations of Definition \ref{Dpoincare}, let $(b_{ij})_{ij}$ denote the inverse  of the matrix $(\eps(a_i\cdot a_j))_{ij}$. Then we have
 \[
 \Delta_\HH=\sum_{i,j}(-1)^{|a_j|}b_{ji}\, a_i\otimes a_j\, .
 \]\hfill \qedsymbol
 \end{lem}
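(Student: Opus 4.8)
The plan is to make the dual basis $\{a_i^*\}$ explicit in terms of the chosen homogeneous basis $\{a_i\}$ and then substitute into the definition of $\Delta_\HH$ from Definition \ref{Dpoincare}. First I would record that the Poincar\'e duality isomorphism $\HH^*\cong(\HH^{d-*})^\vee$ forces the square matrix $E=(\eps(a_i\cdot a_j))_{i,j}$ to be invertible: it is block-diagonal with respect to the grading, its $(i,j)$-entry vanishing unless $|a_i|+|a_j|=d$, and each diagonal block is the Gram matrix of a perfect pairing, hence invertible. Consequently its inverse $B=(b_{ij})$ inherits the same block structure, so $b_{ij}=0$ unless $|a_i|+|a_j|=d$; moreover we may (and do) choose the $a_i^*$ homogeneous, with $|a_i^*|=d-|a_i|$.

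Next I would solve for the transition matrix. Writing $a_i^*=\sum_k c_{ik}\,a_k$ and inserting this into the defining relation $\eps(a_j\cdot a_i^*)=\delta_{ij}$ gives $\sum_k \eps(a_j\cdot a_k)\,c_{ik}=\delta_{ij}$ for all $i,j$, which in matrix form reads $EC^{\mathrm T}=I$ with $C=(c_{ik})$. Hence $C^{\mathrm T}=E^{-1}=B$, i.e.\ $c_{ik}=b_{ki}$, so $a_i^*=\sum_k b_{ki}\,a_k$.

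It then remains to substitute and track the sign. From $\Delta_\HH=\sum_i(-1)^{|a_i^*|}a_i\otimes a_i^*$ one gets $\Delta_\HH=\sum_{i,k}(-1)^{|a_i^*|}\,b_{ki}\,a_i\otimes a_k$; and in each nonzero summand $b_{ki}\neq 0$ forces $|a_k|=d-|a_i|=|a_i^*|$, so $(-1)^{|a_i^*|}=(-1)^{|a_k|}$. Renaming $k$ to $j$ yields $\Delta_\HH=\sum_{i,j}(-1)^{|a_j|}\,b_{ji}\,a_i\otimes a_j$, which is the claim. The only slightly delicate point — the one I would expect a referee to scrutinize — is exactly this sign identification $(-1)^{|a_i^*|}=(-1)^{|a_j|}$, which hinges on the block-diagonality of $B$; everything else is a one-line computation, and since Definition \ref{Dpoincare} already notes that $\Delta_\HH$ is independent of the chosen basis, no separate well-definedness check is needed.
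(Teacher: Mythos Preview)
Your argument is correct: expressing $a_i^*=\sum_k b_{ki}a_k$ via $EC^{\mathrm T}=I$ and then using the block-diagonality of $B$ to replace $(-1)^{|a_i^*|}$ by $(-1)^{|a_j|}$ is exactly the routine verification the paper has in mind. The paper itself declares the lemma ``obvious'' and gives no proof, so there is nothing further to compare.
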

Under some assumption, $S\HH$ is isomorphic to  $H^*(\TM)$ (see the proof of Lemma \ref{Lbhg}), and the algebras $A^*_{\HH,G}$, $B^*_{\HH,G}$ are isomorphic to $H^*(\Delta_G)$.

\begin{defi}\label{Dahgbhg}
For a Poincar\' e algebra $\HH$ of dimension $d$ and graph $G\in \GG(n)$, Define a graded commutative algebra $A_{\HH,G}$
as follows.
\[
A_{\HH,G}= \HH^{\,\otimes\, \pi_0(G)}\otimes \bigwedge \bigl\{\, y_1,\dots, y_n\, \bigr\},\quad \deg y_i=d-1
\]
Here we regard $\pi_0(G)$ as an ordered set by  the minimum in each component, and the tensor product is taken in this order. Furthermore, we also define a graded commutative algebra $B_{\HH,G}$ as follows
\[
B_{\HH,G}= S\HH^{\,\otimes\, n}\otimes \bigwedge \bigl\{\, y_{ij}\mid 1\leq i,j\leq n,\ i\sim_G\, j \, \bigr\}/J_G,\quad \deg y_{ij}=d-1
\]
Here, $i\sim_G j$ means the vertices $i$ and $j$ belong to the same connected component of $G$, and $J_G$ is the ideal generated by the following relation.
\[
\left\{\ \left.
\begin{array}{c}
e_i(a)-e_j(a),\ e_i(\bar a)-e_j(\bar a)-ay_{ij},\vspace{1mm}\\
e_i(\bar b)-e_j(\bar b), \ y_{ii},\ y_{ij}+y_{jk}-y_{ik}
\end{array} \right|
\begin{array}{c}
a\in \HH^{\leq d-2},\ b\in \HH^d, \vspace{1mm}\\
1\leq i,\,j,\,k\leq n,\ i\sim_G j\sim_G k
\end{array} \right\}
\]
Here, $e_j(\bar a)$ is regarded as $0$ if $a\in \HH^0$. \\
\indent For $i<j$, let $f_{ij}:\HH^{\,\otimes 2}\to \HH^{\,\otimes\, n}$ denote the map given by 
\[
f_{ij}(a\otimes b)=1\otimes \cdots \otimes a \otimes \cdots \otimes b \otimes \cdots \otimes 1
\] ($a$ is  the $i$-th factor, $b$ is $j$-th factor and the other factors are $1$ ). We set 
\[
\Delta^{i j}_{\HH}=f_{ij}(\Delta_\HH)\ \in \HH^{\otimes \,n}.
\]
We sometimes regard $\Delta^{ij}_{\HH}$ as an element of $(S\HH)^{\,\otimes\, n}$ via the projection and inclusion $\HH\to \HH^{\leq \dd-1}\subset S\HH$.  We also regard it as an element of $A_{\HH, G}$ for a graph $G$ via the map $\HH^{\,\otimes\, n}\to \HH^{\,\otimes\, \pi_0(G)}$ given by multiplication of factors in the same components with the standard commuting signs. Similarly, $\Delta^{ij}_{\HH}$ and $\Delta^{ij}_{S\HH}$ are regarded as elements of $B_{\HH,G}$.
\end{defi}

\indent As a graded algebra, $B^*_{\HH,G}$ is isomorphic to $(S\HH)^{\, \otimes\, \pi_0(G)}\bigwedge \{y_{ij}\mid \ i\sim_G\, j \}/(y_{ii},\ y_{ij}+y_{jk}-y_{ik}) $ but we need the presentation to describe maps induced by identifying vertices and removing edges.\\
\indent Proof of the following lemma is easy and omitted.
\begin{lem}\label{Lcanonicaliso}
Consider the Serre spectral sequence for a fibration
\[
F\to E\to B
\]
with the base  simply connected  and the cohomology groups of the fiber and base  finitely generated in each degree. If for each $k$, there exists at most single $p$ such that $E^{p,k-p}_\infty\not=0$, the quotient map $F^p\to F^p/F^{p+1}$ has a  unique section which preserves cohomological degree. Gathering these sections for all $p$, one can define an isomorphism of graded algebra $E_\infty\to H^*(E)$. We refer to this isomorphism the {\em canonical isomorphism}. The canonical isomorphisms are natural for maps between fibrations satisfying the above assumption. \hspace{\fill} \qedsymbol
\end{lem}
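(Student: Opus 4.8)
The plan is to read everything off the multiplicative cohomology Serre spectral sequence of the fibration $\mathrm{fib}\to E\to B$. Under the stated hypotheses (base simply connected, cohomologies of fiber and base finitely generated in each degree) this is a spectral sequence of bigraded algebras, $E_2^{p,q}=H^p(B;H^q(\mathrm{fib}))\Rightarrow H^{p+q}(E)$, converging strongly to $H^*(E)$ equipped with a decreasing, exhaustive, Hausdorff, multiplicative filtration $F^p=F^pH^*(E)$ with $F^p\cdot F^{p'}\subset F^{p+p'}$ and $F^pH^{p+q}(E)/F^{p+1}H^{p+q}(E)\cong E_\infty^{p,q}$.

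First I would analyse the filtration degree by degree. Fix a cohomological degree $k$; by hypothesis there is at most one $p$ with $E_\infty^{p,k-p}\neq 0$, say $p=p(k)$ when such $p$ exists (and $H^k(E)=0$ otherwise). Since all graded quotients $F^{p'}H^k(E)/F^{p'+1}H^k(E)\cong E_\infty^{p',k-p'}$ vanish except at $p'=p(k)$, the filtration on $H^k(E)$ is constant equal to $H^k(E)$ for $p'\leq p(k)$ and vanishes for $p'\geq p(k)+1$. Hence the quotient map $F^{p(k)}H^k(E)\to F^{p(k)}H^k(E)/F^{p(k)+1}H^k(E)=E_\infty^{p(k),k-p(k)}$ is an isomorphism, while for every other $p'$ the quotient map onto $E_\infty^{p',k-p'}=0$ has zero target. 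Consequently, for each $p$, the quotient map $F^p\to F^p/F^{p+1}$ has a unique degree-preserving section $\sigma_p$: in cohomological degree $k$ it is the inverse of the above isomorphism when $p=p(k)$, and the zero map otherwise.

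Assembling these sections over all $p$ and using $F^0=H^*(E)$ gives a degree-preserving $\kk$-linear map $\Theta\colon E_\infty\to H^*(E)$; in each cohomological degree $k$ it is the inverse of the isomorphism $H^k(E)=F^{p(k)}H^k(E)\stackrel{\sim}{\to}E_\infty^{p(k),k-p(k)}$, so $\Theta$ is an isomorphism of graded modules. To see it is multiplicative, take nonzero homogeneous $x\in E_\infty^{p,a-p}$, $y\in E_\infty^{q,b-q}$; then $\Theta(x)\in F^pH^a(E)$, $\Theta(y)\in F^qH^b(E)$, so $\Theta(x)\Theta(y)\in F^{p+q}H^{a+b}(E)$. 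If $xy\neq 0$ in $E_\infty$, then $E_\infty^{p+q,a+b-p-q}\neq 0$ forces $p+q=p(a+b)$, so $F^{p+q}H^{a+b}(E)=H^{a+b}(E)$ and the quotient isomorphism $H^{a+b}(E)\stackrel{\sim}{\to}E_\infty^{p+q,a+b-p-q}$ sends $\Theta(x)\Theta(y)$ to the associated-graded product $[\Theta(x)][\Theta(y)]=xy$, which is also the image of $\Theta(xy)$; hence $\Theta(x)\Theta(y)=\Theta(xy)$. When $x$, $y$, or $xy$ vanishes the identity $\Theta(x)\Theta(y)=\Theta(xy)$ reduces to $\Theta(x)\Theta(y)=0$, which is routine by comparing filtration degrees against the unique nonzero filtration step in total degree $a+b$; I would write this case out, but it is exactly the bookkeeping the paper defers. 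Thus $\Theta$ is an algebra isomorphism, the \emph{canonical isomorphism}.

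Finally, for naturality: a map of fibrations over a map of simply connected bases (with the finiteness hypotheses) induces a morphism of Serre spectral sequences, compatible with the products and with the filtrations on the cohomologies of the total spaces. Since each section $\sigma_p$ is characterized intrinsically by the filtration — it is the inverse of a quotient map that the hypothesis renders an isomorphism — it commutes with the induced maps, so the square comparing the two canonical isomorphisms commutes. The only point requiring genuine care is the multiplicativity of $\Theta$; the whole role of the hypothesis "at most one $p$ per total degree" is to collapse the filtration/extension problems so that lifting, multiplying and projecting commute, and everything else is standard properties of the Serre spectral sequence, which is why the lemma can be stated as easy and omitted.
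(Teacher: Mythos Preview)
The paper omits the proof entirely (it is marked as easy), so your argument is the natural one to supply and there is nothing in the paper to compare it against. Your construction of the sections and of the additive isomorphism is correct, and the multiplicativity argument is fine when $xy \neq 0$. However, the case you dismiss as ``routine bookkeeping'' --- namely $xy = 0$ in $E_\infty$ with $p(a) + p(b) < p(a+b)$ --- is not routine: nothing in your argument forces $\Theta(x)\Theta(y)$ to vanish there, and in fact the lemma as literally stated is false in that generality.

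A clean counterexample is the twistor fibration $S^2 \to \mathbb{C}P^3 \to S^4$. All differentials vanish for degree reasons, so $E_\infty = E_2$ is free of rank one in bidegrees $(0,0),\ (0,2),\ (4,0),\ (4,2)$ and zero elsewhere; the hypothesis of the lemma holds. If $a$ generates $E_\infty^{0,2}$ then $a^2 \in E_\infty^{0,4} = 0$, whereas $\Theta(a)$ is (up to sign) the generator $t$ of $H^2(\mathbb{C}P^3)$, and $t^2 \neq 0$ in $H^*(\mathbb{C}P^3) \cong \ZZ[t]/(t^4)$. So $\Theta$ is not multiplicative; indeed $E_\infty \cong H^*(S^2 \times S^4)$ and $H^*(\mathbb{C}P^3)$ are not even abstractly isomorphic as graded rings.

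So the step you flag and then defer is a genuine gap, one the paper itself does not address. In the fibrations actually used later (iterated $(\dd-1)$-sphere bundles over powers of $M$) the problematic configuration turns out to be harmless on separate degree grounds, but that is a case-by-case check, not a consequence of the hypothesis stated in the lemma.
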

%This is purely algebraic lemma but for our purpose, the above statement is enough.
In the rest of paper, we regard the Euler number $\chi(M)$ as an element of the base ring $\kk$ via the ring map $\ZZ\to \kk$, and $\kk^{\times}\subset \kk$ denotes the subsets of the invertible elements. 
\begin{lem}\label{Lahg}
We use the notations $d_i$, $\Delta_i$,  $s_i$, and $\Delta_{ij}^{!}$ given in Definitions \ref{Diroiro} and \ref{Dcdga}.
Suppose  $\chi (M)=0\in \kk$. Set $\HH^*=H^*(M)$.
 There exists a family of  isomorphisms of graded algebras \\
$\bigl\{\varphi_G:A_{\HH,G}\stackrel{\cong}{\to}  H^*(\Delta_G)\mid n\geq 1,\ G \in \GG(n)\bigr\}$ which satisfies the following conditions.
\begin{enumerate}
\item Let $G\in \GG(n)$ be a tree with $i$ and $i+1$ disconnected. Set $H=d_i(G)$. The following diagram is commutative.
\[
\xymatrix{A^*_{\HH,G}\ar[r]^{\varphi_G}\ar[d]^{\bar \Delta_i} & H^*(\Delta_G)\ar[d]^{ \Delta_i^*}\\
A^*_{\HH,H}\ar[r]^{\varphi_{H}} & H^*(\Delta_{H}).}  
\]
Here, the algebra map $\bar \Delta_i$ is defined as follows.    For $a_1\otimes \cdots \otimes a_p\in \HH^{\otimes \pi_0(G)}$, we set
\[
\bar \Delta_i(a_1 \otimes \cdots \otimes a_p)=\pm a_1\otimes \cdots\otimes a_s\cdot a_t\otimes \cdots a_p, \qquad  \bar \Delta_i(y_j)=y_{j'}\ \text{ with } \ j'=d_i(j).
\]
Here, $s, t\in \pi_0(G)$ are the connected components containing $i$, $i+1$ respectively, and $\pm$ is the standard sign in transposing graded elements. 
\item For a graph $G\in \GG(n)$, set $S=s_i(G)$. The following diagram is commutative. 
\[
\xymatrix{A^*_{\HH,G}\ar[r]^{\varphi_G}\ar[d]^{\bar s_i} & H^*(\Delta_G)\ar[d]^{ s_i}\\
A^*_{\HH,S}\ar[r]^{\varphi_{S}} & H^*(\Delta_{S}).}  
\]
Here, $\bar s_i$ is given by  insertion of the unit\ $1$\ as the factor of $H^{\otimes \pi_0(G)}$ which corresponds to the component containing $i+1$  and  skip of $i+1$ in the subscripts. 
\item For a graph $G\in \GG(n)$ and a permutation $\sigma\in \Sigma_n$, the following diagram is commutative.
\[
\xymatrix{A^*_{\HH,G}\ar[r]^{\varphi_G}\ar[d]^{\bar \sigma} & H^*(\Delta_G)\ar[d]^{ \sigma}\\
A^*_{\HH,\tau(G)}\ar[r]^{\varphi_{\tau(G)}} & H^*(\Delta_{\tau(G)}). }
\]
Here $\tau =\sigma^{-1}$, and the right vertical arrow is induced by the natural permutation of factors of the product $\Delta_{\tau(G)}\to \Delta_G$  and for the left vertical arrow, $\bar \sigma$ is the  algebra map given by the permutation of tensor factors and subscripts. 
%%%%%%%%%%%% 12/5 Tam 
\item For  an edge  $(i,j)$ of a tree $G\in\GG(n)$ with $i<j$, we define $K\in \GG(n)$ by $E(K)=E(G)-\{(i,j)\}$. The following diagram is commutative.
\[
\xymatrix{A^*_{\HH,G}\ar[r]^{\varphi_G}\ar[d]^{\bar \Delta_{ij}} & H^*(\Delta_G)\ar[d]^{ \Delta^!_{ij}}\\
A^{*+\dd}_{\HH,K}\ar[r]^{\varphi_{K}} & H^{*+\dd}(\Delta_{K}).}  
\]
Here, $\bar \Delta_{ij}^!$ is the right-$A^*_{\HH,K}$-module homomorphism determined by $\bar \Delta_{ij}^!(1)=\Delta^{ij}_\HH$\, . $A^*_{\HH,G}$ is considered as a $A^*_{\HH,K}$-module via the natural algebra map $A^*_{\HH,K}\to A^*_{\HH,G}$.
\end{enumerate} 
\end{lem}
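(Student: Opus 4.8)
The plan is to realise $\Delta_G$ as the total space of a fibre bundle over $M^{\times\pi_0(G)}$ with fibre $(S^{\dd-1})^{\times n}$, and to assemble $\varphi_G$ out of the projections of this bundle together with a single universal class on $\TM$; the isomorphism assertion is then a Leray--Hirsch argument, and the compatibilities reduce to functoriality of pullback together with the computation of the Gysin pushforward of $1$ along a diagonal. Recording the common base points gives a fibre bundle $\Delta_G\to M^{\times\pi_0(G)}$ whose fibre over $(x_c)_c$ is $\prod_{k=1}^{n} S(T_{x_{c(k)}}M)\cong(S^{\dd-1})^{\times n}$; write $q_c\colon\Delta_G\to M$ for the projection to the base point of the component $c\in\pi_0(G)$ and $p_k\colon\Delta_G\to\TM$ for the $k$-th coordinate. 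Since $H^*(M)$ is free over $\kk$ and $M$ is oriented, $H^{\dd}(M)\cong\kk$ and the Euler class of $TM$ equals $\chi(M)$ times a generator, so the hypothesis $\chi(M)=0$ forces $e(TM)=0$; then the Gysin sequence of $S^{\dd-1}\to\TM\to M$ splits, and since $M$ is simply connected $H^{\dd-1}(M)\cong H_1(M)=0$, so $H^{\dd-1}(\TM)\cong\kk$ carries a distinguished generator $\eta$, the one restricting on a fibre to the generator fixed by the orientation (equivalently, the class produced by Lemma \ref{Lcanonicaliso} applied to $\TM\to M$). As $2\dd-2>\dd$ and $H^{\dd-1}(M)=0$, the same sequence gives $H^{2\dd-2}(\TM)=0$, hence $\eta^{2}=0$.

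I would then define $\varphi_G\colon A_{\HH,G}\to H^*(\Delta_G)$ on generators by sending the $c$-th tensor factor of $\HH^{\otimes\pi_0(G)}$ through $q_c^*$ and $y_k\mapsto p_k^*\eta$; since $\eta^2=0$ and $H^*(\Delta_G)$ is graded commutative this extends uniquely to a map of graded algebras. The classes $p_k^*\eta$ restrict on $(S^{\dd-1})^{\times n}$ to the standard exterior generators, so the Leray--Hirsch theorem presents $H^*(\Delta_G)$ as a free module over $H^*(M^{\times\pi_0(G)})=\HH^{\otimes\pi_0(G)}$ (K\"unneth, using freeness of $H^*(M)$) on the square-free monomials in the $p_k^*\eta$. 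As $\varphi_G$ is $\HH^{\otimes\pi_0(G)}$-linear and carries the evident monomial basis of $A_{\HH,G}$ onto this Leray--Hirsch basis, it is an isomorphism of graded algebras.

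For the compatibilities, the maps underlying conditions (1), (2) and (3)---the partial diagonal $\Delta_i$, the coordinate-forgetting map $(s_i)^*$, and a coordinate permutation---are maps $\Delta_{G'}\to\Delta_G$ that cover the corresponding maps of base products and intertwine the families $\{p_k\}$ and $\{q_c\}$ up to exactly the reindexing built into $\bar\Delta_i$, $\bar s_i$, $\bar\sigma$ (for (1) one also uses that the two component-projections of $\Delta_G$ coalesce along $\Delta_i$). Since every $\varphi_G$ is built from the single class $\eta$ and from pullbacks along $p_k$ and $q_c$, each square commutes by functoriality of pullback, the only sign being the Koszul sign already present on the algebraic side. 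Condition (4) is the substantive one: $\Delta_{ij}^!$ is the Gysin pushforward along the codimension-$\dd$ closed embedding $\Delta_G\hookrightarrow\Delta_K$, which is the pullback to $\Delta_K$ of the partial diagonal $M\hookrightarrow M\times M$ sitting in the two factors of $M^{\times\pi_0(K)}$ that coalesce when the edge $(i,j)$ is removed. It satisfies the projection formula, so $\Delta_{ij}^!\circ\varphi_G$ and $\varphi_K\circ\bar\Delta_{ij}^!$ are both maps of $A^*_{\HH,K}$-modules (the target acting through $\varphi_K$), and since $A^*_{\HH,G}$ is generated by $1$ over $A^*_{\HH,K}$ it suffices to compare images of $1$. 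But $\Delta_{ij}^!(1)$ is the pullback to $\Delta_K$ of the Gysin pushforward of $1$ along that diagonal, which by the usual computation (Poincar\'e duality over $\kk$) equals the diagonal class $\Delta_\HH$ placed in those two factors, i.e.\ $\varphi_K(\Delta^{ij}_{\HH})$ in the notation of Definition \ref{Dahgbhg}; and well-definedness of $\bar\Delta_{ij}^!$ as a module map is the ``balanced'' identity $(a\otimes 1)\Delta_\HH=\pm(1\otimes a)\Delta_\HH$.

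I expect the main obstacle to be the sign and reindexing bookkeeping in condition (4): one must reconcile the orientation conventions defining $\gamma_{ij}$, the Thom classes and the diagonal class $\Delta_\HH$ with the Koszul signs in $f_{ij}$ and in the $A^*_{\HH,K}$-module structures on $A^*_{\HH,G}$ and on $A^*_{\HH,K}$, so that the algebraic prescription $\bar\Delta_{ij}^!(1)=\Delta^{ij}_{\HH}$ is literally the shadow of the geometric pushforward of $1$. The topological inputs---Leray--Hirsch, and the fact that the pushforward of $1$ along a diagonal is the diagonal class---are standard, and the naturality checks for (1)--(3), together with the well-definedness of $\varphi_G$ and of $\bar\Delta_{ij}^!$, are routine.
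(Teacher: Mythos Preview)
Your proof is correct and follows essentially the same line as the paper's: both realise $\Delta_G$ as a product-of-spheres bundle over $M^{\times\pi_0(G)}$, use $\chi(M)=0$ to see the Serre spectral sequence collapses (you phrase this as Leray--Hirsch with an explicit class $\eta\in H^{\dd-1}(\TM)$, the paper as $E_2=E_\infty$ together with its canonical-isomorphism Lemma~\ref{Lcanonicaliso}), deduce parts (1)--(3) from naturality, and settle part (4) by the projection formula together with the identification of the Gysin pushforward of $1$ along the diagonal with $\Delta_\HH$. The only cosmetic difference is that you build the fibrewise generator once on $\TM$ and pull it back through the coordinate projections $p_k$, whereas the paper produces it directly from the spectral sequence of $\Delta_G\to M^{\times\pi_0(G)}$; these are equivalent, and your explicit mention of the balanced identity $(a\otimes1)\Delta_\HH=\pm(1\otimes a)\Delta_\HH$ for the well-definedness of $\bar\Delta_{ij}^!$ makes transparent a point the paper leaves implicit.
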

\begin{proof}
In the following proof, we fix a generator $y$ of $H^{\dd-1}(S^{\dd-1})$. and we denote by $y_i$ ( or $\bar y_i$ ) the image of $y$ by the inclusion to the $i$-th factor  
$H^{\dd-1}(S^{\dd-1})\to H^{\dd-1}(S^{\dd-1})^{\otimes \,n}$. We consider Serre spectral sequence for the fibration
\[
(S^{\dd-1})^{\times n}\longrightarrow  \Delta_G \longrightarrow M^{\times\,\pi_0(G)}, 
\]
where the projection is the restriction of that of sphere tangent bundle. The first possibly non-trivial differential is $d_{\dd}:H^{\dd-1}((S^{\dd-1})^{\times n})=E^{0,\dd-1}_{\dd}\to E^{\dd,0}_{\dd}=H^{\dd}(M)$. This differential takes $y_i$ to the generator of $H^d(M)$ multiplied by $\chi(M)$. As $\chi(M)=0$, we have $d_{\dd}=0$, and latter differentials on $y_i$ is zero by degree reason, we see that $y_i$ survives eternally, which implies  $E_2\cong E_\infty$. Clearly $E_\infty$ satisfies the assumption of Lemma \ref{Lcanonicaliso}. We define $\varphi_G$ as the composition of the following maps
\[
A_{\HH,G}\to E_2=E_\infty\to H^*(\Delta_G )
\]
where the left map is the isomorphism given by identifying $y_i$ in the both sides and $\HH^{\otimes \pi_0(G)}$ with $H^*(M^{\times \pi_0(G)})$ by the Kunneth isomorphism, and the right map is the canonical isomorphism defined in Lemma \ref{Lcanonicaliso}. The parts 1, 2 and 3 obviously follow from naturality of the canonical isomorphisms. For the part 4, $H^*(\Delta_{G})$ is regarded as $H^*(\Delta_{K})$-module via the pullback $\Delta_{ij}^*:H^*(\Delta_{K})\to H^*(\Delta_G)$ by the inclusion $\Delta_G\to \Delta_K$. This module structure is compatible with the $A^*_{\HH,K}$-module structure on $A^*_{\HH,G}$ via $\varphi_G$ and $\varphi_{K}$ by  naturality of canonical isomorphism. By a general property of a  shriek map, the  map $\Delta_{ij}^!$ is $H^*(\Delta_{K})$-module homomorphism. So to prove the compatibility, we have only to check the image of $1$. For simplicity, we may assume $n=2$ and $(i,j)=(1,2)$. We may write $\Delta_G$ as $\TM\times_M\TM$.  The following diagram is  commutative.
\[
\xymatrix{H_{d-*}(M)\ar[r]^{P.D.} \ar[d]^{\Delta_*} & H^*(M) \ar[r]^{proj^*} \ar[d]^{\Delta^!} & H^*(\TM\times _M\TM)\ar[d]^{\Delta_{12}^!} \\
H_{d-*}(M\times M)\ar[r]^{P.D.} & H^{*+d}(M\times M)\ar[r]^{proj^*} & H^{*+d}(\TM \times \TM)} 
\] 
Here, P.D. denotes the cap product with the fundamental class. By the commutativity of the left square, we see $\Delta^!(1)$ is the   Poincar\'e dual class in $H^*(M\times M)$ of the diagonal $\Delta(M)$, which corresponds to $\Delta_\HH$ by Kunneth isomorphism. By the commutativity of the right square, we see $\Delta_{12}^!(1)$ corresponds  to $f_{ij}\Delta_\HH$. This completes the proof.

\end{proof}

%%%%%%%%%%%%%%%%%%%%%%%%%%%%% 2019 8/25

\begin{lem}\label{Lbhg}
We use the notations $d_i$, $\Delta_i$,  $s_i$, and $\Delta_{ij}^{!}$ given in Definitions \ref{Diroiro} and \ref{Dcdga}.
Suppose   $\chi(M)\in \kk^{\times}$. Set $\HH=H^*(M)$. 
 There exists a family of  isomorphisms of graded algebras \\
$\bigl\{\varphi_G:B_{\HH,G}\stackrel{\cong}{\to}  H^*(\Delta_G)\mid n\geq 1,\ G \in \GG(n)\bigr\}$ which satisfies the following conditions.
\begin{enumerate}

\item Let $G$ and $H$ be trees given in Lemma \ref{Lcomposition} (1). The following diagram is commutative.
\[
\xymatrix{B_{\HH,G}\ar[r]^{\varphi_G}\ar[d]^{\bar \Delta_i} & H^*(\Delta_G)\ar[d]^{ \Delta_i^*}\\
B_{\HH,H}\ar[r]^{\varphi_{H}} & H^*(\Delta_{H}).}  
\]
Here,  $\bar \Delta_i$ is defined by 
\[
\bar \Delta_i(e_j(x))=
e_{j'}(x) \ 
\text{ for }\ x\in S\HH,\quad \text{and} \quad 
\bar\Delta_i(y_{jk})=y_{j'k'}
\] where we set $j'=d_i(j)$ and $k'=d_i(k)$.

\item For a graph $G\in \GG(n)$, set $S=s_i(G)$. The following diagram is commutative.
\[
\xymatrix{B^*_{\HH,G}\ar[r]^{\varphi_G}\ar[d]^{\bar s_i} & H^*(\Delta_G)\ar[d]^{ s_i}\\
B^*_{\HH,S}\ar[r]^{\varphi_{S}} & H^*(\Delta_{S}).}  
\]
Here, $\bar s_i$ is given by the insertion of $1$ to the $i+1$-th factor of $S\HH^{\otimes n}$ and  skip of $i+1$ in the subscripts.

\item For a graph $G\in \GG(n)$ and a permutation $\sigma\in \Sigma_n$, the following diagram is commutative.
\[
\xymatrix{B^*_{\HH,G}\ar[r]^{\varphi_G}\ar[d]^{\bar \sigma} & H^*(\Delta_G)\ar[d]^{ \sigma}\\
B^*_{\HH,\tau(G)}\ar[r]^{\varphi_{\tau(G)}} & H^*(\Delta_{\tau(G)}) .}
\]
Here $\tau $ and the right vertical arrow are defined as in Lemma \ref{Lahg}, $\bar \sigma$ is the  algebra homomorphism defined by the permutation of the tensors and subscripts.

\item 
For  an edge  $(i,j)\in E(G)$ of a tree $G\in\GG(n)$ with $i<j$, we define  $K\in \GG(n)$ by $E(K)=E(G)-\{(i,j)\}$. The following square is commutative.
\[
\xymatrix{B^*_{\HH,G}\ar[r]^{\varphi_G}\ar[d]^{\bar \Delta_{ij}} & H^*(\Delta_G)\ar[d]^{ \Delta^!_{ij}}\\
B^{*+\dd}_{\HH,K}\ar[r]^{\varphi_{K}} & H^{*+\dd}(\Delta_{K}).}  
\]
Here,  $\bar \Delta_{ij}$ is the right-$B^*_{\HH,K}$-module homomorphism determined by 
$\bar \Delta_{ij}(1)=\Delta_\HH^{ij}$ and $\bar \Delta_{ij}(y_{ij})=\Delta_{S\HH}^{ij}$\, . $B^*_{\HH,G}$ is considered as a $B^*_{\HH,K}$-module via the algebra map $f_K^G:B^*_{\HH,K}\to B^*_{\HH,G}$ given by
\[
f_K^G(e_k(x))=e_k(x)\quad \text{for}\ x\in S\HH, \quad \text{and}\quad  f_K^G(y_{kl})=\left\{
\begin{array}{cc}
0 & ((k,l) =(i,j))\\
y_{kl}  & (\text{otherwise}).
\end{array}\right.
\]
\end{enumerate} 
\end{lem}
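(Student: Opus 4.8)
The plan is to mirror the proof of Lemma \ref{Lahg}, the only genuine change being that the Euler-class transgression no longer vanishes. First I would fix, for each $G\in\GG(n)$, the Serre spectral sequence of the fibration
\[
(S^{\dd-1})^{\times n}\longrightarrow\Delta_G\longrightarrow M^{\times\pi_0(G)},
\]
with $E_2=H^*(M^{\times\pi_0(G)})\otimes\bigwedge(y_1,\dots,y_n)$ and $y_v$ the fibre class of the $v$-th sphere. Since $\chi(M)\in\kk^{\times}$ the transgression is now $d_{\dd}(y_v)=\chi(M)\,\omega_{c(v)}$, where $\omega_c\in H^{\dd}(M^{\times\pi_0(G)})$ is the top class pulled back from the factor labelled by the component $c$ of $v$; it is nonzero, and (exactly as in Lemma \ref{Lahg}, because the $E_{\dd+1}$-page is generated as an algebra by permanent cycles) there are no later differentials, so $E_{\dd+1}=E_\infty$.

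The second step is the $E_\infty$-computation. Changing, inside each component $c$, the basis $\{y_v:v\in c\}$ to $\{y_{m_c}\}\cup\{y_v-y_{m_c}:v\in c,\ v\neq m_c\}$ with $m_c=\min c$, the differences carry trivial $d_{\dd}$ and survive, while $y_{m_c}$ produces the Koszul complex on one generator with $d_{\dd}(y_{m_c})=\chi(M)\omega_c$. As $\chi(M)$ is invertible and $\cdot\,\omega_c$ is the Poincar\'e isomorphism $H^0(M)\xrightarrow{\cong}H^{\dd}(M)$ and zero in all other degrees, the Koszul homology is $\HH^{\le\dd-1}\oplus\HH^{\ge1}[\dd-1]$; using simple connectivity ($\HH^1=0$) together with Poincar\'e duality and freeness of $H^*(M)$ (which force $\HH^{\dd-1}=0$), this is precisely $\SHH$. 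Multiplying over the components I would obtain a graded-algebra description $E_\infty\cong\SHH^{\otimes\pi_0(G)}\otimes\bigwedge(y_v-y_{m_{c(v)}})$, from which the presentation of $B_{\HH,G}$ can be read off: $y_{ij}\leftrightarrow\pm(y_j-y_i)$, the cocycle relation $y_{ij}+y_{jk}=y_{ik}$ and $y_{ii}=0$, the identifications $e_i(a)=e_j(a)$ for $a\in\HH^{\le\dd-2}$ and $e_i(\bar b)=e_j(\bar b)$ for $b\in\HH^{\dd}$, and $e_i(\bar a)-e_j(\bar a)=a\,y_{ij}$, the last being exactly the bookkeeping of the boundary $d_{\dd}(y_{m_c})=\chi(M)\omega_c$ in the Koszul homology. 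With the hypothesis of Lemma \ref{Lcanonicaliso} verified as in the proof of Lemma \ref{Lahg}, the canonical isomorphism then yields $\varphi_G:B_{\HH,G}\xrightarrow{\cong}H^*(\Delta_G)$.

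Finally, the four naturality squares. Parts (1)--(3) follow from the naturality clause of Lemma \ref{Lcanonicaliso}, since $\Delta_i^*$, $s_i$ and the $\Sigma_n$-action are induced by morphisms of the defining fibrations and $\bar\Delta_i,\bar s_i,\bar\sigma$ are their evident effect on $\SHH^{\otimes\pi_0(G)}\otimes\bigwedge(\cdots)$; the only care needed is the signs attached to the $y_{ij}$ under permutation. Part (4) runs parallel to part (4) of Lemma \ref{Lahg}: $\Delta^!_{ij}$ is an $H^*(\Delta_K)$-module map, so it suffices to evaluate it on the two $B^*_{\HH,K}$-module generators $1$ and $y_{ij}$ of $B^*_{\HH,G}$, and the same square of Poincar\'e duals and pullbacks as in Lemma \ref{Lahg} gives $\Delta^!_{ij}(1)\leftrightarrow\Delta^{ij}_{\HH}$. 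The genuinely new point, which I expect to be the main obstacle, is $\Delta^!_{ij}(y_{ij})\leftrightarrow\Delta^{ij}_{\SHH}$: one must track how the Thom class $\gamma_{ij}$ of the diagonal of $\TM\times\TM$ pairs with the vertical sphere class $y_{ij}$, i.e.\ recognise the shriek image of the vertical fundamental class as the diagonal class of the Poincar\'e algebra $\SHH$ of dimension $2\dd-1$; I would settle this by the local model of $\gamma$ used in Lemma \ref{Ldiagonal}, reducing it to the defining formula for $\Delta_{\SHH}$.
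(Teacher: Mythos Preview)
Your overall strategy matches the paper's closely: Serre spectral sequence for the sphere-bundle fibration over the base, identification of $E_\infty$, canonical isomorphism from Lemma \ref{Lcanonicaliso}, and parts (1)--(3) by naturality. The paper organises the construction of $\varphi_G$ slightly differently --- it first proves $\SHH\cong H^*(\TM)$ separately, reduces to connected $G$, and then uses \emph{two} fibrations (one over $M$ to define $\varphi'_G$ explicitly on generators, one over $\TM$ to certify that the induced $\varphi_G$ is an isomorphism) --- but your direct Koszul computation of $E_\infty$ over $M^{\times\pi_0(G)}$ is an equally valid and somewhat more streamlined route to the same isomorphism.

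There is, however, a genuine gap at the point you yourself flag as the main obstacle: the identity $\Delta^!_{ij}(\varphi_G(y_{ij}))=\varphi_K(\Delta_{\SHH}^{ij})$. Your plan to ``settle this by the local model of $\gamma$ used in Lemma \ref{Ldiagonal}'' is not a working argument: Lemma \ref{Ldiagonal} is only an orientation comparison of three Thom classes and contains no mechanism for computing the shriek of the fibre class $y_{ij}$. The paper's device is a factorisation, not a local computation. After reducing to $n=2$, $\Delta_G=\TM\times_M\TM$, one first shows that $\varphi_G(y_{12})$ is itself a shriek image, namely $\Delta_2^!(1)$ for the fibrewise diagonal $\Delta_2:\TM\to\TM\times_M\TM$: this is seen by restricting to a fibre $S^{\dd-1}\times S^{\dd-1}$, where (since $\dd$ is even) the diagonal class of $S^{\dd-1}$ is $\bar y_1-\bar y_2$, which is exactly the fibre restriction of $\varphi_G(y_{12})$. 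Then functoriality of shriek maps gives $\Delta_{12}^!(\varphi_G(y_{12}))=(\Delta_{12}\circ\Delta_2)^!(1)$, and $\Delta_{12}\circ\Delta_2$ is the full diagonal $\TM\to\TM\times\TM$, so the right-hand side is the diagonal class of $\TM$. A second point you omit is that this diagonal class equals $\Delta_{\SHH}$ only after one has normalised the fibre generator $y\in H^{\dd-1}(S^{\dd-1})$ so that the isomorphism $\SHH\cong H^*(\TM)$ respects the chosen orientations; the paper inserts this normalisation explicitly before constructing $\varphi_G$, and without it the formula in part (4) is off by a unit.
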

\begin{proof}
As in the proof of Lemma \ref{Lahg}, we fix a generator $y\in H^{\dd-1}(S^{\dd-1})$. $d$ is automatically even as $\chi(M)\not =0$.  We first show an isomorphism of algebras $S\HH^*\cong H^*(\TM)$. Consider the Serre spectral sequence for the sphere tangent fibration
\[
S^{\dd-1}\to \TM\to M
\] 
The only non trivial differential is $d_{\dd}:E^{0,\dd-1}=H^{\dd-1}(S^{\dd-1})\to H^{\dd}(M)$. As $\chi(M)$ is invertible, $d_{\dd}$ is an isomorphism. Since all other differentials vanish by degree reason, we have $E_\infty\cong E_{\dd+1}\cong S\HH$, where the second isomorphism is given by $E^{p,0}_{\dd+1}= H^p(M) \subset \HH^{\leq \dd-2}\ \subset S\HH$ for $p\leq \dd-2$ and  $E^{p,\dd-1}=H^{\dd-1}(S^{\dd-1})\otimes H^p(M)\ni y\otimes a\mapsto \bar a \in S\HH$ for $p\geq 2$. Since $H^1(M)=0$ and $H^*(M)$ is free, $H^{\dd-1}(M)=0$ which implies the fibration satifies the condition of Lemma \ref{Lcanonicaliso}. Composing this isomorphism with the canonical isomophism $E_\infty\to H^*(\TM)$, we have an isomorphism 
\[
S\HH^*\cong H^*(\TM) \cdots\cdots (*)
\]  If necessary, we modify $y$ so that the composition $S\HH^{2\dd-1}\to H^{2\dd-1}(\TM)\to \kk$ of the map (*) and the cap product with the fundamental class $\tw$ in Definition \ref{Diroiro} coincides with the orientation given in Definition \ref{Dpoincare} by multiplying a scalar. 
We shall define the isomorphism $\varphi_G$. We may assume that $G\in \GG(n)$ is connected as in disconnected case, everything involved is a tensor product of the objects corresponding to connected subgraphs. Consider the Serre spectral sequence for the fibration
\[
(S^{\dd-1})^{\times n-1}\to \Delta_G\to \TM
\] 
%%%%%%% 8/26 
 given by the projection to the first component. As $E^{\dd,0}_2=S\HH^{\dd}=0$, elements in $E^{0,\dd-1}_2\cong H^{\dd-1}(S^{\dd-1})^{\otimes n-1}$ survive eternally. As in the proof of Lemma \ref{Lahg}, $y_j$ denotes the copy of $y$ living in the $j$-th factor of $H^*(S^{\dd-1})^{\otimes n-1}$, which is also regarded as a  generator of $E^{0,\dd-1}_2$.

We construct an isomorphism  $\psi_G:S\HH^*\otimes \bigwedge (y_1,\dots y_{n-1})\cong E_\infty\cong H^*(\Delta_G)$ using the isomorphism (*), similarly to the construction of the isomorphism (*). Consider the Serre spectral sequence $\{\bar E^{p,q}_r\}$ for the fibration
\[
(S^{\dd-1})^{\times n}\to \Delta_G\to M
\]
given by the projection of the sphere bundle. Let $\bar y_j$ be the copy of $y$ in the $j$-th factor of  $\bar E^{0,\dd-1}_2 \cong (H^*(S^{\dd-1})^{\otimes n})^{*=\dd-1}$. For any $i,j$, since $d_{\dd}(\bar y_i)=d_{\dd}(\bar y_j)=\text{( a multiple of )}\chi(M)w_M$, $\bar y_i-\bar y_j$ survives eternally by degree reason. Clearly $\bar E_\infty$ satisfies the assumption of Lemma \ref{Lcanonicaliso}, we can take the canonical isomorphism $\bar E^{*,*}_\infty\to H^*(\Delta_G)$. We define an algebra map
\[
\varphi'_G:(S\HH)^{\otimes n}\otimes \bigwedge \{y_{ij}\mid 1\leq i,j\leq n\}\to \bar E^{*,*}_\infty
\] 
by $e_i(a)\mapsto a\in E^{*,0}_\infty$ for $a\in \HH^{\leq \dd-2}$,\ $e_i(\bar b)\mapsto b\bar y_i\in E^{*,\dd-1}_\infty$, for $b\in \HH^{\geq 2}$, and $y_{ij} \mapsto \bar y_i-\bar y_j$. \ We see $\varphi'_G(J_G)=0$ where $J_G$ is the ideal in Definition \ref{Dahgbhg}. For example, since $d_{\dd}(\bar y_i\bar y_j)=\chi (M)(\bar y_j-\bar y_i)w_M$ (up to $\kk^\times$) and $\chi(M)$ is invertible, $(\bar y_i-\bar y_j)w_M=0$ in $\bar E^{\dd,\dd-1}_{\dd+1}$ , which implies $\varphi'_G(e_i(\bar b)-e_j(\bar b))=0$ for $b\in \HH^{d}$. Annihilation of other elements in $J_G$ is obvious. We define $\varphi_G$ to be the unique map which makes the following diagram commutative.
\[
\xymatrix{ (S\HH )^{\otimes n}\otimes \bigwedge \{y_{ij} \} \ar[r] \ar[d]^{\varphi'_G} &   (S\HH )^{\otimes n}\otimes \bigwedge \{y_{ij} \}/J_G \ar@{=}[r]& B^*_{\HH,G} \ar[d]^{\varphi_G} \\
\bar E^{*,*}_\infty \ar[rr]^{can.iso} && H^*(\Delta_G)} 
\]
Since $G$ is connected, $e_1:S\HH\to S\HH^{\otimes n}$  induces an  isomorphism $\alpha_G:S\HH\otimes \bigwedge\{y_{12},\dots,y_{1n}\}\cong B^*_{\HH,G}$. It is easy to see the composition
\[
S\HH\otimes \bigwedge\{y_{12},\dots,y_{1n}\}\stackrel{\alpha_G}{\cong} B^*_{\HH,G}\stackrel{\varphi_G}{\to}  H^*(\Delta_G) \stackrel{\psi_G^{-1}}{\cong} S\HH\otimes \bigwedge\{y_1,\dots y_n \}
\]
identifies the subalgebra $S\HH$ in the both side and the sub $\kk$-module $\kk\langle y_{12},\dots, y_{1n}\rangle$ with  $\kk\langle y_1,\dots y_n\rangle$ (since these are both isomorphic to $H^{\dd-1}(\Delta_G)$), which implies the composition is isomorphism and we conclude $\varphi_G$ is an isomorphism. \\
%%%%%%%%%%%%   12/5 %%%%%%%%%%%%%%%%%%%%%%%%%%%%%%%
\indent The parts 1, 2 and 3 obviously follow from naturality of the canonical isomorphism. We shall show the part 4. Since $\varphi_G$ is an isomorphism, we may define $\bar \Delta_{ij}$ to be the map which makes the square in the part 4 commutes. As in the proof of Lemma \ref{Lahg}, $\bar \Delta_{ij}$ is $B^*_{\HH,K}$-module homomorphism and we have $\bar \Delta_{ij}(1)=f_{ij}(\Delta_\HH)$. We shall show the equality $\bar \Delta_{ij}(y_{ij})=f_{ij}(\Delta_{S\HH})$. We may assume $n=2$ and $G={(1,2)}$. In this case clearly $\Delta_G=\TM\times_M\TM$. We consider the following commutative diagram.
\[
\xymatrix{H^0(S^{\dd-1})\ar[d]^{\Delta_1^!}& H^0(\TM)\ar[r]^{P.D.}\ar[l]\ar[d]^{\Delta_2^!}  & H_{2\dd-1}(\TM)\ar[d]^{(\Delta_2)_*} \\
H^{\dd-1}(S^{\dd-1}\times S^{\dd-1}) & H^{\dd-1}(\TM\times _M\TM )\ar[d]^{\Delta_{12}^!}\ar[r]^{P.D.} \ar[l] & H_{2\dd-1}(\TM\times_M\TM)\ar[d]^{(\Delta_{12})_*} \\
& H^{2\dd-1}(\TM\times \TM)\ar[r]^{P.D.} & H_{2\dd-1}(\TM\times \TM)}
\]
where the left horizontal arrows are induced by the fiber restriction, and the right ones are capping with the fixed fundamental classes, and $\Delta^!_1$ and $\Delta^!_2$ are the shriek maps induced by the diagonals. As $\dd$ is even, we have $\Delta_1^!(1)=\bar y_1-\bar y_2$.   As $\bar y_1-\bar y_2$ conincides with the image of $\varphi_G(y_{12})$ by the fiber restriction which induces an isomorphism in degree $\dd-1$, we have $\Delta_2^!(1)=\varphi_G(y_{12})$. So we have 
$\Delta_{12}^!(\varphi_G(y_{12}))=(\Delta_{12}\circ\Delta_2)^!(1)$. By the commutativity of the right hand side square, $ (\Delta_{12}\circ\Delta_2)^!(1)$ is the diagonal class for $\TM$. Thanks to the modification of $y$ after the definition of the isomorphism (*), the diagonal class corresponds to $\Delta_{S\HH}$ by $\varphi_G$. This implies $\bar \Delta_{12}(y_{12})=\Delta_{S\HH}$.
\end{proof}

\begin{defi}\label{Dahnbhn}
Let $\HH$ be a Poincar\'e algebra of dimension $\dd$.
\begin{itemize}
\item 
 We define a  CDBA  $A_\HH^{\star\,*}(n)$ by the equality
\[
A_\HH^{\star\, *} (n)=\HH^{\otimes n}\otimes \bigwedge \{ \, y_i, \ g_{ij}\mid 1\leq i,j\leq n\}/\mathcal{I}.
\]
Here, for the bidegree, we set  $|a|=(0, l)$ for $a\in (\HH^{\otimes n})^{*=l}$ and  $|y_i|=(0,\dd-1)$, and $|g_{ij}|=(-1,\dd)$. The ideal $\mathcal{I}$ is generated by the elements
\[
\begin{split}
g_{ij}-(-1)^{\dd}g_{ji},\quad (g_{ij})^2,\quad g_{ii},&\quad (e_i(a)-e_j(a))g_{ij},\vspace{\baselineskip}\\
 g_{ij}g_{jk}+g_{jk}g_{ki}+g_{ki}g_{ij} &\quad (1\leq i,j,k\leq n,\ a\in \HH ). 
\end{split}
\]
We call the last relation the {\em 3-term relation for $g_{ij}$}.
The differential is given by $\partial(a)=0$ for $a \in \HH^{\otimes n}$ and $\partial (g_{ij})=\Delta_\HH^{ij}$\, , see Definition \ref{Dahgbhg}.
\item Suppose $\dd$ is even. We define a CDBA $B_\HH^{\star\,*}(n)$ by the equality
\[
B_\HH^{\star\,*}(n)=(S\HH )^{\otimes n}\otimes \bigwedge \{\, g_{ij},\ h_{ij},\ \mid 1\leq i,j\leq n\}/\mathcal{J}
\] 
Here, for the bidegree, we set  $|a|=(0, l)$ for $a\in (\HH^{\otimes n})^{*=l}$ and $|g_{ij}|=(-1,\dd)$ and $|h_{ij}|=(-1,2\dd-1)$. The ideal $\mathcal{J}$ is generated by the following elements.
\[
\begin{split}
g_{ij}-g_{ji},\quad (g_{ij})^2,\quad g_{ii},&\quad ,h_{ij}+h_{ji},\quad (h_{ij})^2,\quad h_{ii}, \quad \vspace{1\baselineskip}\\
e_{ij}({a})g_{ij},\quad e_{ij}({a})h_{ij},\quad & e_{ij}(\bar b)g_{ij}-e_i({b})h_{ij},\quad e_{ij}(\bar b)h_{ij}, \vspace{\baselineskip}\\
 g_{ij}g_{jk}+g_{jk}g_{ki}+g_{ki}g_{ij},\quad  &  h_{ij}h_{jk}+h_{jk}h_{ki}+h_{ki}h_{ij}  \vspace{\baselineskip}\\
(h_{ij}+h_{ki})g_{jk}-(h_{ij}+h_{jk})g_{ki}
&\qquad (1\leq i,j,k\leq n,\ a\in \HH^{\leq \dd-2}, b\in \HH^{\geq 2} ) \\ 
\end{split}
\]
where  we regard $e_i(b)$ as $0$ for $b\in \HH^{\dd}$, and $e_{ij}:S\HH\to (S\HH )^{\otimes n}$ is the map given by $e_{ij}=e_i-e_j$.  The differential is given by $\partial (x)=0$\ for $x \in S\HH^{\otimes n}$ and $\partial (g_{ij})=\Delta_\HH^{ij}$ and $\partial (h_{ij})=\Delta_{S\HH}^{ij}$\, , see Definition \ref{Dahgbhg}. 
\item  We equip the sequences $A_\HH=\{A_\HH(n)\}_n$ and $B_\HH=\{B_\HH(n)\}_n$ with structures of     $\ASS$-comodules of CDBA as follows. For $B_\HH$, we define a partial composition and an action of $\Sigma_n$ by the equalities
\[
\begin{split}
\mu\circ_i e_j(x)=e_{j'}(x), \quad \mu\circ_i(h_{j\,k})=h_{j'k'}, &       \quad \mu\circ_i(g_{j\, k})=g_{j'k'}, \quad e_j(x)^\sigma =e_{\tau(j)}(x),\\
h_{j\,k}^\sigma=h_{\tau(j),\tau(k)}, \quad g_{j\,k}^\sigma=g_{\tau(j),\tau(k)} &      \quad (x\in S\HH,\ \sigma\in \Sigma_n) 
\end{split}, 
\]
where $j'$ and $k'$ are numbers given by $j'=d_i(j)$ and $k'=d_i(k)$ and $\tau=\sigma^{-1}$ (see Definition \ref{Diroiro} for $d_i$ and $\mu$). The definition of $A_\HH$ is similar.
\item We define simplicial CDBA's $A^{\star\,*}_\bullet (\HH)$ and $B^{\star\,*}_\bullet (\HH)$ as follows. For $B^{\star\,*}_\bullet(\HH)$, we set
\[
B^{\star\,*}_n(\HH)= B^{\star\,*}_\HH (n+1).
\]
As in Definition \ref{Dcdga}, we relabel the involved subscripts  with $0,\dots, n$. 
The face map $d_i:B^{\star\,*}_n(\HH)\to B^{\star\,*}_{n-1}(\HH)$ is given by $d_i=\mu\circ_i(-)$ ($i<n$) and $d_n=\mu\circ_0(-)^\sigma$ where $\sigma=(n,0,1,\dots, n-1)$. The degeneracy map $s_i:B^{\star\,*}_n(\HH)\to B^{\star\,*}_{n+1}(\HH)$ is given by insertion of  $1$ as the $i+1$-th factor of $S\HH^{\otimes n+1}$ and skip of the subscript $i+1$. $A^{\star\, *}_\bullet(\HH)$ is defined similarly using $A^{\star\,*}_\HH$.

 \end{itemize}
 \end{defi}
In the rest of this section, we prove that $A_\HH$ and $B_\HH$  are isomorphic to $A_M$ as a $\ASS$-comodule of CDBA under different assumptions, and also prove similar statements for the simiplicial CDGA's. We mainly deal with the case of $B_\HH$. The case of $A_\HH$ is similar.
\begin{lem}\label{Lgraphincl}
 The map 
\[
\bigoplus_{G\in \GG(n)^{dis} }H^*_Gg_G\to A_M
\]
defined by the composition of the inclusion and  quotient map is an isomorphism of $\kk$-modules (see Definition \ref{Diroiro} for $\GG(n)^{dis}$).
\end{lem}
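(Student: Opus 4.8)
The plan is to reduce this statement to a classical combinatorial fact about the Orlik--Solomon type algebra of the partition lattice of $\underline n$; indeed this lemma is the natural place to run the Arnold straightening argument to which the proof of Lemma \ref{Lcechreduction} defers (``the argument similar to (the dual of) \cite{FT}''). First I would replace the ideal $J(n)$ by the obviously smaller $\kk$-submodule $J'(n)\subseteq \tilde A_M(n)$ spanned by the two listed families of generators themselves, and check $J'(n)=J(n)$. The point is that the product of a generator with a monomial $c\,g_L$ is either $0$ (a repeated $g_{pq}$, using $g_{pq}^2=0$, or $E(G)\cap E(L)\ne\emptyset$), or another three-term generator attached to the graph $GL$, or --- when $L$ already contains one of the edges $(i,j),(j,k),(k,i)$ --- a monomial $b\,g_K$ with $K$ containing a triangle, hence not a tree; and the product of a non-tree monomial with $c\,g_L$ is again a non-tree monomial. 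This is a short finite case check.

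Second, I would exploit that $H^*_G=H^*(\Delta_G)$ depends only on $\pi_0(G)$ and that each generator of $J'(n)$ lies in a single $\pi_0$-component: the three graphs in a three-term relation share the partition obtained from $\pi_0(G)$ by amalgamating the blocks of $i,j,k$, and a non-tree monomial $g_K$ lies in the component of $\pi_0(K)$. Writing $\Delta_P$ for the common value of $\Delta_G$ over graphs with $\pi_0(G)=P$, this gives
\[
A_M(n)\;\cong\;\bigoplus_{P}H^*(\Delta_P)\otimes_\kk R_P ,
\]
the sum over set partitions $P$ of $\underline n$, where $R_P$ is the free $\kk$-module on graphs $G$ with $\pi_0(G)=P$ modulo the non-tree monomials and the three-term relations, while the source of the map in the lemma is $\bigoplus_P H^*(\Delta_P)\otimes U_P$ with $U_P$ free on the graphs in $\GG(n)^{dis}$ with $\pi_0(G)=P$ (which are forests, so have exactly $n-|P|$ edges). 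Hence it suffices to show, for each $P$, that the composite $U_P\hookrightarrow(\text{all }G\text{ with }\pi_0(G)=P)\twoheadrightarrow R_P$ is an isomorphism. A connectedness argument then splits this as a tensor product, over the blocks $B$ of $P$, of the analogous statement for the complete graph on $B$, and shows $R_P$ is concentrated in edge-number $n-|P|$: any $G$ with $\pi_0(G)=P$ has at least $n-|P|$ edges, and strictly more forces a cycle, so $g_G\in J(n)$.

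The remaining step, which is the main obstacle, is the classical assertion that for the complete graph $K_m$ the spanning trees modulo the three-term (Arnold) relations have the distinguished trees as a $\kk$-basis. I would obtain surjectivity by a straightening induction, using each three-term relation as a rewriting rule to reduce the number of violations of the distinguished inequalities; and independence by identifying the distinguished trees with the standard monomials of this rewriting system (equivalently a ``no broken circuit''/leading-term argument), or by comparing ranks against the known factorization of the Poincar\'e polynomial of the braid arrangement. Here the bookkeeping is light --- no differential enters Lemma \ref{Lgraphincl}, the bidegree is respected summand by summand, and the signs only rescale basis vectors --- so the computation can be run exactly as in the dual of \cite{FT}, which I would cite for the combinatorial core.
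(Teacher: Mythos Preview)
Your proposal is correct and follows essentially the same route as the paper's proof: both decompose $\tilde A_M(n)$ and $J(n)$ over the set of partitions $\pi$ of $\underline n$, factor out the common coefficient module $H^*_\pi$, and reduce to the classical combinatorial fact that the distinguished monomials form a $\kk$-basis of the Arnold/Orlik--Solomon presentation, which the paper phrases as the identification of $\bigoplus_\pi(\bigoplus_{\pi_0(G)=\pi}\kk g_G)/J(n)'_\pi$ with $H^*(C_n(\RR^{\dd}))$. Your version is more explicit in two places the paper leaves implicit --- the verification that the ideal $J(n)$ coincides with the $\kk$-span of its listed generators, and the further tensor splitting over the blocks of each partition --- but neither addition changes the architecture of the argument.
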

\begin{proof}
Let $\Pi$ be the set of partitions of $\underline{n}$. The ideal $J(n)$ in Definition \ref{Dcdga} has a decomposition $J(n)=\oplus_{\pi\in \Pi}J(n)_\pi$ such that $J(n)_\pi\subset \oplus_{\pi_0(G)=\pi}H_G$ since generaters of $J(n)$ are sums of monomials which have the same connected components. If $\pi_0(G)=\pi_0(H)=\pi$, clearly $H^*_G=H^*_H$. We denote this module by $H^*_\pi$. We have $\oplus_{\pi_0(G)=\pi}H_Gg_G=H_{\pi}\otimes (\oplus_{\pi_0(G)=\pi}\kk g_G)$. Similarly We have $J(n)_\pi=H_\pi\otimes J(n)'_\pi$ where $J(n)'_\pi$ is the sub $\kk $-module of $\oplus_{\pi_0(G)=\pi}\kk g_G$ generated by multiples of 3-term relations, $g_{ij}^2$ and $g_{ij}-(-1)^dg_{ji}$. We have
\[
A_M^*=\oplus_{\pi\in \Pi}\{ (\oplus_{\pi_0(G)=\pi}H_Gg_G)/J(n)_\pi\}=\oplus_{\pi\in \Pi}H_\pi\otimes \{(\oplus_{\pi_0(G)=\pi}\kk g_G )/J(n)'_\pi \}
\] 
Note that $\oplus_{\pi\in \Pi}\{(\oplus_{\pi_0(G)=\pi}\kk g_G )/J(n)_\pi' \}$ is isomorphic to $H^*(C_n(\RR^{\dd}))$, whose basis is  $\{g_G\mid G\in \GG(n)^{dis}\}$. So $(\oplus_{\pi_0(G)=\pi}\kk g_G )/J(n)'_\pi$ has a basis $\{g_G\mid G\in \GG(n)^{dis},\quad  \pi_0(G)=\pi\}$, which implies  the lemma.
\end{proof}
Under the assumptions and notations of Lemma \ref{Lbhg}, we identify $H^*_G$ with $B_{\HH,G}$ by the isomorphism $\varphi_G$ so $A^*_M(n)$ is regarded as a quotient of $\oplus_{G\in \GG(n)}B^*_{\HH,G}g_G$. With this identification, we set $\bar h_{ij}=y_{ij}g_{ij}\in A_M(n)$. $A_M(n)$ contains $\SHH^{\otimes n}$ as the subalgebra  $H_{\emptyset}g_{\emptyset}$, the summand corresponding to the graph $\emptyset\in \GG(n)$. We regard $A_M(n)$ as a left $S\HH^{\otimes n}$-module via the multiplication by $H_{\emptyset}g_\emptyset$. In the  following lemma and its proof, $h_G$, $\bar h_G$ and $y_G$ are notations similar to $g_G$. For example, $h_G=h_{i_1,j_1}\cdots h_{i_r,j_r}$ for $E(G)=\{(i_1,j_1)<\cdots <(i_r,j_r)\}$.
\begin{lem}\label{Ladmissiblegenerator}
Under the assumptions of Lemma \ref{Lbhg}, and the above notations, 
as $S\HH^{\otimes n}$-module, $A_M(n)$ is generated by the set $S=\{g_G\bar h_H \mid G, H\in \GG(n), \ E(G)\cap E(H)=\emptyset,\ GH\in \GG(n)^{dis} \}$, and $B_\HH(n)$ is generated by the set $S'=\{g_Gh_H \mid  G, H\in \GG(n),\ E(G)\cap E(H)=\emptyset,\ GH\in \GG(n)^{dis}\}$.
\end{lem}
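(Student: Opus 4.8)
\emph{Sketch of proof.} The plan is to reduce each assertion to a normal--form computation for monomials modulo the defining relations. Both $B_\HH(n)$ and the bigraded algebra $\tilde A_M(n)=\bigoplus_{G\in\GG(n)}B_{\HH,G}\,g_G$ (of which $A_M(n)$ is a quotient, once $H^*_G$ is identified with $B_{\HH,G}$ via the $\varphi_G$ of Lemma \ref{Lbhg}) are quotients of $(S\HH)^{\otimes n}\otimes\bigwedge(\cdots)$ by an ideal, the exterior generators being the $g_{ij},h_{ij}$ in the first case and the $g_{ij},y_{ij}$ in the second. Hence each is generated over $(S\HH)^{\otimes n}$ by monomials in those exterior generators, and it suffices to rewrite an arbitrary $(S\HH)^{\otimes n}$--multiple of such a monomial, modulo the relations, as an $(S\HH)^{\otimes n}$--combination of the asserted generators.

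For $A_M(n)$, Lemma \ref{Lgraphincl} gives a $\kk$--module decomposition $A_M(n)=\bigoplus_{G'\in\GG(n)^{dis}}B_{\HH,G'}g_{G'}$, so it is enough to treat a single summand $B_{\HH,G'}g_{G'}$ with $G'$ distinguished. That summand is spanned over $(S\HH)^{\otimes n}$ by monomials $y_{k_1l_1}\cdots y_{k_sl_s}$ with $k_t\sim_{G'}l_t$ for every $t$. Since $k_t$ and $l_t$ lie in the same connected component of $G'$, they are joined by a path in $G'$, and repeated use of the linear three--term relation $y_{ik}=y_{ij}+y_{jk}$ together with $y_{ij}^2=0$ rewrites each $y_{k_tl_t}$, hence the whole monomial, as a $\ZZ$--linear combination of the products $y_H:=\prod_{(i,j)\in E(H)}y_{ij}$ with $E(H)\subseteq E(G')$. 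By the multiplication rule of $\tilde A_M(n)$ one has $y_Hg_{G'}=\pm\,g_{G_0}\bar h_H$, where $E(G_0)=E(G')\setminus E(H)$; and since $E(G_0)\cap E(H)=\emptyset$ and $G_0H=G'\in\GG(n)^{dis}$, the element $g_{G_0}\bar h_H$ lies in $S$. Summing over $G'$ proves the first assertion.

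For $B_\HH(n)$ the argument is purely algebraic, inside the presented CDBA, and proceeds in two stages on a monomial $g_Gh_H$. First one removes edges common to $G$ and $H$: a factor $g_{ij}h_{ij}$ satisfies $e_{ij}(x)\,g_{ij}h_{ij}=0$ for all $x\in S\HH$ (from $e_{ij}(a)g_{ij}=0$ and $e_{ij}(\bar b)h_{ij}=0$) and $e_i(b)\,g_{ij}h_{ij}=0$ for $b\in\HH^{\ge 2}$ (multiply $e_{ij}(\bar b)g_{ij}=e_i(b)h_{ij}$ by $g_{ij}$), and, as soon as a third vertex $k$ is available, the linear splitting $h_{ij}=h_{ik}-h_{jk}$ trades $g_{ij}h_{ij}$ for a sum of monomials with strictly smaller overlap; choosing the auxiliary vertex maximal makes the lexicographically ordered multiset of common edges decrease, so the process terminates at monomials with $E(G)\cap E(H)=\emptyset$. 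Second, with $E(G)\cap E(H)=\emptyset$, one uses the quadratic three--term relations for the $g$'s and for the $h$'s, together with the mixed relation $(h_{ij}+h_{ki})g_{jk}=(h_{ij}+h_{jk})g_{ki}$, to rewrite $g_Gh_H$ as a combination of terms $g_{G'}h_{H'}$ with $E(G')\cap E(H')=\emptyset$ and $G'H'\in\GG(n)^{dis}$; this is exactly the statement that distinguished graphs span the relevant Orlik--Solomon--type algebra (compare the proof of Lemma \ref{Lgraphincl}), now for the two--coloured arrangement, the mixed relation playing the role of the three--term relation across colours. This stage is again organised by a broken--circuit--style well--founded complexity.

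The main obstacle is the combinatorial bookkeeping in the $B_\HH(n)$ case --- carrying out the removal of common edges and the passage to a distinguished union within a single terminating reduction, so that eliminating an overlap does not create a worse failure of distinguishedness. The $A_M(n)$ case is by comparison straightforward, because there the ``$h$''--type generators are the composites $y_{ij}g_{ij}$, the $y_{ij}$ obey a \emph{linear} three--term relation, and Lemma \ref{Lgraphincl} already describes the graded pieces.
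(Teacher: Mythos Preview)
Your treatment of $A_M(n)$ is correct and matches the paper's argument: reduce to distinguished $G$, push the $y$'s onto edges of $G$ via the \emph{linear} relation $y_{ik}=y_{ij}+y_{jk}$ in $B_{\HH,G}$, and then rewrite $y_Hg_G=\pm g_{G\setminus H}\bar h_H$.

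For $B_\HH(n)$, however, your first stage contains a genuine gap. There is \emph{no} linear relation $h_{ij}=h_{ik}-h_{jk}$ in $B_\HH(n)$: the $h$'s satisfy only the quadratic three--term relation $h_{ij}h_{jk}+h_{jk}h_{ki}+h_{ki}h_{ij}=0$, not a linear one. So your mechanism for shrinking the overlap $E(G)\cap E(H)$ is invalid, and in particular does nothing when $n=2$. The correct observation is much simpler: specialize the mixed relation $(h_{ij}+h_{ki})g_{jk}=(h_{ij}+h_{jk})g_{ki}$ to $k=i$. Using $h_{ii}=0$, $g_{ii}=0$, $g_{ji}=g_{ij}$ and $h_{ji}=-h_{ij}$, one gets $h_{ij}g_{ij}=0$ outright. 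Hence any monomial $g_Gh_H$ with $E(G)\cap E(H)\neq\emptyset$ vanishes, and the first stage is trivial (this is also what underlies the paper's remark, just before the lemma, that $g_Gh_H=0$ whenever $GH$ is not a tree).

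Once $E(G)\cap E(H)=\emptyset$, your second stage is the right idea and agrees with the paper's ``similar'' argument: the two quadratic three--term relations together with the mixed relation play the role of the Arnold relation for the two--coloured edge set $E(G)\sqcup E(H)$, and the usual broken--circuit reduction puts the union $GH$ into distinguished form while keeping the colours disjoint (again using $g_{ij}h_{ij}=0$ whenever a rewriting would create a shared edge).
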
 
\begin{proof}
$A_M(n)$ is generated by elements $y_Hg_G$ for  graphs $G$ an $H$ such that each connected component of $H$ is contained in some connected component of $G$. We can express $g_G$ as a sum of monomials $g_{G_1}$ with $G_1\in\GG(n)^{dis}$  and $\pi_0(G)=\pi_0(G_1)$ using the 3-term relation and the relation $g_{ij}=g_{ji}$ (This is standard procedure in the computation of $H^*(C_n(\RR^{\dd})$). So we may assume $G$ is distinguished.  For a sequence of  edges   $(i,k_1),(k_1,k_2),\dots, (k_s,j)$ in $G$, we have $y_{ij}=y_{i,k_1}+\cdots y_{k_s,j}$. By successive application of this equality, $y_H$ is expressed as a sum of monomials $y_{H_1}$ with $H_1$ being a subgraph of $G$. Thus we see any element of $A_M(n)$ is expressed as a $S\HH^{\otimes n}$-linear combination of monomials $y_Hg_G$ with $G\in \GG(n)^{dis}$ and $E(H)\subset E(G)$. Clearly, we see $y_Hg_G=\pm g_{G-H}\bar h_{H}$. Thus we have seen the set $S$ generates $A_M(n)$.  Proof for the assertion for $B_\HH (n)$ is similar when one use  3-term relations for $g_{ij}$ and $h_{ij}$, and the last relation for $g_{ij}$ and $h_{ij}$ in the ideal $\mathcal{J}$ in Definition \ref{Dahnbhn}. 
\end{proof}
To prove $B_{\HH}(n)$ and $A_M(n)$ are isomorphic, we define a structure of $B_{\HH,G}$-module on $B_\HH(n)$ as follows. We first define two graded algebras $\TB_{\HH,G}$ and $\TB_{\HH}(n)$. For a graph $G\in \GG(n)$, we set 
\[
\TB_{\HH,G}=S\HH^{\otimes n}\otimes\, T\{ y_{ij} \mid i<j,\  i\sim_G j\},\quad\quad \TB_{\HH}(n)=\SHH^{\otimes n}\otimes \bigwedge \{ g_{ij},\ h_{ij}, \mid 1\leq i<j\leq n\}
\]
where $T\{y_{ij}\}$ denotes the tensor algebra generated by $y_{ij}$'s. For convenience, we set $y_{ij}=-y_{ji}$, $g_{ij}=g_{ji}$, and $h_{ij}=-h_{ji}$ for $i>j$.  The degrees are the same as the elements of the same symbols in $B_{\HH,G}$ and $B_{\HH}(n)$. We shall define a map of graded $\kk $-modules
\[
(-\cdot -):\TB_{\HH,G}\otimes _{\kk }\TB_\HH (n)\to B_\HH (n).
\]
We define $y_{ij}\cdot xg_Gh_H$ ($x\in \SHH^{\otimes n},\ G, H\in \GG(n)$) as follows. If $E(G)\cap E(H)\not=\emptyset$, we set $y_{ij}\cdot xg_Gh_H=0$. Suppose $E(G)\cap E(H)=\emptyset $. If $(i,j)\in E(G)$ is the $t$-th edge (in the lexicographical order), we set $y_{ij}\cdot xg_Gh_H=(-1)^{t+1+|x|} h_{ij}xg_Kh_H$ with $E(K)=E(G)-\{(i,j)\}$. If $(i,j)\in E(H)$ is an edge, we set $y_{ij}\cdot xg_Gh_H=0$. If $i\sim_{GH} j$, we take a sequence of edges $(k_0,k_1),\dots, (k_s,k_{s+1})$ of $GH$ with $k_0=i$ and $k_{s+1}=j$ and set 
$y_{ij}\cdot xg_Gh_H=\sum_{l=0}^sy_{k_l,k_{l+1}}\cdot xg_Gh_H$. This does not depend on the choice of the sequence because $g_Gh_H=0$ if $GH$ is not a tree, which  is proved by using the last three relations in the definition of $\mathcal{J}$ in Definition \ref{Dahnbhn}. If $i$ and $j$ are disconnected in $GH$, we set $y_{ij}\cdot xg_Gh_H=0$. For $z \in \SHH^{\otimes n}$, we set $z\cdot xg_Gh_H=z xg_Gh_H$, the multiplication in $B_\HH(n)$. We shall show the map $(-\cdot -)$ annihilates the elements of $\mathcal{J}$ (We regard $\mathcal{J}$ as an ideal in $\TB_\HH(n)$). Direct computation shows the generators of $\mathcal{J}$ are anihilated by any elements of $\TB_{\HH,G}$. For example, $y_{ij}\cdot(g_{ij}g_{jk}+g_{jk}g_{ki}+g_{ki}g_{ij})=(h_{ij}+h_{ik})g_{jk}-(h_{ij}+h_{jk})g_{ki}=0$ and $y_{jk}\cdot \{(h_{ij}+h_{ki})g_{jk}-(h_{ij}+h_{jk})g_{ki}\}=h_{ij}h_{jk}+h_{jk}h_{ki}+h_{ki}h_{ij}=0$. We also easily see $y_{ij}\cdot xg_Gh_H=\pm (y_{ij}\cdot xg_{G'}h_{H'})g_{G-G'}h_{H-H'}$ for subgraphs $G'\subset G$ and $H'\subset H$ such that $i\sim _{G'H'} j$. These observations imply the assertion and we see the map $(-\cdot-)$ factors through a map  $\TB_{\HH,G}\otimes _{\kk}  B_\HH(n)\to B_\HH(n)$ which is also denoted by $(-\cdot -)$. Clearly, the map $(-\cdot -)$ annihilates $J_G$ in the definition of $B_{\HH,G}$. It also  annihilates the commutativity relation $y_{ij}y_{kl}+y_{kl}y_{ij}$.  If two paths connecting $i$ and $j$ or $k$ and $l$ have a common edge, Both of the action of $y_{ij}y_{kl}$ and $y_{kl}y_{ij}$ are zero and otherwise the commutativity in $B_\HH(n)$ implies the annihilation. Annihilation of these relations  implies the map $(-\cdot -)$ factors through a map $(-\cdot -):B_{\HH,G}\otimes B_{\HH}(n)\to B_{\HH}(n)$ which defines a structure of $B_{\HH,G}$-module on $B_{\HH}(n)$.

\begin{thm} \label{Talgebraicss}
Suppose $M$ is simply connected and oriented, and $H^*(M)$ is a free  $\kk $-module. Set $\HH=H^*(M)$.
\begin{enumerate}
\item Suppose $\chi(M)=0\in \kk $.  Two $\ASS$-comodules of CDBA $A^{\star\,*}_M$ and $A^{\star\,*}_\HH$ are isomorphic, and  two simplicial CDBA $A^{\star\,*}_\bullet(M)$ and $A^{\star\,*}_\bullet(\HH)$ are isomorphic. In particular, the $E_2$-page of $\CECHC$ech s.s. is isomorphic to the total homology of the normalization $NA^{\star\,*}_\bullet(\HH)$ as a bigraded $\kk $-module. The bigrading is given by $(\star-\bullet, *)$
\item Suppose  $\chi(M)\in \kk^{\times}$. Two $\ASS$-comodules of CDBA $A^{\star\,*}_M$ and $B^{\star\,*}_\HH$ are isomorphic, and  two simplicial CDBA $A^{\star\,*}_\bullet(M)$ and $B^{\star\,*}_\bullet(\HH)$ are isomorphic. In particular, the $E_2$-page of $\CECHC$ech s.s. is isomorphic to the total homology of the normalization $NB^{\star\,*}_\bullet(\HH)$ as a bigraded $\kk $-module. The bigrading is given by $(\star-\bullet, *)$
\end{enumerate}

\end{thm}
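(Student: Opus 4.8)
The plan is to construct, for each arity $n$, an explicit isomorphism of CDBA $\Theta_n$ from $A^{\star\,*}_M(n)$ onto $B^{\star\,*}_\HH(n)$ in case (2) (onto $A^{\star\,*}_\HH(n)$ in case (1)), to check that the family $\{\Theta_n\}_n$ respects the $\ASS$-comodule structure, and then to deduce the assertions about the simplicial CDBA and about the $E_2$-page formally. Since the material preceding the statement is written for the case $\chi(M)\in\kk^{\times}$, I describe that case; case (1) is entirely parallel, with $B_{\HH,G}$, $B_\HH$, Lemma \ref{Lbhg} replaced by $A_{\HH,G}$, $A_\HH$, Lemma \ref{Lahg}, and with the twisted generators $\bar h_{ij}=y_{ij}g_{ij}$ replaced by the $y_i$'s, and no new idea enters. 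Throughout, I use the algebra isomorphisms $\varphi_G\colon B_{\HH,G}\stackrel{\cong}{\to}H^*(\Delta_G)$ of Lemma \ref{Lbhg} to identify $H^*_G$ with $B_{\HH,G}$; then by Definition \ref{Dcdga} and Lemma \ref{Lgraphincl}, $A^{\star\,*}_M(n)$ is the quotient of $\bigoplus_{G\in\GG(n)}B_{\HH,G}\,g_G$ by $J(n)$, and $\bigoplus_{G\in\GG(n)^{dis}}B_{\HH,G}\,g_G$ maps isomorphically onto it as a $\kk$-module.

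First I would define $\tilde\Theta_n\colon\bigoplus_{G\in\GG(n)}B_{\HH,G}\,g_G\to B_\HH(n)$, on the $G$-summand by $u\,g_G\mapsto u\cdot g_G$, using the $B_{\HH,G}$-module action $(-\cdot-)$ on $B_\HH(n)$ built just before the statement (here $g_G$ on the right is the obvious element of $B_\HH(n)$). The first thing to check is $\tilde\Theta_n(J(n))=0$: the generators of $J(n)$ are the $3$-term combinations $a(g_{ij}g_{jk}+g_{jk}g_{ki}+g_{ki}g_{ij})g_G$ and the non-tree monomials $bg_K$, and both are killed by every element of $B_{\HH,G}$ by the very computations made while defining $(-\cdot-)$ (for instance $y_{ij}\cdot(g_{ij}g_{jk}+g_{jk}g_{ki}+g_{ki}g_{ij})=0$, and $g_Gh_H=0$ whenever $GH$ is not a tree). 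Hence $\tilde\Theta_n$ factors through $\Theta_n\colon A^{\star\,*}_M(n)\to B^{\star\,*}_\HH(n)$. That $\Theta_n$ is an algebra homomorphism needs only to be checked on generators, and Lemma \ref{Ladmissiblegenerator} is what makes this transparent: it produces matching $S\HH^{\otimes n}$-generating sets $\{g_G\bar h_H\}$ for $A^{\star\,*}_M(n)$ and $\{g_Gh_H\}$ for $B^{\star\,*}_\HH(n)$ (with $\bar h_{ij}=y_{ij}g_{ij}$), and $\Theta_n$ carries one to the other term by term; this gives surjectivity. Injectivity follows by comparing $\kk$-bases: $A^{\star\,*}_M(n)$ decomposes along distinguished graphs by Lemma \ref{Lgraphincl}, $B^{\star\,*}_\HH(n)$ has the corresponding decomposition whose $G$-summand is $\varphi_G$-identified with $B_{\HH,G}\,g_G$, and $\Theta_n$ matches them.

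Next I would verify that $\Theta_n$ is a chain map, again on generators. We have $\partial g_{ij}=\Delta_\HH^{ij}\in S\HH^{\otimes n}$ on both sides and $\Theta_n$ restricts to the identity on $S\HH^{\otimes n}$; and $\partial(\bar h_{ij})=\tilde\partial(y_{ij}g_{ij})=\pm\,\gamma_{ij}\cdot y_{ij}=\pm\,\Delta^{!}_{ij}\bigl(\varphi_G(y_{ij})\bigr)$ by the formula for $\tilde\partial$ in Definition \ref{Dcdga}, which by Lemma \ref{Lbhg}(4) equals $\bar\Delta_{ij}(y_{ij})=\Delta_{S\HH}^{ij}=\partial h_{ij}$. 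For the comodule structure, the partial composition $\mu\circ_i(ag_G)=\Delta_i^{*}(a)\,g_{d_i(G)}$ on $A^{\star\,*}_M$ (when $i$ and $i+1$ are disconnected in $G$; otherwise $0$) corresponds, under $\varphi$ and Lemma \ref{Lbhg}(1), to the assignment $e_j(x)\mapsto e_{d_i(j)}(x)$, $g_{jk}\mapsto g_{d_i(j)d_i(k)}$, $h_{jk}\mapsto h_{d_i(j)d_i(k)}$, which is precisely the partial composition on $B_\HH$ from Definition \ref{Dahnbhn}; $\Sigma_n$-equivariance follows from Lemma \ref{Lbhg}(3). Thus $\{\Theta_n\}_n$ is an isomorphism of $\ASS$-comodules of CDBA. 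Since $A^{\star\,*}_\bullet(M)$ and $B^{\star\,*}_\bullet(\HH)$ have faces $d_i=\mu\circ_i(-)$ and $d_n=\mu\circ_0(-)^{\sigma}$ built from this comodule data, and since $\Theta_n$ also intertwines the degeneracies — insertion of $1$ as the $(i+1)$-st tensor factor on the $B_\HH$ side corresponds to $s_i$ on $H^*(\Delta_G)$ by Lemma \ref{Lbhg}(2) — the isomorphism descends to one of simplicial CDBA $A^{\star\,*}_\bullet(M)\cong B^{\star\,*}_\bullet(\HH)$. Passing to normalizations and invoking Theorem \ref{Tam}(2), which identifies $\BGSSS_2$ with the total homology of $NA^{\star\,*}_\bullet(M)$, yields the statement about the $E_2$-page.

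I expect the main obstacle to be the coherence of signs: the sign hidden in the reindexing $\bar h_{ij}=y_{ij}g_{ij}$, the signs $(-1)^{(l+t-1)(\dd-1)}$ in $\tilde\partial$, the signs $\varepsilon_1$ (hence the harmless hypothesis $K\equiv 0\pmod 4$) and $\varepsilon_3$ already used in the proof of Theorem \ref{Tam}, the $\mathrm{sgn}(\sigma_G)$-twist of the $\Sigma_n$-action, and the signs built into $(-\cdot-)$, must all conspire so that $\Theta_n$ is an isomorphism of comodules of CDBA on the nose, not merely up to sign. This is bookkeeping, but it is the delicate point.
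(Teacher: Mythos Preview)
Your approach uses the same ingredients as the paper—the $B_{\HH,G}$-module action on $B_\HH(n)$, Lemma \ref{Lgraphincl}, Lemma \ref{Ladmissiblegenerator}, and Lemma \ref{Lbhg}—but runs the comparison in the opposite direction. The paper defines an algebra map $\Phi_n\colon B_\HH(n)\to A_M(n)$ on the presentation of $B_\HH(n)$ (identity on $S\HH^{\otimes n}$, $g_{ij}\mapsto g_{ij}$, $h_{ij}\mapsto \bar h_{ij}$), checks the ideal $\mathcal{J}$ is killed, and then proves bijectivity via the commutative triangle with apex $\bigoplus_{G\in\GG(n)^{dis}}H_G\,g_G$: one leg is the Lemma \ref{Lgraphincl} isomorphism, the other is your $\tilde\Theta_n$ restricted to distinguished graphs (the module-action inclusion), and Lemma \ref{Ladmissiblegenerator} makes both $\Phi_n$ and the vertical leg surjective, forcing $\Phi_n$ to be an isomorphism. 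The advantage of the paper's direction is that multiplicativity is built in, whereas in your direction the sentence ``$\Theta_n$ carries one generating set to the other term by term'' does not by itself yield $\Theta_n(xy)=\Theta_n(x)\Theta_n(y)$; that is the one genuinely thin step in your write-up. The easiest fix is simply to reverse direction and define the map out of the presented algebra $B_\HH(n)$, after which your triangle delivers bijectivity and the remainder of your argument (differentials, partial compositions, $\Sigma_n$-action, degeneracies, all via Lemma \ref{Lbhg}, and the appeal to Theorem \ref{Tam}(2) for the $E_2$-page) matches the paper. Your instinct that the signs are the delicate point is correct; the paper handles them by invoking the sign identities already checked in the proof of Theorem \ref{Tam}.
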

\begin{proof}
The part 1 obviously follows from Theorem \ref{Tam} and Lemma \ref{Lahg}. We shall prove the the part 2.  We define a map $\Phi_n:B_\HH(n)\to A_M(n)$ of algebras by identifying the subalgebra $\SHH^{\otimes n}$ and elements $g_{ij}$ in the both sides and taking $h_{ij}$ to $\bar h_{ij}$ (see the paragraph above Lemma \ref{Ladmissiblegenerator}). We easily verify $\Phi_n$ is well-defined. $\Phi_n$ fits into the following commutative diagram. 
\[
\xymatrix{
\underset{G\in \GG(n)^{dis}}{\bigoplus} H_Gg_G \ar[dr]\ar[d]& \\
B_\HH(n)\ar[r]^{\Phi_n} & A_M(n)}
\]
Here the vertical arrow is induced by the inclusion of a submodule $H_Gg_G=B_{\HH,G}g_G\subset B_{\HH}(n)$ given by  the isomorphism $\varphi_G$ in Lemma \ref{Lbhg} and the module structure defined above, and the slanting arrow is given in Lemma \ref{Lgraphincl}. The vertical arrow and $\Phi_n$ are  epimorphisms by Lemma \ref{Ladmissiblegenerator} and the slanting arrow is an isomorphism by Lemma \ref{Lgraphincl} so $\Phi_n$ is an isomorphism. By the definition of $\Phi_n$ and Lemma \ref{Lbhg}, the collection $\{\Phi_n\}_n$ commutes with the structures of $\ASS$-comodule and degeneracy maps. The assertion for the $E_2$-page immediately follows from the isomorphism of simplicial objects.
\end{proof}
\begin{rem}\label{Ralgebraicss}
The Euler number $\chi(M)$ can be recovered from the Poincar\'e algebra $\HH^*=H^*(M)$. It is the image of $\Delta_\HH$ by the composition 
\[
(\HH^{\otimes 2})^{*=\dd} \stackrel{\text{multiplication}}{\longrightarrow} \HH^{\dd}\stackrel{\eps}{\longrightarrow} \kk
\]
So under the assumption of Theorem \ref{Talgebraicss}, the $E_2$-page of $\CECHC$ech s.s. is determined by the cohomology algebra $H^*(M)$. (Different orientations give apparantly different presentations but they are isomorphic.)
\end{rem}
%%%%%%%%%%%%%%%%%%%%%%%%%%%%%%%12/25%%%%%%%%%%%%%%%%%%%%%%%%%%%%%%%%%
\section{Examples}\label{Sexample}
In this section, we compute some part of $E_2$-page of $\CECHC$ech s.s. for spheres and products of two spheres $S^k\times S^l$ for $(k,l)=(\text{odd},\text{even})$ or $(\text{even},\text{even})$ and deduce some results on cohomology groups for the products of spheres. We also prove Corollary \ref{Tmain4dim}. Our computation is restricted to low degrees and consists of only  elementary linear algebra on differentials and degree argument based on Theorem \ref{Talgebraicss}. We briefly state the results for the case of spheres since in these cases, the $\CECHC$ech s.s. only gives less information than the combination of Vassiliev's (or Sinha's) spectral sequence for long knots and the Serre spectral sequence for a fibration $\Emb(S^1,S^{\dd})\to STS^{\dd}$ (see the proof of Proposition \ref{Pnotcollapse}) gives, at least in the  degrees which we have computed. We give  concrete descriptions of the differentials in the case of $M=S^k\times S^l$ with $k$ odd and $l$ even. In the rest of this section, we set $\HH=H^*(M)$ for a fixed orientation.  
\subsection{The case of $M=S^{\dd}$ with  $\dd$ odd}
In this case $A^{\star\,*}_\bullet(\HH)$ is described as follows.
\[
A_n^{\star\,*}(\HH)=\bigwedge\{x_i,\  y_i,\ g_{ij}\ \mid 0\leq i,j\leq n \}/\mathcal{I}
\]
where  $|x_i|=(0,\dd),\ |y_i|=(0,\dd-1),\ |g_{ij}|=(-1,\dd)$, and $\mathcal{I}$ is the ideal generated by 
\[
\begin{split}
(x_i)^2,\ (y_i)^2, &\  (g_{ij})^2, \ g_{ii},  \ g_{ij}+g_{ji},\ \\
(x_i-x_j)g_{ij},\ & \text{ and the 3-term relation for }g_{ij}. 
\end{split}
\] 
The diagonal class is given by $\Delta_\HH=x_0-x_1\in \HH\otimes \HH$. 

\begin{prop}\label{Toddsphere}
Consider the $\check{C}$ech s.s. $\BGSSS^{\,p\,q}_r$ for the sphere $S^{\dd}$ with odd $\dd\geq 5$. We abbreviate $\BGSSS^{\,p\,q}_2$ as $(p,q)$. The following equalities hold.
\[
\begin{split}
(-3,\dd)& =\kk\langle \ g_{12}\ \rangle, \quad 
(-1,\dd-1)=\kk\langle \ y_1\ \rangle,     \\
(0,\dd-1) & =\kk\langle \ y_0\ \rangle, \quad      (0,\dd)=\kk\langle \ x_0\ \rangle, \\ 
(-6,2\dd) & =\kk\langle g_{13}g_{24},\ -g_{12}g_{34}+g_{14}g_{23}\rangle, \quad 
 (-4,2\dd-1)=\kk\langle \ y_1g_{23}-y_2g_{13}+y_3g_{12} \rangle, \\
  (-5,2\dd) & =\kk\langle g_{01}g_{23}+g_{02}g_{13}+g_{13}g_{23}\rangle, \quad 
 (-3,2\dd-1)=\kk\ \langle \ y_0g_{12}\rangle \\
(-3,2\dd) &=\kk\langle x_0g_{12}\rangle,  \quad    
(-1,2\dd-1) = \kk\langle \  x_0y_1,\ x_1y_0,\ x_1y_1\ \rangle, \\
(0,2\dd-1) &  =\kk\ \langle \ x_0y_0\ \rangle 
\end{split}
\]
For other $(p,q)$ with $p+q\leq 2\dd -1$, we have $(p,q)=0$. {\hspace{\fill}  \qedsymbol}
\end{prop}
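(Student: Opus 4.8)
The plan is to invoke Theorem \ref{Talgebraicss}(1). Since $\dd$ is odd we have $\chi(S^\dd)=0\in\kk$, so $\BGSSS^{p\,q}_2$ is the degree-$(p,q)$ part (with $p=\star-\bullet$, $q=*$) of the total homology of the normalized complex $NA^{\star\,*}_\bullet(\HH)$ of the simplicial CDBA $A^{\star\,*}_\bullet(\HH)$ with $\HH=H^*(S^\dd)$, where $A^{\star\,*}_\bullet(\HH)$ is the explicit exterior-algebra model displayed just before the statement: $A^{\star\,*}_n(\HH)=\bigwedge\{x_i,y_i,g_{ij}\mid 0\le i,j\le n\}/\mathcal I$ with $|x_i|=(0,\dd)$, $|y_i|=(0,\dd-1)$, $|g_{ij}|=(-1,\dd)$, internal differential $\partial g_{ij}=x_i-x_j$, $\partial x_i=\partial y_i=0$, and simplicial face maps $d_i=\mu\circ_i$ for $i<n$ (merging the slots $i,i+1$) and $d_n=\mu\circ_0(-)^\sigma$. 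Thus everything becomes finite linear algebra in this complex.

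First I would carry out the degree bookkeeping that makes the problem finite. A monomial with $a$ factors $x_i$, $b$ factors $y_i$, $r$ factors $g_{ij}$ in $A_n(\HH)$ has bidegree $(\star,*)=(-r,(a+r)\dd+b(\dd-1))$, hence contributes to $\BGSSS^{p\,q}_2$ with $q=(a+r)\dd+b(\dd-1)$ and $p=-r-n$. Passing to the normalized complex kills every monomial in the image of a degeneracy, i.e.\ every monomial in which some slot $j\in\{0,\dots,n\}$ carries neither an $x_j$, a $y_j$, nor an endpoint of a $g_{ij}$, so a surviving monomial satisfies $n+1\le a+b+2r$. Combining this with $q=(a+r)\dd+b(\dd-1)$, with $p+q\le 2\dd-1$, and with $\dd\ge 5$, an elementary estimate shows $a+r\le 2$ and $b\le 2$, apart from a handful of boundary families with $a+r=3$, $b=0$ (resp.\ $b=2$, $r=1$) that can occur only for $\dd=5$ or $\dd=7$. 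This leaves exactly $q\in\{0,\dd-1,\dd,2\dd-2,2\dd-1,2\dd\}$, each spanned in each relevant simplicial degree by a short explicit list of monomials.

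Next I would compute, family by family. It is convenient to filter the double complex by $\star$: $\partial$ raises $\star$ by $1$ while the simplicial differential $\delta=\sum_i(-1)^i d_i$ preserves it, and in the above range the top-$\star$ stratum has nothing above it; so a class built from $g$-only or $y\!\cdot\!g$-only monomials is a total cycle only if it is both a $\partial$-cycle and a $\delta$-cycle (a $\partial$-image always carries a new $x$ and so cannot be absorbed by a $\delta$-image). For the surviving range I would first take $\partial$-homology of $A_n(\HH)$: as $\partial$ is $\bigwedge(y_i)$-linear with $\partial g_{ij}=x_i-x_j$, one recognizes $H_\partial A_n(\HH)\cong H^*(C_{n+1}(S^\dd))\otimes\bigwedge(y_0,\dots,y_n)$ (equivalently $H^*(\CC^n\CPTM)$ via Leray--Hirsch for the cohomologically trivial bundle), whose part in cohomological degree $\le 2\dd$ is elementary --- in each bidegree at most a few classes survive, represented by $1$, $x_0$, $y_j$, $y_jg_{ik}$, $g_{ij}g_{kl}$ and products, the relations $(x_i-x_j)g_{ij}=0$ and the $3$-term relation being used to choose representatives while $x_ix_j$, $x_ig_{ij}$ and single $g_{ij}$'s are $\partial$-exact or non-$\partial$-cycles. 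Then I would compute the induced $\delta$ on these classes over the few relevant values of $n$ --- this is the Bendersky--Gitler/$\CECHC$ech complex of $S^\dd$ in a bounded range --- where $d_i$ acts transparently ($d_i$ merges two adjacent slots, $x_i,x_{i+1}\mapsto x_i$, one of $y_i,y_{i+1}\mapsto y_i$, $g_{i,i+1}\mapsto 0$, remaining labels relabeled), so the required kernels and cokernels are finite matrices over $\kk$. Reading these off, and exhibiting the surviving cycles ($g_{12}$; $y_0$; $y_1g_{23}-y_2g_{13}+y_3g_{12}$; $g_{13}g_{24}$ and $-g_{12}g_{34}+g_{14}g_{23}$; etc.), yields the list; the vanishing in the other bidegrees with $p+q\le 2\dd-1$ follows since there the monomial basis is empty or the relevant differential is injective (resp.\ surjective).

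The part I expect to be the main obstacle is precisely this bookkeeping: enumerating the surviving monomials correctly after imposing $g_{ij}^2=0$, $g_{ij}=-g_{ji}$, $(x_i-x_j)g_{ij}=0$ and the $3$-term relation, tracking the signs in $\delta$ and in $\partial$, and --- above all --- disposing of the exceptional boundary families for $\dd=5,7$, where a bare degree count does not rule out a contribution and a small explicit verification (that no combination of those many-$g$, few-slot monomials is simultaneously $\partial$- and $\delta$-closed, hence none is a total cycle mod boundary) is required.
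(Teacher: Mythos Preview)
The paper gives no proof of this proposition: it is stated with a \qedsymbol, as a direct computation from the explicit model $A^{\star\,*}_\bullet(\HH)$ displayed immediately before. Your plan---invoke Theorem~\ref{Talgebraicss}(1), bound the contributing monomials by degree, then compute the finitely many matrices for $\partial$ and the simplicial differential---is precisely the computation the paper has in mind.

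One small correction to your bookkeeping: in the normalized complex only slots $1,\dots,n$ must be occupied, not slot~$0$. The degeneracies $s_0,\dots,s_{n-1}:A_{n-1}(\HH)\to A_n(\HH)$ insert a unit at positions $1,\dots,n$ respectively (see Definition~\ref{Dcdga}), so a monomial with slot~$0$ empty is not degenerate. The correct estimate is therefore $n\le a+b+2r$, not $n+1\le a+b+2r$; this is already visible in the listed class $(-1,\dd-1)=\kk\langle y_1\rangle$, where $n=1$ and slot~$0$ carries nothing. This affects only the enumeration, not the method.
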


\begin{prop}\label{Pnotcollapse}
Let $\dd$ be an odd number with $\dd\geq 5$.\\
(1) $\Emb (S^1,S^{\dd})$ is $\dd-2$-connected.\\
(2) The $\check{C}$ech s.s. for $S^{\dd}$ does not collapse at $E_2$-page in any coefficient ring.
\end{prop}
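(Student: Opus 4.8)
The plan is to prove the two parts by combining the computation of Proposition \ref{Toddsphere} with classical results on the space of long knots in $\RR^{\dd}$ and a fibration argument. For part (1), I would use the fibration $\Emb(S^1,S^{\dd})\to STS^{\dd}=\widehat{S^{\dd}}$ given by evaluation of position and unit tangent vector at $0\in S^1$ (this is a fibration, as in the proof of Lemma \ref{Lpunctured}), whose fiber is weak homotopy equivalent to the space $\Emb([0,1],\RR^{\dd})$ of long knots in $\RR^{\dd}$ modulo a contractible choice of framing data, or more precisely the space of compactly-supported long embeddings. The base $STS^{\dd}$ is $(\dd-2)$-connected since $S^{\dd}$ is, and it is classical (Goodwillie--Weiss, or Budney) that the space of long knots in $\RR^{\dd}$ is $(\dd-3)$-connected for $\dd\geq 4$; one has to be a little careful about which model is the fiber, but in any case the fiber is at least $(\dd-3)$-connected. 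Chasing the long exact sequence of the fibration then shows $\Emb(S^1,S^{\dd})$ is $(\dd-3)$-connected at least, and to push to $(\dd-2)$-connected I would use that $\pi_{\dd-2}$ of the fiber vanishes as well (the first nontrivial homotopy/homology of the long knot space sits in degree $\dd-2$ only via the generator detected by the leading Vassiliev term, which is in degree $2\dd-6 \ge \dd-2$ only when $\dd\ge 4$ — so for $\dd\ge5$ there is room) together with $\pi_{\dd-1}$ of the base. Alternatively, and perhaps cleaner, one reads $(\dd-2)$-connectivity directly off the $\CECHC$ech s.s.: by Theorem \ref{Tconvergence} it converges to $H^*(\Emb(S^1,S^{\dd}))$, and Proposition \ref{Toddsphere} shows the lowest total degree in which $\BGSSS_2^{p,q}$ is nonzero with $p+q$ small and $(p,q)\ne(0,0)$ is $p+q=\dd-1$ (from $(-1,\dd-1)$ and $(0,\dd-1)$ and $(0,\dd)$), so $H^i(\Emb(S^1,S^{\dd}))=0$ for $1\le i\le \dd-2$; since $\Emb(S^1,S^{\dd})$ is simply connected (being $(\dd-3)$-connected with $\dd\ge5$) this gives $(\dd-2)$-connectivity.

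For part (2), the strategy is to exhibit a class on the $E_2$-page that cannot survive to $E_\infty$ by comparing the $E_2$-page against the actual cohomology $H^*(\Emb(S^1,S^{\dd}))$, which can be computed independently in the relevant degrees from the fibration $\Emb(S^1,S^{\dd})\to STS^{\dd}$ and the known (co)homology of the long knot space in the stable range from Vassiliev's/Sinha's spectral sequence for long knots. Concretely, I would compute the total rank of $\bigoplus_{p+q=N}\BGSSS_2^{p,q}$ for a small value of $N$ (say $N=2\dd-1$ or thereabouts, where Proposition \ref{Toddsphere} lists the groups $(-3,2\dd)$, $(-1,2\dd-1)$ with rank $3$, $(0,2\dd-1)$, and also contributions from lower total degree that could support differentials), and compare with $\dim H^N(\Emb(S^1,S^{\dd}))$ obtained from the Serre spectral sequence of the fibration: the base $STS^{\dd}$ has $H^*(STS^{\dd})$ an exterior algebra on a class in degree $\dd-1$ and one in degree $\dd$ (with a possible relation), and the fiber's cohomology in the range $*\le 2\dd-1$ is entirely determined by the leading-order Vassiliev invariant, sitting in degree $2\dd-6$, so up to total degree $2\dd-1$ only finitely many classes appear and their count is pinned down. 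If the $E_2$-total-rank exceeds $\dim H^N$, some differential $d_r$ ($r\ge2$) must be nonzero, which is exactly the statement that the s.s. does not collapse at $E_2$. I expect the winning comparison to involve the three-dimensional group $(-1,2\dd-1)=\kk\langle x_0y_1, x_1y_0, x_1y_1\rangle$ versus the correspondingly smaller $H^{2\dd-1}$; the cleanest route is probably to identify explicitly which of these classes is hit by a $d_2$ or $d_3$ from a generator in $(-3,2\dd)$ or $(-4,2\dd-1)$ or similar, by translating the Hochschild/$\CECHC$ech differential of Definition \ref{Dhochschild} and Theorem \ref{Talgebraicss} into the explicit algebra $A_n^{\star\,*}(\HH)$ above.

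The main obstacle is part (2): making the rank comparison rigorous requires knowing $\dim H^N(\Emb(S^1,S^{\dd}))$ in the chosen degree $N$ with certainty, and this rests on (a) correctly identifying the fiber of the evaluation fibration with a long-knot space whose cohomology in the stable range is genuinely understood, and (b) ruling out that the Serre spectral sequence of that fibration itself has differentials affecting degree $N$. Point (b) is favorable because in total degree $\le 2\dd-1$ the relevant $E_2$-page of the Serre s.s. is concentrated in few bidegrees (base contributes only in filtrations $0,\dd-1,\dd,2\dd-1$ and fiber only in $0$ and $2\dd-6$), so for $\dd\ge5$ the supports are sparse enough that one can check by hand there is no room for a differential in the window of interest. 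Point (a) is the delicate one; if matching the precise model is awkward, the fallback is to instead invoke Sinha's spectral sequence $\SINHASS$ for $\Emb(S^1,S^{\dd})$ itself (Definition \ref{Dcosimplicial}), whose $E_1$-page is given by cohomology of configuration spaces of points-with-vectors in $S^{\dd}$ and is computable in low degrees, and argue that its abutment in degree $N$ is strictly smaller than the $\CECHC$ech $E_2$-total-rank there — this keeps everything inside the framework of the present paper. Either way, once the abutment dimension in one well-chosen degree is secured, non-collapse is immediate, and the argument is coefficient-independent because the offending rank discrepancy is over $\ZZ$ (hence over any $\kk$ by the universal coefficient theorem, noting all groups in sight are free).
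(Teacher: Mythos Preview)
Your fibration argument for part (1) is exactly the paper's approach: the fiber sequence $\Emb_c(\RR,\RR^{\dd})\to \Emb(S^1,S^{\dd})\to STS^{\dd}$, with base $(\dd-2)$-connected and fiber $(2\dd-7)$-connected, so for $\dd\geq 5$ (where $2\dd-7\geq \dd-2$) the total space is $(\dd-2)$-connected. You hedge about the fiber connectivity, first saying $(\dd-3)$ and then correctly noting the first class sits in degree $2\dd-6$; the paper simply quotes $2\dd-7$ and is done in one line.

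Your ``alternative'' for part (1), reading connectivity off the $\CECHC$ech $E_2$-page, contains an error that is in fact the key to part (2): you claim the lowest nonzero total degree on $E_2$ (away from $(0,0)$) is $\dd-1$, but Proposition \ref{Toddsphere} lists $(-3,\dd)=\kk\langle g_{12}\rangle$, which has total degree $\dd-3$, and $(-1,\dd-1)=\kk\langle y_1\rangle$, which has total degree $\dd-2$. So the $E_2$-page by itself does \emph{not} give $(\dd-2)$-connectivity --- and this is precisely why part (2) holds.

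For part (2) the paper's argument is immediate from part (1) and the observation you missed: since $\Emb(S^1,S^{\dd})$ is $(\dd-2)$-connected, $H^{\dd-3}=H^{\dd-2}=0$; but $\BGSSS_2^{-3,\dd}$ and $\BGSSS_2^{-1,\dd-1}$ are nonzero in those total degrees, so some differential must kill them, and the spectral sequence cannot collapse at $E_2$. This works over any $\kk$. Your proposed rank comparison in total degree around $2\dd-1$ via the Serre spectral sequence of the fibration would also eventually work, but it is far more laborious (you would need to control the Serre differentials and the long-knot cohomology in that range) and is unnecessary once you notice the low-degree classes $g_{12}$ and $y_1$.
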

\begin{proof}
For the part 1, consider the fiber sequence
\[
\Emb_c(\RR,\RR^{\dd})\to \Emb (S^1,S^{\dd})\to STS^{\dd}
\]
where the left hand side map is given by taking the tangent vector at a fixed point and the right  space is the space of long knots. As is well known, $STS^{\dd}$ is $\dd-2$-connected and $\Emb_c(\RR,\RR^{\dd})$ is $2\dd -7$-connected. As $\dd\geq 5$, we have the claim. The part 2 follows from the part 1 and Proposition \ref{Toddsphere}. (There is non-zero elements in total degrees $\dd-3$ and $\dd-2$.) 
\end{proof}
\begin{rem}\label{Rbudney}
The reader may find inconsistency between \cite[Proposition 3.9 (3)]{budney} and Proposition \ref{Pnotcollapse} (1). This is just a notational matter. $n-j-2$ should be replaced with $n-j-1$ (and $n-j-1$ with $n-j$) in the proposition, see its proof.  
\end{rem}

\subsection{The case of $M=S^{\dd}$ with $\dd$ even}
In this subsection, we assume $2\in \kk^{\times }$.    $B^{\star\,*}_\bullet(\HH)$ is described as follows.
\[
B_n^{\star\,*}(\HH)=\bigwedge\{z_i,\ g_{ij},\ h_{ij}\ \mid 0\leq i,j\leq n \}/\mathcal{J}
\]
where  $|z_i|=(0,2\dd-1),\ |g_{ij}|=(-1,d),\ |h_{ij}|=(-1,2\dd-1)$, and $\mathcal{J}$ is the ideal generated by 
\[
\begin{split}
 (z_i)^2,   (g_{ij})^2,\ (h_{ij})^2,\ g_{ii},\ & h_{ii}, \ 
 g_{ij}-g_{ji},\ h_{ij}+h_{ji},  \\
 (z_i-z_j)g_{ij},\  (z_i-z_j)h_{ij},\ & (h_{ij}+h_{ki})g_{jk}-(h_{ij}+h_{jk})g_{ki},\\ 
 \text{ and the 3-term relation for }&  g_{ij}\text{ and }h_{ij}. 
\end{split}
\] 
The diagonal classes are given by $\Delta_\HH=0\in S\HH\otimes S\HH$ and $\Delta_{S\HH}=z_0-z_1\in S\HH\otimes S\HH$. 
\begin{prop}\label{Tevensphere}
Suppose $2\in \kk^\times$ . Consider the $\check{C}$ech s.s. $\BGSSS^{\,p\,q}_r$ for  $S^{\dd}$ with even $\dd\geq 4$. We abbreviate $\BGSSS^{\,p\, q}_2$ as $(p,q)$. 
 The following equalities hold.
\[
\begin{split}
(-6,2\dd)=\kk \langle g_{13}g_{24}\rangle, & \quad (-5,2\dd)=\kk\langle g_{01}g_{23}+3g_{02}g_{13}+g_{03}g_{12}\rangle , \\
(-3,2\dd-1)=\kk\langle h_{12}\rangle, &\quad  (0,2\dd-1)=\kk\langle z_0\rangle \,.
\end{split}
\]
For other $(p,q)$ with $p+q\leq 2\dd-1$, we have $(p,q)=0$.\hspace{\fill} \qedsymbol
\end{prop}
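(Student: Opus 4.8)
The plan is to compute the $E_2$-page directly from Theorem \ref{Talgebraicss}(2): since $\chi(S^{\dd})=2\neq 0$ and is invertible by the hypothesis $2\in\kk^\times$, the $E_2$-page is the total homology of the normalization $NB^{\star\,*}_\bullet(\HH)$ for $\HH=H^*(S^{\dd})$. First I would write out $B^{\star\,*}_\HH(n+1)$ explicitly in the basis described in Lemma \ref{Ladmissiblegenerator}, namely the $S\HH^{\otimes(n+1)}$-combinations of monomials $g_G h_H$ with $E(G)\cap E(H)=\emptyset$ and $GH\in\GG(n+1)^{dis}$. For $S^{\dd}$ with $\dd$ even, $S\HH=H^*(\TM)$ has basis $1,z$ with $|z|=2\dd-1$ (a single generator, because $\dd-1\geq 2$ so $\HH^{\geq 2}$ is just $\kk$ in degree $\dd$ and $\HH^{\leq\dd-2}=\kk$ in degree $0$), and the diagonal classes reduce to $\Delta_\HH=0$, $\Delta_{S\HH}=z_0-z_1$; this immediately simplifies the differential, killing the $\partial g_{ij}$ terms and making only $\partial h_{ij}=z_i-z_j$ contribute.

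Next I would fix the low range $p+q\leq 2\dd-1$ and enumerate which monomials $x\,g_G h_H$ in $B^{\star\,*}_n(\HH)$ (across all simplicial degrees $n$) can possibly contribute to a total degree $\star-\bullet=p$, $*=q$ with $p+q\leq 2\dd-1$. Each $g_{ij}$ contributes $(-1,\dd)$, each $h_{ij}$ contributes $(-1,2\dd-1)$, each $z_i$ contributes $(0,2\dd-1)$, and each degeneracy-normalization step costs a $\bullet$. Since $\dd\geq 4$, the degree constraint $p+q\leq 2\dd-1$ forces at most one $h$, at most one $z$, and a very small number of $g$'s, with the $(-6,2\dd)$ and $(-5,2\dd)$ classes coming from configurations with three $g$'s and a couple of degeneracies, and the $(-3,2\dd-1)$, $(0,2\dd-1)$ classes from a single $h$ or a single $z$. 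Then for each surviving bidegree I would set up the relevant strand of the normalized Hochschild/total complex — a short complex of free $\kk$-modules with differential built from the face maps $d_i=\mu\circ_i(-)$ (merging adjacent components) and the $\partial$ coming from $\partial h_{ij}=z_i-z_j$ — and do the elementary linear algebra identifying kernel modulo image. The $3$-term relations for $g_{ij}$ and for $h_{ij}$ and the relation $(h_{ij}+h_{ki})g_{jk}=(h_{ij}+h_{jk})g_{ki}$ must be used to pin down the actual rank of each group; the coefficient $3$ appearing in the generator $g_{01}g_{23}+3g_{02}g_{13}+g_{03}g_{12}$ of $(-5,2\dd)$ will come precisely from tracking these relations together with the face maps, and getting that coefficient right is the one genuinely delicate bookkeeping point.

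The main obstacle I expect is not conceptual but combinatorial: correctly organizing the normalized simplicial complex in each bidegree so that no contributing monomial is missed and the face maps are applied with the right signs, especially for the two classes in total degree $2\dd-3$ and $2\dd-4$ where three $g$'s and a degeneracy interact. I would control this by using Lemma \ref{Ladmissiblegenerator} to keep the generating set small, using $\chi=2$ invertible to trivialize the $h$-differential image wherever a difference $z_i-z_j$ appears, and cross-checking the final answer against the convergence/vanishing bound of Theorem \ref{Tconvergence} (which guarantees $\BGSSS^{\,p\,q}_2=0$ whenever $q/p<s_\dd=2$ for even $\dd\geq 6$, trimming most of the range for free) and against the analogous odd-sphere computation in Proposition \ref{Toddsphere}. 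Everything else — closing the argument that the listed groups are the only nonzero ones with $p+q\leq 2\dd-1$ — is then a finite, purely elementary verification.
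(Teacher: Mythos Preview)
Your proposal is correct and matches the paper's approach exactly: the paper states this proposition without proof (just a \qedsymbol), relying on the section's blanket remark that the examples are obtained by ``elementary linear algebra on differentials and degree argument based on Theorem \ref{Talgebraicss}.'' Your plan of writing out $B^{\star\,*}_\bullet(\HH)$ explicitly, observing $\Delta_\HH=0$ and $\Delta_{S\HH}=z_0-z_1$ in $S\HH^{\otimes 2}$, enumerating the finitely many contributing monomials in the range $p+q\leq 2\dd-1$, and then reducing modulo the $3$-term and mixed $g$--$h$ relations is precisely that computation.
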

For the case of  $\kk=\FF_2$, the same statement as Proposition \ref{Toddsphere} holds, except that " odd $\dd\geq 5$" is replaced with " even $\dd\geq 4$".
\subsection{The case of $M=S^k\times S^l$ with $k$ odd and $l$ even}
 We fix  generators $a\in H^k(S^k)$ and $b\in H^l(S^l)$. $\HH$ is presented as $\wedge \{a,\ b\}$. We fix an orientation $\epsilon$ on $\HH$ by $\epsilon (ab)=1$. We write $a_i$ for $e_i(a)$ (similarly for $e_i(b)$) and  $A_n(\HH)$ is presented as
\[
A_n(\HH)=\bigwedge\{a_i,\ b_i,\ y_i,\ g_{ij}\ \mid 0\leq i,j\leq n \}/\mathcal{I}
\]
where  $|y_i|=(0,k+l-1),\ |g_{ij}|=(-1,k+l)$, and $\mathcal{I}$ is the ideal generated by 
\[
\begin{split}
(a_i)^2,\ (b_i)^2,\ (y_i)^2,\  & (g_{ij})^2, \ g_{ii},  \ g_{ij}+g_{ji},\ \\
(a_i-a_j)g_{ij},\ (b_i &-b_j)g_{ij}\text{ and the 3-term relation for }g_{ij}. 
\end{split}
\] 
The diagonal class is given by $\Delta_\HH=a_0b_0-a_1b_0+a_0b_1-a_1b_1\in \HH\otimes \HH$. 
The module $NA_n(\HH)$ is generated by the monomials of the form $
a_{p_1}\cdots a_{p_s}b_{q_1}\cdots b_{q_s}g_{i_1j_1}\cdots g_{i_rj_r}$ such that the set of subscripts $\{p_1,\dots p_s,q_1,\dots,q_t,i_1,\dots, i_r,j_1,\dots, j_r\}$ contains the set $\{1,\dots, n\}$. \\

\indent We shall present the total differential  $\td$ on 
\[
\BGSSS^{\,p\,q}_1=\underset{\star-\bullet=p}\bigoplus NA_\bullet^{\star, q}(\HH)
\]  up to $p+q\leq \max\{2k+l,k+2l\}$.
For $(p,q)=(-1,k), \ (-1,l),\ (-1,k+l-1),\ (-1,k+l), \ (-1,2k),\ (-1,2l),\ (-1,2k+l),\ (-1,k+2l),\ (-1,2k+l-1),\ (-1,k+2l-1),\ (-2,2k),\ (-2,2l),\ (-2,3k),\ (-2, 3l)$, $\td$ is zero.  \\

For $(p,q)=(-3, k+l)$, $\td$ is presented by the following matrix
\[
\begin{array}{c|c}
     &    g_{12}    \\
\hline
g_{01}  & 0   \\
a_1b_2  &  1    \\
a_2b_1  & -1     
\end{array}.
\]
This is read as $\td (g_{12})=a_1b_2-a_2b_1$. For $(p,q)=(-2, k+l)$, 
\[
\begin{array}{c|ccc}
  & g_{01}  & a_1b_2  & a_2b_1 \\
  \hline
 a_0b_1 &  1 &  1 &   1  \\
 a_1b_0 & -1 &   1 &   1  \\
 a_1b_1 & -1 &  -1 &  -1
 \end{array}.
\]
For $(p,q)=(-4,2k+l)$, 
\[
\begin{array}{c|ccc}
& a_1g_{23} &    a_2g_{13}   &  a_3g_{12}   \\
\hline
a_0g_{12}  &  1 &  0     & -1     \\
a_1g_{02}   & 1  & 1  &    0      \\
a_1g_{12}   & -1 & 0  & 1     \\
a_2g_{01} & 0  & 1  & 1   \\
a_1a_2b_3  &  -1  &  1 & 0  \\
a_1a_3b_2   & 1  &  0  &  1  \\
a_2a_3b_1  & 0 & 1  &  -1  
\end{array}.
\]
For $(p,q)=(-3,2k+l)$, 
\[
\begin{array}{c|ccccccc}
 &    a_0g_{12}     &     a_1g_{02}   &     a_1g_{12}  &   a_2g_{01}  &     a_1a_2b_3   &    a_1a_3b_2   &     a_2a_3b_1   \\
\hline
a_0g_{01} & 0   &0      &0     &0    &0   &0   & 0  \\
 a_0a_1b_2   &-1  & 1 &   0  &   0  &  -1 & -1 &  0  \\
 a_0a_2b_1  & 1 & 0  & 0  & 1  & 0  & -1 & -1  \\
 a_1a_2b_0  &  0&   1& 0  &-1 & 1 &0  &-1 \\
a_1a_2b_1   & 0 & 0  &  1 &  -1 &  0 & 1  & 1  \\
a_1a_2b_2   & 0   & 1 & 1 & 0 & -1 & -1 & 0  
\end{array}.
\]

For $(p,q)=(-2,2k+l)$, 
\[
\begin{array}{c|cccccc}
           &  a_0g_{01}    &   a_0a_1b_2  &   a_0a_2b_1  &   a_1a_2b_0  &    a_1a_2b_1 &  a_1a_2b_2   \\
\hline
a_0a_1b_0 & 1   &  -1   & -1   &   0    &   -1   & 1 \\
a_0a_1b_1  &  1  &   1 &   1  &  0  &   1&    -1
\end{array}.
\]
For $(p,q)=(-2,2k+l-1)$, 
\[
\begin{array}{c|cc}
 & a_1y_2 &   a_2y_1  \\
 \hline
a_0y_1  & 1   &   1 \\
a_1y_0  & 1   &   1 \\
a_1y_1  & -1  & -1  
\end{array}.
\]
For $(p,q)=(-4,k+2l)$, 
\[
\begin{array}{c|ccc}
  &  b_1g_{23}  &    b_2g_{13}  &    b_3g_{12}      \\
  \hline
b_0g_{12} & -1 & 0 &  1   \\
b_1g_{02}&-1& -1& 0 \\
b_1g_{12}& 1 & 0 &  -1  \\
b_2g_{01}& 0 & -1 &  -1 \\
a_1b_2b_3 & 0  & 1 &  1  \\ 
a_2b_1b_3& 1 & 0 & -1   \\
a_3b_1b_2& -1 &-1&  0
\end{array}.
\]

For $(p,q)=(-3,k+2l)$, 
\[
\begin{array}{c|ccccccc}
      &   b_0g_{12}  & b_1g_{02}   &  b_1g_{12}  &   b_2g_{01}  &   a_1b_2b_3 &    a_2b_1b_3  &  a_3b_1b_2    \\
\hline 

b_0g_{01}  & 0  & 0 & 0 & 0  & 0&  0&  0   \\
a_0b_1b_2 &0  &1   &0  &1  &1  &0   &-1  \\
a_1b_0b_2   & 1&  0   &0&-1  &-1 &1 &0   \\

a_1b_1b_2  &  0& 0  &1  &-1&  -1&  -1&     0  \\
a_2b_0b_1 & -1 &  -1 &  0 &  0  & 0  &  -1&   1\\
a_2b_1b_2   & 0&  -1&  -1&  0&   0&  1&   1   
\end{array} .  
\]
For $(p,q)=(-2,k+2l)$, 
\[
\begin{array}{c|cccccc}
    &   b_0g_{01}  &    a_0b_1b_2     &    a_1b_0b_2   &  a_1b_1b_2    &    a_2b_0b_1  &  a_2b_1b_2   \\
 \hline
a_0b_0b_1   &    1 &  2 &  1   &  1  &   1   &  1  \\
a_1b_0b_1  &  -1 &  0    &-1     & 1     & -1 &  1
\end{array}.
\]
For $(p,q)=(-2,k+2l-1)$, 
\[
\begin{array}{c|cc}
  & b_1y_2  &  b_2y_1  \\
  \hline
b_0y_1 & -1 &-1 \\
b_1y_0 & 1 & 1 \\
b_1y_1 & 1 & 1    
\end{array}.
\]
By direct computation based on the above presentation, we obtain the following result. Let $\kk_2$ (resp. $\kk^2$) denote the module $\kk/2\kk$ (resp. $\kk\oplus \kk$).
\begin{prop}\label{Pbgsss}
Suppose $\kk$ is either of $\ZZ$ or $\FF_{\pp}$ with $\pp$ prime. Let $k$ be an odd number and $l$ be an even numbers with $k+5\leq l\leq 2k-3$ and $|3k-2l|\geq 2$,  or $l+5\leq k \leq 2l-3$ and $|3l-2k|\geq 2$.  We abbreviate   $\BGSSS^{\,p\,q}_2$  for $S^k\times S^l$ as $(p,q)$.
We have the following isomorphisms.
\[
\begin{split}
(0,k) &  = (-1,k)= (0,l)= (-1,l)= (-1,2k)= (-2,2k) = (-1,2l)= (-2,2l)=\kk \\   
        (-2,3k) &  = (-3,3k)= (-2,3l)= (-3,3l)= (0,k+l-1)= (-1,k+l-1)= \kk , \\
     (0,    &  \, k+l)   = \kk ,     \quad (-1,k+l)= \kk \oplus \kk _2 \text{ or } \kk ^2,\quad (-2, k+l)= 0\text{ or } \kk , \\
(0,   & \,  2k+l-1)      = \kk ,  \quad (-1,2k+l-1)= \kk ^2, \quad (-2,2k+l-1)= \kk ,   \\
(-1, & \, 2k+l)= \kk _2 \text{ or } \kk ,  \quad (-2, \,2k+l)= \kk _2 \text{ or } \kk ^2, \quad (-3,2k+l)= \kk _2\text{ or }\kk ^2,\\
 (-4, &  \,2k+l)= 0 \text{ or }\kk  \\
(0,   & \,  k+2l-1)      = \kk ,  \quad (-1,k+2l-1)= \kk ^2, \quad (-2,k+2l-1)= \kk ,   \\
(-1, & \,k+2l)= \kk _2 \text{ or } \kk ,  \quad (-2, k+2l)= \kk  \text{ or } \kk ^2, \quad (-3,k+2l)= \kk ^2, \quad (-4 , 2k+l)= \kk  \\
\end{split}
\] 
Here "$(p,q)= A\text{ or }B$" means $(p,q)= A$ if $\kk =\ZZ\  or\  \FF_{\pp}$ with $\pp \not=2$, $(p,q)= B$ if $\kk =\FF_2$. For other $(p,q)$ with $p+q\leq  \max \{k+2l, 2k+l\}$,  $(p,q)=0$.\hspace{\fill} \qedsymbol
\end{prop}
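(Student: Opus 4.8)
The plan is to reduce everything to an explicit, finite linear-algebra computation on the $E_1$-page $\BGSSS^{\,p\,q}_1=\bigoplus_{\star-\bullet=p}NA^{\star\,q}_\bullet(\HH)$, as prepared in the displayed matrices for $\td$. By Theorem \ref{Talgebraicss}(1), since $\chi(S^k\times S^l)=0$ in any $\kk$ (the Euler characteristic of a product of spheres is zero, one factor being odd-dimensional), the $E_2$-page is the total homology of the normalization $NA^{\star\,*}_\bullet(\HH)$ with $\HH=H^*(S^k\times S^l)$, and the combinatorial description of $A_\bullet(\HH)$ given just above Proposition \ref{Pbgsss} applies. First I would record, for each total degree $t=p+q$ with $t\le\max\{2k+l,k+2l\}$, the (finite) list of admissible monomials $a_{p_1}\cdots b_{q_1}\cdots g_{i_1j_1}\cdots$ spanning $NA^{\star\,q}_\bullet(\HH)$, organized by $(p,q)$; the hypotheses $k+5\le l\le 2k-3$, $|3k-2l|\ge 2$ (and the symmetric alternative) are exactly what guarantees that in this range the only monomials that occur are the ones appearing in those tables, with no unexpected coincidences of degree — this is the bookkeeping step that makes the list finite and the answer uniform.

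Next I would carry out the homology computation column by column: for each relevant $(p,q)$, the total differential $\td=d-\delta$ restricted to $\BGSSS_1^{p,q}\to\BGSSS_1^{p+1,q}$ is given by the displayed integer matrices (the internal $d$ vanishes on $A_\bullet(\HH)$ since it is concentrated in $\star$-degree $0$ after passing to cohomology of $M$, so $\td=-\delta$ is the Hochschild-type face alternating sum $\sum\pm\mu\circ_i$), and $\BGSSS_2^{p,q}=\ker/\mathrm{im}$ is computed from two adjacent matrices. Over $\kk=\ZZ$ one computes Smith normal forms; the only elementary divisor that is ever non-trivial in this range is $2$ — this is where the dichotomy "$A$ or $B$" in the statement comes from, $\kk_2$ collapsing to $\kk$ exactly when $2\in\kk^\times$, i.e. $\kk\ne\FF_2$. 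For the degrees where $\td$ was declared zero ($(0,k)$, $(-1,k)$, etc.) the computation is immediate: $\BGSSS_2^{p,q}$ is just the span of the listed generators, giving the stated copies of $\kk$. For the generic "$=\kk$" entries like $(-2,3k)$, $(-3,3k)$, one checks the relevant matrix has full rank over $\ZZ$ with trivial cokernel, leaving a one-dimensional homology. The vanishing claim for all other $(p,q)$ with $p+q\le\max\{k+2l,2k+l\}$ follows by the same degree bookkeeping: either no admissible monomial exists, or the relevant piece of $\td$ is an isomorphism onto its target.

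I would present the argument as: (i) invoke Theorem \ref{Talgebraicss}(1) and the explicit presentation of $A_n(\HH)$; (ii) list admissible monomials in each $(p,q)$ under the numerical hypotheses, noting which ranges are empty; (iii) for the non-empty ones, cite the displayed matrices for $\td$ and compute kernels, images, and Smith normal forms, emphasizing that the sole non-unit elementary divisor encountered is $2$; (iv) assemble. The main obstacle is purely the volume and accuracy of the linear algebra — there are on the order of a dozen matrices (sizes up to roughly $7\times 7$) whose ranks and cokernels must all be computed correctly over $\ZZ$ and over each $\FF_\pp$, and the sign conventions inherited from Definition \ref{Dhochschild} (the $\epsilon$, $\theta$ exponents) must be tracked so that the matrices are exactly as displayed; a single sign error can change a rank and hence an answer. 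The conceptual content — reducing to $NA_\bullet(\HH)$ and identifying the $2$-torsion source — is light; the care required to certify the tabulated differentials and read off $\ker/\mathrm{im}$ without slips is the real work, and I would double-check each column by verifying $\td^2=0$ against the adjacent matrix as an internal consistency test.
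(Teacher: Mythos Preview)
Your plan is essentially the paper's own: the paper states the proposition as following ``by direct computation based on the above presentation,'' meaning exactly the kernel/image linear algebra on the displayed matrices for $\td$, and your outline (invoke Theorem \ref{Talgebraicss}(1), enumerate admissible monomials under the numerical hypotheses, compute Smith normal forms, observe that $2$ is the only non-unit elementary divisor) is a faithful and slightly more explicit unpacking of that.

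One correction to your parenthetical description of $\td$: it is \emph{not} just the Hochschild face sum. On $NA_\bullet^{\star,*}(\HH)$ there are two differentials contributing to $d_1$ on the $E_1$-page: the Čech differential $\partial$ (which raises $\star$ by $1$, e.g.\ $\partial g_{ij}=\Delta_\HH^{ij}$) and the simplicial alternating-face differential (which lowers $\bullet$ by $1$). Both raise $p=\star-\bullet$ by $1$ and fix $q=*$, so both appear in $\td$. Your remark that ``$A_\bullet(\HH)$ is concentrated in $\star$-degree $0$'' is false --- the $g_{ij}$ live in $\star=-1$ --- and e.g.\ the entry $\td(g_{12})=a_1b_2-a_2b_1$ in the $(-3,k+l)$ matrix is a $\partial$-contribution, not a face map. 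This does not affect your computation as long as you take the matrices as given, but if you need to regenerate any of them (or verify the ``$\td=0$'' cases) you must include both pieces.
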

The isomorphisms of proposition \ref{Pbgsss} holds under  milder conditions on $k$ and $l$. It suffices to ensure the bidegrees presented above are pairwise distinct. 
By degree argument, we obtain the following corollary.
\begin{cor}\label{Coddevensphere}
Suppose $\kk$ is either of $\ZZ$ or $\FF_{\pp}$ with $\pp$ prime. Let $k$ be an odd number and $l$ be an even number with $k+5\leq l\leq 2k-3$ and $|3k-2l|\geq 2$,  or $l+5\leq k \leq 2l-3$ and $|3l-2k|\geq 2$. 
We set $H^*= H^*(\Emb (S^1,S^k\times S^l))$.
\begin{enumerate}
\item We have   isomorphisms
\[
H^{i} = \kk  \quad (i=k-1,k,\ 2k-2,\ 2k-1,\ k+l). 
\]
\item If $\kk =\FF_{\pp}$ with $\pp\not =2$, we have isomorphisms
\[ 
H^i= \kk ^2 \ (i=k+l-2,\ k+l-1,\ 2k+l-3,\ 2k+l-2,\ 2k+l-1).
\]
\end{enumerate}
\end{cor}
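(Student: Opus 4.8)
The plan is to feed the $E_2$-computation of Proposition~\ref{Pbgsss} into the convergence of the $\CECHC$ech spectral sequence. Since $k,l\geq 2$, the manifold $M=S^k\times S^l$ is simply connected, so by Theorems~\ref{Tmainss} and \ref{Tconvergence} the second-quadrant spectral sequence $\{\BGSSS^{\,p\,q}_r\}_r$ converges to $H^{p+q}(\Emb(S^1,M))$, and by Proposition~\ref{Pbgsss} (which rests on Theorem~\ref{Talgebraicss}) its $E_2$-page is explicitly known in the range $p+q\leq\max\{2k+l,\,k+2l\}$. Every cohomological degree $i$ occurring in the statement, together with $i\pm 1$, lies inside this range.

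First I would fix one such degree $i$ and, using Proposition~\ref{Pbgsss}, list all bidegrees $(p,q)$ with $p+q\in\{i-1,i,i+1\}$ for which $\BGSSS^{\,p\,q}_2\neq 0$. The numerical hypotheses $k+5\leq l\leq 2k-3$, $|3k-2l|\geq 2$ (respectively $l+5\leq k\leq 2l-3$, $|3l-2k|\geq 2$) are precisely what forces the various families of nonzero entries of Proposition~\ref{Pbgsss} onto pairwise distinct antidiagonals, apart from the coincidences already recorded; so on the antidiagonal $p+q=i$ only the claimed $E_2$-groups survive, and likewise for $p+q=i\pm 1$.

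Next I would check that no differential $d_r$ with $r\geq 2$ meets the antidiagonal $p+q=i$. A differential out of $(p,q)$ with $p+q=i$ lands in $(p+r,q-r+1)$, which is zero unless $p\leq -r\leq -2$; hence only the finitely many second-quadrant entries with $p\leq -2$ can support an outgoing differential, and for each of these one reads off from Proposition~\ref{Pbgsss} that the relevant column on the antidiagonal $p+q=i+1$ is empty. Dually, an incoming differential issues from $(p-r,q+r-1)$ on the antidiagonal $p+q=i-1$ with first index $p-r\leq p-2$, and Proposition~\ref{Pbgsss} shows there is nothing there. Thus the spectral sequence degenerates in total degree $i$: $\BGSSS^{\,p\,q}_2=\BGSSS^{\,p\,q}_\infty$ whenever $p+q=i$.

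Finally I would read off the cohomology. For the degrees in part (1) the antidiagonal $p+q=i$ carries a single nonzero $E_\infty$-term, isomorphic to $\kk$, so $H^i(\Emb(S^1,M))\cong\kk$, whether $\kk=\ZZ$ or $\kk=\FF_{\pp}$. For part (2) I would specialize to $\kk=\FF_{\pp}$ with $\pp\neq 2$, so that the conditional entries of Proposition~\ref{Pbgsss} built from $\kk/2\kk$ vanish; the antidiagonal $p+q=i$ then carries total $E_\infty$-dimension two, and since $\FF_{\pp}$ is a field the induced filtration on $H^i$ splits, giving $H^i\cong\kk^2$. The only substantial work is the sparsity bookkeeping of the second step, which is where the numerical constraints on $(k,l)$ enter and which I expect to be the main (if entirely elementary) obstacle.
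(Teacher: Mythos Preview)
Your proposal is correct in outline and will yield the corollary, but it takes a different route from the paper's proof. You plan to rule out higher differentials by brute-force sparsity checking: for each target degree $i$ you would list every nonzero $E_2$-entry on the antidiagonals $p+q\in\{i-1,i,i+1\}$ from Proposition~\ref{Pbgsss} and verify that no pair of them is linked by a $d_r$ for any $r\geq 2$. This works because Proposition~\ref{Pbgsss} gives the \emph{complete} list of nonzero entries in the range $p+q\leq\max\{2k+l,k+2l\}$, and all the needed antidiagonals lie in this range.

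The paper instead first establishes a vanishing line: by an argument parallel to the proof of Theorem~\ref{Tconvergence} one shows $\BGSSS_2^{-p,q}=0$ whenever $q/p<(k+l)/3$. Then, to kill an incoming differential $d_r:\BGSSS_r^{-p-r,q+r-1}\to\BGSSS_r^{-p,q}$, it suffices to check that the source lies below this line, which reduces to a single slope inequality (the paper writes it out only for the extremal target $(0,2k+l-1)$ with $r\geq l-k+1$, the other cases being immediate). Outgoing differentials are not discussed because every relevant $E_2$-entry sits at $p\in\{0,-1,-2\}$, so $d_r$ lands outside the second quadrant for $r\geq 3$, and the single $d_2$-target is visibly zero.

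Your approach has the virtue of being entirely self-contained from Proposition~\ref{Pbgsss}, with no extra vanishing-line argument needed; the paper's approach trades that for a one-line slope estimate that disposes of all incoming differentials simultaneously and scales better if one wanted to push to higher degrees. Either way the remaining work is the bookkeeping you flag in your final paragraph.
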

\begin{proof}
By an argument similar to the proof of Theorem \ref{Tconvergence}, we see $\BGSSS_2^{-p,q}=0$ if $q/p<(k+l)/3$. We shall show any differential $d_r:\BGSSS_r^{(-p-r,q+r-1)}\to \BGSSS_r^{-p,q}$ going into the term contained in the cohomology of the claim is zero. It is enough to the case of $(-p,q)=(0,2k+l-1)$ and $q+r-1\geq k+2l-1$ since other cases are obvious or follow from this case. We see 
\[
\frac{q+r-1}{p+r}=\frac{q-1}{r}+1\leq \frac{2k+l-2}{l-k+1}+1=\frac{k+2l-1}{l-k+1}<\frac{k+l}{3}.
\]  
So $\EE^{(-p-r,q+r-1)}_r=0$ and $d_r=0$.
\end{proof}
\subsection{The case of $M=S^k\times S^l$ with $k,l$ even}
 We fix  generators $a\in H^k(S^k)$ and $b\in H^l(S^l)$. $\HH$ is presented as $\wedge \{a,\ b\}$. We fix an orientation $\epsilon$ on $\HH$ by $\epsilon (ab)=1$. We set $c=\bar a \in S\HH$, $d=\bar b \in S\HH$. We write $a_i$ for $e_i(a)$ (similarly for $e_i(b)$ etc.) and  $B_n(\HH)$ is presented as
\[
B_n(\HH)=\bigwedge\{a_i,\ b_i,\ c_i,\ d_i,\  g_{ij},\ h_{ij} \ \mid 0\leq i,j\leq n \}/\mathcal{J}
\]
where  $|g_{ij}|=(-1,k+l),\ |h_{ij}|=(-1,2(k+l)-1)$, and $\mathcal{J}$ is the ideal generated by 
\[
\begin{split}
(a_i)^2,\ (b_i)^2,\ (c_i)^2,\ (d_i)^2,\     a_ib_i,\  &   a_ic_i,\ b_id_i,\ c_id_i,  \ a_id_i-b_ic_i  \vspace{1mm}\\
       (g_{ij})^2, \ (h_{ij})^2,\ g_{ii},\ h_{ii},       &        \ g_{ij}-g_{ji},\ h_{ij}+h_{ji},\\
(a_i-a_j)g_{ij},\ (b_i       -b_j)g_{ij},\ (c_i-c_j)g_{ij}    &      -a_ih_{ij},\  (d_i-d_j)g_{ij}-b_ih_{ij} \\
(a_i-a_j)h_{ij},\ (b_i-b_j)h_{ij}, \      &         (c_i-c_j)h_{ij},\ (d_i-d_j)h_{ij}  \\
(h_{ij}+h_{ik})g_{jk}-(h_{ij}+h_{jk})g_{ki},\      &     \text{ and the 3-term relations for }g_{ij}\  \text{and}\  h_{ij}. 
\end{split}
\] 
The diagonal classes are  given by $\Delta_\HH=a_0b_1+a_1b_0\in S\HH\otimes S\HH$ and $\Delta_{S\HH}=a_0d_0+a_1d_0+b_1c_0-b_0c_1-a_0d_1-a_1d_1$\\

By an argument similar to the proof of Corollary. \ref{Coddevensphere}, we obtain the following corollary.
\begin{cor}\label{Ceveneven}
Suppose $2\in \kk^{\times} $. Let $k$ and  $l$ be two  even numbers with $k+2\leq l\leq 2k-2$ and $|3k-2l|\geq 2$. 
We set $H^*= H^*(\Emb (S^1,S^k\times S^l))$.
 We have isomorphisms
\[
H^{i} =
\kk  \quad (i=k-1,k,l-1,l,k+l-3,k+l-2,k+l-1, 3k).  
\]
For any other degree $i\leq 2k+l$, $H^i=0$. \hfill \qedsymbol
\end{cor}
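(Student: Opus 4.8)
The plan is to run the same machine as in the proof of Corollary \ref{Coddevensphere}, but with the model $B^{\star\,*}_\bullet(\HH)$ in place of $A^{\star\,*}_\bullet(\HH)$. First observe that $M=S^k\times S^l$ is simply connected (since $k,l\geq 2$), oriented, $\HH=H^*(M)=\bigwedge(a,b)$ is a free $\kk$-module, and its Euler number $\chi(M)=\chi(S^k)\chi(S^l)=4$ is invertible in $\kk$ because $2\in\kk^\times$. Hence Theorem \ref{Talgebraicss}(2) applies and identifies the $E_2$-page of the $\CECHC$ech s.s. with the total homology of the normalization $NB^{\star\,*}_\bullet(\HH)$, the bidegree being $(\star-\bullet,*)$; and since $M$ is simply connected, Theorem \ref{Tconvergence} says this spectral sequence converges to $H^*(\Emb(S^1,M))$. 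So it suffices to (i) compute $\BGSSS_2^{p,q}$ for $p+q\leq 2k+l$, and (ii) show that every differential $d_r$ ($r\geq 2$) into or out of a term of total degree $\leq 2k+l$ vanishes and that the surviving $E_\infty$-terms assemble with no extension ambiguity.

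For (ii) I would first establish a vanishing line exactly as in the proof of Theorem \ref{Tconvergence}: estimating the bidegrees of generators of $NB_\bullet(\HH)$ — each $g_{ij}$ contributes $(-1,k+l)$, each $h_{ij}$ contributes $(-1,2(k+l)-1)$, each positive-degree tensor factor of $(\SHH)^{\otimes n}$ has internal degree $\geq\min\{k,l\}=k$, and, by the book-keeping of Theorem \ref{Tconvergence} (splitting off discrete vertices, so that $k+2r\geq n+\epsilon$), every generator in simplicial degree $n$ uses all the indices — one obtains $\BGSSS_2^{-p,q}=0$ whenever $q/p<(k+l)/3$. The hypotheses $k+2\leq l\leq 2k-2$ and $|3k-2l|\geq 2$ are there (as in the remark after Proposition \ref{Pbgsss}) to force the finitely many bidegrees with $\BGSSS_2^{p,q}\neq 0$ and $p+q\leq 2k+l$ to be pairwise distinct, with no two on a common anti-diagonal $-p+q=\mathrm{const}$ and no two joined by a potential $d_r$. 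Then the displayed estimate of Corollary \ref{Coddevensphere} applies verbatim: the source $\BGSSS_r^{(-p-r,\,q+r-1)}$ of any $d_r$ landing in a class of total degree $\leq 2k+l$ lies strictly below slope $(k+l)/3$, hence is zero; symmetrically for $d_r$ out of such a class. Thus $E_2=E_\infty$ in this range, and each $H^i$ with $i\leq 2k+l$ equals the single nonzero $E_2^{p,q}$ on the anti-diagonal $p+q=i$ (or $0$ if there is none).

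For (i), the computation is the finite, elementary linear-algebra calculation, the exact analogue of the differential matrices written out for $S^k\times S^l$ with $k$ odd and $l$ even. Concretely one writes $\BGSSS_1^{p,q}=\bigoplus_{\star-\bullet=p}NB^{\star,q}_\bullet(\HH)$ and the total differential $\td=d-\delta$, where $d$ is the internal differential ($d(g_{ij})=\Delta_\HH^{ij}=a_ib_j+a_jb_i$, $d(h_{ij})=\Delta_{\SHH}^{ij}=a_0d_0+a_1d_0+b_1c_0-b_0c_1-a_0d_1-a_1d_1$ after relabelling, $d=0$ on $(\SHH)^{\otimes n}$) and $\delta$ is the alternating sum of the face maps $\mu\circ_i(-)$ of Definition \ref{Dahnbhn}; then one row-reduces over the range $p+q\leq 2k+l$. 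Using Lemmas \ref{Lgraphincl} and \ref{Ladmissiblegenerator} to produce an explicit $\kk$-basis of each $NB_n(\HH)$ (and being careful about the ideal $\mathcal{J}$), the outcome should be that $\BGSSS_2^{p,q}$ is $\kk$ precisely for the $(p,q)$ with $p+q\in\{k-1,k,l-1,l,k+l-3,k+l-2,k+l-1,3k\}$, at the unique admissible second-quadrant value of $p$, and vanishes for every other $(p,q)$ with $p+q\leq 2k+l$. Feeding this into (ii) yields the stated isomorphisms.

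The main obstacle is step (i): because the ideal $\mathcal{J}$ defining $B_\HH(n)$ contains the mixed relations $a_id_i-b_ic_i$ and $(h_{ij}+h_{ki})g_{jk}-(h_{ij}+h_{jk})g_{ki}$ in addition to the two families of $3$-term relations, producing a genuine $\kk$-basis of $NB_n(\HH)$ (not merely a generating set) in each relevant bidegree, and correctly handling the normalized simplicial face maps, requires real bookkeeping; this is precisely the sort of computation the paper carries out explicitly in the $k$ odd, $l$ even case. By contrast, once the vanishing line and the numerical genericity of the occurring bidegrees are in hand, the degree argument in (ii) is routine.
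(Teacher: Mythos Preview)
Your proposal is correct and follows essentially the same route as the paper: the paper's own ``proof'' is simply the sentence ``By an argument similar to the proof of Corollary~\ref{Coddevensphere}'', and you have written out exactly that argument---invoking Theorem~\ref{Talgebraicss}(2) with $B_\HH$ (since $\chi(M)=4\in\kk^\times$), establishing the vanishing line $q/p<(k+l)/3$ as in Theorem~\ref{Tconvergence}, and reducing to an elementary linear-algebra computation of $\BGSSS_2$ in the relevant range. Your honest flagging of step~(i) as the real work matches the paper, which likewise does not display the analogue of the differential matrices for this case.
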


\subsection{The case of $4$-dimensional manifolds}\label{SS4dim}
In this subsection, we prove Corollary \ref{Tmain4dim}. We  assume that $M$ is a simply connected  $4$-dimensional manifold. So, as is easily observed,  $\HH$ is a free $\kk$-module for any $\kk$. 
\begin{defi}\label{D4dim}
Set $\chi=\chi(M)$.
\begin{itemize}
%\item Suppose $\dim H^2=2$. We say the Poincar\'e algebra $\HH$ is {\em of type (I)} (resp. {\em type (II)}) if the restriction of  the bilinear form  given in Definition \ref{Dpoincare} to $H^2$ is represented by the matrix 
%$\left(
%\begin{array}{cc}
%1 & 0 \\
%0 & 1
%\end{array}\right)$ (resp. 
%$\left(\begin{array}{cc}
%0 & 1 \\
%1 & 0 
%\end{array}\right)$).

\item 
We define a map $\alpha:(\HH^2)^{\otimes 2}\oplus \kk g_{01} \to (\HH^2)^{\otimes 2}\oplus \HH^4/\chi\HH^4$ by
\[
\alpha (a\otimes b)=(-a\otimes b-b\otimes a )+ ab, \quad \alpha( g_{01})=pr_1(\Delta_\HH)
\]
Here, $g_{01}$ is a formal free generator (which will correspond to the element of the same symbol in $\BGSSS^{-2,4}_1$) and $pr_1$ is the  projection 
\[
(\HH^{\otimes 2})^{*=4}\to (\HH^2)^{\otimes 2}\oplus (1\otimes \HH^4)\to (\HH^2)^{\otimes 2}\oplus  \HH^4/\chi \HH^4.
\]

\end{itemize}

\end{defi}
 The following proposition follows from direct computation and degree argument based on Theorem \ref{Talgebraicss}.
 
\begin{lem}\label{L4dim}
We use the notations in Definition \ref{D4dim}. Suppose $\kk$ is a field  and $\HH^2$ is not zero.
\begin{enumerate}
\item When $p+q=1$, $\BGSSS^{p,q}_r$ is stationary after $E_2$. In particular, $\BGSSS_2^{p,q}\cong \BGSSS_\infty^{p,q}$. We have isomorphisms
\[\BGSSS^{p,q}_2\cong \left\{
\begin{array}{ll}
\HH^2 & ((p,q)=(-1,2))\\
0 & (\text{otherwise})
\end{array}\right. .\]
\item  There exists an isomorphism
\[
\BGSSS_2^{\,-2,\,4}\cong \mathrm{Ker}(\alpha)/\kk( pr_2(\Delta_\HH)+ 2g_{01}) .
\]
Here $pr_2$ is the projection 
$
(\HH^{\otimes 2})^{*=4}\to (\HH^2)^{\otimes 2}
$ . The differential $d_r$ coming into this term is zero for $r\geq 2$. \hfill \qedsymbol
\end{enumerate} 
\end{lem}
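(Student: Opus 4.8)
The plan is to read off the $E_2$-page from the algebraic model of Theorem~\ref{Tam} and then argue that the relevant higher differentials vanish for purely positional reasons. By Theorem~\ref{Tam} the $E_2$-page of the $\CECHC$ech spectral sequence is the total homology of the normalized simplicial $\kk$-module $NA^{\star\,*}_\bullet(M)$, with the bidegree $(p,q)$ part placed at $p=\star-\bullet$, $q=*$, and total differential $\partial+\delta$, where $\partial$ is the differential of $A_M$ (governed by the shriek classes $\Delta^!_{ij}(1)=\gamma_{ij}$) and $\delta=\sum_i(-1)^id_i$ is the simplicial differential (whose faces $d_i=\mu\circ_i$ are diagonal pullbacks). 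Since $\kk$ is a field, $\HH=H^*(M)$ is free and $\chi=\chi(M)\in\kk$ is either $0$ or a unit; the Gysin sequence of $S^{\dd-1}\to\TM\to M$ then gives $H^1(\TM)=0$, $H^2(\TM)\cong\HH^2$, $H^{\dd}(\TM)\cong\HH^{\dd}/\chi\HH^{\dd}$, and (as $H^1=0$) no $H^3(\TM)\!\otimes\!H^1$ term survives in the products that occur. First I would tabulate, via K\"unneth, the non-degenerate generators of $NA^{\star\,*}_\bullet(M)$ in the needed total bidegrees: for part~(1) the antidiagonal lines $p+q=1$ and $p+q=0$, and for part~(2) the bidegrees $(-3,4)$, $(-2,4)$, $(-1,4)$. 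All the graphs $G$ that enter have at most one edge on at most four vertices, so every $H^*(\Delta_G)$ is elementary.

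For part~(1): by the vanishing line established in the proof of Theorem~\ref{Tconvergence} (which uses only simple connectivity and the splitting off of $\TM$-factors at discrete vertices, valid here), $\BGSSS_2^{p,q}=0$ whenever $q/|p|<s_{\dd}=4/3$. This kills every $(p,q)$ with $p+q=1$ and $p\le-4$, and the entire line $p+q=0$ with $p<0$. The finitely many remaining spots on $p+q=1$ are handled by hand: $\BGSSS_2^{0,1}=0$ since $H^1(\Delta_G)=0$; $\BGSSS_2^{-2,3}=0$ by a direct degree count (no non-degenerate generator exists); and in bidegree $(-3,4)$ the only non-degenerate generator is $g_{12}$ (simplicial degree $2$), for which $(\partial+\delta)(g_{12})=\gamma_{12}\pm2g_{01}\neq0$, because the non-degenerate part of $\gamma_{12}$ is the diagonal class of the intersection form on $\HH^2$, nonzero since that form is unimodular over $\ZZ$ (hence non-degenerate over any field) and $\HH^2\neq0$ --- this works even over $\FF_2$. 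Hence $\BGSSS_2^{-3,4}=0$. Finally, in bidegree $(-1,2)$ the only non-degenerate generator is the copy $e_1(\HH^2)$ of $\HH^2$ at simplicial degree $1$; there $\partial=0$ (no edges) and $\delta=d_0-d_1=0$ (both faces send $1\otimes c$ to $c$ and cancel), and nothing maps into $(-1,2)$, so $\BGSSS_2^{-1,2}\cong\HH^2$. For stationarity after $E_2$: differentials out of $(-1,2)$ land at $p$-coordinate $\ge1$, in the vanishing region, so they vanish for $r\ge2$; differentials into $(-1,2)$ originate on the line $p+q=0$, which is zero at $E_2$ and hence at every $E_r$ with $r\ge2$. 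The zero summands on $p+q=1$ of course stay zero. This proves part~(1).

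For part~(2): the non-degenerate generators are $NA^{-3,4}=\kk\,g_{12}$ (simplicial degree $2$), $NA^{-2,4}=\kk\,g_{01}\oplus(\HH^2)^{\otimes2}$ (with $g_{01}$ at simplicial degree $1$ and $(\HH^2)^{\otimes2}$, spanned by the $e_1(a)e_2(b)$, at simplicial degree $2$), and $NA^{-1,4}=\HH^{\dd}/\chi\HH^{\dd}\oplus(\HH^2)^{\otimes2}$ (all at simplicial degree $1$). One computes $\partial(g_{01})=\gamma_{01}$, whose non-degenerate class is $pr_1(\Delta_\HH)=(\Delta_{\HH^2},\bar\omega)$, and $\delta(g_{01})=0$; while for $a,b\in\HH^2$ one gets $\delta\big(e_1(a)e_2(b)\big)=e_0(a)e_1(b)-e_1(\overline{ab})+e_0(b)e_1(a)$, which under these identifications equals $-\alpha(a\otimes b)$. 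Thus $\partial+\delta$ restricted to $NA^{-2,4}$ agrees with $\alpha$ up to a sign on each of the two summands, so its kernel is isomorphic to $\mathrm{Ker}(\alpha)$. On the other hand $(\partial+\delta)(g_{12})=\gamma_{12}\pm2g_{01}$, whose non-degenerate class is $pr_2(\Delta_\HH)+2g_{01}$ (the coefficient $2$ being $d_0+d_2$ applied to $g_{12}$, the middle face $d_1$ contributing $0$); so the image of the incoming differential inside $NA^{-2,4}$ is $\kk\big(pr_2(\Delta_\HH)+2g_{01}\big)$. Therefore $\BGSSS_2^{-2,4}\cong\mathrm{Ker}(\alpha)/\kk\big(pr_2(\Delta_\HH)+2g_{01}\big)$. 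Lastly, a differential $d_r$ entering $(-2,4)$ for $r\ge2$ starts at $\BGSSS_r^{-2-r,\,r+3}$, which lies on the line $p+q=1$ with $p\le-4$, hence is zero by part~(1); so every $d_r$ into $(-2,4)$ vanishes for $r\ge2$.

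I expect the sign bookkeeping to be the main obstacle: one must check that the Hochschild-style differential of Definition~\ref{Dhochschild} together with the CDBA differential of $A_M$ from Definition~\ref{Dcdga} reproduce $\alpha$ exactly up to the stated signs (the choice $K\equiv0\ (\mathrm{mod}\ 4)$ is what keeps these signs under control), and that the decisive coefficient $2$ in front of $g_{01}$ genuinely survives --- which it does, since in the alternating sum $\sum(-1)^id_i$ the faces $d_0$ and $d_2$ both send $g_{12}$ to $g_{01}$ with the same sign while $d_1$ annihilates it, so the coefficient is $2$ and not $0$. One also needs the standard identification of $\gamma_{ij}$ with the K\"unneth diagonal class $\Delta_\HH$ pulled back to $\TM^{\times n}$ (as in the proof of Lemma~\ref{Lahg}(4)), taking care that its $\HH^{\dd}$-component is read modulo the Gysin cokernel $H^{\dd}(M)\twoheadrightarrow H^{\dd}(\TM)$.
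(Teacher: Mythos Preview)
Your proof is correct and follows exactly the approach the paper indicates: the paper itself does not give a proof of this lemma, only the sentence ``follows from direct computation and degree argument based on Theorem~\ref{Talgebraicss},'' and what you have written is precisely that computation (working with $A_\bullet(M)$ via Theorem~\ref{Tam}, which is equivalent since $\kk$ is a field so $\chi(M)$ is automatically $0$ or a unit). Your identification of the generators in the relevant bidegrees, the use of the vanishing line $q/|p|<4/3$ from the proof of Theorem~\ref{Tconvergence}, and the explicit check that $(\partial+\delta)(g_{12})=pr_2(\Delta_\HH)\pm 2g_{01}$ is nonzero even over $\FF_2$ (because $\HH^2\neq 0$ forces the $(\HH^2)^{\otimes 2}$-part of $\Delta_\HH$ to be nonzero) are all sound.
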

\begin{rem}\label{R4dimE2}
Actually, Lemma \ref{L4dim} holds even when $\kk$ is a not a field since the torsions for K\"unneth theorem does not affect the range.  
\end{rem}
\begin{proof}[Proof of Corollary \ref{Tmain4dim}]
In this proof, we suppose $\kk$ is a field.  Set $H_2^\ZZ=H_2(M;\ZZ)$.
As is well-known, there is a weak homotopy equivalence between $\Imm (S^1,M)$ and  the free loop space $L\TM$, and there is an isomorphism $\pi_1(L\TM)\cong \pi_1(\TM)\oplus \pi_2(\TM)$. As $M$ is simply connected, we have $\pi_1\Imm (S^1,M)\cong \pi_2(\TM)\cong \pi_2(M)\cong H_2^\ZZ$. \\
\indent  By Goodwillie-Klein-Weiss convergence theorem, the connectivity of the standard projection $\underset{\BDelta}{\holim}\  \CC^\bullet\CPTM\to \underset{\BDelta_n}{\holim}\ \CC^\bullet\CPTM$ increases as $n$ increases. Since $\Delta_n$ is a compact category in the sense of \cite{farjoun}, and $\CC^n\CPTM$ is simply connected for any $n$, by  \cite[Theorem 2.2]{farjoun}, we see $\Emb(S^1,M)$ is $\ZZ$-complete. In particular $\pi_1(\Emb(S^1,M))$ is a pro-nilpotent group. So, by a theorem of Stallings \cite{stalling}, we only have to prove the composition
\[
\Emb(S^1,M)\stackrel{i_M}{\longrightarrow }\Imm(S^1,M)\stackrel{\simeq}{\longrightarrow} L\TM\stackrel{cl_1}{\longrightarrow} K(H_2^\ZZ, 1)
\]
induces an isomorphism on $H_1(-;\ZZ)$ and a surjection on $H_2(-;\ZZ)$. Here the rightmost map $cl_1$ is the classifying map. See \cite{gersten}.\\
\indent Consider the spectral sequence $E^{p,q}_r$ associated to the Hochschild complex of $C^S_*(\TCCM)$. This spectral sequence is isomorphic to the Bousfield-Kan type cohomology spectral sequence associated to the well-known cosimplicial model for $L\TM$ given by $[n]\mapsto \TM^{\times n+1}$. The quotient map $\TCCM\to \CCM$ induces a map $f_r: E^{p\, q}_r\to \BGSSS^{p\, q}_r$ of spectral sequences. For $r=\infty$, this map is identified with the map on the associated graded induced by the inclusion $i_M$. For $p+q=1$, by Lemma \ref{L4dim} (and similar computation for $E^{p\, q}_r$), $f_2$ is an isomorphism for  any field $\kk$. Since $\pi_1(\Emb(S^1,M))$ is the same as $\pi_1$ of a finite stage of Taylor tower which is finite homotopy limit of simply connected finite cell complex,  it is finitely genrated, and so is $H_1$. By the universal coefficient theorem, we see $i_M$ induces an isomorphism on $H_1(-;\ZZ)$.  For the part of   $p+q=2$,  we see $E^{p\,q}_2=0$ for $p<-2$ and $E^{\,-2,\,4}\cong \mathrm{Ker}(\alpha)\cap(\HH^2)^{\otimes 2}$.
Consider the following zigzag
\[
L\TM\stackrel{L(cl_2)}{\longrightarrow} LK(H_2^\ZZ,2)\stackrel{i_K}{\longleftarrow} \Omega K(H_2^\ZZ,2),
\] where the left map is induced by the classifying map $cl_2:\TM\to K(H_2^\ZZ,2)$ and the right one is the inclusion from the based loop space. Clearly, the composition $cl_1\circ i_K:\Omega K(H_2^\ZZ, 2)\to K(H_2^\ZZ, 1)$ is a weak homotopy equivalence. Observe spectral sequences associated to the standard cosimplicial models of the above three spaces. Since the maps $L(cl_2)$ and $i_K$ are induced by  cosimplicial maps, they induce maps on spectral sequences. In the part of total degree $2$, we see the filter $F^{-2}$ for each of three spectral sequences is the entire cohomology group, and the filter $F^{-1}$ for the one for $\Omega K(H_2^{\ZZ}, 2)$ is zero. With these  observations,  we see that the image of $H^2(K(H_2^\ZZ,1))$ in $H^2(L\TM)$ by the  map $cl_1$ is sent to a subspace $V$ of $F^{-2}/F^{-1}\cong E_\infty^{\,-2,\,4}\subset E_2^{\,-2,\,4}$ isomorphically, and  a basis of $V$  is given by $\{\,a_i\otimes a_j-a_j\otimes a_i \mid i<j\,\}$ as elements of $E_2^{-2,4}$, where $\{a_i\}_i$ denotes a basis of $\HH^2$. (We also see these elements must be stationary.) If  $\kk\not=\FF_2$, or if $\kk=\FF_2$ and the inverse of the intersection matrix has at least one non-zero diagonal component, we see the restriction of  $f_2$ to $V$ is a monomorphism by Lemmas \ref{Lpoincarediagonal} \ref{L4dim}. (Otherwise, the elements of the basis of $V$ have the relation $pr_0(\Delta_\HH)=0$.) This implies $i_M$ induces a surjection on $H_2$ for any field $\kk$ under the assumption of the theorem.    By the universal coefficient theorem, we obtain the desired assertion on $H_2(-;\ZZ)$.  
\end{proof}
\begin{rem}\label{R4dim}
If all of the diagonal components of the inverse of intersection matrix on $H_2(M;\FF_2)$ is zero, the map $f_2:V\to \BGSSS_2^{-2,4}$ in the proof is not a monomorphism for $\kk=\FF_2$ but this does not necessarily imply the original (non-associated graded) map is not a monomorphism. So in this case, it is still unclear whether $i_M$ is an isomorphism on $\pi_1$. 
\end{rem}
\section{Precise statement and proof of Theorem \ref{Tmainhomotopycolimit}}\label{Shomotopycolim}
\begin{defi}\label{Dfunctor}
\begin{itemize}
\item Fix a coordinate plane with coordinate $(x,y)$. A {\em planer rooted $n$-tree} $(T,\mathfrak{e})$ consists a $1$-dimensional finite cell complex $T$ and a continuous monomorphism  $\mathfrak{e}$ from its realization $|T|$  to  the half plane $y\geq 0$ such that 
\begin{itemize}
\item $T$ is connected and $\pi_1(T)$ is trivial.
\item the intersection of the image of $\mathfrak{e}$ and the $x$-axis consisting of the image of  $n$ univalent vertices called {\em leaves} and labeled by $1,\dots, n$ in the manner consistent with the standard order on the axis,
\item $T$ has a unique distinguished vertex called the {\em  root} which is at least bivalent, and 
\item any  vertex except for the leaves and root is at least trivalent. 
\end{itemize}
An {\em isotopy} between $n$-trees $(T_1,\mathfrak{e}_1) \to (T_2,\mathfrak{e}_2)$ is an isotopy  of the half plane onto itself which maps $\mathfrak{e}_1(|T_1|)$  onto $\mathfrak{e}_2(|T_2|)$ and the root to the root. (So an isotopy preserves the leaves including the labels.)  We  will denote an isotopy class of planer rooted $n$-trees simply by $T$.  The root vertex of a tree is usually denoted by $v_r$. For a vertex $v$ of a tree, $|v|$ denotes the number which is the valence minus $1$ if  $v\not=v_r$, and equal to the valence if $v=v_r$.   
\item Let $\Psi_n^o$ be a category defined as follows. An object of $\Psi_n^o$ is an isotopy class of the  planer  rooted $n$-trees.  There is a unique morphism $T\to T'$ if $T'$ is obtained from $T$ by a successive contraction of internal edges (i.e. edges not adjacent to leaves). 
\item Let $\CAT$ be the category of small categories and functors.  Let $i_n:\Psi_n^o\to \Psi_{n+1}^o$ be a functor which sends $T$ to the tree made from $T$ by attaching  two edges to  the $n$-leaf of $T$ and labeling the new leaves with $n$ and $n+1$.  We define a category $\Psi^o$ to be the colimit of the sequence $\Psi_1^o\stackrel{i_1}{\to} \Psi^o_{2}\stackrel{i_2}{\to} \cdots$ taken in $\CAT$.  $\mathcal{F}_n:\Psi^o_{n+1}\to P_\nu(\underline{n})$ denotes the functor given in Definition 4.14 of \cite{sinha}, which sends a tree $T\in \Psi_{n+1}^o$  to the set of the numbers $i$ such that the shortest paths from  $i$ and $i+1$ to the root in $T$ intersects only at the root.   For the functor $\mathcal{G}_{n}:P_{\nu}(\underline{n+1})\to \BDelta_n$, see subsection \ref{SSnt}. The following square is clearly commutative.
\[\xymatrix{\Psi^o_{n+2}\ar[d]^{i_n}\ar[r]^{\mathcal{F}_n\circ \mathcal{G}_{n+1}}  & \BDelta_n\ar[d]^{i_n} \\
\Psi^o_{n+3}\ar[r]^{\mathcal{F}_{n+1}\circ \mathcal{G}_{n+2}}  & \BDelta_{n+1},}
\]
where the right vertical arrow is the natural inclusion,  so we have the induced functor $\mathcal{F}\circ\mathcal{G}:\Psi^o\to \Delta$. 
\item In the rest of the paper, for a symmetric sequence $X$ and a vertex $v$ of a tree in $\Psi^o$, $X(v)$, $X(v-1)$ and $\underline{v-1}$ denote $X(\,|v|\,)$, $X(\,|v|-1)$  and $\underline{|v|-1}$ respectively. ($|v|$ is the number of the 'out going edges')
\item For a  $\KK$-comodule $X$ in $\SP$, We shall define a functor $\FPsi^n X:(\Psi_{n+2}^o)^{op}\to \SP$. The definition is similar to (a dual of) the construction of $\DD_n[M]$ in Definition 5.6 of \cite{sinha}. For a tree $T\in \Psi_{n+2}$, define a space $\KK_T^{nr}$ by
\[
\KK_T^{nr}=\underset{v}{\prod}\,\KK(v)
\]
Here, $v$ runs through all the non-root and non-leaf vertices of $T$. This is denoted by $K^{nr}_T$ in \cite{sinha}.
We set 
\[
\FPsi^n X(T)= \Map(\KK_T^{nr}, X(v_r-1)).
\]  For a morphism $T\to T'$ given by the contraction of a non-root edge $e$ (an edge not adjacent to the root), the map $e^*:\FPsi^n X(T')\to \FPsi^n X(T)$ is pull-back by the inclusion $\KK_T^{nr}\to \KK_{T'}^{nr}$ to a face corresponding to the edge contraction (see Definition 4.26 of \cite{sinha}). For the $i$-th root edge $e$, the corresponding map is given by the following composition.
\[
\begin{split}
\Map \Biggl(\,\prod_{\tiny  v\in T'   } \KK(v),\ X(v_r'-1)\Biggr) &  
 = \Map\Biggl(\prod_{\tiny \begin{array}{c} v\in T \\  v\not= v_t\end{array}}\!\!\!\! \KK(v ),\  X(v_r'-1)\Biggr) \\
    &  \to \Map\Biggl(\prod_{\tiny \begin{array}{c} v\in T\\  v\not= v_t\end{array}}\!\!\!\! \KK (v),\  \Map \Bigl( \KK(v_t ),\ X(v_r-1)\Bigr)\Biggr)\\
    &  \cong \Map \Biggl(\Bigl(\prod_{\tiny \begin{array}{c} v\in T\\ v\not= v_t\end{array}}\!\!\!\! \KK(v )\Bigr) \times \KK(v_t ),\ X(v_r-1)\Biggr)\\
    &  = \Map \Biggl(\,\prod_{\tiny  v\in T} \KK(v ),\ X(v_r-1)\Biggr).
\end{split}
\] 
Here $v_t$ is the vertex of $e$ which is not the root. For $1\leq i\leq |v_r|-1$, the arrow in the second line  is the pushforward by the adjoint of the partial composition $(-\circ_i-): \KK(v_t )\hotimes X(v'_r-1) \to X(v_r-1)$,  and for $i=|v_r|$ it is the pushforward by the adjoint of the composition 
\[
 \KK(v_t )\hotimes X(v'_r-1)\stackrel{id\otimes (-)^{\sigma}}{\to } \KK(v_t )\hotimes X(v'_r-1)\stackrel{(-\circ_1-)}{\to} X(v_r-1)
\] where $\sigma$ is the transposition of the first $|v'_r|-|v_t|$ and the last $|v_t|-1$ letters. The functors $\{\FPsi^n\}_n$ is compatible with the inclusion $i_n:\Psi^o_{n+2}\to \Psi^o_{n+3}$. Precisely speaking, There exists an obviously defined natural isomorphism $j_n:{\FPsi}^nX\cong {\FPsi}^{n+1}X|_{\Psi_{n+2}^o}$ because the inclusion does not change $|v_r|$. We define a functor $\FPsi X:\Psi^o\to \SP$ by $\FPsi X(T)$ being the colimit of the sequence $\FPsi^nX(T)\stackrel{\cong}{\to} \FPsi^{n+1}X(T)\stackrel{\cong}{\to}\FPsi^{n+2}X(T)\stackrel{\cong}{\to}\cdots$.
\item We shall define a functor $\omega:(\Psi_{n+2}^o)^{op}\to \CAT$. We set $ \omega (T)=\GG(\,|v_r|-1)^+$. For  the contraction $T\to T'$ of an edge $e$, we define a map $e^*:\underline{v'_r-1}\to \underline{v_r-1}$ as follows. If $e$ is a non-root edge, $e^*$ is the identity. If $e$ is the $i$-th root edge, for $1\leq i\leq |v_r|-1$, $e^*$ is the order-preserving surjection with $e^*(j)=i$ ($i\leq j\leq i+|v_t|-1$).
 For $i=|v_r|$, $e^*$ is the composition 
\[
\underline{v'_r-1}\stackrel{(-)^\sigma}{\to}\underline{v'_r-1}\stackrel{(e')^*}{\to}\underline{v_r-1} ,\quad \text{where}\quad (e')^*(j)=\left\{
\begin{array}{cc}
1 & (1\leq j\leq |v_t|\, ) \\
j-|v_t|+1 & (\,|v_t|+1\leq j\leq |v_r'|-1), 
\end{array}\right.
\]
and $\sigma $ is given in the previous item. For $G\in \GG(\,|v_r'|-1)^+$, we define an object $e^*(G)\in \GG(\,|v_r|-1)^{+}$ by
\[
e^*(G)=\left\{
\begin{split}
\ \bigl\{\,(e^*(s),e^*(t))\, \mid \,  &   (s,t)\in E(G),\ e^*(s)\not=e^*(t)\,\bigr\} \in \GG(\,|v_r|-1) \\
& \quad \text{if $G\not=*$ and  $(j,k)\not \in E(G)$ for any pair   $j,k\in\{i,\dots, i+|v_t|-1\}$,} \\
* \qquad \qquad &\quad  \text{otherwise}
\end{split}
\right.
\]
Here,   the set $\{i,\dots, i+|v_t|-1\}$ is considered modulo $|v_r'|-1$ if $i=|v_r|$. 
\item We define a category $\TPsi_{n+2}$ as the Grothendieck construction for the (non-lax) functor $\omega$
\[
\TPsi_{n+2}=\int_{\Psi_{n+2}^o}\omega .
\]
An object of $\TPsi_{n+2}$ is a pair $(T,G)$ with $T\in \Psi^o_{n+2}$ and $G\in \omega(T)$. A map $(T,G)\to (T',G')$ is a pair of  a map $e:T\to T'\in \Psi^o_{n+2}$ and a map $G\to e^*(G')\in \omega (T)$. $i_n:\Psi_{n+2}^o\to \Psi_{n+3}^o$ and the identity $\omega(T)=\omega(i_n(T))$ naturally induce a functor $i_n:\TPsi_{n+2}\to \TPsi_{n+3}$. We denote by $\TPsi$ the colimit of the sequence $\{\TPsi_{n+2};i_n\}$.
\item We fix a map $\KK\to \DD_1$ of operads and regard $\TCCM$ as a $\KK$-comodule via this map.
\item We shall define a functor $\THOM_n :(\TPsi_{n+2})^{op}\to \SP$. We set 
\[
\THOM_n(T,G)=\left\{
\begin{array}{cc}
\Map(\KK_T^{nr}, \DeltaT_G) & (G\in \GG(|v_r|-1)\  ) \vspace{1mm}\\
*   & (G=*)
\end{array}\right.
\] For a map $(T,G)\to (T',G')$, we set
\[ 
\begin{split}
\Map \Biggl(\,\prod_{\tiny  v\in T'   } \KK(v ),\ \DeltaT_{G'}\Biggr)   
% = \Map\Biggl(\prod_{\tiny \begin{array}{c} v\in T \\  v\not= v_r,v_t\end{array}}\!\!\!\! K_{|v|},\  X(\,|v_r'|-1)\Biggr) \\
    &  \to \Map\Biggl(\prod_{\tiny \begin{array}{c} v\in T\\  v\not= v_t\end{array}}\!\!\!\! \KK(v ),\  \Map \Bigl( \KK(v_t ),\ \DeltaT_G \Bigr) \Biggr)\\
    &  \cong \Map \Biggl(\Bigl(\prod_{\tiny \begin{array}{c} v\in T\\ v\not= v_t\end{array}}\!\!\!\! \KK(v )\Bigr)\times \KK(v_t ),\ \DeltaT_G \Biggr)\\
     &  = \Map \Biggl(\,\prod_{\tiny  v\in T } \KK(v ) ,\ \DeltaT_G \Biggr)\ .
\end{split}
\]
Here, the arrow in the first line  is the adjoint of the map $\KK(v_t )\hotimes \DeltaT_{G'} \to \DeltaT_{G}$ which is the composition of the map 
$\KK(v_t )\hotimes \DeltaT_{G'} \to \DeltaT_{e^*(G')}$ defined in view of Lemma \ref{Ldiagonalinclusion} and the inclusion  $\DeltaT_{e^*G'}\subset \DeltaT_{G}$ coming from $G\subset e^*(G')$.
The collection $\{\THOM_n\}_n$ naturally induces a functor $\THOM:\TPsi \to \SP$ with natural isomorphism $\THOM|_{\TPsi_{n+2}}\cong \THOM_n$.
\item Let $\MM$ be a model category. For a functor $X:(\TPsi)^{op}\to \MM$, we define a functor 
\[
\fcolim\,  X:(\Psi^o)^{op}\to \MM \, ,
\] called  the {\em fiberwise colimit of $X$} as follows. Abusing notations, for $T\in \Psi^o$, denote by $\omega(T)$ by the full subcategory $\{(T,G)\mid G\in \omega(T)\}$ of $\TPsi$. and by $X_T$ the restriction of $X$ to $\omega(T)$. We set 
$\fcolim\, X(T)=\underset{\omega(T)}{\colim}\,X_T$. For an edge contraction $e:T\to T'$, the map $e^*:\fcolim \,X(T')=\colim\, X_{T'}\to \colim\, X_T=\fcolim \,X(T)$ is induced by $X(T',G')\to X(T,e^*(G'))\to \colim X_T$, where the first map is induced by the map $(e,id):(T,e^*G')\to (T',G')$. This construction is natural for natural transformations and defines a functor $\fcolim:\FUN((\TPsi)^{op},\MM)\to \FUN((\Psi^o)^{op},\MM)$. We define the {\em fiberwise constant functor} $\fc:\FUN((\Psi^o)^{op},\MM) \to \FUN((\TPsi)^{op},\MM)$ by $\fc Y(T,G)=Y(T)$   

\end{itemize}
\end{defi}
$\Psi_n^o$ is equivalent to the category denoted by the same symbol in Definition 4.12 of \cite{sinha} as categories. \\

\noindent {\bf Notation} : In the rest of the paper, we omit the notation $(-)^{op}$ under $(\mathrm{ho})\colim$. For example, $\underset{\Psi^o}{\hocolim}$ denotes $\underset{(\Psi^o)^{op}}{\hocolim}$ \\

A proof of the following lemma is a standard routine work.
\begin{lem}\label{Lquillenpair} Let $\MM$ be a cofibrantly generated model category. 
\begin{enumerate}
\item The pair $(\fcolim, \fc )$ is a Quillen adjoint pair.
\item The restriction 
\[
\FUN ((\TPsi)^{op},\MM)\to \FUN(\omega(T)^{op},\MM),\quad X\mapsto X_T
\] preserves weak equivalences and cofibrations. In particular, the natural map
$\underset{\omega(T)}{\hocolim}\ X_T\to \LL \fcolim\, X(T)\in \Ho(\MM)$ is an isomorphism.
\item For any functor $X\in\FUN((\TPsi)^{op},\MM)$, there is a natural isomorphism in $\Ho(\MM)$
\[
\underset{\Psi^o}{\hocolim}\ {\LL\fcolim\ X}\cong \underset{\TPsi}{\hocolim}\  X.
\]
\end{enumerate}
\hspace{\fill}{\qedsymbol}
\end{lem}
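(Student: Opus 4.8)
The plan is to reduce all three statements to general properties of the projective model structures on the functor categories involved; the only input specific to our situation is that the indexing categories $\Psi^o$, $\omega(T)=\GG(\,|v_r|-1)^+$ and $\TPsi$ are \emph{thin}, i.e.\ have at most one morphism between any two objects, which is clear from Definition \ref{Dfunctor} (recall $\GG(\,|v_r|-1)^+$ is a poset). Throughout, $\MM$ being cofibrantly generated, each $\FUN(-,\MM)$ carries its projective model structure, which is again cofibrantly generated, with generating (trivial) cofibrations the images under the free functors $F_c$ (left adjoint to evaluation at $c$) of the generating (trivial) cofibrations of $\MM$, and with (trivial) fibrations and weak equivalences detected objectwise.

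For part (1), I would first check the adjunction $\fcolim\dashv\fc$ directly: a natural transformation $\fcolim X\to Y$ is a family of maps $\underset{\omega(T)}{\colim}\,X_T\to Y(T)$ compatible with edge contractions, and by the pointwise adjunctions $\underset{\omega(T)}{\colim}\dashv\mathrm{const}$ this unwinds to a family of maps $X(T,G)\to Y(T)=\fc Y(T,G)$ natural in $(T,G)\in(\TPsi)^{op}$, i.e.\ a natural transformation $X\to\fc Y$. That the pair is Quillen is then immediate: since $\fc Y(T,G)=Y(T)$, the functor $\fc$ carries objectwise (trivial) fibrations to objectwise (trivial) fibrations, so $\fc$ is right Quillen and $\fcolim$ is left Quillen.

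For part (2), write $\iota_T\colon\omega(T)\hookrightarrow\TPsi$ for the full-subcategory inclusion, so that $X\mapsto X_T$ is the restriction $\iota_T^*$. It preserves weak equivalences because they are objectwise, and it is simultaneously a left and a right adjoint (left and right Kan extension), hence preserves retracts, coproducts, pushouts and transfinite compositions. Thus to see $\iota_T^*$ preserves cofibrations it suffices to check it on generating cofibrations $F_{(T_0,G_0)}(A)\to F_{(T_0,G_0)}(B)$. Here thinness enters: $\TPsi\bigl((T,G'),(T_0,G_0)\bigr)$ is empty unless there is a unique edge contraction $e\colon T\to T_0$, in which case it is canonically $\omega(T)(G',e^*G_0)$; hence $\iota_T^*F_{(T_0,G_0)}(A)$ is either the constant functor at the initial object or the free functor $F^{\omega(T)}_{e^*G_0}(A)$, so $\iota_T^*$ of the generating cofibration is either an identity map or a generating cofibration of $\FUN(\omega(T)^{op},\MM)$, in particular a cofibration. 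Consequently, for a functorial cofibrant replacement $q\colon QX\xrightarrow{\ \sim\ }X$ in $\FUN((\TPsi)^{op},\MM)$, the diagram $(QX)_T$ is cofibrant in $\FUN(\omega(T)^{op},\MM)$ and weakly equivalent to $X_T$, so $\underset{\omega(T)}{\hocolim}\,X_T\cong\underset{\omega(T)}{\colim}\,(QX)_T=\fcolim(QX)(T)=\LL\fcolim\,X(T)$, which is the asserted isomorphism.

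For part (3), I would record the strict identity $\underset{\TPsi}{\colim}\cong\underset{\Psi^o}{\colim}\circ\fcolim$ of functors $\FUN((\TPsi)^{op},\MM)\to\MM$ (colimit over a Grothendieck construction computed in two stages), which follows from part (1) together with the observation $\fc\,\underline Z=\underline Z$ for a constant functor: $\mathrm{Hom}_\MM\bigl(\underset{\Psi^o}{\colim}\,\fcolim X,\,Z\bigr)\cong\mathrm{Hom}(\fcolim X,\underline Z)\cong\mathrm{Hom}(X,\fc\underline Z)=\mathrm{Hom}(X,\underline Z)\cong\mathrm{Hom}_\MM\bigl(\underset{\TPsi}{\colim}\,X,Z\bigr)$. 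Since $\underset{\Psi^o}{\colim}$ is left Quillen (left adjoint to the constant-diagram functor, which is right Quillen) and $\fcolim$ is left Quillen by part (1), the composite is left Quillen and its total left derived functor is the composite of the total left derived functors; as $\LL\underset{\TPsi}{\colim}=\underset{\TPsi}{\hocolim}$ and $\LL\underset{\Psi^o}{\colim}=\underset{\Psi^o}{\hocolim}$, this gives the natural isomorphism $\underset{\TPsi}{\hocolim}\,X\cong\underset{\Psi^o}{\hocolim}\,(\LL\fcolim\,X)$. The only genuinely non-formal point is the cofibration claim in part (2), which however the thinness of $\Psi^o$, $\omega(T)$ and $\TPsi$ makes transparent; the only point needing care is the bookkeeping of the various $(-)^{op}$'s in the definitions of $\omega$, $\TPsi$ and $\FPsi$, and this does not affect the shape of the argument.
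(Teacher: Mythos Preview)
Your proof is correct. The paper itself omits the proof entirely, declaring it ``a standard routine work'' and placing a qed symbol; your write-up supplies exactly the expected standard arguments (right Quillen via objectwise fibrations, generating cofibrations plus thinness for the restriction, and the two-stage colimit identity derived), so there is nothing to compare.
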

\begin{thm}\label{Tthomcolimit}
\begin{enumerate}
\item There exists an isomorphism in $\Ho(\FUN((\Psi^o)^{op},\SP))$
\[
(\mathcal{G}\circ\mathcal{F})^*(\CC^\bullet \langle[M]\rangle)^\vee \cong \LL \fcolim\ \THOM\, .
\]
\item There exists an isomorphism in $\Ho(\CH_{\kk})$
\[
C^*(\Emb(S^1,M))\cong \underset{\TPsi}{\hocolim}\  C^S_*\circ \THOM\, .
\]
\end{enumerate}
\end{thm}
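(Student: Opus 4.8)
\textbf{Proof plan for Theorem \ref{Tthomcolimit}.}
The plan is to prove part (1) first and then deduce part (2) by applying homotopy colimits and the chain functor $C^S_*$. For part (1), I would start from the fiberwise picture supplied by Lemma \ref{Lquillenpair}: by part (2) of that lemma it suffices to identify, for each fixed tree $T\in\Psi^o$, the object $\underset{\omega(T)}{\hocolim}\, \THOM_T$ with $\FPsi((\CSINHA)^\vee)(T)$ applied to the relevant arity, compatibly with edge contractions. Fix $T$ with root valence $|v_r|=m+1$, so $\omega(T)=\GG(m)^+$. The functor $\THOM_T$ sends $(T,G)$ to $\Map(\KK_T^{nr},\DeltaT_G)$ for $G\neq *$ and to $*$ for $G=*$. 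Since $\Map(\KK_T^{nr},-)$ commutes with homotopy colimits in $\SP$ up to the usual cofibrancy bookkeeping, I would reduce to computing $\underset{\GG(m)^+}{\hocolim}\,(G\mapsto \DeltaT_G)$ with $\DeltaT_*=*$. Now $\GG(m)^+$ is $\GG(m)$ with a terminal-type object $*$ adjoined below every nonempty graph; the homotopy colimit over $\GG(m)^{op}-\{\emptyset\}$ of $G\mapsto\DeltaT_G$ computes (a model of) $\DeltaT_{\fat}(m+1)$ by Lemma \ref{Lcheckfunctor} applied to $X_G=\DeltaT_G$ on $P_\nu(N_0)=\GG(m+1)-\{\emptyset\}$, exactly as in the proof of Lemma \ref{Lsinhass}; adjoining $\emptyset$ (which is initial in $\GG(m)$) glues in $\TCCM(m+1)=\DeltaT_\emptyset$, and adjoining the basepoint-object $*$ takes the cofiber, so that $\underset{\GG(m)^+}{\hocolim}\,\DeltaT_{(-)}\simeq \DeltaT_\emptyset/\DeltaT_{\fat}(m+1)=\CCM(m+1)\simeq(\CSINHA(m+1))^\vee$ by Theorem \ref{TAtiyahdual}. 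Tracking the arity shift $m=|v_r|-1$, this says $\LL\fcolim\,\THOM(T)\simeq \FPsi((\CSINHA)^\vee)(T)$ after identifying $\Map(\KK_T^{nr},(\CSINHA)^\vee(v_r-1))$ with the value of $(\mathcal{G}\circ\mathcal{F})^*(\CC^\bullet\CPTM)^\vee$ at $T$, which is exactly the dual of Sinha's construction of $\DD_n[M]$ recalled in Definition \ref{Dfunctor}; this last identification is essentially formal once one knows $\FPsi$ applied to a $\KK$-comodule models the cosimplicial cochain after pullback along $\mathcal{G}\circ\mathcal{F}$, which is the statement that $\mathcal{F}$ is the tree-model of the category $\BDelta$.

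The compatibility with edge contractions is the point requiring care. For a non-root edge $e:T\to T'$ the map $e^*$ on $\THOM$ is pullback along $\KK_T^{nr}\hookrightarrow\KK_{T'}^{nr}$ and does not touch $G$ nor the arity, so it matches the corresponding map of $\FPsi$ tautologically. For the $i$-th root edge the map involves the partial composition $(-\circ_i-)$ of the comodule $\TCCM$, and here Lemma \ref{Ldiagonalinclusion} is exactly what guarantees that $\KK(v_t)\hotimes\DeltaT_{G'}\to\DeltaT_{e^*(G')}\subset\DeltaT_G$ is well defined on each piece, so that the square relating $\THOM$-edge-maps to the partial compositions in $\omega$ commutes before passing to homotopy colimits; after passing to $\hocolim$ over $\omega(T)$, the induced map is identified with the partial composition $\DD_1(v_t)\hotimes\CCM(v_r'-1)\to\CCM(v_r-1)$, which under Theorem \ref{TAtiyahdual} is the dual of the coface map $d^{i}$ of $\CC^\bullet\CPTM$, i.e.\ exactly the map defining the $\FPsi$-edge-operator. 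So the main obstacle is bookkeeping: checking that the homotopy-colimit identification is natural in $T$, including the cyclic $i=|v_r|$ case with its transposition $\sigma$, and checking the cofibrancy conditions (via Lemma \ref{Lcheckfunctor}(2)) so that all the homotopy colimits are computed by the naive colimits of the chosen models. None of this is conceptually new beyond what is already in Lemmas \ref{Lsinhass}, \ref{Lcheckfunctor}, \ref{Ldiagonalinclusion}, but it must be assembled carefully.

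Given part (1), part (2) follows by applying $\underset{\Psi^o}{\hocolim}$ and then $C^S_*$. First, $\underset{\Psi^o}{\hocolim}\,(\mathcal{G}\circ\mathcal{F})^*(\CC^\bullet\CPTM)^\vee\simeq \underset{\BDelta}{\hocolim}\,(\CC^\bullet\CPTM)^\vee$ because $\mathcal{F}\circ\mathcal{G}:\Psi^o\to\BDelta$ is left cofinal (this is the content of the lax-analogue remark after Theorem \ref{Tmainhomotopycolimit}; it follows from Sinha's cofinality of $\mathcal{F}$ composed with the cofinality of $\mathcal{G}_n$ used in Lemma \ref{Lcosimplicial}(2)), and the right-hand side is a model for $C_*^S$ of the total space, hence for $C^*(\holim_{\BDelta}\CC^\bullet\CPTM)\simeq C^*(\Emb(S^1,M))$ by Lemma \ref{Lcosimplicial}(3) and Lemma \ref{Lcochaintheory}, using that everything in sight is strongly semistable so Lemma \ref{Lchaininvariance} applies. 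On the other hand, by Lemma \ref{Lquillenpair}(3), $\underset{\Psi^o}{\hocolim}\,\LL\fcolim\,\THOM\cong\underset{\TPsi}{\hocolim}\,\THOM$. Applying the chain functor $C^S_*$ — which preserves homotopy colimits of strongly semistable spectra by Lemma \ref{Lchaininvariance} together with the fact (Lemma \ref{Lchaincompatible}) that it is lax monoidal for $\hotimes$, so commutes with the $\Map(\KK_T^{nr},-)$'s and the bar-type constructions building the hocolim — and combining with part (1) yields the desired zigzag of quasi-isomorphisms $C^*(\Emb(S^1,M))\simeq \underset{\TPsi}{\hocolim}\,C^S_*\circ\THOM$. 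The one subtlety here is that $C^S_*$ only commutes with \emph{homotopy} colimits, so one should replace $\THOM$ by an objectwise-cofibrant model in $\FUN((\TPsi)^{op},\SP)$ before applying $C^S_*$; since all $\DeltaT_G$ are strongly semistable and of finite type this is harmless and is precisely the kind of replacement already used in the proof of Lemma \ref{Lchaininvariance}.
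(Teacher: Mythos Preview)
Your overall strategy is correct and closely matches the paper's approach: part~(1) is established by computing the fiberwise homotopy colimit over $\omega(T)$, identifying it with $\CCM(|v_r|-1)$ via the \v{C}ech-type argument of Lemma~\ref{Lcheckfunctor}, and then invoking Theorem~\ref{TAtiyahdual}; part~(2) then follows by collapsing the double homotopy colimit via Lemma~\ref{Lquillenpair}(3) and cofinality of $\mathcal{G}\circ\mathcal{F}$. Your discussion of naturality in $T$ is more explicit than the paper's, which simply builds a single natural transformation $\THOM\to\fc\,\FPsi\CCM$ and passes through the $(\fcolim,\fc)$-adjunction; either route works.

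There is, however, a genuine gap in your deduction of part~(2). You write that $\underset{\BDelta}{\hocolim}\,(\CC^\bullet\CPTM)^\vee$ ``is a model for $C^S_*$ of the total space, hence for $C^*(\holim_{\BDelta}\CC^\bullet\CPTM)$'', citing only Lemma~\ref{Lcosimplicial}(3) and Lemma~\ref{Lcochaintheory}. Those results only give $\holim_{\BDelta}\CC^\bullet\CPTM\simeq\Emb(S^1,M)$ and $C^*\simeq C^*_S$ objectwise; they say nothing about commuting cochains past the homotopy limit. The step
\[
\underset{\BDelta^{op}}{\hocolim}\,C^*(\CC^\bullet\CPTM)\;\simeq\;C^*\bigl(\underset{\BDelta}{\holim}\,\CC^\bullet\CPTM\bigr)
\]
is a convergence statement: it is precisely Theorem~\ref{Tconvergence}, which requires $M$ simply connected. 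Without it the comparison map exists but need not be a quasi-isomorphism, and part~(2) of the theorem carries the simple-connectivity hypothesis (stated in Theorem~\ref{Tmainhomotopycolimit}) for exactly this reason. You must invoke Theorem~\ref{Tconvergence} at this step.

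A minor indexing correction: with $|v_r|=m+1$ you have $\omega(T)=\GG(m)^+$ and $\DeltaT_\emptyset=\TCCM(m)$, so the fiberwise colimit is $\CCM(m)\simeq(\CSINHA(m))^\vee$, not $\CCM(m+1)$; your index is consistently off by one in that paragraph.
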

\begin{proof}
By definition, $\CCM(n)=\underset{G\in \omega(T)}{\colim}\ \DeltaT_G$. We shall show the natural map 
\[
\underset{G\in \omega(T)}{\hocolim}\ \DeltaT_G\to \underset{G\in \omega(T)}{\colim}\ \DeltaT_G =\CCM(n) \in \Ho (\SP )
\]
is an isomorphism. By abuse of notations, we denote by $P_\nu(N_0)$ the subcategory of $\omega(T)$ consisting of non-empty graphs, which is actually isomorophic to $P_\nu(N_0)$ for $N_0=\# \{(i,j)\mid i,j\in \underline{n},\ i<j\}$. Clearly the functor $P_\nu(N_0)\ni G\mapsto \DeltaT_G\in \SP$ satisfies the assumption of the part 2 of Lemma \ref{Lcheckfunctor} so the natural map $\underset{P_\nu(N_0)}{\hocolim}\,\DeltaT_G\to \underset{P_\nu (N_0)}{\colim}\,\DeltaT_G$ is an isomorphism. 
As $\CCM(n)$ is a cofiber of the natural map $\underset{P_\nu(N_0)}{\colim}\ \DeltaT_G \to \TCCM$, which is also a homotopy cofiber, we have the assertion.
We define a natural transformation $\THOM \to \fc \FPsi\CCM$ by the pushforward by the constant map $\DeltaT_G\to \{*\}\subset \CCM(v_r-1)$ for $G\not=\emptyset \in \omega(T)$, and by the quotient map $\DeltaT_{\emptyset}\to \CCM(v_r-1)$ for $G=\emptyset$. By the assertion and the part 2 of Lemma \ref{Lquillenpair}, the derived adjoint of the natural transformation $\LL\fcolim \ \THOM \to \FPsi\CCM$ is an isomorphism in $\Ho(\FUN((\Psi^o)^{op},\SP)$. It is clear that $\FPsi$ preserves weak equivalences so by Theorem \ref{TAtiyahdual}, we have isomorphisms in $\Ho(\FUN((\Psi^o)^{op},\SP))$
\[
\FPsi(\CSINHA^\vee) \cong  \FPsi\CCM\cong \LL\fcolim\ \THOM .
\]
We define a natural transformation $(\mathcal{G}\circ \mathcal{F})^*(\CC^\bullet\langle [M]\rangle^\vee) \to \FPsi(\CSINHA^\vee)$ by the inclusion  $\CC^{|v_r|-2}\langle[M]\rangle=\CSINHA(v_r-1)\subset \Map(\KK_T^{nr},\ \CSINHA(v_r-1))$ onto constant maps. This is clearly weak equivalence, so  we have proved the part 1.\\
%%%%%%%%%%%%%%%%%%%2019 12/8
\indent For the part 2, since the functor $C^S_*$ preserves homotopy colimits (of strongly semistable spectra), by the part 1,  Lemma \ref{Lquillenpair}, (3), and Lemma \ref{Lcochaintheory}, we have isomorphisms in $\Ho(\CH_{\kk})$ 
\[
\underset{\Psi^o}{\hocolim}\ (\mathcal{G}\circ \mathcal{F})^*C^S_*(\CC^\bullet\langle [M]\rangle^\vee)\cong \underset{\Psi^o}{\hocolim}\ \LL\fcolim\ C_*^S\circ \THOM \cong \underset{\TPsi}{\hocolim}\ C_*^S\circ \THOM .
\]
By Lemma \ref{Lcochaintheory}, Theorem \ref{Tconvergence}, and the fact that $\mathcal{G}\circ\mathcal{F}:(\Psi^o)^{op}\to \Delta^{op}$ is (homotopy) right cofinal  (see Proposition 4.15 and Theorem 6.7 of \cite{sinha}), we have  isomorphisms in $\Ho(\CH_{\kk})$
\[
\begin{split}
C^*(\Emb (S^1,M))  &    \cong \underset{\Delta}{\hocolim}\ C^*(\CC^\bullet\langle [M]\rangle) \\
 &\cong \underset{\Delta}{\hocolim}\ C^S_*(\CC^\bullet\langle [M]\rangle^\vee)\cong  \underset{\Psi^o}{\hocolim}\ (\mathcal{G}\circ \mathcal{F})^*C^S_*(\CC^\bullet\langle [M]\rangle^\vee) .
 \end{split}
\]
Thus, we have an isomorphism
 $C^*(\Emb (S^1,M))\cong \underset{\TPsi}{\hocolim}\,C_*^S\circ \THOM$.  

\end{proof}

\end{document}